    \setlist[enumerate]{label=\textup{(\roman*)}}
\newcommand{\textfrc}[1]{{\frcseries#1}}
\newcommand{\mathfrc}[1]{\text{\footnotesize\rm\textfrc{#1}}}
\tikzset{cutting/.style={draw,fill=white,circle,minimum size=3pt,inner
    sep=0pt}}
\numberwithin{equation}{section}
\newcommand*{\nn}{\mathbb{N}}
\newcommand*{\zz}{\mathbb{Z}}
\newcommand*{\isom}{\cong}
\newcommand*{\from}{\colon}
\newcommand*{\emptyw}{\varepsilon}
\newcommand*{\green}[1]{\ensuremath{\mathrel{\mathcal{#1}}}}
\newcommand*{\pro}[1]{\widehat{#1}}
\newcommand*{\prov}[2]{{#2}^{\pv {#1}}}
\newcommand*{\bsigma}{{\boldsymbol{\sigma}}}
\newcommand*{\btau}{{\boldsymbol{\tau}}}
\newcommand*{\btheta}{{\boldsymbol{\theta}}}
\newcommand*{\bzeta}{{\boldsymbol{\zeta}}} 
\newcommand*{\shift}{T}
\newcommand*{\pv}[1]{\ensuremath{\mathsf{#1}}} \newcommand*{\loc}{{L}}
\newcommand*{\pvo}[1]{{\ensuremath{\overline{\mathsf{#1}}}}}
\newcommand*{\init}{\alpha} \newcommand*{\term}{\omega}
\newcommand*{\edg}{E} \newcommand*{\vertex}{V}
\newcommand*{\rstab}{\operatorname{Stab}}
\newcommand*{\catsig}{\ensuremath{\mathcal{C}}}
\newcommand*{\Om}[2]{\ensuremath{\overline{\Omega}_{#1}{\pv{#2}}}}
\newcommand*{\ltr}[1]{\mathtt{#1}}
\newcommand*{\catpro}{\mathbf{Pro}}
\renewcommand*{\varnothing}{\emptyset}
\newcommand*{\pli}{\mathfrc{h}}
\newcommand*{\card}{\operatorname{Card}}
\newcommand*{\fac}{\operatorname{fac}}
\newcommand*{\img}{\operatorname{Im}}
\newcommand*{\en}{\operatorname{End}}
\renewcommand*{\hom}{\operatorname{Hom}}
\newcommand*{\clos}[1]{\operatorname{Cl}_{\pv{#1}}}
\theoremstyle{plain}
\newtheorem{theorem}{Theorem}[section]
\newtheorem{lemma}[theorem]{Lemma}
\newtheorem{proposition}[theorem]{Proposition}
\newtheorem{corollary}[theorem]{Corollary}
\newtheorem{problem}[theorem]{Problem}
\theoremstyle{definition}
\newtheorem{definition}[theorem]{Definition}
\theoremstyle{remark} 
\newtheorem{remark}[theorem]{Remark}
\newtheorem{example}[theorem]{Example}
\begin{document}

\title[Profinite approach to S-adic shift spaces I: Saturating sequences.]{Profinite approach to S-adic shift spaces I: Saturating directive sequences.}  

\author[J. Almeida]{Jorge Almeida}
\address{CMUP, Dep.\ Matem\'atica, Faculdade de Ci\^encias, Universidade do Porto, Rua do Campo Alegre 687, 4169-007 Porto, Portugal}
\email{jalmeida@fc.up.pt}

\author[A. Costa]{Alfredo Costa}
\address{University of Coimbra, CMUC, Department of Mathematics, Largo D. Dinis, 3000-143, Coimbra, Portugal}
\email{amgc@mat.uc.pt}

\author[H. Goulet-Ouellet]{Herman Goulet-Ouellet}
\address{Universit\'e de Moncton, 60 Notre-Dame-du-Sacr\'e-Coeur Street, Moncton E1A 3E9, New-Brunswick, Canada}
\email{herman.goulet-ouellet@umoncton.ca}

\begin{abstract}
  This paper is the first in a series of three, about (relatively)
  free profinite semigroups and S-adic representations of minimal
  shift spaces. We associate to each primitive S-adic directive
  sequence $\bsigma$ a \emph{profinite image} in the free profinite
  semigroup over the alphabet of the induced minimal shift space. When
  this profinite image contains a $\green{J}$-maximal maximal subgroup
  of the free profinite semigroup (which, up to isomorphism, is called
  the \emph{Sch\"utzenberger group} of the shift space), we say that
  $\bsigma$ is \emph{saturating}. We show that if $\bsigma$ is
  recognizable, then it is saturating. Conversely, we use the notion
  of saturating sequence to obtain several sufficient conditions for
  $\bsigma$ to be recognizable: $\bsigma$ consists of pure encodings;
  or $\bsigma$ is eventually recognizable, saturating and consists of
  encodings; or $\bsigma$ is eventually recognizable, recurrent,
  bounded and consists of encodings. For the most part, we do not
  assume that $\bsigma$ has finite alphabet rank although we establish
  that this combinatorial property has important algebraic
  consequences, namely that the rank of the Sch\"utzenberger group is
  also finite, whose maximum possible value we also determine.
  We also show that for every  minimal shift space of finite topological rank,
the rank of its Sch\"utzenberger group  is a lower bound of the topological rank.
\end{abstract}

\date{\today}

\keywords{Recognizable S-adic sequence, minimal shift space,
  profinite semigroup, Sch\"utzenberger group}

\makeatletter
\@namedef{subjclassname@2020}{%
  \textup{2020} Mathematics Subject Classification}
\makeatother
\subjclass[2020]{Primary 37B10, 20M05, 20M07; Secondary 20E08}

\maketitle

\tableofcontents

\section{Introduction}

\label{s:intro}

This article is the first in a series of three papers linking
minimal shift spaces, via their S-adic representations,
with free profinite semigroups (cf.~\cite{Almeida&ACosta&Goulet-Ouellet:2024c,Almeida&ACosta&Goulet-Ouellet:2024d} for the ensuing two papers).
Finitely generated free profinite semigroups are completions of free semigroups by a natural metric. The elements of free profinite semigroups are called \emph{pseudowords}. In this first paper, we apply methods relying on a sort of ``algebraic combinatorics on pseudowords'' to obtain necessary and sufficient conditions for a primitive S-adic representation to be recognizable.
The main contribution of this paper is the introduction and exploration of the notion of \emph{saturating} directive sequence,
and its associated machinery, to obtain new results about recognizable directive sequences.

S-adic representations of minimal shift spaces are an important subject of symbolic
dynamics, that in the past few decades has received a lot of attention (as seen, for example, in the books~\cite{Fogg:2002,Durand&Perrin:2022}
and in the survey~\cite{Berthe&Delacroix:2014}). Symbolic dynamics has strong connections with the theory of automata and formal languages~\cite{Beal:1993, Lind&Marcus:1996, Fogg:2002, Lothaire:2001, Beal&Berstel&Eilers&Perrin:2021, Almeida&ACosta&Kyriakoglou&Perrin:2020b}.
Free profinite semigroups were involved in major advancements in that theory since the 1980s~\cite{Almeida:1994a, Rhodes&Steinberg:2009qt, Almeida&ACosta:2015hb,Straubing&Weil:2021}. Hence, it is not surprising there has been an emergence of direct links between
symbolic dynamics and free profinite semigroups. The first time that methods from symbolic dynamics were systematically employed in the theory of profinite semigroups was
in~\cite{Almeida:1999c}, where they were used to establish a
strong decidability property of the pseudovariety of all finite
$p$-groups. Shortly thereafter, the first author introduced a systematic connection between symbolic dynamics and free profinite semigroups, allowing him to associate to each irreducible shift space~$X$ a profinite group $G(X)$, naturally realized
as a maximal subgroup of the free profinite
semigroup over the alphabet of
$X$~\cite{Almeida:2003b,Almeida:2003a,Almeida:2003cshort,Almeida:2005c}.
The group $G(X)$, called the \emph{Sch\"utzenberger group of~$X$} since the paper~\cite{Almeida:2003a}, has dynamical significance:
it is a flow invariant~\cite{ACosta&Steinberg:2021}. 
If $X$ is sofic and non-periodic, then $G(X)$
is a free profinite group of rank $\aleph_0$~\cite{ACosta&Steinberg:2011}.
Besides the sofic case, the computation of $G(X)$ has only been made
for minimal shift spaces, a case in which it has also been shown to have geometric
significance involving Rauzy graphs \cite{Almeida&ACosta:2016b}. Such
calculations have been carried out mostly for substitutive shift spaces~\cite{Almeida:2005c,Almeida&ACosta:2013,Goulet-Ouellet:2022d,Goulet-Ouellet:2022c}, but not always~\cite{Almeida&ACosta:2016b}. The landscape of possibilities for $G(X)$ when $X$ is minimal seems rich, and remains largely unexplored.

In the series of three papers initiated here, we go beyond the substitutive case by systematically expanding to minimal S-adic shift spaces our study of the interplay between free profinite semigroups and symbolic dynamical systems, specially through the Sch\"utzenberger groups of the latter.
Fixing an S-adic representation for a shift space allows us to see it as a sort of limit of substitutive spaces, thus
suggesting a way to approach spaces that are non-substitutive by adapting what was done for the substitutive ones. This kind of approach is sketched in~\cite{Almeida:2003a} to investigate
the Sch\"utzenberger groups of Arnoux--Rauzy shift spaces.

In this article we focus on connections with the notion of \emph{recognizable} directive sequence.
Mossé's celebrated theorem, stating that every aperiodic primitive substitution
is recognizable \cite{Mosse:1992,Mosse:1996}, is crucial for the deduction, by the first two authors, of presentations for $G(X)$ when~$X$ is defined by a primitive substitution~\cite{Almeida&ACosta:2013}. Her theorem was extended and refined by several
authors~\cite{Bezuglyi&Kwiatkowski&Medynets:2009,Crabb&Duncan&McGregor:2010,Klouda&Starosta:2019}. This led to far-reaching generalizations by Berthé
et.~al.~\cite{Berthe&Steiner&Thuswaldner&Yassawi:2019} concerning primitive directive S-adic sequences, which, in conjunction
with earlier investigations of the group $G(X)$, motivated the work in the present paper. Further generalizations of Berthé et al.'s results appear in work by Béal et al.~\cite{Beal&Perrin&Restivo&Steiner:2023}.
Also testifying to their importance, we mention that recognizable directive sequences provide representations of S-adic shift spaces by Bratteli--Vershik systems~\cite[Theorem 6.5]{Berthe&Steiner&Thuswaldner&Yassawi:2019}. 

At this point, it is convenient to provide some technical context. An
S-adic directive sequence is a sequence $\bsigma=(\sigma_n)_{n\in\nn}$
of substitutions (i.e., homomorphisms $\sigma_n\from A_{n+1}^+\to
A_n^+$ between free semigroups) defining in a natural way a minimal
shift space $X(\bsigma)$ consisting of all biinfinite words over the
alphabet $A_0$ whose finite factors occur as factors of arbitrarily
far iterated images along the sequence $\bsigma$. We then also say
that $\bsigma$ is an S-adic representation of $X(\bsigma)$. Roughly
speaking, $\bsigma$ is recognizable when, denoting by $\bsigma^{(k)}$
the subsequence $(\sigma_n)_{n\geq k}$, every element of
$X(\bsigma^{(k)})$ has a unique ``de-substitution``, via $\sigma_k$,
as an element of $X(\bsigma^{(k+1)})$, for every $k\in\nn$. Quite
often, one needs to assume that $\bsigma$ is \emph{bounded}, meaning
that the sequence of cardinalities $\card(A_n)$ is bounded; or at
least that $\bsigma$ has \emph{finite alphabet rank}, meaning that
$\lim\inf\card(A_n)<\infty$. One of the most remarkable results
from~\cite{Berthe&Steiner&Thuswaldner&Yassawi:2019}, generalizing
Mossé's theorem, is that if $\bsigma=(\sigma_n)_{n\in\nn}$ is a
primitive directive sequence with finite alphabet rank, such that
$X(\bsigma)$ is aperiodic, then $\bsigma$ is \emph{eventually
  recognizable} (i.e., $\bsigma^{(k)}$ is recognizable for some
$k\in\nn$).

The central notion of this paper is that of \emph{saturating directive sequence}.
Briefly speaking,
a primitive directive sequence $\bsigma=(\sigma_n)_{n\in\nn}$ is saturating when a certain natural realization of $G(X(\bsigma))$ is contained in the intersection
of the images of the profinite extensions of the homomorphisms $\sigma_0\circ\cdots\circ\sigma_n$; we call this intersection the \emph{profinite image} of $\bsigma$.
The profinite image of $\bsigma$ is a group if $\bsigma$ is proper (Theorem~\ref{t:proper-is-group}); it is a simple semigroup if, and only if, all limit
words of $\bsigma$ belong
to $X(\bsigma)$ (Theorem~\ref{t:characterization-of-image-inside-Jsigma}).

The following theorem collects several of our main applications of saturation to recognizability
(cf.~Theorems~\ref{t:surjectivity}, \ref{t:a-sort-of-converse-of-surjectivity-theorem}, \ref{t:recurrent-encoding-made-implies-recognizable},
and Corollary~\ref{c:surjectivity-pure-encoding}). By saying that $\bsigma$ is an \emph{encoding} we mean that
$\sigma_n$ is an injective homomorphism for each $n\in\nn$, and by saying that it is \emph{pure} we mean that
it is an encoding such that, for each $n\in\nn$, the image of $\sigma_n$ is a pure code.
Moreover, we say that $\bsigma$ is \emph{recurrent} if for every $n\in\nn$ there exists $m>0$ such that $(\sigma_{0},\ldots, \sigma_{n}) = (\sigma_{m},\ldots, \sigma_{m+n})$.

\begin{theorem}
  \label{t:introduction-0}
  Let $\bsigma$ be an eventually recognizable primitive directive sequence. The following statements hold:
  \begin{enumerate}
  \item if $\bsigma$ is recognizable, then it is saturating;
    \item if $\bsigma$ is pure, then $\bsigma$ is recognizable;
  \item if $\bsigma$ is saturating and encoding, then $\bsigma$ is recognizable;
  \item if $\bsigma$ is recurrent, bounded, and encoding, then $\bsigma$ is recognizable.
  \end{enumerate}
\end{theorem}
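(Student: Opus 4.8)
The four assertions are logically linked, so the plan is to prove them in an order reflecting their dependencies, with statement~(i) --- the Surjectivity Theorem --- as the cornerstone on which (ii)--(iv) are built. The objects in play are the continuous extensions $\widehat{\sigma_n}\from\widehat{A_{n+1}^+}\to\widehat{A_n^+}$ of the substitutions, the composites $\widehat{\sigma_0\circ\cdots\circ\sigma_n}$ whose images intersect in the profinite image of $\bsigma$, and the regular $\green{J}$-class $J(X(\bsigma))\subseteq\widehat{A_0^+}$ of pseudowords all of whose finite factors occur in $X(\bsigma)$; its maximal subgroups are the copies of $G(X(\bsigma))$, and saturation asks that the profinite image meet $J(X(\bsigma))$ in such a subgroup. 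Two structural facts are available as levers: by Theorem~\ref{t:proper-is-group} the profinite image is a group in the proper case, and by Theorem~\ref{t:characterization-of-image-inside-Jsigma} it is simple exactly when all limit words of $\bsigma$ lie in $X(\bsigma)$ --- a reformulation I expect to be useful for locating maximal subgroups inside the image.

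For~(i), the first geometric step is to check that each $\widehat{\sigma_n}$ carries $J(X(\bsigma^{(n+1)}))$ into $J(X(\bsigma^{(n)}))$ compatibly with the $\green{J}$-order. Recognizability then supplies, for every point of the target shift space, a \emph{unique} de-substitution in the source, and the plan is to promote this to the profinite setting: fix an idempotent $e$ in a maximal subgroup $H$ of $J(X(\bsigma))$, use unique de-substitution to lift $e$, and then all of $H$, into the image of $\widehat{\sigma_0\circ\cdots\circ\sigma_n}$ for each $n$, and finally invoke a compactness/inverse-limit argument to place $H$ inside the intersection, i.e.\ inside the profinite image. This yields saturation. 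The delicate point here --- and the principal obstacle of the whole theorem --- is controlling the action of the $\widehat{\sigma_n}$ on maximal subgroups and verifying that the sections provided by recognizability are mutually compatible across the inverse system.

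Statement~(iii) is the partial converse. The standing hypothesis ``eventually recognizable'' gives recognizability of some tail $\bsigma^{(k)}$, and the goal is to descend to level $0$ by downward induction on $k$: assuming $\bsigma^{(k)}$ recognizable, I would combine the injectivity of $\sigma_{k-1}$ (from the encoding hypothesis) with saturation to force uniqueness of the de-substitution at level $k-1$, thereby obtaining recognizability of $\bsigma^{(k-1)}$. The role of saturation is to guarantee that the pseudowords witnessing the combinatorics of $X(\bsigma)$ genuinely lie in the image of the profinite extensions, so that two competing de-substitutions would have to arise from two preimages under $\widehat{\sigma_{k-1}}$, which the injectivity of the underlying code serves to exclude. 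Statement~(ii) then falls out as a corollary: a pure encoding is in particular an encoding, and the purity of each code $\sigma_n(A_{n+1})$ forces the good idempotent-power behaviour that lets the Surjectivity-Theorem machinery show $\bsigma$ is automatically saturating; applying~(iii) gives recognizability.

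Finally, for~(iv) the hypotheses recurrent, bounded, and encoding pin down a rigid situation: boundedness means the $\card(A_n)$ take finitely many values, and recurrence means every prefix $(\sigma_0,\dots,\sigma_n)$ reappears as a block $(\sigma_m,\dots,\sigma_{m+n})$ with $m>0$. The plan is to exploit this quasi-periodicity to transport the recognizability of the eventually-recognizable tail backwards: a recognized block occurring arbitrarily far out is translated, with the encoding preserving uniqueness of de-substitution, into a recognition of the initial block, and the finiteness coming from boundedness makes this transport uniform over all levels. I expect statement~(i) to remain the main difficulty, since (ii)--(iv) all rest on the surjectivity/saturation correspondence it establishes; the combinatorial bookkeeping behind the transport argument in~(iv) is then the secondary obstacle.
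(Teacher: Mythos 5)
Your proposal is a plan rather than a proof, and two of its key steps would fail as stated. The most serious gap is in your argument for statement~(iii). Two distinct centered $\sigma$-representations of the same point do \emph{not} produce two preimages of a single pseudoword under the pro-$\pv S$ extension of the substitution, so injectivity of the encoding cannot by itself exclude them. What they actually produce is a conjugation relation: writing $e_n,f_n$ for the idempotents of $J_{\pv S}(\bsigma^{(n)})$ corresponding via $\pli$ to the two de-substituted points, one obtains $p_n\,\prov S\sigma_{0,n}(e_n)=\prov S\sigma_{0,n}(f_n)\,p_n$ where $p_n$ is a finite word whose length is the offset $\ell_n$, strictly between $0$ and $|\sigma_{0,n}(z_n[n])|$ (this is Proposition~\ref{p:shift-of-idempotents}). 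Injectivity disposes only of the case $\ell_n=0$. For positive offsets the paper's proof of Theorem~\ref{t:a-sort-of-converse-of-surjectivity-theorem} must pass to a cluster point $(p,e,f)$ with $f\green{R}p\green{L}e$, invoke saturation exactly once --- via Proposition~\ref{p:saturation-equivalence} --- to conclude that the infinite pseudoword $p$ lies in $\img_{\pv V}(\bsigma)$, and then use that $\img(\sigma_{0,m})$ is a recognizable language, hence has clopen closure (Theorem~\ref{t:V-recognizability}), to pull this back to some \emph{finite} $p_k\in\sigma_{0,m}(A_m^+)$; only at that point does injectivity permit cancellation, yielding a centered representation that contradicts recognizability of the tail. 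Your phrase about saturation guaranteeing that witnessing pseudowords lie in the image gestures at this, but the step you actually invoke --- injectivity excluding competing de-substitutions --- is false without it. Your sketch of~(i) has a parallel omission: one cannot ``lift $e$ and then all of $H$'' by unique de-substitution, since the whole difficulty is precisely to place arbitrary group elements $s\in H$ in the image. The paper's proof of Theorem~\ref{t:surjectivity} approximates $s$ by words $t_n=p_ns_nq_n$ with $p_n,q_n\to e_0$ (Lemma~\ref{l:metric-open-mapping}) and then uses Mossé's pointwise recognizability with its constant $\ell_k$ (Proposition~\ref{p:mosse-sense}) to force the occurrences of $s_n$ in $z_0=\pli(e_0)$ to begin and end at $\sigma_{0,k}$-cutting points, whence $s_n\in\sigma_{0,k}(A_k^+)$.

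For~(iv), your transport argument breaks at a specific point: recognizability of a block is relative to the shift space of the subsequent tail. Recurrence gives $(\sigma_0,\dots,\sigma_n)=(\sigma_m,\dots,\sigma_{m+n})$, but it does not give $X(\bsigma^{(1)})=X(\bsigma^{(m+1)})$, so knowing that $\sigma_m$ is recognizable in $X(\bsigma^{(m+1)})$ says nothing directly about $\sigma_0$ in $X(\bsigma^{(1)})$; no amount of boundedness makes this transport uniform. The paper avoids any such transport: it first obtains eventual $\pv S$-saturation by applying~(i) to a recognizable tail, then proves that $\bsigma$ itself is $\pv S$-saturating (Theorem~\ref{t:recurrent-encoding-implies-saturating}) by a genuinely profinite argument in which recurrence yields an idempotent pseudopath in a $\pv S$-model (Proposition~\ref{p:idempotents-in-Pw-Cat}), the encoding hypothesis gives injectivity of the relevant compression via Theorem~\ref{t:sufficient-conditions-for-injectivity}, and the clincher is torsion-freeness of closed subgroups of free profinite semigroups; recognizability then follows from~(iii). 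By contrast, your outline for~(ii) --- pure implies $\pv S$-saturating through root extraction in closures of pure codes, then apply~(iii) --- is sound and matches Theorem~\ref{t:surjectivity-pure-encoding}, though the paper's actual derivation is slicker, taking $\pv H=\pv I$ in Corollary~\ref{c:an-equivalence-with-surjectivity} so that the saturation hypothesis holds vacuously for $\pv A$.
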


Other sufficient conditions for recognizability of $\bsigma$ have been
obtained before. Berthé et al.~\cite[Theorem
4.6]{Berthe&Steiner&Thuswaldner&Yassawi:2019} showed that $\bsigma$ is
\emph{fully recognizable} (a property stronger than being
recognizable) if $X(\bsigma)$ is aperiodic and for each $n\in\nn$ the
homomorphism $\sigma_n\from A_{n+1}^+\to A_n^+$ satisfies one of the
following conditions: $\card(A_{n+1})=2$, the rank of the incidence
matrix of $\sigma_n$ is $\card(A_{n+1})$, or $\sigma_n$ is
rotationally conjugate to a left or right permutative homomorphism.
Bustos-Gajardo et al.~showed, again assuming aperiodicity of
$X(\bsigma)$, that $\bsigma$ is recognizable if each term $\sigma_n$
appears infinitely often in $\bsigma$ and is a constant-length
encoding, cf.~\cite[Lemma 3.4 and Theorem
3.6]{Bustos-Gajardo&Manibo&Yassawi:2023}.

Theorem~\ref{t:introduction-0} gives further links between symbolic
dynamics and free profinite semigroups. The latter are applied in the
proofs of all four statements included in the theorem, with the notion
of saturating sequence playing a key role in all of them. On the other hand, Theorem~\ref{t:introduction-0} is used to
obtain upper bounds for the rank of the profinite group $G(X)$, when
$X$ has finite alphabet rank (cf.~Corollary~\ref{c:upper-bound-saturating}).
We then deduce that the rank of $G(X)$ is a lower bound for the \emph{topological rank} of~$X$ (Theorem~\ref{t:finite-top-rank-implies-finitely-generated-Schutz}).
The determination of the topological rank of a minimal shift space is a difficult problem, frequently
approached with the help of the \emph{dimension group} of the space, as the rank
of that group is a lower bound of the topological rank~\cite{Durand&Perrin:2022}. In this context, it is worthy to point out
that, for example, if $X$ is the Prouhet-Thue Morse shift space, then the rank of $G(X)$ equals the topological rank of $X$, which is three,
while the dimension group of $X$ has rank two (Example~\ref{eg:ptm-comparing-ranks}). In fact, we are not aware of any example of a minimal shift space $X$ where the rank of $G(X)$ differs from the topological rank of~$X$. This state of affairs provides further motivation for future research
on Sch\"utzenberger groups of minimal shift spaces. In the two sequel papers~\cite{Almeida&ACosta&Goulet-Ouellet:2024c,Almeida&ACosta&Goulet-Ouellet:2024d} we obtain more information about these groups with the help of results from the present paper.

When delving into the proof of Theorem~\ref{t:introduction-0}, the
reader will notice our option to refine the concept of saturation by
considering free profinite semigroups relatively to pseudovarieties of
finite semigroups. A pseudovariety of finite semigroups is a class of
finite semigroups that is closed under taking finite products,
subsemigroups, and quotients. This type of class provides one of the
main frameworks for the study of finite semigroups and formal
languages, particularly via Eilenberg's Correspondence
Theorem~\cite{Eilenberg:1976}. This is enough as motivation to also
consider the image $G_{\pv V}(X)$ of $G(X)$ in the free pro-$\pv V$
semigroup over the alphabet of~$X$, when $\pv V$ is a pseudovariety of
finite semigroups; we say that $G_{\pv V}(X)$ is the \emph{\emph{\pv
    V}-Sch\"utzenberger group} of $X$. Note that
$\pv{V}$-Schützenberger groups are an extension of the original notion
of Schützenberger groups, in view of the equality $G(X)=G_{\pv S}(X)$,
where~$\pv S$ is the pseudovariety of all finite semigroups. Going
back to saturation, the corresponding refinement for the notion of
saturating directive sequence is that of \emph{$\pv V$-saturating}
directive sequence; the saturating sequences mentioned in
Theorem~\ref{t:introduction-0} are precisely the $\pv S$-saturating
sequences. Considering $\pv V$-saturating sequences, for $\pv V$ other
than $\pv S$, allows for more clarity in the proof of
Theorem~\ref{t:introduction-0} and enlarges its scope. It also
prepares the path to results about \emph{\pv V}-Sch\"utzenberger
groups in the subsequent
papers~\cite{Almeida&ACosta&Goulet-Ouellet:2024c,Almeida&ACosta&Goulet-Ouellet:2024d}.

We proceed by detailing how this paper is organized,
highlighting some of its content.
Preliminaries about symbolic dynamics and profinite semigroups are respectively given in the two sections following this introduction.
Immediately afterwards, we have a section dedicated to profinite categories. There, we improve Hunter's theorem
stating that the monoid of continuous endomorphisms of a finitely generated profinite semigroup is itself
a profinite monoid  for the pointwise topology \cite{Hunter:1983}: we extend it to any category of continuous homomorphisms between finitely many finitely generated profinite semigroups (cf.~Proposition~\ref{p:hom-is-a-profinite-category}). This improvement is necessary for Sections~\ref{sec:case-bound-direct} and~\ref{sec:models}, and for the proofs
of Theorem~\ref{t:recurrent-encoding-implies-saturating} and its closely related Theorem~\ref{t:recurrent-encoding-made-implies-recognizable}.
We also introduce free profinite categories and some of its properties, also needed for the same latter parts of the paper.

In Section~\ref{s:profinite-symbolic} we recapitulate existing results connecting minimal shift spaces with profinite semigroups, improving some of them
and establishing new ones. Part of the novelty comes from a more systematic consideration
in this study of all pseudovarieties of semigroups containing all finite local semilattices, and of the corresponding relatively free profinite semigroups.

In Section~\ref{sec:profinite-images} we introduce the profinite image
of an S-adic directive sequence~$\bsigma$, moreover establishing and
studying a natural inverse limit of the profinite images of the tails
$\bsigma^{(n)}$. In Section~\ref{sec:proper-case} we relate the
algebraic structure of the profinite image of $\bsigma$ with
combinatorial and dynamical aspects of $\bsigma$. In
Section~\ref{sec:case-bound-direct} we see that if the primitive
directive sequence $\bsigma$ is bounded, then the profinite image of
$\bsigma$ is the image of primitive continuous homomorphisms between
free profinite semigroups, obtained as cluster points of the sequence
of homomorphisms $\sigma_0\circ\sigma_1\circ\cdots \circ\sigma_n$.
Intuitively, this approximates even more the bounded case to the case
of substitutive shift spaces. Section~\ref{sec:models} further
develops the material of the preceding section by, among other things,
associating to each bounded primitive directive sequence a certain set
of continuous idempotent endomorphisms (of finitely generated
profinite semigroups), which we call \emph{kernel endomorphisms}. The
kernel endomorphisms play a key role in the ensuing
papers~\cite{Almeida&ACosta&Goulet-Ouellet:2024c,Almeida&ACosta&Goulet-Ouellet:2024d}.

Section~\ref{sec:satur-direct-sequ} contains the main results of the
paper. There, we introduce and study saturating directive sequences,
leading to the deduction of necessary or sufficient conditions,
summarized in Theorem~\ref{t:introduction-0}, for an S-adic directive
sequence to be recognizable.

Finally, in Section~\ref{sec:rank}, we restrict attention to primitive
directive sequences with finite alphabet rank. Applying several
results from earlier sections, we show the corresponding
Sch\"utzenberg groups have finite rank for which we determine sharp
upper bounds, depending on the pseudovariety of semigroups over which
they are considered. Moreover, we show that for every  minimal shift space of finite topological rank,
the rank of its Sch\"utzenberger group  is a lower bound of the topological rank.

\section{Symbolic dynamics}
\label{s:dynamics}

This section aims to provide some background on symbolic dynamics; the reader is referred to \cite{Fogg:2002} for a more in-depth
introduction, particularly in what concerns substitutions. 

\subsection{Basic notions}
Let $A$ be an alphabet, that is, a nonempty set whose elements we call
\emph{letters}. We denote by $A^*$ the set of all words over $A$,
including the empty word $\emptyw$, and we let $A^+ =
A^*\setminus\{\emptyw\}$; under the operation of word concatenation,
$A^*$ is the free monoid on $A$, and $A^+$ is the free semigroup. The
length of a word $w\in A^*$ is denoted $|w|$, while the number of
occurrences of a letter $a\in A$ in $w$ is denoted $|w|_a$. Counting
from the left starting at $0$, the letter in position
$i\in\{0,\ldots,|w|-1\}$ is denoted $w[i]$. For $0\leq i\leq j\leq
|w|$, we let $w[i,j) = w[i]\cdots w[j-1]$. Note that $w[i,i) =
\emptyw$. We denote by $\fac(w)$ the set of \emph{nonempty} factors of
$w$, that is
\begin{equation*}
    \fac(w) = \{ w[i,j) : 0\le i< j\leq |w|\}.
\end{equation*}
Factors of the form $w[0,j)$ are further called \emph{prefixes}, while those of the form $w[i,|w|)$ are called \emph{suffixes}.
We make the choice of excluding the empty word from $\fac(w)$ because it will often be more convenient to work with free semigroups rather than free monoids.

Let $A^\zz$ be the set of two-sided infinite words over $A$. Given $x\in A^\zz$ and
$i\in\zz$, we let $x[i]$ be the letter of $x$ on position $i$.
If $i,j\in\zz$ are such that $i\leq j$,
we may consider the word $x[i,j)=x[i]\cdots x[j-1]$. Observe that $x{[}i,i{)}=\varepsilon$. The set
\begin{equation*}
\fac(x) = \{ x[i,j) : i< j \}  
\end{equation*}
is the set of \emph{nonempty} factors of $x$. \textit{Mutatis mutandis}, we make similar definitions for \emph{right infinite words} and \emph{left infinite words}, that is elements of $A^\nn$ and $A^{\zz_{-}}$, where $\nn$  and $\zz_{-}$  respectively stand for the set of nonnegative and the set of negative integers.
For $x\in A^{\zz_{-}}$ and $y\in A^\nn$, we denote by $x\cdot y$
the element $z$ of $A^\zz$ such that $z[i]=x[i]$ if $i<0$ and $z[i]=y[i]$ if $i\geq 0$.

At this point, we assume that $A$ is finite and we endow it with the
discrete topology, and $A^\zz$ with the corresponding product
topology. The \emph{shift map} is the homeomorphism $\shift\from A^\zz
\to A^\zz$ defined by $\shift(x) = (x[i+1])_{i\in\zz}$. A \emph{shift
  space} over the alphabet $A$ is a nonempty closed subset $X$ of
$A^\zz$ that satisfies $\shift(X)=X$. Note that the pair $(X,\shift)$ is a
topological dynamical system, and so one may apply to shift spaces terminology
from the theory of dynamical systems, such as that of
\emph{topological conjugacy}, which is natural the notion of isomorphism for dynamical
systems.

We focus primarily on shift spaces that are \emph{minimal} (for the
inclusion order). An infinite word $x\in A^\zz$ is \emph{periodic} if it has a finite $\shift$-orbit, and aperiodic otherwise; a shift space is called \emph{periodic} if it is the orbit of a periodic infinite word, and \emph{aperiodic} if it contains no periodic shift space.

The \emph{language} of a subset $X\subseteq A^\zz$ is the subset of $A^+$ defined by
\begin{equation*}
    L(X) = \bigcup_{x\in X}\fac(x).
\end{equation*}
It is well known that for two shift spaces $X, Y\subseteq A^\zz$, we have $L(X)\subseteq L(Y)$ if and only if $X\subseteq
Y$. The language of a shift space $X$ is both \emph{factorial} ($\fac(w)\subseteq L(X)$ for every $w\in L(X)$) and \emph{extendable} (if $w\in L(X)$, then $awb\in
L(X)$ for some $a,b\in A$); conversely, every
nonempty, factorial and extendable language is the language of a unique shift
space. Minimal shift spaces have the following  simple
characterization in terms of their languages. A language $L\subseteq A^+$ is called \emph{uniformly recurrent} if it
is factorial, extendable, and for every $u\in L$, there exists
$n\in\nn$ such that $u\in\fac(v)$ for every $v\in L$ with $|v|\geq n$. Then, a shift space $X$ is minimal if and
only if the language $L(X)$ is uniformly recurrent.

Consider a semigroup homomorphism  $\sigma\from A^+\to B^+$.
For each $x\in A^\zz$, the element $\sigma(x)$  of $B^\zz$ is defined by the equality
 \begin{equation*}
 \sigma(x)=\cdots\sigma(x[-2])\sigma(x[-1])\cdot\sigma(x[0])\sigma(x[1])\sigma(x[2])\cdots .
 \end{equation*}

\subsection{S-adic representations}
\label{ss:s-adic}

A common way of defining shift spaces is to use so-called S-adic representations, which we proceed to introduce.

Let $\bsigma=(\sigma_n)_{n\in\nn}$ be a sequence of homomorphisms of free semigroups $\sigma_n\from A_{n+1}^+\to A_n^+$, where $A_n$ is a finite alphabet for every $n\in\nn$.
We say that $\bsigma$ a \emph{directive sequence}. 
 The \emph{alphabet rank} of $\bsigma$
 is the limit
 $\liminf_{n\to\infty}\card(A_n)$
where $\card(S)$ denotes the cardinal of a set $S$.
For such a directive sequence $\bsigma$ and natural numbers $n\le m$, let $\sigma_{n,m}$ be the homomorphism $A_m^+\to A_n^+$ given by the composition
\begin{equation*}
  \sigma_{n,m}=\sigma_{n}\circ\cdots\circ\sigma_{m-1},
\end{equation*}
with the convention that $\sigma_{n,n}$ is the identity on $A_n^+$.
Consider the factorial language
\begin{equation*}
    L(\bsigma) = \bigcup_{n\geq 0}\bigcup_{a\in A_n} \fac(\sigma_{0,n}(a)).
  \end{equation*}
Let $X(\bsigma)$ be the set of elements $x$ of $A_0^\zz$ such that
$\fac(x)\subseteq L(\bsigma)$. The set $X(\bsigma)$ is a shift space when it is nonempty. We say that a shift space $X$ is \emph{represented} by the directive sequence $\bsigma$, or that $\bsigma$ is an \emph{S-adic representation of $X$}, when $X=X(\bsigma)$.

\begin{remark}
  \label{r:when-Xsigma-is-a-shift-space}
  One has $X(\bsigma)\neq \varnothing$ precisely when $L(\bsigma)$ is infinite,
which happens if and only if $\lim_{n\to\infty}\max_{a\in A_{n+1}}|\sigma_{0,n}(a)|=\infty$ (in some publications, this limit is part of the definition of directive sequence, e.g.~\cite{Berthe&CecchiBernales&Durand&Leroy:2021}).
The inclusion $L(X(\bsigma))\subseteq L(\bsigma)$ clearly holds, but it may be strict, even
if $X(\bsigma)\neq\emptyset$ (cf.~\cite[Example 1.4.5]{Durand&Perrin:2022}).

\end{remark}

\begin{remark}
  \label{r:terminology-directive-sequence}
  In the book of Durand and Perrin~\cite{Durand&Perrin:2022} the terminology \emph{directive sequence} is reserved for
   sequences $\bsigma$ such that $L(\bsigma)=L(X(\bsigma))$. On the other hand, our usage is adopted in many other relevant publications
    (cf.~\cite{Durand&Leroy&Richomme_2013,Berthe&Delacroix:2014,Berthe&CecchiBernales&Durand&Leroy:2021,Berthe&Steiner&Thuswaldner&Yassawi:2019}).
\end{remark}

Let $k\in\nn$. We denote by $\bsigma^{(k)}$
the \emph{tail sequence} given by $\bsigma^{(k)}=(\sigma_{n+k})_{n\in\nn}$.
A~proof of the following fact is found in~\cite[Lemma~4.2]{Berthe&Steiner&Thuswaldner&Yassawi:2019}.

\begin{lemma}
  \label{l:changing-levels-shift}
  For every $m,n\in\nn$ such that $m\geq n$,
  the shift space $X(\bsigma^{(n)})$ is the smallest one containing the set $\sigma_{n,m}(X(\bsigma^{(m)}))$.
\end{lemma}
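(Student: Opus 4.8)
The plan is to reduce the statement to an equality of languages and then prove two inclusions. Recall from the excerpt that for shift spaces one has $L(X)\subseteq L(Y)$ if and only if $X\subseteq Y$, and that every nonempty factorial extendable language is the language of a unique shift space. Assuming $X(\bsigma^{(m)})\neq\varnothing$ (the empty case being vacuous, and $m=n$ being trivial since $\sigma_{n,n}$ is the identity), the set $S=\sigma_{n,m}(X(\bsigma^{(m)}))$ consists of biinfinite words, so its factor language $M=\bigcup_{x\in X(\bsigma^{(m)})}\fac(\sigma_{n,m}(x))$ is factorial and extendable, and the smallest shift space containing $S$ is precisely the one with language $M$. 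Thus the lemma is equivalent to the identity $L(X(\bsigma^{(n)}))=M$, which I would establish by showing both inclusions.

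For the inclusion $M\subseteq L(X(\bsigma^{(n)}))$ it suffices to check $\sigma_{n,m}(X(\bsigma^{(m)}))\subseteq X(\bsigma^{(n)})$. Given $y\in X(\bsigma^{(m)})$, any factor of $\sigma_{n,m}(y)$ lies in $\fac(\sigma_{n,m}(w))$ for some $w\in\fac(y)\subseteq L(\bsigma^{(m)})$; writing $w\in\fac(\sigma_{m,m+q}(b))$ and using $\sigma_{n,m}\circ\sigma_{m,m+q}=\sigma_{n,m+q}$, such a factor lies in $\fac(\sigma_{n,m+q}(b))\subseteq L(\bsigma^{(n)})$. Hence $\fac(\sigma_{n,m}(y))\subseteq L(\bsigma^{(n)})$, i.e.\ $\sigma_{n,m}(y)\in X(\bsigma^{(n)})$, and the inclusion follows.

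The substance is in the reverse inclusion $L(X(\bsigma^{(n)}))\subseteq M$. Fix $z\in X(\bsigma^{(n)})$ and a factor $u=z[i,j)$. For each large $k$ the central factor $z[-k,k)$ belongs to $L(\bsigma^{(n)})$, hence to $\fac(\sigma_{n,n+p_k}(a_k))$ for some letter $a_k\in A_{n+p_k}$ and some $p_k\geq 0$; since $|\sigma_{n,n+p}(a)|$ is bounded over the finitely many pairs with $p<m-n$, growth of $|z[-k,k)|$ forces $p_k\geq m-n$ for $k$ large, so that $\sigma_{n,n+p_k}=\sigma_{n,m}\circ\sigma_{m,n+p_k}$ and $z[-k,k)\in\fac(\sigma_{n,m}(b_k))$ with $b_k=\sigma_{m,n+p_k}(a_k)\in L(\bsigma^{(m)})$. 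As $k$ grows, the occurrence of $u$ inside $\sigma_{n,m}(b_k)$ acquires arbitrarily long context on both sides (each letter of $b_k$ has image length at most a fixed constant), yielding factors $v_k\in L(\bsigma^{(m)})$ of $b_k$ containing a fixed block that maps onto an occurrence of $u$, with the surrounding context growing without bound.

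I would finish by a compactness argument: marking in each $v_k$ the block covering $u$ and using finiteness of $A_m$, extract a biinfinite limit $x\in A_m^{\zz}$ all of whose finite factors are factors of some $v_k$, hence lie in $L(\bsigma^{(m)})$; thus $x\in X(\bsigma^{(m)})$ by definition, while the marked block survives in $x$ so that $u\in\fac(\sigma_{n,m}(x))\subseteq M$. This gives $u\in M$ and completes the inclusion. The main obstacle is precisely this last step: one must carry the occurrence of $u$ through the limit, keeping track of positions so that the limit point genuinely lies in $X(\bsigma^{(m)})$ and its image under $\sigma_{n,m}$ still contains $u$. Note that it is essential here that $X(\bsigma^{(m)})$ is defined via $L(\bsigma^{(m)})$ rather than via $L(X(\bsigma^{(m)}))$, so that membership of $x$ only requires its factors to lie in $L(\bsigma^{(m)})$; this is what lets the argument avoid any recognizability or primitivity hypothesis.
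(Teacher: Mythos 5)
Your proof is correct. Note that the paper does not prove this lemma itself---it defers to Berth\'e--Steiner--Thuswaldner--Yassawi \cite[Lemma~4.2]{Berthe&Steiner&Thuswaldner&Yassawi:2019}---and your argument (reducing the statement to the language equality $L(X(\bsigma^{(n)}))=\bigcup_{x\in X(\bsigma^{(m)})}\fac(\sigma_{n,m}(x))$, forcing witnesses with $p_k\geq m-n$ by a length bound, and then extracting a biinfinite limit in $X(\bsigma^{(m)})$ anchored at the bounded block covering $u$) is essentially the standard proof found there, including the correct observation that membership in $X(\bsigma^{(m)})$ only requires factors to lie in $L(\bsigma^{(m)})$, so no primitivity is needed.
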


Let $\varphi$ be a \emph{substitution} over the alphabet $A$, by which we mean an
endomorphism of $A^+$. In the special case where $\sigma_n=\varphi$
for all~$n\in\nn$, we denote $L(\bsigma)$ and $X(\bsigma)$
respectively by $L(\varphi)$ and $X(\varphi)$. Assuming moreover that
$X(\varphi)\neq \varnothing$, we say that $X(\varphi)$ is a
\emph{substitutive} shift space. We mention that in some sources
the equality $L(\varphi)=L(X(\varphi))$
is included in the definition of substitution (e.g.~\cite{Durand&Perrin:2022}).

When studying minimal shift spaces, it is often useful to focus on S-adic representations subject to special conditions, some of which we introduce next. We start with conditions on homomorphisms. 
\begin{definition}
  \label{d:properties-of-homomorphisms}
    A homomorphism $\varphi\from A^+\to B^+$ is called: 
    \begin{enumerate}
   \item \emph{expansive} if $|\varphi(a)|\geq 2$ for every $a\in A$;
    \item\label{item:def-positive}
      \emph{positive} if $\varphi$ is expansive and $B\subseteq\fac(\varphi(a))$ for every $a\in A$;
        \item \emph{circular} if it is injective and $uv,
          vu\in\varphi(A^+)$ implies $u, v\in\varphi(A^+)$ for every
          $u, v\in B^+$; 
        \item \emph{left proper} if there is a letter $b\in B$
          such that $\varphi(a)\in bB^*$ for every $a\in A$;
          \item \emph{right proper} if there is a letter $b\in B$
          such that $\varphi(a)\in B^*b$ for every $a\in A$;
        \item \emph{proper} if it is both  right proper and left proper. 
    \end{enumerate}
  \end{definition}

  In turn, when we say that a directive sequence $\bsigma = (\sigma_n)_{n\in\nn}$ is \emph{circular}, \emph{right proper}, \emph{left proper}, \emph{proper}, or \emph{positive}, we mean that $\sigma_n$ has that property for every $n\in\nn$.
  We also say that $\bsigma$ is \emph{encoding} if $\sigma_n$ is injective for every $n\in\nn$.

A slightly more subtle notion is that of primitive directive sequence. 
\begin{definition}
  \label{d:primitive-sequences}
A directive sequence $\bsigma = (\sigma_n)_{n\in\nn}$ is \emph{primitive} if, for every $n\in\nn$, there exists $m>n$ such that $\sigma_{n,m}$ is positive. 
An endomorphism $\varphi\from A^+\to A^+$ is \emph{primitive} if there is $k\geq 1$ such that $\varphi^k$ is positive.
\end{definition}

Clearly every tail of primitive directive sequence is itself primitive.
In some papers, for instance \cite{Berthe&Delacroix:2014,Leroy:2014}, primitive directive sequences are called instead \emph{weakly primitive}.
The following theorem is well known within the community studying minimal shift spaces and their S-adic representations. A proof can be found
in  \cite[Section 6.4.2]{Durand&Perrin:2022}.

\begin{theorem}\label{t:characterization-minimal-shifts}
  Let $X$ be a shift space. The following conditions are equivalent:
  \begin{enumerate}
  \item $X$ is a minimal shift space;\label{item:characterization-minimal-shifts-1}
  \item $X = X(\bsigma)$ for some primitive directive sequence $\bsigma$;\label{item:characterization-minimal-shifts-2}
  \item $X = X(\bsigma)$ for some proper, primitive and circular directive sequence $\bsigma$.\label{item:characterization-minimal-shifts-3}
  \end{enumerate}
  Moreover, if $\bsigma$ is a primitive directive sequence, then the equality $L(X(\bsigma))=L(\bsigma)$ holds.
\end{theorem}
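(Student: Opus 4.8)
The plan is to prove Theorem~\ref{t:characterization-minimal-shifts}, whose content splits into the cycle of equivalences \eqref{item:characterization-minimal-shifts-1}$\Leftrightarrow$\eqref{item:characterization-minimal-shifts-2}$\Leftrightarrow$\eqref{item:characterization-minimal-shifts-3} together with the final assertion that $L(X(\bsigma))=L(\bsigma)$ for primitive $\bsigma$. I would structure the argument as a ring of implications \eqref{item:characterization-minimal-shifts-3}$\Rightarrow$\eqref{item:characterization-minimal-shifts-2}$\Rightarrow$\eqref{item:characterization-minimal-shifts-1}$\Rightarrow$\eqref{item:characterization-minimal-shifts-3}, and prove the moreover clause first, since it is the combinatorial engine that drives \eqref{item:characterization-minimal-shifts-2}$\Rightarrow$\eqref{item:characterization-minimal-shifts-1}.

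\textbf{The equality $L(X(\bsigma))=L(\bsigma)$.} The inclusion $L(X(\bsigma))\subseteq L(\bsigma)$ always holds (Remark~\ref{r:when-Xsigma-is-a-shift-space}), so the work is the reverse inclusion. First I would use primitivity to show $L(\bsigma)$ is \emph{uniformly recurrent}: given $u\in L(\bsigma)$, it occurs as a factor of some $\sigma_{0,n}(a)$; by primitivity choose $m>n$ with $\sigma_{n,m}$ positive, so that every letter of $A_n$ appears in $\sigma_{n,m}(b)$ for each $b\in A_m$, and in particular $a\in\fac(\sigma_{n,m}(b))$. Applying $\sigma_{0,n}$ gives $u\in\fac(\sigma_{0,m}(b))$ for every $b\in A_m$. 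Iterating, for any target level $k\geq m$ one finds $u$ occurring inside $\sigma_{0,k}(c)$ for every letter $c$, with a uniform gap bound coming from $\max_{c}|\sigma_{0,k}(c)|$; this yields the uniform-recurrence constant $n$ in the definition. Extendability and factoriality of $L(\bsigma)$ are immediate from its definition and the fact that lengths grow without bound (Remark~\ref{r:when-Xsigma-is-a-shift-space}). Since a uniformly recurrent language is the language of a (minimal) shift space, and $X(\bsigma)$ is defined precisely as the shift space whose factors lie in $L(\bsigma)$, a compactness/König's-lemma argument produces, for each $u\in L(\bsigma)$, a biinfinite word $x\in A_0^\zz$ with $u\in\fac(x)$ and $\fac(x)\subseteq L(\bsigma)$, i.e.\ $x\in X(\bsigma)$; hence $u\in L(X(\bsigma))$.

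\textbf{The ring of implications.} For \eqref{item:characterization-minimal-shifts-3}$\Rightarrow$\eqref{item:characterization-minimal-shifts-2} there is nothing to prove, as \eqref{item:characterization-minimal-shifts-3} is a special case of \eqref{item:characterization-minimal-shifts-2}. For \eqref{item:characterization-minimal-shifts-2}$\Rightarrow$\eqref{item:characterization-minimal-shifts-1}, I invoke the moreover clause: $L(X(\bsigma))=L(\bsigma)$ is uniformly recurrent, and by the characterization stated just before the theorem (a shift space is minimal iff its language is uniformly recurrent), $X(\bsigma)$ is minimal. The substantive implication is \eqref{item:characterization-minimal-shifts-1}$\Rightarrow$\eqref{item:characterization-minimal-shifts-3}: starting from an arbitrary minimal $X$, I must build a proper, primitive, \emph{and} circular S-adic representation. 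The natural route is to use return words / Rauzy-graph desubstitutions: fix a letter (or a word) $w\in L(X)$, consider the set of return words to $w$, which is finite by uniform recurrence, and code $X$ by the substitution sending each return-word symbol to the corresponding return word; properness is arranged by centering the coding on $w$ (making every image begin and end with the prescribed prefix/suffix determined by $w$), and injectivity plus circularity follow from the standard unique-decoding properties of return words. Iterating this construction down a sequence of levels, with the alphabets being the successive return-word sets, produces the directive sequence $\bsigma$; primitivity reflects uniform recurrence at each stage.

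\textbf{Main obstacle.} The genuinely hard part is \eqref{item:characterization-minimal-shifts-1}$\Rightarrow$\eqref{item:characterization-minimal-shifts-3}, specifically securing \emph{circularity} of every $\sigma_n$ simultaneously with properness. Circularity (in the sense of Definition~\ref{d:properties-of-homomorphisms}\eqref{item:def-positive}, i.e.\ $uv,vu\in\varphi(A^+)\Rightarrow u,v\in\varphi(A^+)$) is strictly stronger than mere injectivity and requires a careful choice of the coding windows so that the return-word code has no synchronization defects across concatenations; this is the classical but delicate content underlying the return-word desubstitution, and it is where I would expect to lean hardest on the combinatorics of recurrence. Since the theorem is stated as well known with a reference to \cite[Section 6.4.2]{Durand&Perrin:2022}, in the write-up I would present the forward implications and the moreover clause in full and cite that reference for the construction realizing \eqref{item:characterization-minimal-shifts-3}.
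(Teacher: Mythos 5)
Your overall route coincides with the paper's: the paper does not give a self-contained proof either, but cites Section~6.4.2 of Durand--Perrin for the construction realizing item~\ref{item:characterization-minimal-shifts-3}, noting in Remark~\ref{r:guide-for-the-proof} that the homomorphisms produced there are return-word encodings, which are circular by \cite[Lemma 17]{Durand&Host&Skau:1999}; your treatment of the moreover clause and of \ref{item:characterization-minimal-shifts-2}$\Rightarrow$\ref{item:characterization-minimal-shifts-1} via uniform recurrence is the standard argument behind that citation. Two points need repair, however. The lesser one: your uniform-recurrence constant is not uniform as stated, since $\max_c|\sigma_{0,k}(c)|$ grows with the target level $k$. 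The correct bound is fixed once $m$ is chosen with $\sigma_{n,m}$ positive: any $v\in L(\bsigma)$ arising at a level $k>m$ is a factor of a concatenation of blocks $\sigma_{0,m}(b)$ with $b\in A_m$, so $|v|\geq 2\max_{b\in A_m}|\sigma_{0,m}(b)|$ forces $v$ to contain a complete block and hence $u$; and words of $L(\bsigma)$ arising only at levels $k\leq m$ (which need not reappear at higher levels) have length bounded by a constant depending on $m$ alone, so they are excluded by enlarging $N$. Your sketch glosses over this second family of words entirely.

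The more serious omission is the bottleneck/periodic case, which is precisely where the paper does its only real work, in Remarks~\ref{r:primitive-implies-directive} and~\ref{r:guide-for-the-proof}. Durand--Perrin's Proposition~6.4.5 is stated under the hypothesis \emph{without bottleneck} ($\card(A_n)\geq 2$ for all $n$), and your return-word argument for \ref{item:characterization-minimal-shifts-1}$\Rightarrow$\ref{item:characterization-minimal-shifts-3} tacitly assumes aperiodicity: for a periodic minimal $X$ the return-word sets eventually collapse to singletons, the derived sequences live on one-letter alphabets, and the construction as you describe it degenerates (the return substitutions become non-expansive identities). The paper's reason for restating the theorem rather than merely citing it is exactly to cover this case, by defining positive homomorphisms to be expansive even when the target alphabet is a single letter, which is what makes the moreover clause and \ref{item:characterization-minimal-shifts-2}$\Rightarrow$\ref{item:characterization-minimal-shifts-1} go through for periodic $X$. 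Note that this case is genuinely delicate for circularity as well: an expansive map on a one-letter alphabet such as $x\mapsto x^2$ is injective but \emph{not} circular under Definition~\ref{d:properties-of-homomorphisms} (take $u=v=x$), so the one-letter levels cannot simply be waved through, and a faithful write-up must address them explicitly rather than deferring wholesale to the cited reference. (Incidentally, you cite Definition~\ref{d:properties-of-homomorphisms}\ref{item:def-positive} for circularity, but that item defines positivity; circularity is the following item.)
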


\begin{remark}\label{r:primitive-implies-directive}
  In \cite[Proposition~6.4.5]{Durand&Perrin:2022} it is stated that
  if $\bsigma$ is a primitive directive sequence, then
  the equality $L(X(\bsigma))=L(\bsigma)$ holds under the extra assumption that the sequence is \emph{without bottleneck}, that is, $\card(A_n)\geq 2$ for all $n\in\nn$. We avoid this extra assumption in Theorem~\ref{t:characterization-minimal-shifts}
  because we force positive homomorphisms to be expansive also when the alphabet in the image has only one letter.
\end{remark}

\begin{remark}\label{r:guide-for-the-proof}
  Theorem~\ref{t:characterization-minimal-shifts} is essentially Proposition~6.4.5 from the book of Durand and Perrin~\cite{Durand&Perrin:2022},
  with two notable differences. First, our statement includes periodic shift spaces
  because we allow bottleneck (cf.~Remark~\ref{r:primitive-implies-directive}).
  Second, in the statement provided in the book there is no explicit reference to circular homomorphisms;
  but the proof found there gives what we write here,
  since the pertinent homomorphisms are encodings by \emph{return words}, well known to be circular encodings~(cf.~\cite[Lemma 17]{Durand&Host&Skau:1999}). In our companion paper~\cite{Almeida&ACosta&Goulet-Ouellet:2024c} one finds a more detailed discussion about the
  representation by a proper, primitive and circular directive sequence that follows from that proof.
\end{remark}

A useful operation on directive sequences is that of contraction, which consists in grouping consecutive homomorphisms in the sequence. More precisely, a \emph{contraction} (also called a \emph{telescoping} in many sources) of a
sequence of homomorphisms $\bsigma=(\sigma_n)_{n\in\nn}$ is a sequence of the
form $\btau=(\sigma_{n_k,n_{k+1}})_{k\in\nn}$, for some strictly
increasing sequence $(n_k)_{k\in\nn}$ of natural numbers such that
$n_0=0$. Note that, if $\bsigma$ is primitive, then $\bsigma$ has a
contraction which is positive; moreover, every contraction of $\bsigma$ is
primitive.
As seen next, under a very mild condition\footnote{This mild condition appears to be implicit in several sources where it is stated that taking a contraction does not change the shift being represented by the directive sequence (e.g.~\cite[Section~5.2]{Berthe&Steiner&Thuswaldner&Yassawi:2019} and~\cite[Section 6.4.1]{Durand&Perrin:2022})}, satisfied by primitive directive sequences,
the shift space $X(\bsigma)$ remains unchanged when passing to a contraction. The reader should bear this fact in mind.

\begin{lemma}
  \label{l:contraction-does-not-change-the-language}
  Let $\bsigma=(\sigma_n)_{n\in\nn}$ be a directive
  sequence with a contraction $\btau=(\sigma_{n_k,n_{k+1}})_{k\in\nn}$.
  Suppose that $A_n\subseteq\fac(\sigma_n(A_{n+1}))$ for every $n\in\nn$.
  Then, the equalities $L(\bsigma^{(n_k)})=L(\btau^{(k)})$ and $X(\bsigma^{(n_k)})=X(\btau^{(k)})$ hold for every $k\in\nn$.
\end{lemma}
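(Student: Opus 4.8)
The plan is to reduce the statement about shift spaces to the statement about languages, and to prove the language equality $L(\bsigma^{(n_k)})=L(\btau^{(k)})$ by unfolding both sides according to the definition of $L(\cdot)$ and comparing the two unions term by term. The starting observation is that the homomorphisms of $\btau^{(k)}$ telescope back into $\bsigma$: for every $j\in\nn$ one has $\sigma_{n_k,n_{k+1}}\circ\cdots\circ\sigma_{n_{k+j-1},n_{k+j}}=\sigma_{n_k,n_{k+j}}$, so that the $j$-th generating set appearing in $L(\btau^{(k)})$ is $\bigcup_{a\in A_{n_{k+j}}}\fac\bigl(\sigma_{n_k,n_{k+j}}(a)\bigr)$, while the $m$-th generating set appearing in $L(\bsigma^{(n_k)})$ is $\bigcup_{a\in A_{n_k+m}}\fac\bigl(\sigma_{n_k,n_k+m}(a)\bigr)$.

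First I would dispatch the inclusion $L(\btau^{(k)})\subseteq L(\bsigma^{(n_k)})$, which is immediate: since $(n_{k+j})_{j\in\nn}$ is a subsequence of $(n_k+m)_{m\in\nn}$, every generating set of $L(\btau^{(k)})$ already occurs among the generating sets of $L(\bsigma^{(n_k)})$, so this direction needs no hypothesis.

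The hard part is the reverse inclusion, and this is where the hypothesis $A_n\subseteq\fac(\sigma_n(A_{n+1}))$ enters. I would first establish, by induction on $m\geq n$, the auxiliary claim that $A_n\subseteq\fac(\sigma_{n,m}(A_m))$ for all $n\leq m$: the base case is trivial since $\sigma_{n,n}$ is the identity, and in the inductive step a letter $a\in A_n$ lying in $\fac(\sigma_{n,m}(c))$ with $c\in A_m$ is pulled back, using the hypothesis at level $m$, to a letter $d\in A_{m+1}$ with $c\in\fac(\sigma_m(d))$, whence $a\in\fac(\sigma_{n,m+1}(d))$ because homomorphisms send factors to factors and $\sigma_{n,m+1}=\sigma_{n,m}\circ\sigma_m$. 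Granting this claim, I take $w\in\fac(\sigma_{n_k,n_k+m}(a))$ with $a\in A_{n_k+m}$ and choose $j$ so large that $n_{k+j}\geq n_k+m$. Applying the claim to the levels $n_k+m\leq n_{k+j}$ yields a letter $b\in A_{n_{k+j}}$ with $a\in\fac(\sigma_{n_k+m,n_{k+j}}(b))$; applying $\sigma_{n_k,n_k+m}$ and again using that homomorphisms preserve factors gives $\sigma_{n_k,n_k+m}(a)\in\fac(\sigma_{n_k,n_{k+j}}(b))$, so $w\in L(\btau^{(k)})$ as desired.

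Finally I would deduce the shift-space equality. The directive sequences $\bsigma^{(n_k)}$ and $\btau^{(k)}$ share the same ground alphabet $A_{n_k}$, so both $X(\bsigma^{(n_k)})$ and $X(\btau^{(k)})$ consist of the elements $x\in A_{n_k}^\zz$ whose factors lie in $L(\bsigma^{(n_k)})$, respectively in $L(\btau^{(k)})$; since the two languages coincide, so do the two shift spaces. I expect the only genuinely delicate point to be the bookkeeping with the composition indices $\sigma_{n,m}$ and the telescoping identity, while the factor-preservation arguments are routine once the auxiliary claim is stated carefully.
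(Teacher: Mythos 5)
Your proof is correct. The paper gives no proof of this lemma, explicitly leaving it as ``an easy exercise''; your argument --- the telescoping identity $\sigma_{n_k,n_{k+1}}\circ\cdots\circ\sigma_{n_{k+j-1},n_{k+j}}=\sigma_{n_k,n_{k+j}}$, the hypothesis-free inclusion $L(\btau^{(k)})\subseteq L(\bsigma^{(n_k)})$, the inductive auxiliary claim $A_n\subseteq\fac(\sigma_{n,m}(A_m))$ driving the converse inclusion, and the reduction of $X(\bsigma^{(n_k)})=X(\btau^{(k)})$ to the language equality over the common ground alphabet $A_{n_k}$ --- is exactly the intended routine argument, carried out with the index bookkeeping done correctly.
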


Lemma~\ref{l:contraction-does-not-change-the-language}, whose proof is an easy exercise, does not hold if
we drop some inclusion $A_n\subseteq\fac(\sigma_n(A_{n+1}))$ (cf.~Exercise 1.27, and its solution, in the book~\cite{Durand&Perrin:2022}).

\subsection{Recognizability}

We proceed to give the necessary background on the important notion of \emph{recognizable} directive sequence, following the monograph~\cite{Durand&Perrin:2022} and the paper~\cite{Berthe&Steiner&Thuswaldner&Yassawi:2019}.  

Let $\sigma\from A^+\to B^+$ be a homomorphism, where $A$ and $B$ are finite alphabets. A \emph{$\sigma$-representation} of a point $y\in B^\zz$ is a pair $(k,x)$, where $k\in\nn$ and $x\in A^\zz$, satisfying $\shift^k\sigma(x)=y$. We say that it is \emph{centered} if, additionally, $k<|\sigma(x[0])|$. 
\begin{definition}[Dynamical recognizability]
  \label{d:dynamical-recognizability}
    Given $X\subseteq A^\zz$, we say that $\sigma$ is \emph{(dynamically) recognizable in $X$} if every $y\in B^\zz$ has at most one centered $\sigma$-representation $(k,x)$ with $x\in X$.
\end{definition}

In case $X=A^\zz$, we say instead that $\sigma$ is \emph{fully
recognizable}. Full recognizability has the following characterization (cf.~\cite[Proposition~1.4.32]{Durand&Perrin:2022}).

\begin{proposition}
  \label{p:circular-are-fully-recognizable}
  A homomorphism is fully recognizable if and only if it is circular.
\end{proposition}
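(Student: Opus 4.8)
The plan is to translate the dynamical notion of (full) recognizability into a purely combinatorial statement about factorizations of bi-infinite words, and then to recognize that statement as the classical characterization of circular codes. So first I would set up the following dictionary. Given $y\in B^\zz$, a centered $\sigma$-representation $(k,x)$ of $y$ determines a bi-infinite sequence of \emph{cutting points} in $\zz$, namely the block boundaries of $\shift^k\sigma(x)$; the centering condition $0\le k<|\sigma(x[0])|$ says precisely that position $0$ lies inside the block $\sigma(x[0])$. Conversely, a factorization of $y$ as a bi-infinite concatenation of blocks $\sigma(a)$ ($a\in A$) determines its set of cutting points, and since $\sigma$ is injective as a morphism the letters labelling the blocks are uniquely recovered; reading off the offset of position $0$ inside its block then yields a unique pair $(k,x)$. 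Thus, for each $y$, centered $\sigma$-representations of $y$ correspond bijectively to factorizations of $y$ into $\sigma$-blocks, and $\sigma$ is fully recognizable if and only if every $y\in B^\zz$ admits at most one such factorization. It remains to prove that this last property is equivalent to circularity.

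For the implication \enquote{not circular $\Rightarrow$ not fully recognizable} I would exhibit explicit ambiguous points. If $\sigma$ is not injective, some word $w\in\sigma(A^+)$ has two distinct factorizations into blocks, and the two-sided periodic point with period $w$ inherits two distinct factorizations (differing either in their cutting points or in a block label), hence two distinct centered $\sigma$-representations. If instead $\sigma$ is injective but the cyclic condition fails, there are $u,v\in B^+$ with $uv,vu\in\sigma(A^+)$ but, say, $u\notin\sigma(A^+)$; writing $uv=\sigma(s)$ and $vu=\sigma(t)$, take the periodic point $y=\cdots(uv)(uv)(uv)\cdots$. Aligning blocks at the multiples of $|uv|$ gives a factorization with letters $s$, while shifting by $|u|$ gives one with letters $t$. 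These factorizations have different cutting-point sets (they differ by the offset $|u|$, and $u\notin\sigma(A^+)$ guarantees the shifted cut is genuinely not a cut of the first factorization), so $y$ has two distinct centered $\sigma$-representations.

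For the converse, \enquote{circular $\Rightarrow$ at most one factorization}, suppose $y$ has two factorizations with cutting-point sets $P\ne Q$; both have gaps bounded by $M=\max_{a\in A}|\sigma(a)|$. The key elementary remark is that injectivity of $\sigma$ (i.e.\ the code property: every word of $\sigma(A^+)$ factors uniquely into blocks) forces the two factorizations to agree between any two consecutive common cutting points: if $m<m'$ lie in $P\cap Q$, then $y[m,m')\in\sigma(A^+)$ has a unique factorization, so $P$ and $Q$ coincide on $[m,m')$. Hence it suffices to show that $P\cap Q$ is unbounded both above and below, for then $\zz$ is covered by the intervals between consecutive common cuts and $P=Q$ follows. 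Assuming otherwise, there is an infinite one-sided region in which no cut of $P$ coincides with a cut of $Q$. In that region I would run a pigeonhole argument: the offset of each $P$-cut to the nearest $Q$-cut on its left lies in $\{1,\dots,M-1\}$, and together with the bounded local block data it takes only finitely many values, so some configuration recurs. The recurrence then produces words $u,v\in B^+$ with $uv,vu\in\sigma(A^+)$ while $u\notin\sigma(A^+)$ (the overhang of a $Q$-block past a $P$-cut), contradicting circularity.

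The main obstacle is precisely this last step: converting a persistent misalignment of the two cutting-point sequences into a genuine violation of the cyclic condition, that is, producing a pair $(u,v)$ with both $uv$ and $vu$ in $\sigma(A^+)$ and $u\notin\sigma(A^+)$. This requires careful bookkeeping of the interleaving of the two factorizations and of the recurring local picture; it is, in effect, the exact reverse of the explicit construction used in the easy direction, and securing the membership and non-membership conditions simultaneously is where the argument must be handled with care.
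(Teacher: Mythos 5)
The paper itself does not prove this proposition (it cites \cite[Proposition~1.4.32]{Durand&Perrin:2022}), so your attempt must be judged against the standard argument, whose architecture you reproduce correctly: the dictionary between centered $\sigma$-representations and factorizations of $y\in B^\zz$ into $\sigma$-blocks, the explicit ambiguous periodic point in the non-circular direction (including the observation that $u\notin\sigma(A^+)$ is precisely what makes the cut at $|u|$ new, and the separate handling of label ambiguity when $\sigma$ is not injective), and, in the circular direction, the reduction to showing that the set $P\cap Q$ of common cuts is unbounded in both directions, via the remark that the code property forces agreement between consecutive common cuts. All of this is sound, and your pigeonhole extraction is the right move: choosing two $Q$-cuts $q_i<q_j$ (far enough apart that $q_i<p_j$) in a region without common cuts, with the same offset $d=q_i-p_i=q_j-p_j$ to the nearest $P$-cut on the left and the same overhang word, you get $u=y[p_i,q_i)=y[p_j,q_j)$ and $v=y[q_i,p_j)$, both nonempty, with $uv=y[p_i,p_j)\in\sigma(A^+)$ (a run of $P$-blocks) and $vu=y[q_i,q_j)\in\sigma(A^+)$ (a run of $Q$-blocks).

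The one genuine gap is the punchline you yourself flag as unresolved: the claim that $u\notin\sigma(A^+)$. Nothing in the construction rules out $u\in\sigma(A^+)$ as an abstract word, so as stated this is unjustified. But it is salvageable by a two-line application of your own ``key elementary remark.'' Suppose both $u,v\in\sigma(A^+)$. Concatenating a block factorization of $u$ with one of $v$ gives a factorization of $uv$ into elements of the code $C=\sigma(A)$ having a cut at position $|u|=d$; by uniqueness of $C$-factorizations this must coincide with the $P$-block factorization of $y[p_i,p_j)$, so $q_i=p_i+d$ would belong to $P$ --- contradicting that $q_i$ lies in the region containing no common cuts. Hence at least one of $u,v$ is outside $\sigma(A^+)$, and since $uv,vu\in\sigma(A^+)$, circularity is violated (swap the roles of $u$ and $v$ if needed; the hypothesis is symmetric under this swap). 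Equivalently, arguing directly under the assumption that $\sigma$ is circular: from $uv,vu\in C^+$ circularity yields $u,v\in C^+$, and the code property then forces $q_i\in P\cap Q$, the desired contradiction. With this insertion your proof is complete and agrees in substance with the cited one.
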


We say that a directive sequence $\bsigma=(\sigma_n)_{n\in\nn}$ is
\emph{recognizable} if the homomorphism $\sigma_n$ is recognizable in
$X(\bsigma^{(n+1)})$ for every $n\in\nn$; and \emph{eventually
  recognizable} if this holds only for all but finitely many
$n\in\nn$. One should bear in mind the following remarkable result of
Berthé et~al.~\cite[cf.~Theorem
5.2]{Berthe&Steiner&Thuswaldner&Yassawi:2019}.
  
\begin{theorem}
  \label{t:aperiodic-implies-eventually-recognizable}
  Let $\bsigma$ be a primitive directive sequence with finite alphabet
  rank. If $X(\bsigma)$ is aperiodic, then $\bsigma$ is eventually
  recognizable.
\end{theorem}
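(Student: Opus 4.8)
The plan is to generalize Mossé's recognizability theorem, using the finite alphabet rank hypothesis to recover the finitary control that, for a single substitution, comes from self-similarity. I argue by contradiction: suppose $\bsigma$ is not eventually recognizable, so that for infinitely many $n$ the homomorphism $\sigma_n$ fails to be recognizable in $X(\bsigma^{(n+1)})$; the goal is to manufacture a periodic point of $X(\bsigma)$, contradicting aperiodicity.

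First I would reduce to a single alphabet. Since $\liminf_{n}\card(A_n)<\infty$, some value $r$ is attained by $\card(A_n)$ for infinitely many $n$, and among those indices infinitely many share one common alphabet $A$; telescoping $\bsigma$ along a positive subsequence passing through such indices yields a contraction $\btau=(\tau_k)_k$, all of whose alphabets equal $A$, with $X(\btau^{(k)})=X(\bsigma^{(n_k)})$ by Lemma~\ref{l:contraction-does-not-change-the-language} (the hypothesis $A_n\subseteq\fac(\sigma_n(A_{n+1}))$ holding because the contraction is positive). This reduction becomes useful only once paired with a transfer principle relating recognizability of $\bsigma$ to that of $\btau$; establishing that a failure of eventual recognizability for $\bsigma$ forces persisting $\tau_k$-ambiguities for $\btau$ — recognizability does not pass to contractions verbatim — is the first technical hurdle, which I would handle by assembling a centered $\sigma_n$-ambiguity, together with the ambiguities that must accompany it at nearby levels, into a single centered $\tau_k$-ambiguity.

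The core is then a Mossé-type analysis of the persisting ambiguities for $\btau$, over the now-fixed alphabet $A$. A failure of recognizability at level $k$ furnishes a point with two distinct centered $\tau_k$-representations; I read these as two admissible ``cuttings'' of the same block of some $y\in X(\btau^{(k)})$ into images $\tau_k(a)$, $a\in A$, that are out of phase, and I record the resulting local cut-mismatch pattern. Because the alphabet is fixed and, by aperiodicity together with uniform recurrence (Theorem~\ref{t:characterization-minimal-shifts}), the relevant patterns of bounded radius range over a finite set, infinitely many levels must exhibit the same recurring mismatch pattern with a common nonzero phase offset. Pulling such a recurring ambiguity all the way down through the maps $\sigma_{0,n_k}$, and passing to a limit by compactness of $A_0^{\zz}$, produces a two-sided point whose two cuttings differ by a fixed offset at arbitrarily large scales; this rigidity forces the point to be $\shift$-periodic with period dividing that offset. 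As periodicity propagates downward — a periodic point of a tail shift $X(\bsigma^{(n_k)})$ has periodic image under $\sigma_{0,n_k}$, and $X(\bsigma)$ is the smallest shift containing that image (Lemma~\ref{l:changing-levels-shift}) — we obtain a periodic subsystem of $X(\bsigma)$, the desired contradiction.

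The main obstacle is the step turning ``a recurring out-of-phase ambiguity'' into ``a genuine periodic point'': this is the quantitative heart of Mossé's theorem and is considerably more delicate here because the $\tau_k$ vary, so there is no single recognizability radius to iterate. The argument must extract a uniform bound on the size of the mismatch data from finite alphabet rank and the combinatorics of $L(\bsigma)$, so that the pigeonhole on patterns applies, and it must verify that the nonzero phase offset is \emph{preserved}, rather than dissipating, as the ambiguity is propagated downward and passed to the limit. Controlling this offset is exactly where aperiodicity — in the guise of a finite but unbounded repetition function on $L(\bsigma)$ — is brought to bear.
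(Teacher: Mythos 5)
There is a preliminary point of comparison: the paper does not prove Theorem~\ref{t:aperiodic-implies-eventually-recognizable} at all — it quotes it from \cite[Theorem~5.2]{Berthe&Steiner&Thuswaldner&Yassawi:2019} — so your proposal has to be measured against that published proof. Your reduction to a fixed alphabet is sound, but your ``first technical hurdle'' is a non-issue: by Lemma~\ref{l:composition-of-recognizable-homomorphisms}, $\tau_k=\sigma_{n_k,n_{k+1}}$ is recognizable in $X(\btau^{(k+1)})=X(\bsigma^{(n_{k+1})})$ if and only if every $\sigma_j$ with $n_k\leq j<n_{k+1}$ is recognizable in $X(\bsigma^{(j+1)})$, so eventual recognizability transfers between $\bsigma$ and any contraction in both directions, with no ``assembling of ambiguities'' required. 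The genuine gap lies in the core, in exactly the two places you flag as obstacles: as written these are not delicate refinements to be supplied later, they are the entire content of the theorem, and the mechanisms you propose for them would fail.

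Concretely: (i) your pigeonhole has no finite set to act on. A failure of recognizability at level $k$ produces two distinct centered $\tau_k$-representations, but absent a constant of recognizability — which is precisely what is in question — there is no bounded radius within which the two cuttings must disagree, and the phase offsets involved range over intervals $[0,|\sigma_{0,n_k}(z_k[0])|)$ whose lengths tend to infinity by primitivity; so ``the same recurring mismatch pattern with a common nonzero phase offset'' cannot be extracted, and any uniform bound on the mismatch data is essentially equivalent to the theorem itself, making the plan circular. (ii) Even granting a fixed offset $p$, two out-of-phase centered representations of the same bi-infinite word are not a translation symmetry of that word: an equality $\shift^{p}\sigma_{0,n}(x')=\sigma_{0,n}(x)$ with $(0,x)\neq(p,x')$ does not yield $y=\shift^{p}(y)$ for any cluster point $y$, so no periodic point is produced. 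In Mossé's single-substitution argument, the passage from persistent out-of-phase overlap to genuine periodicity exploits self-similarity — iterating the \emph{one} substitution to amplify an overlap into arbitrarily long words of bounded period — and that is exactly the resource that disappears when the $\tau_k$ vary, as you yourself observe. The proof of Berthé et al.\ sidesteps all of this: it is a counting argument in which, roughly, each non-recognizable level forces some point of the shift to acquire additional distinct centered representations, while aperiodicity together with finite alphabet rank caps the possible number, so only finitely many levels can fail (see also the quantitative refinement in \cite{Beal&Perrin&Restivo&Steiner:2023}, which bounds the number of unrecognizable levels explicitly in terms of the alphabet rank). No recognizability radius, local pattern pigeonhole, or limiting periodic point appears there, and your plan, which tries to rebuild Mossé's local rigidity in a setting without self-similarity, leaves its quantitative heart unproved.
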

  
There is also a pointwise version of recognizability. Fix $x\in A^\zz$ and a homomorphism $\sigma\from A^+\to B^+$; define the set of \emph{$\sigma$-cutting points of $x$} by:
\begin{equation*}
    C_\sigma(x) = \{ -|\sigma(x[i,0))| : i<0\}\cup\{0\}\cup \{ |\sigma(x[0,i))| : i> 0\}.
\end{equation*}
The following definition was introduced in a seminal paper by Mossé~\cite{Mosse:1992}.
\begin{definition}[Mossé's recognizability]\label{def:Mosse}
  \label{d:Mosse-recognizability}
  Let $x\in A^\zz$ and write $y=\sigma(x)$. We say that $\sigma$ is
  \emph{recognizable for $x$ in Mossé's sense} when, for some positive
  integer $\ell$ (called the \emph{constant of recognizability}), the
  following holds for every $m\in C_\sigma(x)$ and $n\in\zz$:
    \begin{equation*}
        y[m-\ell,m+\ell)=y[n-\ell,n+\ell) \implies n\in C_\sigma(x).
    \end{equation*}
\end{definition}

Under mild conditions, dynamical recognizability implies Mossé's
recognizability.
\begin{proposition}[{\cite[Theorem~2.5(1)]{Berthe&Steiner&Thuswaldner&Yassawi:2019}}]
  \label{p:mosse-sense}
    Let $\sigma\from A^+\to B^+$ be a homomorphism, $X\subseteq A^\zz$ be a shift space and $x\in X$ be such that $L(X) = \fac(x)$. If $\sigma$ is recognizable in $X$, then it is recognizable in Mossé's sense for $x$.
\end{proposition}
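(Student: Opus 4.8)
The plan is to argue by contraposition, via a compactness argument in $A^\zz$. Write $y=\sigma(x)$ and set $M=\max_{a\in A}|\sigma(a)|$. First I would dispose of the degenerate case $M=1$: then every integer is a cutting point, so $C_\sigma(x)=\zz$ and the defining implication in Definition~\ref{d:Mosse-recognizability} holds vacuously (take $\ell=1$); hence I may assume $M\geq 2$. Suppose now that $\sigma$ is \emph{not} recognizable in Mossé's sense for $x$. Negating the definition, for every $\ell\geq 1$ there exist $m_\ell\in C_\sigma(x)$ and $n_\ell\in\zz\setminus C_\sigma(x)$ with $y[m_\ell-\ell,m_\ell+\ell)=y[n_\ell-\ell,n_\ell+\ell)$. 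The goal is to manufacture, in the limit, two \emph{distinct} centered $\sigma$-representations of a single point of $B^\zz$, both with first component in $X$, thereby contradicting the recognizability of $\sigma$ in $X$.

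The key observation is that the cutting points of $x$ mark exactly the block boundaries of $y=\sigma(x)$. Since $m_\ell\in C_\sigma(x)$, there is an integer $j_\ell$ with $\shift^{m_\ell}y=\sigma(\shift^{j_\ell}x)$, so that $(0,\shift^{j_\ell}x)$ is a centered $\sigma$-representation of $\shift^{m_\ell}y$. On the other hand, since $n_\ell\notin C_\sigma(x)$, the position $n_\ell$ lies strictly inside some block $\sigma(x[p_\ell])$; thus $\shift^{n_\ell}y=\shift^{k_\ell}\sigma(\shift^{p_\ell}x)$ for some $p_\ell\in\zz$ and some $0<k_\ell<|\sigma(x[p_\ell])|\leq M$, giving a centered $\sigma$-representation $(k_\ell,\shift^{p_\ell}x)$ of $\shift^{n_\ell}y$.

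Next I would pass to the limit. Since $X$ is closed and shift-invariant and $x\in X$, all the points $\shift^{j_\ell}x$ and $\shift^{p_\ell}x$ lie in the compact space $X$, while the offsets $k_\ell$ take finitely many values in $\{1,\dots,M-1\}$. Extracting a common subsequence, I may assume that $k_\ell$ is a constant $k^\ast\in\{1,\dots,M-1\}$, that $\shift^{j_\ell}x\to x^{(1)}\in X$ and $\shift^{p_\ell}x\to x^{(2)}\in X$, and moreover (by stabilization of the central letter) that $x[p_\ell]=x^{(2)}[0]$. Using the continuity of the induced map $\sigma\from A^\zz\to B^\zz$ together with the fact that $\shift^{m_\ell}y$ and $\shift^{n_\ell}y$ agree on the window $[-\ell,\ell)$ with $\ell\to\infty$, both shifted points converge to the same limit $y^\ast\in B^\zz$, and I obtain
\begin{equation*}
  \sigma(x^{(1)})=y^\ast=\shift^{k^\ast}\sigma(x^{(2)}).
\end{equation*}
Hence $(0,x^{(1)})$ and $(k^\ast,x^{(2)})$ are both centered $\sigma$-representations of $y^\ast$ with first components in $X$: centeredness of the second survives because $k^\ast=k_\ell<|\sigma(x[p_\ell])|=|\sigma(x^{(2)}[0])|$. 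Since $k^\ast\geq 1>0$, these two representations are distinct, contradicting the recognizability of $\sigma$ in $X$.

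The main obstacle, and the place where care is needed, is the passage to the limit: one must ensure simultaneously that the offset $k_\ell$ stays bounded (which is what makes the subsequence extraction possible and forces $k^\ast$ to be a genuine strictly positive centered offset) and that the defining inequality of a centered representation, $k^\ast<|\sigma(x^{(2)}[0])|$, is preserved in the limit. Both hinge on the elementary but essential remarks that block lengths are bounded by $M$ and that the leading letter $x^{(2)}[0]$ stabilizes along the subsequence. The hypothesis $L(X)=\fac(x)$ identifies $X$ with the orbit closure of $x$; for the present implication I use only that this orbit closure is contained in $X$, so that the limit points $x^{(1)},x^{(2)}$ produced from the orbit of $x$ indeed belong to $X$ and the recognizability hypothesis applies to them.
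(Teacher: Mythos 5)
Your proof is correct. Note that the paper itself gives no proof of this proposition: it is imported verbatim from Berth\'e, Steiner, Thuswaldner and Yassawi (Theorem~2.5(1) of that paper), so there is no in-paper argument to compare against. Your contraposition-plus-compactness argument is essentially the standard proof of the cited result: negate Mossé recognizability to get, for each $\ell$, a cutting point $m_\ell$ and a non-cutting point $n_\ell$ with matching windows; recentre $y$ at $m_\ell$ and $n_\ell$ to get a block-boundary representation $(0,\shift^{j_\ell}x)$ and an interior representation $(k_\ell,\shift^{p_\ell}x)$ with $0<k_\ell<|\sigma(x[p_\ell])|\le M$; then extract a subsequence along which $k_\ell$ is constant, the shifted points converge in the compact space $X$, and the central letter stabilizes, so that the growing agreement windows force the two limits to be centered representations of the same point $y^\ast$, distinct because $k^\ast\ge 1$ --- contradicting dynamical recognizability. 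All the delicate points are handled: boundedness of the offsets, persistence of the strict inequality $k^\ast<|\sigma(x^{(2)}[0])|$ via stabilization of $x[p_\ell]$, continuity of the induced map $A^\zz\to B^\zz$, and the pointwise identification of the two limits. Your closing remark is also accurate and worth keeping: this direction uses only that the orbit closure of $x$ lies in $X$ (i.e., $x\in X$), while the hypothesis $L(X)=\fac(x)$ is genuinely needed only for the converse implication, which is part (2) of the cited theorem.
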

In particular, if $X$ is a minimal shift space, then $\sigma$ is recognizable in Mossé's sense for every $x\in X$.

\section{Profinite semigroups}
\label{ss:pre-semigroups}
We move on to review some elements of
semigroup theory, with a focus on profinite semigroups. We follow the definition of a semigroup as being a \emph{nonempty} set endowed with an associative binary operation (in some sources, such as the book of Rhodes and Steinberg~\cite{Rhodes&Steinberg:2009qt}, the empty set is considered to be a semigroup).

\subsection{Green's relations}
\label{ss:green-relations}

We briefly recall a few standard facts about Green's relations; a
thorough account may be found in any book covering basic semigroup
theory, for
instance~\cite{Clifford&Preston:1961,Lallement:1979,Howie:1995}.

Let $S$ be a semigroup, and $S^1$ be the smallest monoid containing
$S$ (obtained by adjoining to $S$ an identity element, generically
denoted 1, if needed). For $s,t\in S$, write:
\begin{itemize}
\item $s\le_{\green{R}}t$ (or say that $t$ is a \emph{prefix} of~$s$)
  when $sS^1\subseteq tS^1$;
\item $s\le_{\green{L}}t$ (or say that $t$ is a \emph{suffix} of~$s$)
  when $S^1s\subseteq S^1t$;
    \item $s\le_{\green{H}}t$ when $sS^1\subseteq tS^1$ and $S^1s\subseteq S^1t$;
    \item $s\le_{\green{J}}t$ (or say that $t$ is a \emph{factor}
      of~$s$) when $S^1sS^1\subseteq S^1tS^1$.
\end{itemize}
These are quasi-orders known as \emph{Green's quasi-orders}. They
induce four equivalence relations, respectively denoted $\green{R}$,
$\green{L}$, $\green{H}$ and $\green{J}$, called \emph{Green's
  equivalences}. By a classical theorem of Green, the maximal
subgroups (maximal for inclusion) of $S$ are precisely the
$\green{H}$-classes of its idempotent elements. We may write $H_s$ for
the $\green{H}$-class of $s$ and similarly for other Green's
equivalences. For any Green's relation $\green{K}\in
\{\green{R},\green{L},\green{H},\green{K}\}$, we may write
$\mathcal{K}_S$ instead of $\green{K}$, whenever we want to emphasize
that we are considering the relation $\green{K}$ in the semigroup $S$;
this may be needed when reasoning with different semigroups at the
same time.

In this paper, we deal mostly with \emph{compact semigroups}:
semigroups endowed with a compact topology for which the
multiplication is continuous (we include the Hausdorff property in the
definition of compactness). Note that finite semigroups equipped with
the discrete topology are compact semigroups. In compact semigroups,
all of Green's relations (quasi-orders and equivalences) are closed;
in particular, so are the equivalence classes of Green's equivalences. When $S$ is a compact semigroup which is not a monoid, then $S^1$ is viewed as a compact semigroup by considering the topological sum of $S$ and of the space $\{1\}$.

A useful property of compact semigroups is that they are
\emph{stable}, that is, the following implications hold for all
elements $s$ and $t$:
 \begin{align*}
   (s\le_{\green{R}} t\ \text{and}\ s\green{J}t) &\implies s\green{R} t,
   \\
   (s\le_{\green{L}} t\ \text{and}\ s\green{J}t) &\implies s\green{L} t.
\end{align*}
Stable semigroups $S$ enjoy several useful properties:
\begin{itemize}
\item Two elements $s$ and $t$ are $\green{J}$-equivalent if and only
  if there is $u$ such that $s\green{R}u\green{L}t$, if and only if
  there is $v$ such that $s\green{L}v\green{R}t$.
\item A $\green{J}$-class $J$ contains an idempotent if and only if
  each of its $\green{L}$-classes contains an idempotent; the same
  holds for $\green{R}$-classes. This is also equivalent to every
  element $S$ of $J$ being \emph{regular}, which means that $s\in
  sSs$. Whenever $J$ satisfies these
  equivalent conditions, we 
  call it a \emph{regular $\green{J}$-class}. Its maximal subgroups
  are then isomorphic to one another, continuously so in the compact
  case.
\item The intersection of every $\green{R}$-class with every
  $\green{L}$-class contained in the same $\green{J}$-class is an
  $\green{H}$-class.
\end{itemize}

A semigroup is called \emph{simple} if $\green{J}$ is the universal
relation.\footnote{Note that the notion of (ideal) simple semigroup is
  unrelated with the classical notion of (congruence) simple group: as
  a semigroup, every group is simple.} The reader unfamiliar with
semigroup theory is cautioned that in the literature one finds also
the \emph{completely simple semigroups}, which are the stable simple
semigroups.

A subset $F$ of a semigroup $S$ is said to be \emph{factorial} if it
is an upset for the quasi-order $\le_{\green{J}}$, that is, it is
closed under taking factors.

\subsection{Pseudovarieties of semigroups}
\label{ss:pseudovarieties}

A \emph{pseudovariety} of semigroups is a class of finite semigroups closed under taking subsemigroups, homomorphic images, and finite direct products. Examples include:
\begin{itemize}
    \item the class $\pv S$ of all finite semigroups;
    \item the class $\pv G$ of all finite groups;
    \item the \emph{trivial pseudovariety} $\pv I$ (with only one-element semigroups);
    \item the class $\pv A$ of all finite \emph{aperiodic semigroups} (semigroups whose subgroups are trivial);
    \item the class $\pv{Sl}$ of all finite \emph{semilattices} (commutative semigroups whose elements are idempotent);
    \item the class $\pv{CS}$ of all finite \emph{simple semigroups}, that is, semigroups
      where the relation $\green{J}$ is universal;
    \item the class $\pv N$ of all finite \emph{nilpotent semigroups} (a semigroup is nilpotent if it has a zero $0$ and $S^k=\{0\}$ for some $k\geq 1$).
  
\end{itemize}

There are several operators of interest on pseudovarieties. For this
paper and the ensuing companions~\cite{Almeida&ACosta&Goulet-Ouellet:2024c,Almeida&ACosta&Goulet-Ouellet:2024d}, the following are relevant.
\begin{itemize}
\item If $\pv{H}$ is a pseudovariety consisting of finite groups, then the
  class $\pvo{H}$ of all finite semigroups whose subgroups belong to
  $\pv{H}$ is a pseudovariety. Note that $\pvo I=\pv A$ and $\pvo G=\pv S$.
\item Given a pseudovariety of semigroups \pv{V}, the class $\pv{\loc V}$ of all finite semigroups~$S$ such that $eSe\in\pv{V}$ for all
  idempotents $e\in S$ is also a pseudovariety, called the \emph{local
    of \pv{V}}. (In this paper we need to consider the pseudovarieties $\pv{\loc{I}}$
  and $\pv{\loc{Sl}}$.)
\item For two pseudovarieties of semigroups \pv V and \pv W, their
  \emph{semidirect product} $\pv V*\pv W$ is the smallest pseudovariety containing all semidirect products of the form $S*R$
  with $S\in\pv V$ and $R\in\pv W$.
\end{itemize}

\subsection{Relatively free profinite semigroups}
\label{ss:rel-free}

This subsection serves to introduce profinite semigroups, and in
particular relatively free profinite semigroups. For more details on
this topic, see~\cite{Almeida:1994a,Rhodes&Steinberg:2009qt} and the
shorter \cite{Almeida:2003cshort}.

By an \emph{inverse system of functions} we mean a triple
\begin{displaymath}
  \bigl((X_i)_{i\in I}; (\varphi_{i,j})_{i,j\in I; i\leq j},I\bigr)
\end{displaymath}
where $(I,{\le})$ is a directed set, the $X_i$ are sets and each
$\varphi_{i,j}$ is a function from $X_j$ to~$X_i$ such that
$\varphi_{i,j}\circ\varphi_{j,k}=\varphi_{i,k}$ whenever $i\leq j\leq k$, with $\varphi_{i,i}$ being the identity on $X_i$. The
\emph{inverse limit} of such a system is the set
\begin{displaymath}
  \varprojlim X_i
  = \Bigl\{x\in\prod_{i\in I}X_i:
  \forall i,j\in I\ \bigl(i\leq j \implies \varphi_{i,j}(x_j)=x_i\bigr)\Bigr\},
\end{displaymath}
where $x_k$ denotes the $k$-component of~$x$. The inverse system is said to be \emph{surjective} if all the functions
$\varphi_{i,j}$ are surjective. The restricted component
projection $\varprojlim X_i\to X_j$ is called the \emph{natural
  $j$-projection}. In case each $X_i$ has the additional structure of being a topological space, an algebra or a small category, we
assume that each $\varphi_{i,j}$ is a morphism in the corresponding
category. Then the inverse limit stays in the same category viewed,
respectively, as a subspace, subalgebra or subcategory, of the product
$\prod_{i\in I}X_i$, sometimes with the exception of the case where $\varprojlim X_i=\emptyset$ (as, for example, we do not allow the empty set to be a semigroup).  If the inverse system is surjective and the $X_i$ are compact spaces, then the natural
projections are surjective, cf.~\cite[Corollary~3.2.15]{Engelking:1989}.

Let $\pv V$ be a pseudovariety of finite semigroups. In this paper, we
always consider finite semigroups to be equipped with the discrete
topology. A \emph{pro-$\pv V$ semigroup} is a topological semigroup
$S$ which is is isomorphic as a topological semigroup to an inverse
limit of members of $\pv{V}$. Equivalently, $S$ is compact and
\emph{residually $\pv{V}$}, in the sense that any two distinct
elements $x,y\in S$ take distinct values under some continuous
homomorphism $\varphi\from S\to R$ where $R\in\pv{V}$. In particular,
members of $\pv{V}$ are pro-\pv{V}; we also say that a pro-$\pv {S}$
semigroup is \emph{profinite}. Other such specialized terminology will
be introduced as needed.

For each pseudovariety of semigroups $\pv V$, the category of pro-\pv{V} semigroups has free objects, called \emph{free pro-\pv{V} semigroups}~\cite[Subsection 3.2]{Almeida:2003cshort}. The free pro-\pv{V} semigroup over a set $A$ is denoted $\Om AV$. 
It comes equipped with a mapping $\iota_\pv{V}\from A\to\Om AV$ such that the following universal property holds: for every mapping $f\from A\to
S$ with $S$ a pro-$\pv V$ semigroup, there exists a unique continuous
homomorphism $\prov Vf\from \Om AV\to S$ such that the following
diagram commutes:
\begin{equation*}
    \xymatrix{
        A \ar[r]^{\iota_\pv{V}} \ar[dr]_f & \Om AV \ar[d]^{\prov Vf}\\
        & S.
    }
\end{equation*}
Semigroups of the form $\Om AV$ for some pseudovariety of semigroups $\pv V$ are called \emph{relatively free}.

  If the alphabet $A$ is finite, then topology of $\Om AV$ is metrizable, for every pseudovariety $\pv V$~\cite[Subsection 3.4]{Almeida:2003cshort}.
  A metric generating the topology of $\Om AV$ is
  the following: for $u,v\in\Om AV$ such that $u\neq v$,
  their distance, denoted $d(u,v)$, is given by the equality
  $d(u,v)=2^{-r(u,v)}$
  where $r(u,v)$ is the smallest possible cardinal for
  a semigroup $S$ from $\pv V$ for which there is a continuous homomorphism $\varphi\from \Om AV\to S$ satisfying $\varphi(u)\neq\varphi(v)$.
  
  On the other hand,  $\Om AS$
  is not metrizable if $A$ is infinite (cf.~\cite{Almeida&Steinberg:2008}).

Let $\pv{V},\pv{W}$ be two pseudovarieties of semigroups with $\pv
W\subseteq \pv V$. For a given set~$A$, the universal property of $\Om
AV$ applied to the mapping $\iota_\pv{W}\from A\to\Om AW$ gives a
continuous onto homomorphism $p_{\pv{V},\pv{W}}\from\Om AV\to \Om AW$
such that the following diagram commutes:
\begin{displaymath}
  \xymatrix{
        A
        \ar[r]^{\iota_\pv{V}}
        \ar[dr]_{\iota_\pv{W}} &
        \Om AV
        \ar[d]^{p_{\pv{V},\pv{W}}}\\
        & \Om AW
      }
\end{displaymath}
We call $p_{\pv{V},\pv{W}}$ the \emph{natural projection} of $\Om AV$ onto $\Om AW$.

The mapping $\iota_\pv{V}$ extends uniquely to a homomorphism
$\iota_\pv{V}^+\from A^+\to\Om AV$. Whenever, $\pv V$ contains the
pseudovariety $\pv N$ of finite nilpotent semigroups, or the
pseudovariety $\pv G$ of finite groups, $\iota_\pv{V}^+$ is injective.
In particular, for such a pseudovariety $\pv V$, we may identify $A^+$
with the (dense) subspace $\iota_\pv{V}^+(A^+)\subseteq\Om AV$
whenever convenient. We denote by $\clos V(L)$ the closure in $\Om AV$
of a language $L\subseteq A^+$ viewed as a subset of $\Om AV$,
whenever \pv V is a pseudovariety of semigroups containing \pv{N} or
\pv G. In this context, the elements of $\Om AV$ may be seen as
generalizations of words, for which reason they are called
\emph{pseudowords}.

Consider a homomorphism $\varphi\from A^+\to B^+$ of free semigroups.
It follows from the universal property of free pro-$\pv V$ semigroups
that for every homomorphism $\varphi\from A^+\to B^+$ there is a
unique continuous homomorphism $\prov V\varphi\from \Om AV\to\Om BV$
such that the following diagram commutes
\begin{displaymath}
  \xymatrix{
    A^+
    \ar[d]^\varphi
    \ar[r]^{\hat\iota_{\pv V,A}^+}
    &
    \Om AV
    \ar[d]^{\varphi^{\pv V}}
    \\
    B^+
    \ar[r]^{\hat\iota_{\pv V,B}^+}
    &
    \Om BV
  }
\end{displaymath}
In case $\pv V$ contains \pv{N} or \pv{G}, these mappings are
injective and we say that $\prov V\varphi$ is the \emph{pro-$\pv V$
  extension} of $\varphi$. By the uniqueness of the pro-$\pv V$
extension of homomorphisms between free semigroups, the correspondence
$\varphi\mapsto \varphi_{\pv V}$ is functorial; in other words, $\prov
V{(\varphi\circ \psi)}=\prov V\varphi\circ\prov V\psi$ whenever
$\varphi$ and $\psi$ are composable homomorphisms of free semigroups,
and the pro-$\pv V$ extension of the identity on $A^+$ is the identity
on $\Om AV$.

Recall that a \emph{Stone space} is a topological space that is both
compact and totally disconnected. Note that closed subspaces of Stone
spaces are also Stone spaces. 
Furthermore, the categories of Stone spaces and Boolean algebras are connected by \emph{Stone duality}~\cite{Burris&Sankappanavar:1981}.
A theorem of Numakura states that a topological semigroup is profinite if and only if it is topologically a Stone space~\cite[Theorem~1]{Numakura:1957}.

For every pseudovariety of semigroups $\pv V$, a language $L\subseteq A^+$ is said to be \emph{\pv V-recognizable} if
there are a semigroup $S\in\pv V$ and a homomorphism $\varphi\from
A^+\to S$ such that $L=\varphi^{-1}(\varphi(L))$. The \emph{syntactic semigroup} of the language $L$ is the
quotient of $A^+$ by the least congruence saturating $L$. We have the following alternative characterization of
the notion of \pv V-recognizable language: $L$ is \pv V-recognizable if and only if the syntactic semigroup of $L$
belongs to $\pv V$.  The following basic
result gives a topological characterization of the same notion, which amounts to the fact that the topological space \Om AV is the Stone
dual of the Boolean algebra of all \pv V-recognizable subsets of~$A^+$.

\begin{theorem}[{\cite[Theorem~3.6.1]{Almeida:1994a}}]
  \label{t:V-recognizability}
  Let \pv V be a pseudovariety of semigroups containing \pv N. Then a
  language $L\subseteq A^+$ over a finite alphabet $A$ is \pv
  V-recognizable if and only if $\clos V(L)$ is open.
\end{theorem}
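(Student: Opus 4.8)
The plan is to prove both implications by relating the closure $\clos V(L)$ to the continuous homomorphisms from $\Om AV$ onto finite members of $\pv V$, using throughout the identification of $A^+$ with the dense subspace $\iota_\pv{V}^+(A^+)\subseteq\Om AV$, which is available because $\pv N\subseteq\pv V$. The fact that $\Om AV$ is a Stone space realized as an inverse limit $\varprojlim S_i$ of finite semigroups $S_i\in\pv V$ (Numakura's theorem, together with the inverse limit construction) will let me pass freely between clopen subsets of $\Om AV$ and preimages of subsets under the corresponding continuous projections $\pi_i\from\Om AV\to S_i$.

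For the direct implication, suppose $L=\varphi^{-1}(\varphi(L))$ for some homomorphism $\varphi\from A^+\to S$ with $S\in\pv V$. Since $S$ is a pro-$\pv V$ semigroup, the universal property of $\Om AV$ provides a continuous homomorphism $\widehat\varphi\from\Om AV\to S$ extending $\varphi$. As $S$ carries the discrete topology, $\widehat\varphi^{-1}(\varphi(L))$ is clopen, and I claim it equals $\clos V(L)$. The inclusion $\clos V(L)\subseteq\widehat\varphi^{-1}(\varphi(L))$ holds because the right-hand side is closed and contains $L$. For the reverse inclusion I use density of $A^+$: any open neighbourhood of a point $u\in\widehat\varphi^{-1}(\varphi(L))$ meets $\widehat\varphi^{-1}(\varphi(L))$ in a nonempty open set, which therefore contains a word $w\in A^+$; then $\varphi(w)\in\varphi(L)$ forces $w\in\varphi^{-1}(\varphi(L))=L$, so $u\in\clos V(L)$. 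Hence $\clos V(L)=\widehat\varphi^{-1}(\varphi(L))$ is open (indeed clopen).

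For the converse, assume $\clos V(L)$ is open; being also closed, it is clopen. Writing $\Om AV=\varprojlim S_i$ as above, compactness lets me express this clopen set as $\pi_i^{-1}(T)$ for a single projection $\pi_i\from\Om AV\to S_i\in\pv V$ and some $T\subseteq S_i$. Let $\varphi\from A^+\to S_i$ be the restriction of $\pi_i$ to $A^+$. Then $\varphi^{-1}(T)=A^+\cap\pi_i^{-1}(T)=A^+\cap\clos V(L)$, and once I know this equals $L$ it follows, upon taking images, that $\varphi(L)\subseteq T$ and hence $L=\varphi^{-1}(\varphi(L))$, so that $L$ is $\pv V$-recognizable. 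The main obstacle, and the only place where the hypothesis $\pv N\subseteq\pv V$ is genuinely used, is the identity $A^+\cap\clos V(L)=L$, equivalently the assertion that $L$ is closed in the subspace $A^+$. I expect to settle this by showing that $A^+$ is a discrete subspace of $\Om AV$: for $w\in A^+$ with $k=|w|$, the length-truncation homomorphism $A^+\to A^{\leq k}\cup\{0\}$ (sending $v$ to $v$ if $|v|\le k$ and to $0$ otherwise) lands in a finite nilpotent semigroup, hence in $\pv V$; its continuous extension isolates $\{w\}$ as a clopen subset of $\Om AV$ whose trace on $A^+$ is $\{w\}$. Thus every singleton, and so every subset of $A^+$, is closed in $A^+$, which yields $A^+\cap\clos V(L)=L$ and completes the proof.
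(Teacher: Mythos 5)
Your proof is correct: the forward direction (extending the recognizing homomorphism $\varphi$ to $\widehat\varphi\from\Om AV\to S$ and using density of $A^+$ to show $\clos V(L)=\widehat\varphi^{-1}(\varphi(L))$), the compactness argument pulling the clopen set $\clos V(L)$ back from a single finite level of the inverse limit, and the truncation semigroup $A^{\le k}\cup\{0\}\in\pv N$ showing that words are isolated — whence $A^+\cap\clos V(L)=L$ — are all sound, and the last step is precisely Proposition~\ref{p:A+-as-filter}\ref{i:A+-as-filter-0} together with the remark following it. The paper itself gives no proof, citing \cite{Almeida:1994a}, and your argument is essentially the standard one found in that reference, so there is nothing to flag.
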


Endow the set $\nn_+$ of positive integers with the semigroup operation of addition.
For this structure, the length mapping
$\ell\from A^+\to\nn_+$,
defined by $\ell(u)=|u|$
is a semigroup homomorphism. We extend the length homomorphism to pseudowords
in the following way.
Let $\nn_+\cup\{\infty\}$ be the Alexandroff compactification of the
discrete space $\nn_+$,
and extend the addition operation on $\nn_+$ to $\nn_+\cup\{\infty\}$
by making $\infty$ an absorbing element of $\nn_+\cup\{\infty\}$.
In this way, $\nn_+\cup\{\infty\}$ is a pro-$\pv N$ semigroup (it is in fact a free pro-\pv{N} semigroup on the single generator~1).
Therefore, provided \pv V contains $\pv{N}$, the length homomorphism $\ell\from A^+\to \nn_+$ extends uniquely to a continuous homomorphism
$\prov V\ell\from\Om AV\to\nn_+\cup\{\infty\}$. We use the notation $|u|$ for $\prov V\ell(u)$, for every $u\in\Om AV$. An element $u\in\Om AV$ has \emph{infinite length} if $|u|=\infty$, and \emph{finite length} otherwise.  Clearly, infinite-length pseudowords form a closed ideal of~$\Om AV$, a fact included in the next proposition, whose complete proof can be found in~\cite[Section 3]{Almeida&ACosta&Goulet-Ouellet:2024a}, which extends earlier results for the case when $A$ is finite;
see e.g.~\cite{Almeida:1994a}.

\begin{proposition}
  \label{p:A+-as-filter}
  Let $A$ be an arbitrary alphabet and let \pv V be a pseudovariety of
  semigroups containing \pv N. Then the following hold:
  \begin{enumerate}
    \item the elements of~$A^+$ are isolated points in~$\Om AV$;
      \label{i:A+-as-filter-0}
    \item the set $\{u\in\Om AV:|u|=\infty\}$ is an ideal of the free
      pro-\pv{V} semigroup $\Om AV$;
      \label{i:A+-as-filter-1}
    \item the set $\Om AV\setminus A^+$ is an ideal of the free
      pro-\pv{V} semigroup $\Om AV$.
      \label{i:A+-as-filter-2}
  \end{enumerate}
\end{proposition}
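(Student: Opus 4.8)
The plan is to rely throughout on two ingredients: a soft topological principle, and the good behaviour of the syntactic semigroup of a \emph{single} word. Since $\pv N\subseteq\pv V$, the free semigroup $A^+$ is dense in $\Om AV$, so every clopen subset $C\subseteq\Om AV$ satisfies $C=\clos{V}(C\cap A^+)$ (an open set meets a dense set in a set dense in it, and $C$ is closed); in particular, if $C\cap A^+=\{w\}$ for a word $w$, then $C=\{w\}$, because $\{w\}$ is closed. For a word $w\in A^+$, let $\eta_w\from A^+\to S_w$ be the syntactic homomorphism of the language $\{w\}$. A direct check shows $S_w$ is finite (only the finitely many factors of $w$ have nonzero image, every word that is not a factor of $w$ being sent to the zero) and nilpotent (any product of more than $|w|$ generators has length exceeding $|w|$, hence is sent to the zero); thus $S_w\in\pv N\subseteq\pv V$ and $\eta_w$ extends to a continuous homomorphism $\widehat{\eta_w}\from\Om AV\to S_w$ restricting to $\eta_w$ on $A^+$. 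The essential feature is that the syntactic class of $w$ is exactly $\{w\}$ (take the empty context $(\emptyw,\emptyw)$), so $s_w=\eta_w(w)$ is nonzero and $\eta_w^{-1}(s_w)=\{w\}$.

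For item (i), I would take the clopen set $C=\widehat{\eta_w}^{-1}(s_w)$, the preimage of an isolated point of the finite discrete semigroup $S_w$. Then $C\cap A^+=\eta_w^{-1}(s_w)=\{w\}$, so the density principle yields $C=\{w\}$; hence $\{w\}$ is clopen and $w$ is isolated. Item (ii) is immediate: writing $\ell$ for the continuous homomorphism $\prov V\ell\from\Om AV\to\nn_+\cup\{\infty\}$, the set in question is $\ell^{-1}(\{\infty\})$, the preimage of the ideal $\{\infty\}$ of $\nn_+\cup\{\infty\}$ (in which $\infty$ is absorbing), and a preimage of an ideal under a homomorphism is an ideal; it is nonempty since any accumulation point of $\{a^n:n\ge 1\}$ has infinite length.

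The substance is in item (iii). It suffices to prove the prefix statement, together with its left--right dual for suffixes: if $u,t\in\Om AV$ and $ut=w\in A^+$, then $u\in A^+$. Granting this, whenever a product $cx$ or $xc$ lies in $A^+$ both factors lie in $A^+$, so $\Om AV\setminus A^+$ is an ideal. To prove the prefix statement, note $|u|=p$ is finite (since $|u|+|t|=|w|$) and set $c=w[0,p)$. I would consider the set
\begin{equation*}
  K=\tau_t^{-1}(\{w\})\cap\ell^{-1}(\{p\}),
\end{equation*}
where $\tau_t(x)=xt$ is right translation; it is clopen by item (i) (so that $\{w\}$ is clopen) together with continuity of $\tau_t$ and $\ell$, and it contains $u$.

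The crux --- and the step I expect to be the main obstacle, precisely because $A$ may be infinite so that no finite-alphabet recognizability result such as Theorem~\ref{t:V-recognizability} applies --- is to show $K\cap A^+=\{c\}$. Given a word $z\in K\cap A^+$, one has $zt=w$ and $|z|=p$; applying $\widehat{\eta_w}$ gives $\eta_w(z)\,\widehat{\eta_w}(t)=\widehat{\eta_w}(w)=s_w\neq 0$, so choosing, by surjectivity of $\eta_w$, a word $t'\in A^+$ with $\eta_w(t')=\widehat{\eta_w}(t)$, I obtain $\eta_w(zt')=s_w=\eta_w(w)$; since the syntactic class of $w$ is the singleton $\{w\}$, this forces $zt'=w$ in $A^+$, whence $z$ is the length-$p$ word-prefix $c$ of $w$. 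Thus $K\cap A^+=\{c\}$, the density principle gives $K=\{c\}$, and since $u\in K$ we conclude $u=c\in A^+$, as desired. The suffix version is entirely symmetric, using left translations and the same singleton-class property.
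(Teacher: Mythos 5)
Your proposal is correct and complete. A point of comparison worth noting: the paper itself gives no in-text proof of Proposition~\ref{p:A+-as-filter} --- it defers entirely to Section~3 of the cited companion paper --- so the relevant benchmark is the standard argument, which for \emph{finite} $A$ derives item~(i) from the $\pv N$-recognizability of the language $\{w\}$ together with Theorem~\ref{t:V-recognizability}. Your route reproves the needed fragment directly: you observe that the syntactic semigroup of $\{w\}$ is finite and nilpotent \emph{even when $A$ is infinite} (all non-factors of $w$ collapse to the zero), so the syntactic morphism extends continuously to $\Om AV$ and the singleton syntactic class of $w$ does all the work. This is exactly the right move, since Theorem~\ref{t:V-recognizability} as stated requires $A$ finite. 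Your handling of item~(iii) is also sound where it is most delicate: for infinite $A$ one cannot argue that a factor of finite length lies in $A^+$ (the paper's own remark notes that closure points of $A$ have length~$1$ without being letters), and your clopen set $K=\tau_t^{-1}(\{w\})\cap\ell^{-1}(\{p\})$, combined with the density principle and the singleton-class property forcing $K\cap A^+=\{c\}$, correctly circumvents this; the replacement of $\widehat{\eta_w}(t)$ by a word $t'$ via surjectivity of the syntactic morphism is the key reduction to the free semigroup, where cancellation identifies $z$ with the length-$p$ prefix of $w$. All auxiliary steps check out: $\{p\}$ and $\{\infty\}$ are respectively an isolated point and an absorbing ideal of $\nn_+\cup\{\infty\}$, nonemptiness of the ideals follows from accumulation points of $\{a^n:n\ge1\}$, and the two-sided ideal property in~(iii) indeed requires both the prefix statement and its left--right dual, which you supply.
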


\begin{remark}
  Since, whenever $\pv V\supseteq\pv N$, the elements of~$A^+$ are
  isolated points in~$\Om AV$, the equality $\clos V(L)\cap A^+=L$
  holds for every language $L\subseteq A^+$.
\end{remark}

\begin{remark}
  In case $A$ is finite, the equality $\Om AV\setminus A^+=\{u\in\Om
  AV:|u|=\infty\}$ holds whenever $\pv V\supseteq\pv N$, since there
  are only finitely many words of~$A^+$ of a given length. The
  equality no longer holds if~$A$ is an infinite alphabet: in that
  case, the topological closure of~$A$ in~$\Om AV$, being compact,
  contains some element not in~$A$, and any such element has length 1
  by continuity of the length homomorphism.
\end{remark}

Let us suppose that $\pv V$ contains the pseudovariety~$\pv{\loc I}$,
bearing in mind that $\pv{\loc I}$ contains~$\pv N$. In that case, for
every nonnegative integer $n$, every pseudoword $w\in(\Om AV)^1$ such
that $|w|\geq n$ has a unique prefix of length of $n$, denoted
$w{[0},n{)}$, and a unique suffix of length $n$, denoted
$w{[}-n,-1{]}$ (for further details, we refer to the discussion
in~\cite[Section 6]{Almeida&ACosta&Goulet-Ouellet:2024a}). We also
denote $w[0,k+1)[-1,-1]$ by $w[k]$, which means that $w[0,n) =
w[0]\cdots w[n-1]$ is the unique factorization of $w[0,n)$ into
pseudowords of length 1.

In the case of the pseudovariety $\pv S$, we have the following
property, which amounts to saying that in an equality of pseudowords
we may cancel equal finite-length prefixes, or suffixes.
  
\begin{proposition}
  \label{p:letter-super-cancelativity}
  Let $A$ be any alphabet. If $x,y,u,v\in(\Om AS)^1$ are pseudowords
  such that $xu=yv$ or $ux=vy$, and $|x|=|y|\in\nn$, then $x=y$ and
  $u=v$.
\end{proposition}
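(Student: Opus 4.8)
The plan is to reduce the statement to a one-sided cancellation property of finite-length pseudowords in $\Om AS$, and then to establish that property by constructing a continuous ``erase the first letter'' map. Throughout I use that $\pv S\supseteq\pv{\loc I}\supseteq\pv N$, so that the length homomorphism $\prov S\ell$, the uniqueness of prefixes and suffixes of a given finite length, and the unique factorization $w[0,n)=w[0]\cdots w[n-1]$ into length-$1$ pseudowords (all recalled just before the statement) are available.

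\textbf{Reductions.} By applying the reversal involution (the continuous anti-automorphism of $\Om AS$ extending $w\mapsto\widetilde w$ on $A^+$, which preserves length) it suffices to treat the case $xu=yv$. Write $n=|x|=|y|\in\nn$. Using $\prov S\ell$, one has $|w|=0$ iff $w=1$, so if $n=0$ then $x=y=1$ and $u=v$ at once; moreover if say $u=1$ then $n=n+|v|$ forces $v=1$, so I may assume $n\ge1$ and $u,v\in\Om AS$. Next, since $xu=x\cdot u$ lies in $x(\Om AS)^1$, the pseudoword $x$ is a prefix of $xu$ of length $n$, whence $x=(xu)[0,n)$ by uniqueness of the length-$n$ prefix; likewise $y=(yv)[0,n)$, and $xu=yv$ gives $x=y$. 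Thus everything reduces to the \emph{left-cancellation} statement $xu=xv\Rightarrow u=v$ for $|x|=n$. Factoring $x=x[0]\cdots x[n-1]$ into length-$1$ pseudowords and cancelling them one at a time, it is enough to prove left-cancellation by a single length-$1$ pseudoword.

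\textbf{The erase-first-letter map.} The crux is to show that the partial operation ``drop the first letter'' extends continuously to $\Om AS$. Fix a continuous homomorphism $\pi\from\Om AS\to T$ with $T$ finite, and consider $g_\pi\from A^+\to T^1$ defined by $g_\pi(w)=\pi\bigl(w[1,|w|)\bigr)$ (with $g_\pi(w)=1$ when $|w|=1$). Let $w\sim w'$ mean that $\pi(w)=\pi(w')$, that $g_\pi(w)=g_\pi(w')$, and that $|w|=1\Leftrightarrow|w'|=1$. A direct check, splitting on whether the left factor has length $1$, shows that $\sim$ is a congruence on $A^+$, and it has finite index (its classes embed into $T\times T^1\times\{0,1\}$); since $g_\pi$ is constant on $\sim$-classes, it factors through the finite quotient $A^+/{\sim}$, hence is uniformly continuous and extends continuously to $\Om AS\to T^1$. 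These extensions are compatible with the projections between finite continuous quotients of $\Om AS$, so they assemble into a single continuous map $F\from\Om AS\to(\Om AS)^1$ whose restriction to $A^+$ sends $w$ to $w[1,|w|)$.

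\textbf{Cancellation and conclusion.} Recall that the length-$1$ pseudowords are exactly the elements of the closure $\overline A$ of $A$ in $\Om AS$ (a net of words converging to a length-$1$ pseudoword eventually has length $1$, as $1$ is isolated in $\nn_+\cup\{\infty\}$). Fix a word $w\in A^+$: the map $\overline A\to(\Om AS)^1$, $z\mapsto F(zw)$, is continuous and equals $w$ on the dense subset $A$ (since $F(aw)=w$ for a letter $a$), so $F(zw)=w$ for every length-$1$ pseudoword $z$. For fixed such $z$, the maps $u\mapsto F(zu)$ and $u\mapsto u$ are continuous and agree on the dense subset $A^+$, hence $F(zu)=u$ for all $u\in\Om AS$; this is precisely left-cancellation by $z$. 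Applying it $n$ times yields $xu=xv\Rightarrow u=v$, completing the proof. The main obstacle is exactly the middle step — verifying that erase-first-letter is a profinite (uniformly continuous) function via the finite-index congruence $\sim$; note that finiteness of $|x|$ is essential, since $\Om AS$ is \emph{not} cancellative in general (its kernel is a completely simple semigroup that is not a group when $\card(A)\ge 2$), and this is exactly where the argument uses that $x$ is a finite product of elements of $\overline A$.
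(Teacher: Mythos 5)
Your proof is correct, but note that there is nothing in this paper to compare it against: Proposition~\ref{p:letter-super-cancelativity} is stated here as known background, with the prefix/suffix machinery it rests on deferred to \cite[Section~6]{Almeida&ACosta&Goulet-Ouellet:2024a} (see also Remark~\ref{r:weak-generalization-letter-super-cancelativity}). What you have written is a self-contained rendering of the standard ``marker/deletion'' technique behind those references: after the (correct) reductions — reversal symmetry, the case $n=0$, identifying $x=y=(xu)[0,n)$ by uniqueness of the length-$n$ prefix, and factoring $x$ into length-$1$ pseudowords — the whole burden falls on showing that erase-first-letter extends to a continuous map $F\from\Om AS\to(\Om AS)^1$, and your finite-index congruence does exactly that: the identities $g_\pi(wt)=g_\pi(w)\pi(t)$ and $g_\pi(tw)=g_\pi(t)\pi(w)$ (with $g_\pi=1$ on letters) make $\sim$ a congruence with at most $|T|\cdot(|T|+1)\cdot 2$ classes, so $g_\pi$ factors through a finite quotient and extends continuously. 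Two steps are compressed but routine: (i) assembling the extensions $\hat g_\pi$ into a single $F$ requires checking that the tuple $(\hat g_\pi(w))_\pi$ lies in the image of $(\Om AS)^1$ in $\prod_\pi T_\pi^1$ — this follows since the image is compact, hence closed, and contains $F(A^+)$, with $A^+$ dense; and (ii) for that image to be an embedding one must separate the adjoined identity from elements of $\Om AS$, which a truncated length quotient (a finite semigroup without identity) provides. The closing density arguments ($F(zw)=w$ on $A\times A^+$, then on $\overline A\times\Om AS$) are sound, and the degenerate cases $u=1$ or $v=1$ are indeed forced by the length homomorphism, so no case is missed. Compared with simply importing the result, your argument buys a transparent, purely ``finite-witness'' proof of the cancellation half; its only cost is the bookkeeping in (i)–(ii), which the companion paper's treatment of $w[0,n)$ and $w^{(n)}$ packages once and for all.
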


Let $w$ be an infinite-length pseudoword on $\Om AS$, and let $x$ be
its prefix of length~$n$, where $n\in\nn$. By
Proposition~\ref{p:letter-super-cancelativity} there is a unique
pseudoword $u\in\Om AS$ such that $w=xu$. We may denote $u$ by
$x^{-1}w$, and sometimes, alternatively, by $w^{(n)}$.

 \subsection{Codes}
\label{sec:codes}

Let $C$ be a subset of $A^+$. Recall that $C$ is a \emph{code} if the
subsemigroup of $A^+$ generated by $C$ is free with basis $C$. The
code $C$ is called \emph{pure} if it is closed under extraction of
roots, that is, if for every $u\in A^+$ and integer $n\geq 1$, the
following implication holds:
\begin{equation*}
  u^n\in C^+\implies u\in C^+.
\end{equation*}

It turns out that a finite code $C$ over the alphabet $A$ is pure if
and only if the syntactic semigroup of $C^+$ belongs to the
pseudovariety $\pv{A}$ of all finite aperiodic semigroups
(cf.~\cite[Theorem 3.1]{Restivo:1973}; see also~\cite[Chapter 7,
Exercise 8]{Lallement:1979}). Hence, pure codes are often called
\emph{aperiodic} codes.

The property of being closed under root extraction carries through for
pro-\pv{V} closures of pure codes, in the following sense.

\begin{lemma}
  \label{l:profinite-root-extraction}
  Let $C$ be a finite pure code over a finite alphabet $A$, $\pv{V}$ a
  pseudovariety containing $\pv{A}$, and $u$ an element of $\Om AV$.
  If $\clos V(C^+)\cap \clos V(u^+)\neq\varnothing$, then $u$ belongs
  to $\clos V(C^+)$.
\end{lemma}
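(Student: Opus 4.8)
The plan is to reduce the claim to an elementary property of the syntactic semigroup of $C^+$ and then transport the hypothesis through its continuous extension to $\Om AV$.

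First I would take the syntactic homomorphism $\eta\from A^+\to S$ of the language $C^+$, where $S$ denotes its syntactic semigroup, and put $P=\eta(C^+)$, so that $C^+=\eta^{-1}(P)$ and $P$ is a subsemigroup of $S$. Since $C$ is a finite pure code, $S$ lies in $\pv A$ (by the characterization recalled above, cf.~\cite[Theorem~3.1]{Restivo:1973}), and hence $S\in\pv A\subseteq\pv V$. The universal property of $\Om AV$ then provides a continuous homomorphism $\pro\eta\from\Om AV\to S$ extending $\eta$. As $S$ is finite discrete and $P$ is clopen in $S$, the set $\pro\eta^{-1}(P)$ is clopen in $\Om AV$; it meets $A^+$ exactly in $\eta^{-1}(P)=C^+$, and since $A^+$ is dense in $\Om AV$ and a clopen set equals the closure of its trace on any dense subset, we obtain $\clos V(C^+)=\pro\eta^{-1}(P)$. (Alternatively, the clopenness of $\clos V(C^+)$ follows from Theorem~\ref{t:V-recognizability}, since $S\in\pv V$ makes $C^+$ a $\pv V$-recognizable language.)

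The crucial point is that purity of $C$ forces $P$ to be \emph{root-closed} in $S$, that is, $s^n\in P$ implies $s\in P$ for all $s\in S$ and $n\ge1$. Indeed, writing $s=\eta(w)$ for a word $w\in A^+$ (possible since $\eta$ is onto), one has $w^n\in C^+\iff s^n\in P$ and $w\in C^+\iff s\in P$, so the defining implication of purity reads precisely as root-closedness of $P$. I would then push the hypothesis forward: choosing $p\in\clos V(C^+)\cap\clos V(u^+)$ and setting $s=\pro\eta(u)\in S$, continuity and compactness give $\pro\eta(\clos V(u^+))=\overline{\langle s\rangle}=\langle s\rangle$, a finite monogenic semigroup, so $\pro\eta(p)=s^k$ for some $k\ge1$; meanwhile $p\in\clos V(C^+)=\pro\eta^{-1}(P)$ yields $\pro\eta(p)\in P$. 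Therefore $s^k\in P$, and root-closedness gives $s=\pro\eta(u)\in P$, that is $u\in\pro\eta^{-1}(P)=\clos V(C^+)$, as desired.

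I do not expect a genuine obstacle here: the argument is essentially the observation that purity is an aperiodic-semigroup phenomenon, which the hypothesis $\pv A\subseteq\pv V$ makes visible inside $\Om AV$. The only step demanding a little care is the identification $\clos V(C^+)=\pro\eta^{-1}(P)$, which rests on the clopenness of $\pro\eta^{-1}(P)$ together with the density of $A^+$ in $\Om AV$.
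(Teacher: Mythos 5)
Your proof is correct, but it takes a genuinely different route from the paper's. The paper argues by approximation: it picks $x\in\clos V(C^+)\cap \clos V(u^+)$, a sequence of words $u_i\to u$ and powers $u^{n_i}\to x$, proves via separating homomorphisms into finite members of $\pv V$ that the diagonal sequence $u_i^{n_i}$ also converges to $x$, then uses clopenness of $\clos V(C^+)$ (Restivo's theorem plus Theorem~\ref{t:V-recognizability}) to force $u_i^{n_i}\in C^+$ for large $i$, and finally applies purity in its original combinatorial form to honest words to get $u_i\in C^+$, whence $u\in\clos V(C^+)$. You instead reduce everything to the finite syntactic semigroup $S$ of $C^+$: Restivo's theorem serves to place $S$ in $\pv A\subseteq\pv V$ so that the syntactic morphism extends to a continuous $\pro\eta\from\Om AV\to S$, and your key observation is that purity of $C$ transfers, via $C^+=\eta^{-1}(P)$ and surjectivity of $\eta$, to root-closedness of $P=\eta(C^+)$ in $S$; after that the lemma is a one-line computation inside the finite monogenic semigroup $\langle\pro\eta(u)\rangle$ (note $\pro\eta(\clos V(u^+))\subseteq\langle\pro\eta(u)\rangle$ because the latter is finite, hence closed). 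Your identification $\clos V(C^+)=\pro\eta^{-1}(P)$ is a mild strengthening of the clopenness statement in Theorem~\ref{t:V-recognizability}, and your justification (a clopen set equals the closure of its trace on a dense subset, with $A^+$ dense since $\pv N\subseteq\pv A\subseteq\pv V$) is sound. What your route buys: it eliminates the diagonal sequence argument entirely — and with it any implicit reliance on metrizability of $\Om AV$, harmless here since $A$ is finite — and it isolates the finite-semigroup phenomenon (root-closedness of the syntactic image) that makes the lemma true. What the paper's route buys: slightly more information along the way (the approximating words $u_i$ themselves eventually lie in $C^+$), and it invokes purity only at the level of words, without passing through the syntactic quotient.
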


\begin{proof}
  Take $x\in \clos V(C^+)\cap \clos V(u^+)$. Let $(u_i)_{i\in\nn}$ be
  a sequence of finite words such that $u = \lim u_i$ and
  $(u^{n_i})_{i\in\nn}$ be a sequence of powers of $u$ converging to
  $x$ such that $n_i$ is a positive integer for every $i\in\nn$. We
  claim that $(u_i^{n_i})_{i\in\nn}$ converges to $x$. Let
  $\varphi\from \Om AV\to S$ be an arbitrary continuous homomorphism
  where $S\in\pv{V}$; because $\Om AV$ is residually $\pv{V}$, we are
  reduced to showing that there exists $j\in\nn$ such that
  $\varphi(u_i^{n_i})=\varphi(x)$ for every $i\geq j$. Since $S$ is
  discrete, there exists $i_1\in \nn$ such that $\varphi(u_i) =
  \varphi(u)$ for every $i\geq i_1$. Likewise, there exists $i_2\in
  \nn$ such that $\varphi(u^{n_i}) = \varphi(x)$ for all $i\geq i_2$.
  Then, for all $i\geq \max\{i_1, i_2\}$, we find that
  \begin{equation*}
    \varphi(u_i^{n_i}) = \varphi(u_i)^{n_i}
    = \varphi(u)^{n_i} = \varphi(u^{n_i}) = \varphi(x).
  \end{equation*}
  This concludes the proof of the claim.
  
  Since $\pv{A}\subseteq\pv{V}$ and the syntactic semigroup of $C^+$
  is in $\pv{A}$, it follows that $\clos V(C^+)$ is clopen. Therefore,
  there exists $j\in\nn$ such that $u_i^{n_i}\in \clos V(C^+)$ for all
  $i\geq j$.  As $\clos V(C^+)\cap A^+=C^+$, this means that
  $u_i^{n_i}\in C^+$ for all $i\geq j$.
  By purity,  it follows that $u_i\in C^+$ for all $i\geq j$,
  whence $u = \lim u_i\in\clos V(C^+)$.
\end{proof}

Using this lemma, we deduce the following key property of pure codes.

\begin{proposition}
  \label{p:surjective-image-pure-code}
  Let $C$ be a finite pure code over a finite alphabet $A$ and $\pv V$
  be a pseudovariety of finite semigroups containing $\pv A$. For
  every subgroup $H\subseteq\Om AV$, the following implication holds:
  \begin{equation*}
    H\cap\clos{V}(C^+)\neq\varnothing \implies H\subseteq \clos{V}(C^+).
  \end{equation*}
\end{proposition}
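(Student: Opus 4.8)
The plan is to leverage Lemma~\ref{l:profinite-root-extraction}, which does most of the heavy lifting, and to exploit the group structure of $H$ together with the fact that $\clos{V}(C^+)$ is a closed subsemigroup of $\Om AV$. Let $e$ denote the idempotent of $H$ (the identity of the group $H$), and suppose we are given some $h\in H\cap\clos{V}(C^+)$. The key observation is that for any element $g\in H$, we can write $g = h\cdot(h^{-1}g)$ where $h^{-1}g$ is computed inside the group $H$; so if we can show that $h^{-1}g$ also lies in $\clos{V}(C^+)$, then since $\clos{V}(C^+)$ is a subsemigroup (being the closure of the subsemigroup $C^+$), the product $h$ times $h^{-1}g$ stays in $\clos{V}(C^+)$.

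First I would reduce the problem to showing that the identity $e$ of $H$ lies in $\clos{V}(C^+)$. Indeed, in the profinite setting the element $h$ has an idempotent power: the sequence $h^{n!}$ converges to the idempotent $e = h^\omega$, which is the identity of $H_h = H$. Since $\clos{V}(C^+)$ is closed and $h\in\clos{V}(C^+)$, and since $\clos{V}(C^+)$ is a subsemigroup, all powers $h^{n}$ belong to $\clos{V}(C^+)$, and therefore the limit $e = h^\omega$ belongs to $\clos{V}(C^+)$ as well. Now I would apply Lemma~\ref{l:profinite-root-extraction} with $u = e$: since $e$ is idempotent, $\clos{V}(e^+)$ is just the closure of $\{e\}$, which contains $e$ itself, so $\clos{V}(C^+)\cap\clos{V}(e^+)\ni e \neq\varnothing$, giving nothing new directly. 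The more effective route is to apply the lemma to an arbitrary $g\in H$.

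So the main argument runs as follows. Take any $g\in H$. The cyclic subsemigroup generated by $g$ accumulates at the idempotent $e$ (since $g^\omega = e$ is the identity of the group containing $g$), so $e\in\clos{V}(g^+)$. We have just shown $e\in\clos{V}(C^+)$, hence $e\in\clos{V}(C^+)\cap\clos{V}(g^+)\neq\varnothing$. By Lemma~\ref{l:profinite-root-extraction} applied to $u = g$, we conclude $g\in\clos{V}(C^+)$. Since $g\in H$ was arbitrary, this yields $H\subseteq\clos{V}(C^+)$, as required.

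The step I expect to be the main obstacle — or at least the point requiring the most care — is justifying that $g^\omega = e$ for every $g\in H$, i.e.\ that the idempotent power of any element of the group $H$ equals the identity of $H$, and correspondingly that $e\in\clos{V}(g^+)$. This is a standard fact about maximal subgroups in compact semigroups (the idempotent power of any group element is the group identity, and the closure of the monogenic subsemigroup $g^+$ contains $g^\omega$), but I would state it explicitly, invoking compactness of $\Om AV$ to guarantee the existence of the idempotent $g^\omega = \lim g^{n!}$ as a limit of powers of $g$. Once this accumulation fact is in place, everything else is a direct application of the preceding lemma, and the hypothesis $\pv A\subseteq\pv V$ is exactly what is needed for that lemma to apply.
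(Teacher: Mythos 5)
Your proof is correct and takes essentially the same approach as the paper's: both arguments boil down to observing that $g^\omega=h^\omega$ is the identity $e$ of $H$, that $e$ lies in the closed subsemigroup $\clos{V}(C^+)$ as well as in $\clos{V}(g^+)$, and then applying Lemma~\ref{l:profinite-root-extraction} to conclude $g\in\clos{V}(C^+)$. The only differences are cosmetic: your opening detour via the factorization $g=h\cdot(h^{-1}g)$ is never actually used, and the paper compresses the whole argument into the single identity $u^\omega=h^\omega$ (which, as you note, holds for any subgroup, not just maximal ones, since a subgroup with identity $e$ is contained in the closed maximal subgroup $H_e$).
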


For the proof of this proposition, we use the following: in a
profinite finite semigroup~$S$, given~$s\in S$, the sequence
$(s^{n!})_{n\in\nn}$ converges to an idempotent, denoted~$s^\omega$,
which is the unique idempotent in the closed subsemigroup of $S$
generated by~$s$ \cite[cf.~Proposition
3.9.2]{Almeida&ACosta&Kyriakoglou&Perrin:2020b}. At some point in the
paper we also use the notation $s^{\omega-1}$ for the inverse of
$s^{\omega+1}=s^\omega\cdot s$ in the maximal subgroup of $S$
containing the idempotent $s^\omega$.

\begin{proof}[Proof of Proposition~\ref{p:surjective-image-pure-code}]
  Let $u\in H$. Take $h\in H\cap\clos V(C^+)$. Observe that
  $u^\omega=h^\omega$, as $H$ is a subgroup. Since $\clos V(C^+)$ is a
  closed semigroup, it follows that $u^\omega\in \clos V(C^+)\cap\clos
  V(u^+)$. This yields $u\in\clos V(C^+)$ by
  Lemma~\ref{l:profinite-root-extraction}.
\end{proof}

Let $\pv V$ be a pseudovariety of finite semigroups and $C\subseteq
A^+$ be a code. If the syntactic semigroup of $C^+$ is in~$\pv V$,
then we say that $C$ is a \emph{$\pv V$-code}. In particular the
finite $\pv{A}$-codes are exactly the finite pure codes.

An injective homomorphism $\sigma\from A^+\to B^+$ is called an
\emph{encoding}; equivalently, $\sigma$ is injective on $A$ and
$\sigma(A)$ is a code. The encoding $\sigma$ is \emph{pure} if
$\sigma(A)$ is a pure code. Note that a circular homomorphism is a
pure encoding, but the converse fails~\cite[Example
7.1.4]{Berstel&Perrin&Reutenauer:2010} for the Prouhet-Thue-Morse
substitution
\begin{equation*}
  \tau\from \ltr{a}\mapsto \ltr{ab},\ \ltr{b}\mapsto \ltr{ba}.
\end{equation*}

We say that an encoding $\sigma\from A^+\to B^+$
is a \emph{$\pv V$-encoding} if $\sigma(A)$ is a $\pv V$-code. 

The following theorem may be attributed to Margolis, Sapir and
Weil~\cite{Margolis&Sapir&Weil:1995}. Since they did not explicitly
state the theorem in this form, we give a short proof for the sake of
completeness.
 
\begin{theorem}\label{t:sufficient-conditions-for-injectivity}
  Let $\sigma\from A^+\to B^+$ be a homomorphism and
  $\pv H,\pv K$ be pseudovarieties of groups.
  Suppose that:
  \begin{enumerate}
  \item $\pv H*\pv K\subseteq \pv
    H$;\label{item:sufficient-conditions-for-injectivity-1}
  \item $\sigma$ is a $\pvo
    K$-encoding.\label{item:sufficient-conditions-for-injectivity-2}
  \end{enumerate}
  Then the pro-$\pvo H$ extension
   $\prov {\pvo H}\sigma\from \Om A{\pvo H}\to \Om B{\pvo H}$
  is injective.
\end{theorem}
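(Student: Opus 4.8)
The plan is to reduce the injectivity of $\prov{\pvo H}\sigma$ to a recognizability statement, and then to establish that statement with a wreath product construction whose subgroups are controlled by hypothesis~(\ref{item:sufficient-conditions-for-injectivity-1}). First I would recast injectivity in finitary terms. Since $\pvo H\supseteq\pv N$, the semigroup $\Om A{\pvo H}$ is residually $\pvo H$ and $A^+$ is dense in it, so a standard compactness argument shows that $\prov{\pvo H}\sigma$ is injective if and only if, for every homomorphism $\psi\from A^+\to R$ into a finite $R\in\pvo H$, there is a homomorphism $\Phi\from B^+\to T$ into some finite $T\in\pvo H$ with $\ker(\Phi\circ\sigma)\subseteq\ker\psi$ (the point being that the kernel of $\prov{\pvo H}\sigma$ is the intersection of the clopen congruences $\ker(\Phi\circ\sigma)$, while the congruences $\ker\psi$ separate all points). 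Writing $C=\sigma(A)$ and using that $\sigma$ is injective with image the code $C^+$, it suffices to take for $\Phi$ a common recognizer of the finitely many languages $\sigma(\psi^{-1}(r))$, $r\in R$: pulling back along the injection $\sigma$ then forces $\ker(\Phi\circ\sigma)\subseteq\ker\psi$. Taking a product of recognizers, everything reduces to the single claim that \emph{for every $\pvo H$-recognizable language $L\subseteq A^+$, the language $\sigma(L)\subseteq B^+$ is $\pvo H$-recognizable}.

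To prove this claim I would build an explicit recognizer. Let $N$ be the syntactic semigroup of $C^+$; by hypothesis~(\ref{item:sufficient-conditions-for-injectivity-2}), that $\sigma$ is a $\pvo K$-encoding, we have $N\in\pvo K$. Let $R\in\pvo H$ recognize $L$ via $\psi$. I would then construct a homomorphism $\Phi\from B^+\to R\wr N$ into the wreath product, realizing a decoding device: the $N$-coordinate tracks the syntactic class in $N$ of the prefix read so far, which detects membership of prefixes and of complementary suffixes in $C^+$ and hence, because $C$ is a code and every element of $C^+$ factorizes uniquely, locates the codeword boundaries; the $R$-valued coordinate accumulates the product in $R$ of the $\psi$-images of the letters of $A$ decoded at those boundaries. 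A word $w\in C^+$ then satisfies $\sigma^{-1}(w)\in L$ exactly when $\Phi(w)$ falls in a suitable subset, so the image $T=\Phi(B^+)\subseteq R\wr N$ recognizes $\sigma(L)$.

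It then remains to check $T\in\pvo H$, for which it suffices to show $R\wr N\in\pvo H$, i.e.\ that its subgroups lie in $\pv H$. The projection $R\wr N\to N$ maps any subgroup $G$ onto a subgroup of $N$, which lies in $\pv K$ since $N\in\pvo K$; the kernel of this projection restricted to $G$ embeds into a finite direct power of $R$, hence is a group in $\pv H$ because $\pv H$ is closed under direct products and subsemigroups. Thus $G$ is an extension of a group of $\pv K$ by a group of $\pv H$, so by the Kaloujnine--Krasner theorem $G\in\pv H*\pv K$, which is contained in $\pv H$ by hypothesis~(\ref{item:sufficient-conditions-for-injectivity-1}). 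Therefore $R\wr N\in\pvo H$, hence so is its divisor $T$; this proves the claim, and with it the injectivity of $\prov{\pvo H}\sigma$.

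The main obstacle is the decoder construction of the second paragraph. Since $C$ need not be a prefix code, it may have unbounded deciphering delay, so the factorization of a word of $C^+$ into codewords cannot be recovered by any bounded online procedure: a prefix is a codeword boundary precisely when it and its complementary suffix both lie in $C^*$, a two-sided condition. The precise role of hypothesis~(\ref{item:sufficient-conditions-for-injectivity-2}) is to guarantee that this two-sided bookkeeping can be routed entirely through the syntactic semigroup $N$ of $C^+$, which is tame enough ($N\in\pvo K$) for the resulting wreath product to remain inside $\pvo H$.
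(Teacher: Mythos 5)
Your first and third paragraphs are sound: the compactness reduction of injectivity of $\prov{\pvo H}\sigma$ to the claim that $\sigma$ maps $\pvo H$-recognizable languages to $\pvo H$-recognizable ones is correct (it uses only injectivity of $\sigma$ on words and density of $A^+$ in $\Om A{\pvo H}$), and your Kaloujnine--Krasner analysis of a finite group that is an extension with kernel in $\pv H$ and quotient in $\pv K$ is exactly the computation that closes the real proof. The gap is the decoder of your second paragraph, and your closing paragraph concedes it without repairing it. No homomorphism $\Phi\from B^+\to R\wr N$ whose $N$-coordinate is the syntactic morphism of $C^+$ can do what you ask of it: whether a position of the word being read is a boundary of the (unique) $C$-factorization is a two-sided condition --- the prefix \emph{and} the as yet unread suffix must both lie in $C^*$ --- so the sequence of letters of $A$ ``decoded so far'' after reading a prefix $p$ is not a function of $p$, let alone of its class in $N^1$; the function component $f_b\from N^1\to R$ of your $\Phi$ is therefore not well defined. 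For a code of unbounded deciphering delay such as $C=\{a,ab,bb\}$, where $ab^k$ parses as $a(bb)^{k/2}$ or as $(ab)(bb)^{(k-1)/2}$ according to the parity of~$k$, no sequential (i.e.\ wreath-product) device can recover the factorization online. Hypothesis~\ref{item:sufficient-conditions-for-injectivity-2} constrains the group content of $N$; it does nothing to restore determinism of decoding, so it cannot rescue the construction.

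The standard repair --- and the route the paper actually takes --- is to replace the wreath product by an \emph{unambiguous} product: instead of computing the factorization online, one guesses it, and uniqueness of the factorization (because $C$ is a code) makes the guessing unambiguous, which is what keeps the algebra under control. Concretely, the paper invokes Weil (building on Le Rest--Le Rest) for the fact that the \emph{sagittal semigroup} of $C$ --- the object that genuinely carries the two-sided parsing data you tried to route through $N$ --- lies in $\pvo K$ whenever the syntactic semigroup of $C^+$ does; then Proposition~2.1 of Margolis--Sapir--Weil reduces injectivity of $\prov{\pvo H}\sigma$ to closure of $\pvo H$ under unambiguous products with this sagittal semigroup; and their Lemma~1.3 --- the unambiguous-product analogue of your subgroup computation --- derives that closure from $\pv H*\pv K\subseteq\pv H$. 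An alternative of the same flavour is the Pin--Sakarovitch unambiguous matrix representation of the decoding function, mentioned in Remark~\ref{r:alternative-proof-of-lemma-image-under-encoding-using-Pin-Sakarovitch}. Either way, unambiguity is the missing idea; your reduction and your group-extension argument then slot in essentially unchanged.
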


\begin{proof}
  We refer the reader to \cite{Margolis&Sapir&Weil:1995} for the
  definitions of \emph{sagittal semigroup} and of \emph{unambiguous
    product of semigroups}, which are used in this proof. By
  \cite[Corollary~1 of Theorem~4.9]{Weil:1986} (which is based on
  \cite[Theorem 3]{LeRest&LeRest:1980a}), we know that the sagittal
  semigroup of $\sigma(A)$ is in $\pvo K$, and by \cite[Proposition
  2.1]{Margolis&Sapir&Weil:1995}, if $\sigma$ is injective then
  the extension $\prov {\pvo H}\sigma$ is injective provided the unambiguous
  product of every semigroup of $\pvo H$ with the sagittal semigroup
  of $\sigma(A)$ is still in~$\pvo H$. By \cite[Lemma
  1.3]{Margolis&Sapir&Weil:1995}, such unambiguous product indeed
  belongs to $\pvo H$ whenever $\pv H*\pv K\subseteq\pv H$.
\end{proof}

When $\pv K$ is the trivial pseudovariety, then
condition~\ref{item:sufficient-conditions-for-injectivity-1} in the
statement of the theorem holds trivially,
while~\ref{item:sufficient-conditions-for-injectivity-2} means that
$\sigma$ is a pure encoding. Thus, if $\sigma$ is pure, then $\prov
{\pvo H}\sigma$ is injective for every pseudovariety of groups \pv H.

In Theorem~\ref{t:sufficient-conditions-for-injectivity}, condition
\ref{item:sufficient-conditions-for-injectivity-1} cannot be omitted.
We illustrate this with the following example.

\begin{example}
  \label{e:induced-barH-homomorphism-not-injective-for-H-encoding}
  Let $\pv{H}$ be a nontrivial locally finite pseudovariety of groups
  (\emph{locally finite} means that all finitely generated pro-\pv{V}
  semigroups are finite). Let $A = \{\ltr{a}\}$ be a one-letter
  alphabet and $n$ be the order of $\Om AH$. The free pro-\pvo{H}
  semigroup $\Om A{\pvo{H}}$ consists of all powers $\ltr{a}^k$ with
  $k$ a positive integer, which are all distinct powers, together with
  a group of order $n$. Consider the homomorphism $\sigma\from A^+\to
  A^+$ defined by $\sigma(\ltr{a}) = \ltr{a}^n$. The syntactic
  semigroup of $\sigma(A^+)$ is $\zz/n\zz$, so $\sigma$ is an
  $\pv{H}$-encoding, hence also an $\pvo{H}$-encoding. The unique
  idempotent $e$ of $\Om A{\pvo{H}}$ satisfies $\prov {\pvo
    H}\sigma(e) = \prov {\pvo H}\sigma(e\ltr{a})$, whereas $e\neq
  e\ltr{a}$; hence, $\prov {\pvo H}\sigma\from\Om A{\pvo{H}}\to\Om
  A{\pvo{H}}$ is not injective. Note however that $\pv{H}*\pv{H}$ is
  not contained in $\pv{H}$.
\end{example}

\subsection{Finitely generated profinite semigroups}
\label{sec:finitely-generated}

A profinite semigroup $S$ is said to be \emph{$n$-generated} if it has
a subset of cardinality at most $n$ which generates a dense
subsemigroup. We also say that $S$ is \emph{finitely generated} if it
is $n$-generated for some $n\in\nn$. This subsection collects a number
of useful facts on finitely generated profinite semigroups

Given two profinite semigroups $S$ and $R$, let $\hom(S,R)$ be the set
of continuous semigroup homomorphisms $S\to R$. The monoid of
continuous endomorphisms of a profinite semigroup $S$ is denoted
$\en(S)$.

Hunter proved that, when $S$ is finitely generated, the monoid
$\en(S)$ is a profinite semigroup for the pointwise
topology~\cite[Proposition~1]{Hunter:1983}. This was rediscovered by
the first author, who moreover observed the equality between the
pointwise and compact-open
topologies~\cite[Proposition~4.13]{Almeida:2003cshort}. The next
result is a generalization of this to general hom-sets $\hom(S,R)$.

\begin{theorem}\label{t:hunter}
  Let $S$ and $R$ be finitely generated profinite semigroups. Then,
  the compact-open topology on $\hom(S,R)$ agrees with the
  pointwise topology. Under this topology, $\hom(S,R)$ is a Stone space.
\end{theorem}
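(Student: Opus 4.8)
The plan is to prove Theorem~\ref{t:hunter} by first establishing that $\hom(S,R)$ is a Stone space in the pointwise topology, and then showing that this topology coincides with the compact-open topology. Since $R$ is profinite, it is (by Numakura's theorem) a Stone space, and $R^S = \prod_{s\in S} R$ with the product topology is compact by Tychonoff. The set $\hom(S,R)$ sits inside $R^S$ as the subset of continuous homomorphisms, so the first task is to show it is a closed subspace: being a closed subspace of a Stone space, it will then itself be a Stone space. The pointwise topology on $\hom(S,R)$ is by definition the subspace topology inherited from $R^S$.

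The key step is verifying closedness. I would argue that both the homomorphism condition and the continuity condition are closed conditions in the pointwise topology. For the homomorphism condition, fix $s,t\in S$; the set $\{f\in R^S : f(st)=f(s)f(t)\}$ is closed because evaluation maps $f\mapsto f(s)$ are continuous into the Hausdorff space $R$ and multiplication in $R$ is continuous, so the equalizer of the two continuous maps $f\mapsto f(st)$ and $f\mapsto f(s)f(t)$ is closed; intersecting over all pairs $(s,t)$ keeps the set closed. The more delicate part is continuity: I would exploit that $S$ is finitely generated, so continuity of a homomorphism is determined by its behaviour on a finite generating set together with compactness. A cleaner route uses residual finiteness: a function $f\colon S\to R$ that is a homomorphism is automatically continuous if and only if its restriction respects the profinite structure, and because $S$ is finitely generated with $R$ finite-by-finite (an inverse limit of finite semigroups), every continuous homomorphism factors through finite quotients. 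Concretely, I would show $\hom(S,R)=\varprojlim \hom(S_i,R_j)$ over finite quotients $S_i$ of $S$ and finite approximations $R_j$ of $R$, each $\hom(S_i,R_j)$ being a finite discrete set; this exhibits $\hom(S,R)$ as an inverse limit of finite discrete spaces, hence a Stone space, and simultaneously identifies the pointwise topology as the inverse-limit topology.

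For the agreement of the two topologies, the pointwise topology is always coarser than the compact-open topology, so only the reverse inclusion requires work: I must show every compact-open subbasic set is pointwise-open. A subbasic compact-open set has the form $W(K,U)=\{f : f(K)\subseteq U\}$ with $K\subseteq S$ compact and $U\subseteq R$ open. Since $S$ is finitely generated and profinite, hence compact and totally disconnected, and $R$ is totally disconnected, I would use that $U$ can be taken clopen and that continuous homomorphisms on the finitely generated $S$ are uniformly controlled by their values on finitely many points (via the finite quotient through which a given neighbourhood is detected). This lets me cover $K$ by finitely many clopen pieces on which membership in $W(K,U)$ is detected by finitely many pointwise conditions, showing $W(K,U)$ is pointwise-open.

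**The main obstacle** I anticipate is the continuity half of the closedness argument: a pointwise limit of continuous homomorphisms need not obviously be continuous without using finite generation of $S$ in an essential way. The cleanest resolution is almost certainly the inverse-limit description $\hom(S,R)\cong\varprojlim_{i,j}\hom(S_i,R_j)$, which I expect to handle both the Stone-space conclusion and the identification of topologies in one stroke, following and generalizing Hunter's original approach \cite{Hunter:1983} and its reformulation in \cite{Almeida:2003cshort}; the finite generation of both $S$ and $R$ is what guarantees each finite-level hom-set is finite and that the projective system is surjective with the required directedness.
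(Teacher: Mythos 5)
Your proposal is essentially correct, but it takes a genuinely different route from the paper, which does not re-prove Hunter's theorem at all: instead it \emph{reduces} the hom-set statement to the known endomorphism case by forming the topological coproduct $U$ of $S$, $R$ and $\{0\}$ with all mixed products set to $0$ (a finitely generated profinite semigroup), embedding $\hom(S,R)$ into $\en(U)$ via the map $\xi$ extending each $\varphi$ by sending $R\cup\{0\}$ to $0$, observing that $\xi([K,O]_{\hom(S,R)})=[K,O]_{\en(U)}\cap\img(\xi)$ makes $\xi$ a topological embedding for both topologies simultaneously, and that $\img(\xi)$ is closed in $\en(U)$; the coincidence of topologies and the Stone property are then inherited from Hunter's result for $\en(U)$. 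Your finite-approximation argument, by contrast, re-derives the theorem from scratch, and it does go through, modulo two repairs. First, the double-indexed family $\hom(S_i,R_j)$ is \emph{inverse} in $j$ but \emph{direct} in $i$ (precomposition along $S_{i'}\to S_i$ goes the wrong way), so ``$\varprojlim_{i,j}$'' is a misstatement; the correct identity is $\hom(S,R)\cong\varprojlim_j\hom(S,R_j)$, where each $\hom(S,R_j)$ is finite because a continuous homomorphism into the finite discrete $R_j$ is determined by its values on a finite topological generating set of $S$ — and since $\hom(S,R_j)$ is finite, one may choose $i(j)$ so that all its elements factor through $S_{i(j)}$, recovering your formula along a cofinal subfamily. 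Second, the transition maps $\hom(S,R_{j'})\to\hom(S,R_j)$ need \emph{not} be surjective (homomorphisms do not generally lift along $R_{j'}\twoheadrightarrow R_j$: the identity of $\mathbb{Z}/2\mathbb{Z}$ does not lift to $\mathbb{Z}/4\mathbb{Z}$), but surjectivity is irrelevant — an inverse limit of finite discrete spaces is a Stone space regardless, and its topology agrees with the pointwise one precisely because membership in $\{h: p_j\circ h=p_j\circ f\}$ is detected by finitely many generator values; the same observation, combined with interpolating a clopen set $f(K)\subseteq V\subseteq U$ pulled back from a finite quotient $R_j$, completes your compact-open argument. As for what each approach buys: the paper's reduction is shorter given Hunter's theorem as a black box, whereas your direct argument is self-contained and reveals that only finite generation of $S$ is actually used, while the paper's construction needs $R$ finitely generated as well, in order for $U$ to be finitely generated.
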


Before the proof, we need to set up some notation. Let $X$ and $Y$ be
two topological spaces and let $F$ be a set of functions from $X$ to
$Y$. Given a compact subset $K$ of~$X$ and an open subset $O$ of $Y$,
we denote $[K,O]_F$ the set of all $f\in F$ such that $f(K)\subseteq
O$; these sets are a subbasis for the \emph{compact-open} topology on
$F$. When $K$ runs only over singleton subsets of $X$, we obtain a
subbasis for the pointwise topology instead.

\begin{proof}[Proof of Theorem~\ref{t:hunter}]
  Let $U$ be the topological coproduct of $S$, $R$, and $\{0\}$ and
  extend the multiplications of $S$ and $R$ by declaring all other
  products in $U$ to be~0. In this way, $U$ becomes a finitely
  generated profinite semigroup. We extend each element $\varphi$ of
  $\hom(S,R)$ to a continuous endomorphism $\xi(\varphi)$ of~$U$ by
  mapping $R\cup\{0\}$ to~$0$. Note that $\xi$ is an injective mapping.

  For a compact subset $K\subseteq S$ and an open subset $O\subseteq
  R$, we have
  \begin{equation*}
    \xi([K,O]_{\hom(S,R)})=[K,O]_{\en(U)}\cap\img(\xi).
  \end{equation*}
  This shows that $\xi$ is a topological embedding, both when the
  compact-open and pointwise topologies are considered in both the
  domain and range of~$\xi$. Since the two topologies coincide on
  $\en(U)$ as observed above, it follows that the two topologies also
  coincide on~$\hom(S,R)$. Since the image of $\xi$ has complement the open
  union $\bigcup_{s\in S}[\{s\},S\cup\{0\}]$ and $\en(U)$ is a Stone by \cite[Proposition~1]{Hunter:1983}, it follows that the image of $\xi$ is also a Stone space. 
  Since $\xi$ is a topological embedding, we conclude that $\hom(S,R)$ is a Stone space.
\end{proof}

For each pair of profinite semigroups $S$ and $R$ the \emph{evaluation mapping} is the mapping
$\hom(S,R)\times S\to R$ sending each pair $(\varphi,s)$ to $\varphi(s)$. 
We need the following corollary of Theorem~\ref{t:hunter} for several of our proofs.

\begin{corollary}
  \label{c:evaluation-mapping-is-continuous}
  Let $S$ and $R$ be finitely generated profinite semigroups.
  Consider in $\hom(S,R)$ the pointwise topology.
  Then the evaluation mapping $\hom(S,R)\times S\to R$ is continuous.
\end{corollary}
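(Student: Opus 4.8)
**

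The plan is to prove that the evaluation mapping $e\from\hom(S,R)\times S\to R$, $(\varphi,s)\mapsto\varphi(s)$, is continuous, using the fact established in Theorem~\ref{t:hunter} that $\hom(S,R)$ carries the pointwise topology, which coincides with the compact-open topology, and that under this topology $\hom(S,R)$ is a Stone space (in particular compact). Since $S$ and $R$ are finitely generated profinite semigroups, they are compact metrizable Stone spaces, and $\hom(S,R)$ is likewise compact. Thus $\hom(S,R)\times S$ is compact, and to prove continuity of $e$ it suffices to verify continuity at each point, which I would do via nets or sequences.

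First I would fix a point $(\varphi_0,s_0)$ and a basic open neighbourhood of its image $\varphi_0(s_0)$ in $R$; since $R$ is a profinite semigroup, I may take this neighbourhood to be the preimage $\pi^{-1}(\pi(\varphi_0(s_0)))$ of a point under a continuous homomorphism $\pi\from R\to F$ onto a finite semigroup $F\in\pv S$, as such clopen sets form a basis for the topology of $R$. It then suffices to find neighbourhoods $\mathcal U$ of $\varphi_0$ in $\hom(S,R)$ and $W$ of $s_0$ in $S$ such that $\pi(\varphi(s))=\pi(\varphi_0(s_0))$ for all $(\varphi,s)\in\mathcal U\times W$. The key tool is the compact-open description of the topology on $\hom(S,R)$: the set $\mathcal U=[K,O]_{\hom(S,R)}$ for a suitable compact $K$ containing $s_0$ and suitable clopen $O\subseteq R$ with $\varphi_0(K)\subseteq O$ is open. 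Because the two topologies agree by Theorem~\ref{t:hunter}, I am free to exploit whichever description is convenient at each step.

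The main idea is to choose $K$ to be a clopen neighbourhood $W$ of $s_0$ on which $\varphi_0$ is ``constant modulo $\pi$''. Concretely, since $\varphi_0$ and $\pi$ are continuous and $S$ is a Stone space, the composite $\pi\circ\varphi_0\from S\to F$ is continuous into the finite discrete semigroup $F$, so the fibre $W=(\pi\circ\varphi_0)^{-1}(\pi(\varphi_0(s_0)))$ is a clopen (hence compact) neighbourhood of $s_0$. Setting $O=\pi^{-1}(\pi(\varphi_0(s_0)))$, which is clopen in $R$, we have $\varphi_0(W)\subseteq O$, so $\varphi_0\in[W,O]_{\hom(S,R)}=:\mathcal U$, and this set is open in the compact-open (hence pointwise) topology. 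Now for any $(\varphi,s)\in\mathcal U\times W$ we have $s\in W$ and $\varphi(W)\subseteq O$, whence $\varphi(s)\in O$, that is $\pi(\varphi(s))=\pi(\varphi_0(s_0))$, as required.

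The only delicate point is ensuring that the sets produced are genuinely open in the relevant topologies and that $W$ is compact so that $[W,O]$ is a legitimate subbasic compact-open set; this is exactly where the Stone space structure of $S$ and $R$ (giving clopen, hence compact, fibres of continuous maps to finite semigroups) and the coincidence of the two topologies from Theorem~\ref{t:hunter} are indispensable. Once these are in place the verification is immediate. An alternative and perhaps cleaner route, avoiding explicit neighbourhood bookkeeping, is to argue by nets: given $(\varphi_i,s_i)\to(\varphi_0,s_0)$, pointwise convergence gives $\varphi_i(s_0)\to\varphi_0(s_0)$, and one upgrades this to $\varphi_i(s_i)\to\varphi_0(s_0)$ using equicontinuity of the family $\{\varphi_i\}$, which follows from compactness of $\hom(S,R)$ together with the uniform structure on the compact metrizable semigroup $S$. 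Either way, the substantive input is Theorem~\ref{t:hunter}; the rest is a routine, if careful, topological argument.
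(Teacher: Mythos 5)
Your proposal is correct, and its pivot is the same as the paper's: both arguments use Theorem~\ref{t:hunter} to identify the pointwise topology on $\hom(S,R)$ with the compact-open topology, and then invoke continuity of evaluation for the latter. The difference lies in how that last fact is obtained. The paper disposes of it by citing Bourbaki's general result on continuity of the evaluation mapping for the compact-open topology (valid whenever the domain is locally compact, in particular compact), whereas you re-prove it from scratch in the profinite setting: since $R$ is profinite, the sets $O=\pi^{-1}(\pi(\varphi_0(s_0)))$, for $\pi\from R\to F$ a continuous homomorphism onto a finite semigroup, form a clopen neighbourhood basis at $\varphi_0(s_0)$; the fibre $W=(\pi\circ\varphi_0)^{-1}(\pi(\varphi_0(s_0)))$ is a clopen, hence compact, neighbourhood of $s_0$; and $[W,O]_{\hom(S,R)}\times W$ is then a neighbourhood of $(\varphi_0,s_0)$ that evaluation maps into $O$. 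This yields a self-contained argument in which zero-dimensionality substitutes for the local-compactness input of the general theorem --- a clean specialization, though it proves nothing beyond the cited result. Two minor remarks: the metrizability of $S$ and $R$ that you invoke at the outset plays no role in your main argument and can be dropped; and the alternative route sketched at the end, upgrading $\varphi_i(s_0)\to\varphi_0(s_0)$ to $\varphi_i(s_i)\to\varphi_0(s_0)$ via equicontinuity of $\{\varphi_i\}$, is the one genuinely underjustified step (it silently uses the converse half of Ascoli's theorem to extract equicontinuity from compactness of $\hom(S,R)$), but as it is offered only as an aside it does not affect the correctness of the proposal.
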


\begin{proof}
  The evaluation mapping is continuous  under the compact-open
  topology of $\hom(S,R)$~\cite[Corollary~X.3.1]{Bourbaki:1998GTb}, which agrees
  with the pointwise topology by Theorem~\ref{t:hunter}.
\end{proof}

\section{Profinite categories}
\label{sec:profinite-cats}

In this paper, a \emph{graph} is a structure $\Gamma$ which consists
of two disjoint sets $\vertex(\Gamma)$ and $\edg(\Gamma)$, called the
\emph{vertex set} and the \emph{edge set}, together with two
\emph{adjacency mappings} $\init_\Gamma, \term_\Gamma\from
\vertex(\Gamma)\to \edg(\Gamma)$, called the \emph{domain mapping}
and \emph{range mapping} respectively. The set of \emph{composable edges}, also called \emph{consecutive edges}, is the subset of $\edg(\Gamma)\times \edg(\Gamma)$ given by
\begin{equation*}
  D(\Gamma) = \{ (u,v)\in \edg(\Gamma)\times \edg(\Gamma) : \init_\Gamma(u)=\term_\Gamma(v)\}.
\end{equation*}
When the graph $\Gamma$ is clear from the context, we may simply write $V$,
$E$, $D$, $\init$ and $\term$. We say that $u$ is an edge \emph{from $\alpha(u)$ to $\omega(u)$}.
An edge $u$ from a vertex $q$ to the same vertex $q$ is called a \emph{loop} at $q$.

Every small category $C$ is a graph: the set $V$
is the set of objects of $C$, the set $E$ is the set of morphisms of $C$,
and the adjacency mappings $\init,\term\from E\to V$ send a morphism to its domain and codomain respectively.
The loops of $C$ are the endomorphisms of objects of $C$.

The \emph{consolidation} of a small category $C$ is the semigroup
  $C_{cd}=E(C)\uplus\{0\}$ such that $0$ is an element not in $E(C)$,
  which is a zero of $C_{cd}$, with the semigroup operation on $C_{cd}$  being
  the following natural extension of the composition on $C$:
  \begin{equation*}
    fg=
    \begin{cases}
      f\circ g&\text{ if $(f,g)$ is a pair of composable edges of $C$},\\
      0&\text{ otherwise}.
    \end{cases}
  \end{equation*}
Green's relations on the consolidation of
$C$ restrict on $E(C)$ to relations which are called \emph{Green's relations} of~$C$.

A \emph{graph homomorphism} $\Gamma\to\Delta$ is a mapping
$\varphi\from\vertex(\Gamma)\cup
\edg(\Gamma)\to\vertex(\Delta)\cup\edg(\Delta)$ which maps vertices
to vertices, edges to edges, and satisfies the following equations for
every $u\in\edg(\Gamma)$:
\begin{equation*}
  \init_\Delta(\varphi(u)) = \varphi(\init_\Gamma(u)),\quad
  \term_\Delta(\varphi(u)) = \varphi(\term_\Gamma(u)).
\end{equation*}
Note that, under these conditions, $(\varphi(u),\varphi(v))\in
D(\Delta)$ for all $(u,v)\in D(\Gamma)$. When $\Gamma$ and $\Delta$
are small categories, we may say that $\varphi$ is a \emph{category
  homomorphism} when $\varphi$ is a functor, which means that $\varphi$
is a graph homomorphism such that $\varphi(uv) = \varphi(u)\varphi(v)$
whenever $(u,v)\in D(\Gamma)$ and $\varphi(1_q)=1_{\varphi(q)}$ whenever $q\in V(\Gamma)$, where $1_p$ stands for the local identity on object $p$.

For a graph $\Gamma$, a \emph{path} is a word $u\in \edg(\Gamma)^+$
such that $(u[i],u[i+1])\in D_\Gamma$ for all $0\leq i<|u|-1$ (see Figure~\ref{fig:path-u}). To each vertex $q\in V(\Gamma)$ we associate
an empty path~$1_q$. In this way, we form the free category over $\Gamma$, denoted $\Gamma^*$, given by the following data:
one has $V(\Gamma^*)=V(\Gamma)$,
the set $E(\Gamma^*)$ is the set of all paths (including empty paths) in $\Gamma$,
the relations $\alpha(u)=\alpha(u[n-1])$ and $\omega(u)=\omega(u[0])$ hold for every nonempty path $u$ of length $n$,
the empty path $1_q$ is the local identity at $q$ for every $q\in V(\Gamma)$, and the composition
of two consecutive paths $u,v$ is the path $uv$.

\begin{figure}[ht]
  \centering
\begin{equation*}
  \xymatrix@C=1.1 cm{
    q_0&q_1\ar[l]_{u[0]}&q_2\ar[l]_{u[1]}&\cdots\ar[l]&q_{n-1}\ar[l]&q_n\ar[l]_(.4){u[n-1]}
  }
\end{equation*}  
  \caption{Path $u$ of length $n$, seen as a composition of edges $u[i]$ from $q_{i+1}$ to $q_{i}$, for $0\leq i<n$}
  \label{fig:path-u}
\end{figure}

A \emph{topological graph} is a graph $\Gamma$ with topologies on $V$
and $E$ such that the incidence mappings $\init$, $\term$ are
continuous; a \emph{topological category} is a small category, as an
algebraic structure, with topologies on $V$ and $E$ making continuous the
incidence mappings and the category operations (i.e., the composition mapping and the mapping $q\in V(C)\mapsto 1_q\in E(C)$). We say that
a topological graph or category is \emph{compact} when both $V$ and
$E$ are compact spaces (recall that we include the Hausdorff property
in the definition of compactness).

A graph or category is called \emph{finite-vertex} when $V$ is finite,
and \emph{finite} when both $V$ and $E$ are finite. A \emph{profinite
  graph} is an inverse limit of finite discrete graphs; a \emph{profinite
category} is likewise an inverse limit of finite categories. When
equipped with continuous category homomorphisms (i.e., both vertex and
edge components of the homomorphism are continuous), profinite
categories form a category in the usual sense of category theory.

Let $\catpro$ denote the category of profinite semigroups, where
morphisms are continuous semigroup homomorphisms.
For each set $\mathcal F$ of profinite semigroups, denote by
$\catpro[\mathcal{F}]$ the full subcategory of
$\catpro$ whose objects are the elements of $\mathcal F$.
Let us say that the category $\catpro[\mathcal{F}]$ \emph{is equipped with the pointwise topology} if
it is equipped with the following topological structure:
\begin{enumerate}
\item $\mathcal{F}$ has the discrete topology;
\item each hom-set $\hom(S,R)$, with $S,R\in \mathcal{F}$, is endowed with the pointwise topology (which agrees with the compact-open topology when $S$ and $R$ are finitely generated, by Theorem~\ref{t:hunter});
\item the set of morphisms of $\catpro[\mathcal{F}]$ is endowed with the coproduct
  topology of the spaces of the form $\hom(S,R)$, with $S,R\in \mathcal{F}$.
\end{enumerate}

\begin{proposition}\label{p:hom-is-a-profinite-category}
  Let $\mathcal{F}$ be a set of finitely generated profinite semigroups.
  When equipped with the pointwise topology, the category $\catpro[\mathcal{F}]$ is a topological category.
  If moreover $\mathcal{F}$ is finite, then $\catpro[\mathcal{F}]$ is a profinite category.
\end{proposition}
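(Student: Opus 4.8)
The plan is to treat the two assertions separately, handling the continuity of the structural operations first and then bootstrapping to profiniteness via the consolidation. Write $V=\mathcal F$ for the object space and $E=\bigsqcup_{S,R\in\mathcal F}\hom(S,R)$ for the morphism space, with $E$ carrying the coproduct topology. The continuity of the incidence maps $\init,\term\from E\to V$ and of the identity-assigning map $q\mapsto 1_q$ is immediate: a map out of a coproduct is continuous iff its restriction to each summand is, and on the summand $\hom(S,R)$ the maps $\init$ and $\term$ are the constants $S$ and $R$, valued in the discrete space $V$; likewise every map out of the discrete space $V$ is continuous. The only substantial point is joint continuity of composition, which (after decomposing over the discrete index $V$) amounts to showing that $\hom(S,R)\times\hom(T,S)\to\hom(T,R)$, $(\varphi,\psi)\mapsto\varphi\circ\psi$, is continuous for fixed $S,R,T\in\mathcal F$.

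For this I would use that the pointwise topology on $\hom(T,R)$ is exactly the subspace topology inherited from the product $R^T$, so that a map into $\hom(T,R)$ is continuous iff each of its evaluations at a point $t\in T$ is continuous. The evaluation at $t$ of the composite is $(\varphi,\psi)\mapsto\varphi(\psi(t))$, which factors as $(\varphi,\psi)\mapsto(\varphi,\psi(t))\mapsto\varphi(\psi(t))$. The inner map $\psi\mapsto\psi(t)$ is continuous by definition of the pointwise topology (it is an evaluation), so $(\varphi,\psi)\mapsto(\varphi,\psi(t))$ is continuous into $\hom(S,R)\times S$; composing with the evaluation mapping $\hom(S,R)\times S\to R$, which is continuous by Corollary~\ref{c:evaluation-mapping-is-continuous} since $S$ and $R$ are finitely generated, yields the desired continuity. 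This settles that $\catpro[\mathcal F]$ is a topological category.

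For the second statement, assume $\mathcal F$ finite. Then $V$ is finite and $E$ is a finite coproduct of the Stone spaces $\hom(S,R)$ (each Stone by Theorem~\ref{t:hunter}), hence itself a Stone space. The plan is to realize $\catpro[\mathcal F]$ as an inverse limit of finite categories by passing through its consolidation $\catpro[\mathcal F]_{cd}=E\uplus\{0\}$. Because $\init,\term$ take values in the finite set $V$, the set $D$ of composable pairs is clopen in $E\times E$; since composition is continuous on $D$ by the first part and the product is the constant $0$ off $D$, the multiplication of $\catpro[\mathcal F]_{cd}$ is continuous, so $\catpro[\mathcal F]_{cd}$ is a compact semigroup on a Stone space and is therefore profinite by Numakura's theorem (\cite[Theorem~1]{Numakura:1957}). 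Writing $\catpro[\mathcal F]_{cd}=\varprojlim_i S_i$ with continuous onto homomorphisms $\pi_i\from\catpro[\mathcal F]_{cd}\to S_i$ onto finite semigroups, directed and separating points, I would define for each $i$ the equivalence $f\approx_i g$ on $E$ by requiring $\init(f)=\init(g)$, $\term(f)=\term(g)$, and $\pi_i(f)=\pi_i(g)$. One checks that each $\approx_i$ is a clopen finite-index congruence respecting composability and composition, so the quotient $E/{\approx_i}$ over the unchanged object set $V$ is a finite category $C_i$ with continuous quotient functor. The family $(\approx_i)_i$ is directed, and since the $\pi_i$ separate points together with $\init,\term$, its intersection is equality; hence the canonical functor $\catpro[\mathcal F]\to\varprojlim_i C_i$ is the identity on the finite object space and a continuous bijection of compact edge spaces, thus an isomorphism, exhibiting $\catpro[\mathcal F]$ as a profinite category.

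The main obstacle is the continuity of composition in the first part: it is not formal and genuinely requires the identification of the pointwise and compact-open topologies on hom-sets of finitely generated profinite semigroups (Theorem~\ref{t:hunter}) together with continuity of evaluation (Corollary~\ref{c:evaluation-mapping-is-continuous}). In the second part the delicate step is verifying that the relations $\approx_i$ are congruences of the \emph{partial} composition (so that their quotients are honest finite categories), for which the clopenness of $D$ and the separation of points by the $\pi_i$ are exactly what is needed.
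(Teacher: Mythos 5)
Your proof is correct, but it takes a genuinely different route from the paper's in both halves. For continuity of composition, the paper works with the compact-open topology: having identified it with the pointwise topology via Theorem~\ref{t:hunter}, it simply cites Bourbaki's general fact that composition is continuous for compact-open topologies. You instead verify pointwise continuity directly, reducing each evaluation of the composite to the jointly continuous evaluation mapping of Corollary~\ref{c:evaluation-mapping-is-continuous} (itself a consequence of Theorem~\ref{t:hunter}); this is more elementary and makes the role of finite generation explicit, while the paper's version is shorter. For the profiniteness claim the divergence is larger: the paper observes that the edge space is a finite coproduct of Stone spaces, hence Stone, and then invokes Jones's theorem that a finite-vertex topological category whose morphism space is a Stone space is profinite. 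You effectively reprove the special case of Jones's theorem that is needed, by passing to the consolidation, applying Numakura's theorem to get a profinite semigroup, and pulling the finite semigroup quotients back to finite category quotients refined by the (finite, discrete) incidence data; this is fully self-contained at the cost of length. Two steps you gloss over are routine but should be said: for $i\le j$ one needs $\approx_j\subseteq\approx_i$ so that the quotient categories $C_i$ form an inverse system, which holds because $\pi_i$ factors through $\pi_j$; and surjectivity of the canonical functor onto $\varprojlim_i C_i$ requires the standard compactness argument that a thread of compatible $\approx_i$-classes is a decreasing family of nonempty clopen subsets of the compact edge space, hence has nonempty intersection. With those two remarks supplied, your continuous bijection between compact Hausdorff edge spaces is a homeomorphism and the argument is complete.
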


\begin{proof}
  Since the topology of the set of objects of $\catpro[\mathcal{F}]$ is discrete, the mapping
  $S\mapsto 1_S$, with domain $\mathcal{F}$, is clearly continuous. 
  For every $S,R\in \mathcal{F}$, the set $\hom(S,R)$ has the compact-open topology, by Theorem~\ref{t:hunter}.
  The composition is continuous for compact-open topologies (see~\cite[Proposition~X.3.9]{Bourbaki:1998GTb}). Therefore, $\catpro[\mathcal F]$ is indeed a topological category.

  A coproduct of finitely many Stone spaces is itself a Stone space.
  Hence, when $\mathcal{F}$ is finite, the morphisms of $\catpro[\mathcal F]$ form a Stone space by Theorem~\ref{t:hunter}.
  Peter Jones showed that a finite-vertex topological category whose space of morphisms is a Stone space\footnote{Peter Jones used the term \emph{Boolean category} to refer to a topological category whose underlying topological space is a Stone space.}
  is in fact a profinite category (cf.~\cite[Theorem 4.1]{Jones:1996}).
  Therefore, if $\mathcal{F}$ is finite, then $\catpro[\mathcal{F}]$  is profinite.
\end{proof}

Let $\pv{Cat}$ be the class of all finite categories. Given a finite-vertex graph
$\Gamma$, we let $\Om \Gamma{Cat}$ denote the free profinite category
on $\Gamma$; a description of $\Om\Gamma{Cat}$ may be found in a paper
by the first two authors~\cite[Section~3.4]{Almeida&ACosta:2007a}; it can be found in earlier papers such as~\cite{Jones:1996,Almeida&Weil:1996}. The free profinite category $\Om \Gamma{Cat}$ is equipped
with an inclusion mapping $\iota\from\Gamma\to\Om \Gamma{Cat}$ with
the following universal property: for every profinite category
$\Sigma$ and graph homomorphism $\varphi\from\Gamma\to \Sigma$, there
exists a unique continuous category homomorphism
$\pro\varphi\from\Om\Gamma{Cat}\to \Sigma$ such that
$\pro\varphi\circ\iota = \varphi$.

The canonical mapping $\iota$ extends to an inclusion mapping 
$\iota^*\from\Gamma^*\to\Om\Gamma{Cat}$ which identifies $\Gamma^*$
with a dense discrete subcategory of $\Om\Gamma{Cat}$; this is similar
to how $A^+$ is identified to a dense discrete subset of $\Om AV$ when
$\pv V$ is a pseudovariety of semigroups containing $\pv{N}$. The edges of $\Om\Gamma{Cat}$ are called
\emph{pseudopaths}.

Every (profinite) monoid is viewed as a (profinite) category with only one object. Therefore, given a finite-vertex graph $\Gamma$, we
may consider the continuous category homomorphism
\begin{equation*}
\chi_\Gamma\from\Om\Gamma{Cat}\to(\Om{\edg(\Gamma)}S)^1
\end{equation*}
which collapses all vertices and maps each $b\in\edg(\Gamma)$ to the
corresponding generator of $\Om{\edg(\Gamma)}S$.
The next proposition goes back to \cite{Almeida:1996c} and is discussed in full generality
in~\cite[Proposition 3.19(ii) and Remark 6.13]{Almeida&ACosta&Goulet-Ouellet:2024a}.

\begin{proposition}
  \label{p:list-of-properties-of-chi}
  Let $\Gamma$ be a finite-vertex graph. The
  following properties hold:
  \begin{enumerate}
  \item one has $\chi(w)=1$ if and only if $w$ is a local identity;\label{item:list-of-properties-of-chi-1-and-half}
  \item the restriction of $\chi_\Gamma$ to the edges of $\Om\Gamma{Cat}$ that are not local identities is injective.\label{item:list-of-properties-of-chi-2}
  \end{enumerate}
\end{proposition}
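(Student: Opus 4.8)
The plan is to deduce both statements from the density of the finite paths $\Gamma^*$ in $\Om\Gamma{Cat}$ together with the fact that the adjoined identity $1$ is an isolated point of $(\Om{\edg(\Gamma)}S)^1$. I would prove (i) first and extract from it the topological input needed for (ii).

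For (i), the forward implication is immediate: $\chi_\Gamma$ is a functor into the one-object category $(\Om{\edg(\Gamma)}S)^1$, so it sends each local identity $1_q$ to the identity $1$. For the converse, suppose $\chi_\Gamma(w)=1$ and choose a net $(w_i)$ in $\Gamma^*$ converging to $w$. Then $\chi_\Gamma(w_i)\to 1$; as $1$ is isolated in $(\Om{\edg(\Gamma)}S)^1$, we have $\chi_\Gamma(w_i)=1$ eventually. But a nonempty finite path is sent by $\chi_\Gamma$ to a nonempty word, which lies in $\Om{\edg(\Gamma)}S$ and is therefore different from $1$; hence $w_i$ is eventually an empty path. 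Since $\vertex(\Gamma)$ is finite, the set $\{1_q:q\in\vertex(\Gamma)\}$ of empty paths is finite, thus closed, so the limit $w$ is a local identity. This proves (i), and moreover shows that $\chi_\Gamma^{-1}(1)=\{1_q:q\in\vertex(\Gamma)\}$. As $\{1\}$ is clopen in $(\Om{\edg(\Gamma)}S)^1$, the set $U=\Om\Gamma{Cat}\setminus\chi_\Gamma^{-1}(1)$ of non-local-identity edges is clopen; consequently the nonempty finite paths, which are exactly $\Gamma^*\cap U$, form a dense subset of $U$.

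For (ii), let $\Phi\from\Om\Gamma{Cat}\to C$ be an arbitrary continuous category homomorphism into a finite category $C$, and let $C_{cd}$ be its consolidation. Restriction of $\Phi$ to $\edg(\Gamma)$ yields a map into $\edg(C)\subseteq C_{cd}$, which by the universal property of $\Om{\edg(\Gamma)}S$ extends to a continuous homomorphism $\widehat{\Phi}\from(\Om{\edg(\Gamma)}S)^1\to(C_{cd})^1$. I claim that $\Phi$ and $\widehat{\Phi}\circ\chi_\Gamma$ coincide on $U$. On a nonempty finite path $w=b_0\cdots b_{n-1}$ one has $\chi_\Gamma(w)=b_0\cdots b_{n-1}$ in $\Om{\edg(\Gamma)}S$, so $\widehat{\Phi}(\chi_\Gamma(w))=\Phi(b_0)\cdots\Phi(b_{n-1})$ computed in $C_{cd}$; since $\Phi$ is a functor, consecutive factors $\Phi(b_i)$ and $\Phi(b_{i+1})$ are composable in $C$, so this product never meets the adjoined zero and equals $\Phi(b_0)\circ\cdots\circ\Phi(b_{n-1})=\Phi(w)$. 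Thus $\Phi$ and $\widehat{\Phi}\circ\chi_\Gamma$ agree on the nonempty finite paths; being continuous and agreeing on a dense subset of the clopen set $U$, they agree on all of $U$.

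The conclusion is then formal. If $w,w'\in U$ satisfy $\chi_\Gamma(w)=\chi_\Gamma(w')$, then $\Phi(w)=\widehat{\Phi}(\chi_\Gamma(w))=\widehat{\Phi}(\chi_\Gamma(w'))=\Phi(w')$ for every continuous category homomorphism $\Phi$ into a finite category. Because $\Om\Gamma{Cat}$ is a profinite category, such homomorphisms separate its points, so $w=w'$, which is (ii). I expect the main obstacle to be the bookkeeping inside the consolidation: one must ensure that a genuine path never produces the adjoined zero of $C_{cd}$ — this is exactly where functoriality of $\Phi$ enters — and that the identity established on finite paths propagates to arbitrary pseudopaths, which is why the clopen-ness of the local identities, and hence the density of the nonempty finite paths in $U$, had to be secured beforehand.
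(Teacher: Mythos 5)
Your proposal is correct. A direct comparison is only partially possible, because the paper itself contains no proof of this proposition: it defers to Proposition~3.19(ii) and Remark~6.13 of the companion reference cited just above the statement. Judged on its own terms, your argument is sound and uses only facts the present paper makes available: density of $\Gamma^*$ in $\Om\Gamma{Cat}$; the convention that $(\Om{\edg(\Gamma)}S)^1$ is the topological sum of $\Om{\edg(\Gamma)}S$ and $\{1\}$, so that $1$ is isolated and nonempty words are distinct from it; the universal property of $\Om{\edg(\Gamma)}S$, which as stated in Section~3 imposes no continuity hypothesis on the map defined on the generators, so your extension $\widehat\Phi\from(\Om{\edg(\Gamma)}S)^1\to(C_{cd})^1$ exists even when $\edg(\Gamma)$ is infinite; and the definition of profinite category as an inverse limit of finite categories, which is exactly what justifies your final point-separation step (the natural projections are continuous functors into finite categories). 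Several details deserve explicit approval: you use nets rather than sequences in (i), which is necessary since $\Om{\edg(\Gamma)}S$ need not be metrizable for infinite $\edg(\Gamma)$; the finite-vertex hypothesis enters exactly once, to make the set of empty paths $\{1_q : q\in\vertex(\Gamma)\}$ finite and hence closed, which is precisely where it belongs; and in (ii) you correctly identify and discharge the one point where functoriality of $\Phi$ is indispensable, namely that a product of consecutive composable edges in the consolidation $C_{cd}$ never degenerates to the adjoined zero (the inductive step resting on $\init(f\circ g)=\init(g)$). The resulting factorization $\Phi=\widehat\Phi\circ\chi_\Gamma$ on the clopen set $U$ of non-local-identity edges, obtained from agreement on the dense subset of nonempty finite paths, is the natural mechanism for a statement of this kind, and it simultaneously yields the useful byproduct you note, $\chi_\Gamma^{-1}(1)=\{1_q : q\in\vertex(\Gamma)\}$, sharpening (i) into the clopenness of the set of local identities.
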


Following Proposition~\ref{p:list-of-properties-of-chi}, we may view the pseudopaths of $\Om{\Gamma}{Cat}$ that are not local identities
as elements of $\Om{\edg(\Gamma)}S$: that is, if $w$ is a pseudopath of $\Om{\Gamma}{Cat}$ that is not a local identity, we may identify
$w$ with $\chi(w)$. We already apply this convention in the following statement (for a proof, see~\cite[Proposition 3.19(iii)]{Almeida&ACosta&Goulet-Ouellet:2024a}).

\begin{proposition}
  \label{p:factoriality-of-pseudopaths}
  Let $\Gamma$ be a finite-vertex graph.
  If $w$ is a pseudopath of $\Om{\Gamma}{Cat}$ and $u,v\in\Om {\edg(\Gamma)}S$ are pseudowords
  such that the equality $w=uv$ holds in $\Om{\edg(\Gamma)}S$, then $u$ and $v$ are consecutive pseudopaths of $\Om{\Gamma}{Cat}$, and the equality $w=uv$ also holds in $\Om{\Gamma}{Cat}$.
\end{proposition}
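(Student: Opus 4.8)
The plan is to detect pseudopaths, together with their endpoints, by means of a single finite semigroup, and then read everything off from a homomorphism into it. Let $B=(\vertex(\Gamma)\times\vertex(\Gamma))\cup\{0\}$ be the combinatorial Brandt semigroup on the vertex set, with product $(p,q)(r,s)=(p,s)$ if $q=r$ and $(p,q)(r,s)=0$ otherwise, and with $0$ absorbing. Since $\Gamma$ is finite-vertex, $B$ is finite, so the assignment $e\mapsto(\term(e),\init(e))$ on $\edg(\Gamma)$ extends to a unique continuous homomorphism $\beta\from\Om{\edg(\Gamma)}{S}\to B$. Directly from the definitions of $B$ and of consecutive edges, a finite word $p\in\edg(\Gamma)^+$ is a path if and only if $\beta(p)\neq 0$, in which case $\beta(p)=(\term(p),\init(p))$ records its endpoints.

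Write $P\subseteq\Om{\edg(\Gamma)}{S}$ for the set of non-identity pseudopaths, identified with their images under $\chi$ following the convention after Proposition~\ref{p:list-of-properties-of-chi}. First I would establish the two facts that drive the proof: $P=\beta^{-1}(\vertex(\Gamma)\times\vertex(\Gamma))$, and $\beta(\chi(x))=(\term(x),\init(x))$ for every non-identity pseudopath $x$ of $\Om{\Gamma}{Cat}$. The endpoint formula follows by approximating $x$ by finite paths $p_j\to x$ in $\Om{\Gamma}{Cat}$: continuity of $\chi$ and $\beta$, together with continuity of $\init,\term$ into the finite set $\vertex(\Gamma)$, gives $\beta(\chi(x))=\lim(\term(p_j),\init(p_j))=(\term(x),\init(x))$. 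For the equality of sets, I would note that $P$ is exactly the closure in $\Om{\edg(\Gamma)}{S}$ of the set of finite non-empty paths (because $\Gamma^*$ is dense in $\Om{\Gamma}{Cat}$, $\chi$ is continuous, $\Om{\Gamma}{Cat}$ is compact, and $\{1\}$ is isolated in $(\Om{\edg(\Gamma)}{S})^1$). Since $\beta^{-1}(\vertex(\Gamma)\times\vertex(\Gamma))$ is clopen and contains every finite path, it contains $P$; conversely any $x$ with $\beta(x)\neq0$ is a limit of finite words whose $\beta$-images are eventually non-zero, hence a limit of finite paths, so $x\in P$.

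With this characterization the statement becomes formal. Applying $\beta$ to the hypothesis $\chi(w)=uv$, and using that $w$ is a pseudopath, we get $\beta(u)\beta(v)=\beta(\chi(w))=(\term(w),\init(w))\neq 0$; as $0$ is absorbing in $B$, both $\beta(u)\neq0$ and $\beta(v)\neq0$, so $u,v\in P$. Thus $u=\chi(\tilde u)$ and $v=\chi(\tilde v)$ for unique non-identity pseudopaths $\tilde u,\tilde v$, by the injectivity part of Proposition~\ref{p:list-of-properties-of-chi}. Writing $\beta(u)=(\term(\tilde u),\init(\tilde u))$ and $\beta(v)=(\term(\tilde v),\init(\tilde v))$, the non-vanishing of $\beta(u)\beta(v)$ forces $\init(\tilde u)=\term(\tilde v)$, that is, $\tilde u$ and $\tilde v$ are consecutive. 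Their composite $\tilde u\tilde v$ is then a non-identity pseudopath with $\chi(\tilde u\tilde v)=\chi(\tilde u)\chi(\tilde v)=uv=\chi(w)$, so injectivity of $\chi$ on non-identity pseudopaths yields $\tilde u\tilde v=w$, which is the asserted factorization in $\Om{\Gamma}{Cat}$.

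The only genuinely non-formal step is the inclusion $\beta^{-1}(\vertex(\Gamma)\times\vertex(\Gamma))\subseteq P$, i.e.\ that the pseudopaths are \emph{precisely} the closure of the finite paths; this is also the sole place where finite-vertexness of $\Gamma$ is essential, since it is what makes $B$ finite and $\beta$ available in the first place. Everything else reduces to the absorbing behaviour of $0$ in $B$ and to the already-established properties of $\chi$ in Proposition~\ref{p:list-of-properties-of-chi}.
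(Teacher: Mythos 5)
Your proof is correct, but note that this paper contains no in-text proof to compare against: Proposition~\ref{p:factoriality-of-pseudopaths} is proved by citation to \cite[Proposition 3.19(iii)]{Almeida&ACosta&Goulet-Ouellet:2024a}, so what you have produced is a self-contained argument within the present paper's toolkit. Your route, via the incidence homomorphism $\beta\from\Om{\edg(\Gamma)}S\to B$ onto the finite aperiodic Brandt-type semigroup on $(\vertex(\Gamma)\times\vertex(\Gamma))\cup\{0\}$, is the standard device for this kind of factoriality statement, and you deploy it carefully: $B$ is finite exactly because $\Gamma$ is finite-vertex, $\beta$ exists even for infinite $\edg(\Gamma)$ by the paper's universal property of $\Om AS$ over arbitrary sets, your orientation convention $\beta(e)=(\term(e),\init(e))$ matches the paper's composition rule $\init(u)=\term(v)$ for composable pairs, and the key identification $P=\beta^{-1}(\vertex(\Gamma)\times\vertex(\Gamma))$ is correctly reduced, via density of $\Gamma^*$ and of $\edg(\Gamma)^+$, compactness of the edge space, isolation of $1$ in $(\Om{\edg(\Gamma)}S)^1$, and a convergent-subnet argument, to facts the paper already provides. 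One step deserves a line you omitted: since $u,v\in\Om{\edg(\Gamma)}S$ and this semigroup excludes $1$, the hypothesis $\chi(w)=uv\neq 1$ forces $w$ not to be a local identity by item~\ref{item:list-of-properties-of-chi-1-and-half} of Proposition~\ref{p:list-of-properties-of-chi}; this is what licenses applying your endpoint formula $\beta(\chi(w))=(\term(w),\init(w))$ to $w$. That is a one-line patch, not a gap, and everything else reduces, as you say, to the absorbing zero of $B$ and the injectivity of $\chi$ on non-identity pseudopaths.
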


\begin{remark}
  \label{r:extension-pseudowords-to-pseudopaths}
  Propositions~\ref{p:list-of-properties-of-chi} and \ref{p:factoriality-of-pseudopaths} allow us to immediately extend to pseudopaths several
definitions, properties and notation that we already introduced for pseudowords.
For example, we consider the length of a pseudopath $w$ as just being the length of $w$ seen as a pseudoword;
we know that every pseudopath $w$ of infinite length has a unique finite prefix in $\Om{\Gamma}{Cat}$ of length $n$, denoted $w{[}0,n{)}$;
we may consider the pseudopath $w^{(n)}=(w{[}0,n{)})^{-1}w$; etc.
\end{remark}

From hereon, we apply liberally the generalizations from pseudowords to pseudopaths mentioned in Remark~\ref{r:extension-pseudowords-to-pseudopaths}.

\begin{definition}[Prefix accessible pseudopaths]
Let $\Gamma$ be an arbitrary graph.
A \emph{right-infinite} path of $\Gamma$ is an element $w$ of $\edg(\Gamma)^\nn$
such that $w{[}0,n{)}$ is a path of $\Gamma$, for every $n\in\nn$.
Let $w$ be a right-infinite path of a finite-vertex graph $\Gamma$.
A cluster point in $\Om\Gamma{Cat}$ of the sequence $(w[0,n))_n$
is said to be \emph{prefix accessible} by $w$.
\end{definition}

Let $A$ be an arbitrary alphabet. A right-infinite word $w\in A^\nn$
is said to be \emph{recurrent} if for every $n\in\nn$, there is some
$m>n$ such that $w[0,n)=w[m,m+n)$. Equivalently, $w\in A^\nn$ is
recurrent when every finite factor of $w$ occurs infinitely often in
$w$. For a proof of the following proposition, see~\cite[Corollary
6.14]{Almeida&ACosta&Goulet-Ouellet:2024a}.
  
\begin{proposition}
  \label{p:idempotents-in-Pw-Cat} Let $w$ be a right-infinite path
over a finite-vertex graph $\Gamma$. Then $w$ is recurrent if and only
if there is an idempotent pseudopath in $\Om\Gamma{Cat}$  that is prefix accessible by $w$.
\end{proposition}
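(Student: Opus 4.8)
The plan is to fix $v=\omega(w[0])$, the common codomain of every prefix $w[0,n)$, and to transfer the whole statement to pseudowords. First I observe that recurrence forces the set $P=\{n\geq 1:\alpha(w[0,n))=v\}$ to be infinite: whenever $w[m]=w[0]$ (which happens for infinitely many $m$ by recurrence), consecutiveness gives $\alpha(w[0,m))=\alpha(w[m-1])=\omega(w[m])=v$, so $m\in P$. The prefixes indexed by $P$ are precisely the \emph{loops} at $v$ among the prefixes, and for $i<j$ in $P$ the interval factor $w[q_i,q_j)$ is again a loop at $v$; all of these live in the local monoid $M$ of $\Om\Gamma{Cat}$ at $v$, a profinite monoid. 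Restricting to the (countable) sub-alphabet $B=\{w[i]:i\in\nn\}$ of edges actually used, and applying Propositions~\ref{p:list-of-properties-of-chi} and~\ref{p:factoriality-of-pseudopaths}, I identify pseudopaths that are not local identities with pseudowords via $\chi_\Gamma$; idempotent loops at $v$ correspond to idempotent pseudowords, prefix-accessibility is preserved, and the relevant submonoid is metrizable since it is topologically generated by the countable set $B$. Since an idempotent pseudopath is necessarily a loop, only idempotents lying in $M$ can occur, so it is exactly the loop-prefixes (those indexed by $P$) that matter.

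For the (easier) converse direction, suppose $e$ is an idempotent prefix-accessible pseudopath, say $e=\lim_k w[0,n_k)$ with $n_k\to\infty$; then $|e|=\infty$ by continuity of the length homomorphism, and $e$ is an idempotent loop at $v$. Fix $\ell\geq 1$ and set $u=w[0,\ell)$. Since $w[0,n_k)$ has $u$ as a prefix for $n_k\geq\ell$, we get $e\in\clos S(B^*uB^*)$. Using $e=e^m$ for every $m\geq 1$ and the fact that concatenating $m$ words each containing $u$ produces a word with at least $m$ occurrences of $u$, it follows that $e\in\clos S(L_m)$, where $L_m$ is the regular language of words over $B$ with at least $m$ occurrences of $u$. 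By Theorem~\ref{t:V-recognizability} each $\clos S(L_m)$ is clopen, so $w[0,n_k)\in L_m$ for large $k$, i.e. $u$ occurs at least $m$ times in $w$. Letting $m\to\infty$ shows $u$ occurs infinitely often; as every finite factor of $w$ is a factor of some prefix, this yields recurrence of $w$.

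For the forward direction I would exploit the return structure. Using recurrence, for each $n\in P$ there is $r>n$ with $w[r,r+n)=w[0,n)$, whence $w[0,r+n)=w[0,r)\,w[0,n)$ and $r+n\in P$; this produces an infinite set $I=\{q_0<q_1<\cdots\}\subseteq P$ whose interval factors $f_{ij}=w[q_i,q_j)$ are loops at $v$ satisfying the cocycle identity $f_{ik}=f_{ij}f_{jk}$ and $w[0,q_j)=w[0,q_i)\,f_{ij}$. For each continuous homomorphism $\varphi$ of $M$ onto a finite semigroup, infinite Ramsey applied to the coloring $\{i<j\}\mapsto\varphi(f_{ij})$ gives an infinite homogeneous set on which $\varphi(f_{ij})$ is a constant value $c$; the cocycle identity forces $c=c^2$, so $c$ is idempotent. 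A diagonal argument over a countable cofinal family of such $\varphi$ (available by metrizability) then yields a single infinite subset along which the $f_{ij}$ converge to an idempotent $e\in M$.

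The main obstacle is that this $e$ is, a priori, a limit of \emph{internal} factors $w[q_i,q_j)$ rather than of prefixes, whereas prefix-accessibility is what the statement demands. This is exactly where recurrence must be used a second time. Since the finite prefixes $w[0,q_i)$ are left-cancellable by Proposition~\ref{p:letter-super-cancelativity}, one has $f_{ij}=w[0,q_i)^{-1}w[0,q_j)$, so controlling the factor-limit is equivalent to controlling tails of prefix-limits; and recurrence guarantees that each homogeneous block $f_{ij}$ reoccurs arbitrarily far out, allowing one to select the indices $q_i$ so that the prefixes $w[0,q_i)$ converge to the \emph{same} idempotent toward which the blocks converge. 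Carrying out this simultaneous selection — reconciling Ramsey-homogeneity of the internal blocks with convergence of genuine prefixes to an idempotent — is the technical heart of the argument, and the step I expect to require the most care.
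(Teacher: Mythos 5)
Your backward direction is essentially sound: from an idempotent $e=\lim_k w[0,n_k)$, the relation $e=e^m$ together with continuity of multiplication (so that $\clos S(X)\clos S(Y)\subseteq\clos S(XY)$) does put $e$ in $\clos S(L_m)$, and clopenness then forces each prefix to occur at least $m$ times. Two caveats: Theorem~\ref{t:V-recognizability} is stated for \emph{finite} alphabets, whereas $\Gamma$ may have infinitely many edges (in the paper's applications the edge set of $\catsig_{\pv V}(\bsigma)$ is uncountable), so you must first collapse all letters outside the finite alphabet of $u=w[0,\ell)$ to a single new letter before invoking recognizability; and your metrizability claim is simply false in that generality — a profinite semigroup topologically generated by a countable set need not be metrizable ($\Om BS$ with $B$ countably infinite is the paper's own counterexample). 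That error matters because your forward direction leans on a sequential diagonalization over "a countable cofinal family of $\varphi$", which is unavailable here.

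The more serious problem is the one you yourself flag: your Ramsey argument produces an idempotent that is a limit of \emph{internal} blocks $f_{ij}=w[q_i,q_j)$, not of prefixes, and the promised reconciliation is never carried out — nor can it be along the lines you sketch. Homogeneity gives $\varphi(w[0,q_j))=\varphi(w[0,q_{i_0}))\,c$ for $j>i_0$ with $c$ idempotent, i.e., prefix-limits $x$ satisfying $x=xe$ for an idempotent $e$; that makes $e$ land in the kernel of $\rstab(x)$ (consistent with Theorems~\ref{t:right-stabilizers-are-R-trivial-bands} and~\ref{t:characterization-of-L-minimal-right-stabilizers}) but gives no reason for $x$ itself, or any prefix-accessible pseudopath, to be idempotent. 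What is missing is to force the blocks to \emph{be} prefixes. Call $m$ a return point of order $n$ if $w[m,m+n)=w[0,n)$; recurrence makes each set $R_n$ infinite, and one checks directly that $m'\in R_n$ and $m\in R_{m'+n}$ imply $m+m'\in R_n$, with $w[0,m+m')=w[0,m)\,w[0,m')$. Consequently the set $Q$ of cluster points of prefixes $w[0,m)$ taken along return points of unbounded order is a nonempty closed subsemigroup of the local monoid at $v$ (the additivity above realizes any product $yx$ of two such cluster points again as a limit of prefixes at high-order return points), and a nonempty closed subsemigroup of a compact semigroup contains an idempotent. This replaces Ramsey, the cocycle bookkeeping, and the diagonalization all at once, works for arbitrary edge sets, and directly yields an idempotent prefix-accessible pseudopath. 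Note, finally, that the paper itself does not prove this proposition but cites \cite[Corollary~6.14]{Almeida&ACosta&Goulet-Ouellet:2024a}; the argument just sketched is the standard one in that line of work.
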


Let $C$ be a small category. For an edge $x$ of $C$,
the \emph{right stabilizer} in $C$ is the set
\begin{equation*}
  \rstab_C(x)=\{y\in \edg: xy=x\},
\end{equation*}
which we denote simply by $\rstab(x)$ when $C$ is clear from context.
Note that $\rstab(x)$ is a submonoid of
the monoid of loops at $\init(x)$.
Moreover, $\rstab(x)$ is a profinite semigroup when $C$ is a profinite category. Since we view monoids
as one-vertex categories, the
definition of right stabilizer applies to a semigroup $S$ as well, by
considering the monoid $S^1$.

When $S$ is a profinite semigroup, there is a unique $\green{J}$-class
which has all elements of $S$ as factors. This $\green{J}$-class is frequently called the \emph{kernel of $S$}.

\begin{theorem}\label{t:right-stabilizers-are-R-trivial-bands} Let
$\Gamma$ be an arbitrary finite-vertex graph. For every pseudopath
$x\in\Om\Gamma{Cat}$, the kernel of\/ $\rstab(x)$ is a left-zero semigroup.
\end{theorem}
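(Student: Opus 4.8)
The plan is to show that the kernel $K$ of the profinite monoid $\rstab(x)$ is completely simple and then that it is $\green{R}$-trivial; for a completely simple semigroup, being $\green{R}$-trivial is exactly the condition of being a left-zero semigroup. First I would dispose of the degenerate case: if $x$ has finite length, then $xy=x$ together with the additivity of the length homomorphism forces $|y|=0$, so $\rstab(x)=\{1_{\init(x)}\}$ and its kernel is a one-element (hence left-zero) semigroup. So assume $|x|=\infty$. As recalled before the statement, $\rstab(x)$ is a profinite monoid; therefore it is stable, its minimal ideal $K$ is its kernel, and $K$ is completely simple. For $a,b\in K$ one has $ab\le_{\green{R}}a$ and $ab\green{J}a$, whence $ab\green{R}a$ by stability; consequently $ab=a$ for all $a,b\in K$ as soon as $K$ is $\green{R}$-trivial, and conversely every left-zero semigroup is $\green{R}$-trivial. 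Thus it suffices to prove that $\green{R}$-equivalent elements of $K$ coincide.

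By the Rees structure of the completely simple semigroup $K$, this $\green{R}$-triviality breaks into two collapse statements: (a) each $\green{R}$-class of $K$ contains a unique idempotent (single $\green{L}$-class), and (b) the maximal subgroups of $K$ are trivial. Both reduce to a single assertion: \emph{if $e\in K$ is idempotent and $z\in K$ satisfies $e\green{R}z$ and $ez=z$, then $e=z$.} Indeed, in case (a) one takes $z=f$ a second idempotent in the $\green{R}$-class of $e$, so that $ef=f$; in case (b) one takes $z=g$ in the $\green{H}$-class $H_e$, so that $eg=g$. The next step is to feed in the defining relation of the stabilizer. Every $s\in\rstab(x)$ is a suffix of $x$: from $xs=x$ we get $x\le_{\green{L}}s$, so $x=ws$ for some $w\in(\Om\Gamma{Cat})^1$. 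Applying this to $e$ gives $x=we$; since $z\in\rstab(x)$ we have $xz=x$, and using $ez=z$ we compute $x=xz=wez=wz$. Hence $x=we=wz$, while $e\green{R}z$ in the ambient category means that $e$ and $z$ share all finite prefixes (and, being kernel elements, also all the finite suffixes $x[-m,-1]$ of $x$). It remains to cancel the common left factor $w$ and conclude $e=z$.

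The hard part will be precisely this cancellation of $w$. The obstacle is that $w$ may have infinite length, so Proposition~\ref{p:letter-super-cancelativity}, which cancels only \emph{finite}-length factors, does not apply directly, and indeed an equality $we=wz$ with $w$ infinite does not force $e=z$ in general; the extra constraints $xe=xz=x$, $e\green{R}z$, and the idempotency of $e$ must genuinely be used. The route I would take is to truncate: writing $x=x[0,n)\,x^{(n)}$ and cancelling the finite prefix $x[0,n)$ yields $e,z\in\rstab(x^{(n)})$ for every $n$, transporting the problem to the tails $x^{(n)}$. I would then pass to a cluster point of the sequence $(x^{(n)})_n$ and exploit the idempotency of $e$ together with $e\green{R}z$, invoking the recurrence/prefix-accessibility machinery of Proposition~\ref{p:idempotents-in-Pw-Cat} and the factoriality of pseudopaths of Proposition~\ref{p:factoriality-of-pseudopaths}, in order to pin down the second factor of $x$ lying in the fixed $\green{R}$-class of $e$ and thereby force $e=z$.

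In summary, the skeleton of the argument is short and robust — reduce to $|x|=\infty$, use stability to make $K$ completely simple, reduce left-zeroness to $\green{R}$-triviality, and reduce that to the collapse statement above via the observation that stabilizer elements are suffixes of $x$ — and the entire technical weight of the proof is concentrated in the final cancellation of the infinite common prefix $w$ governed by the constraint $xe=xz=x$. I expect that step, rather than any of the structural reductions, to be the main obstacle and the part requiring the profinite combinatorics on pseudopaths developed earlier in the paper.
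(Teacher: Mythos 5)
Your structural scaffolding is correct---the finite-length case, stability of the compact monoid $\rstab(x)$ making its kernel $K$ completely simple, the equivalence of left-zeroness with $\green{R}$-triviality for completely simple semigroups, the reduction to the collapse statement, and the observation that stabilizer elements are suffixes of $x$---but what you have is not a proof: by your own account, the step carrying the entire content, namely cancelling the infinite common left factor $w$ in $x=we=wz$, is left open, and the route you sketch for it breaks down. The machinery you point to, Proposition~\ref{p:idempotents-in-Pw-Cat} and Theorem~\ref{t:characterization-of-L-minimal-right-stabilizers}, is available only for \emph{prefix accessible} pseudopaths (and, in the first case, recurrent right-infinite paths), whereas the theorem concerns an arbitrary $x\in\Om\Gamma{Cat}$; a general infinite pseudopath need not be a cluster point of its finite prefixes (already in the one-vertex case, a pseudoword like $a^\omega b a^\omega$ is not, since its finite prefixes accumulate only on limits of powers of $a$). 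Moreover, passing from $x$ to a cluster point $x'$ of the tails $(x^{(n)})_n$ preserves $x'e=x'z=x'$ but severs all connection with $x$, so proving $e=z$ ``at $x'$'' is the same problem over again, not a reduction of it.

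For comparison: the paper proves nothing inline here; it cites Corollary~7.7 of the companion paper \cite{Almeida&ACosta&Goulet-Ouellet:2024a}. It is worth noting that, once Theorem~\ref{t:characterization-of-L-minimal-right-stabilizers} is granted, the prefix-accessible case admits a two-line argument that bypasses your entire Rees-theoretic reduction: given $y$ in the kernel of $\rstab(x)$, pick a net $(x_i)_{i\in I}$ of finite-length prefixes of $x$ with $x_i\to x$ and $x_i^{-1}x\to y$; for any $y'\in\rstab(x)$, cancelling the finite prefix $x_i$ in $xy'=x$ (Proposition~\ref{p:letter-super-cancelativity}, extended to pseudopaths as in Remark~\ref{r:extension-pseudowords-to-pseudopaths}) gives $(x_i^{-1}x)\,y'=x_i^{-1}x$, and taking limits yields $yy'=y$. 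Thus every kernel element is a left zero even for the whole stabilizer, with no need for the collapse statement, Rees coordinates, or idempotent bookkeeping. The genuinely hard part---and the actual content of the companion's treatment---is handling pseudopaths that are not prefix accessible, and on that your proposal offers no workable idea; so the gap is real and sits exactly where you predicted it would.
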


A proof of Theorem~\ref{t:right-stabilizers-are-R-trivial-bands} may be found in~\cite[Corollary 7.7]{Almeida&ACosta&Goulet-Ouellet:2024a},
and in the same paper we find a discussion about other similar results
going back to work of Rhodes and Steinberg~\cite{Rhodes&Steinberg:2001}.

The following characterization of the right stabilizer of an edge of $\Om\Gamma{Cat}$, extracted from~\cite[Corollary 7.8]{Almeida&ACosta&Goulet-Ouellet:2024a},
is used in the proof of Proposition~\ref{p:isomorphism-with-inverse-limit}. We adopt throughout the paper the topological definition of net and subnet
given by Willard~\cite[Definition 11.2]{Willard:1970}.

\begin{theorem}\label{t:characterization-of-L-minimal-right-stabilizers}
  Let $x$ be a prefix accessible pseudopath of $\Om\Gamma{Cat}$,
  with $\Gamma$ being a finite-vertex graph.
  Then an edge $y$ of $\Om\Gamma{Cat}$ belongs to the kernel of\/ $\rstab(x)$
  if and only if there is a net $(x_i)_{i\in I}$ of finite-length prefixes of $x$ such that
  $x_i\to x$ and $x_i^{-1}x\to y$.
\end{theorem}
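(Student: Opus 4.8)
The plan is to prove Theorem~\ref{t:characterization-of-L-minimal-right-stabilizers} by characterizing the kernel of $\rstab(x)$ as the set of limit points obtainable from the "tail words" $x_i^{-1}x$ as $x_i$ ranges over finite-length prefixes of $x$ converging to $x$. The key structural input is Theorem~\ref{t:right-stabilizers-are-R-trivial-bands}, which tells us the kernel is a left-zero semigroup, together with the prefix-accessibility hypothesis on $x$ and the cancellativity machinery from Proposition~\ref{p:letter-super-cancelativity} (extended to pseudopaths via Remark~\ref{r:extension-pseudowords-to-pseudopaths}).

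First I would establish the direction that is more constructive: if there is a net $(x_i)_{i\in I}$ of finite-length prefixes of $x$ with $x_i\to x$ and $x_i^{-1}x\to y$, then $y$ lies in the kernel of $\rstab(x)$. Here I would exploit the cancellation identity $x = x_i\cdot(x_i^{-1}x)$, which holds at each stage. Passing to the limit using continuity of multiplication in the profinite category $\Om\Gamma{Cat}$, and using that $x_i\to x$ while $x_i^{-1}x\to y$, one wants to conclude an identity of the form $x\cdot y = x$ (so that $y\in\rstab(x)$) together with a minimality property placing $y$ in the kernel. Care is needed because the product $x_i\cdot(x_i^{-1}x)$ equals $x$ for each $i$, but the limit of $x_i$ is $x$ itself, so one must argue that the limit product $x\cdot y$ recovers $x$; this requires the length-compatibility and the fact that the prefixes $x_i$ exhaust $x$, together with the idempotent structure furnished by Proposition~\ref{p:idempotents-in-Pw-Cat} when $x$ is prefix accessible. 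The membership in the \emph{kernel} (rather than merely in $\rstab(x)$) should follow from the left-zero structure: any such limit $y$ is $\green{J}$-minimal in $\rstab(x)$ because it arises as a tail of $x$ and hence is a factor of every element realized the same way.

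Conversely, I would show that every element $y$ of the kernel of $\rstab(x)$ arises as such a limit. Since the kernel is a left-zero semigroup by Theorem~\ref{t:right-stabilizers-are-R-trivial-bands}, its elements are idempotents $e$ satisfying $xe = x$ and $ef = e$ for all $f$ in the kernel. Using prefix accessibility, I would select an idempotent pseudopath $e_0$ prefix accessible by $x$ (via Proposition~\ref{p:idempotents-in-Pw-Cat}, after checking $x$ restricts to a recurrent right-infinite path, or directly from the net of prefixes whose cluster points generate the kernel). Then, given an arbitrary kernel element $y$, I would build the required net by choosing, cofinally, finite prefixes $x_i$ of $x$ such that $x_i^{-1}x$ approximates $y$; the left-zero identity $e\cdot y = e$ and the equation $x = x\cdot e = x\cdot y$ let me transfer approximations of $e$ to approximations of $y$. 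The technical device is to use the characterization of the kernel as the set of all subnet-limits of the tails, together with the compactness of $\Om\Gamma{Cat}$ to extract convergent subnets.

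The hard part will be the bookkeeping of the nets and the passage to the limit in the conversion between the factorization $x = x_i\cdot(x_i^{-1}x)$ and the stabilizer/kernel conditions: one must ensure that the limit of the tails $x_i^{-1}x$ genuinely lands in $\rstab(x)$ and is $\green{J}$-minimal there, which is exactly where the left-zero structure (Theorem~\ref{t:right-stabilizers-are-R-trivial-bands}) and the stability of compact semigroups are indispensable. A secondary subtlety is that $x_i^{-1}x$ is only defined once $|x_i|$ is finite and $x$ has infinite length, so I would first record that prefix-accessible $x$ has infinite length and that the operation $w\mapsto w^{(n)}$ from Remark~\ref{r:extension-pseudowords-to-pseudopaths} is available throughout; with those in hand the two inclusions combine to give the stated equivalence.
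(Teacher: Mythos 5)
Note first that this paper does not actually prove Theorem~\ref{t:characterization-of-L-minimal-right-stabilizers}: it imports it from the companion paper \cite{Almeida&ACosta&Goulet-Ouellet:2024a} (Corollary~7.8 there). So your proposal must stand on its own, and as written it has two problems: one fixable under-justification and one genuinely circular step.

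In the direction ``net $\Rightarrow$ kernel'', continuity of composition does give $xy=\lim x_i(x_i^{-1}x)=x$, so $y\in\rstab(x)$ (no recurrence, no Proposition~\ref{p:idempotents-in-Pw-Cat}, and no ``exhaustion'' argument is needed here; your appeal to Proposition~\ref{p:idempotents-in-Pw-Cat} is a red herring, since that proposition requires recurrence of the underlying right-infinite path, which is not assumed). But your justification of membership in the \emph{kernel} --- ``any such limit is $\green{J}$-minimal because it arises as a tail of $x$'' --- is not an argument. The step that works is: for every $z\in\rstab(x)$, from $x_i(x_i^{-1}x)z=xz=x=x_i(x_i^{-1}x)$ cancel the finite-length prefix $x_i$ (Proposition~\ref{p:letter-super-cancelativity} via Remark~\ref{r:extension-pseudowords-to-pseudopaths}) to get $(x_i^{-1}x)z=x_i^{-1}x$, then pass to the limit to obtain $yz=y$. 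Thus $y$ is a left zero of $\rstab(x)$, and a left zero lies in every ideal (if $u\in I$ then $y=yu\in I$), hence in the kernel. Note this direction does not even use Theorem~\ref{t:right-stabilizers-are-R-trivial-bands}.

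The converse direction is where your proposal fails outright. You propose to ``build the required net by choosing, cofinally, finite prefixes $x_i$ of $x$ such that $x_i^{-1}x$ approximates $y$'', and your stated technical device is ``the characterization of the kernel as the set of all subnet-limits of the tails'' --- but that characterization \emph{is} the theorem being proved, so the argument is circular; nothing in the sketch produces a single prefix whose tail is close to a prescribed kernel element $y$. The missing idea is a factorization-splitting step applied to $x=xy$: taking a net of finite prefixes $x_i\to x$ (prefix accessibility) and splitting $x_i=u_iv_i$ with $u_i\to x$ and $v_i\to y$ --- the pseudopath analogue, via $\chi_\Gamma$ and Proposition~\ref{p:factoriality-of-pseudopaths}, of the openness-of-multiplication fact behind Lemma~\ref{l:metric-open-mapping} --- one observes that each $u_i$ is again a finite prefix of $x$ and that $u_i^{-1}x=v_i\,(x_i^{-1}x)$. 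Passing to a subnet along which $x_i^{-1}x\to e$, the already-proved direction puts $e$ in the kernel, and the left-zero identity $ye=y$ (this, and not your $ey=e$, is the identity that does the work, and it is exactly where Theorem~\ref{t:right-stabilizers-are-R-trivial-bands} enters) yields $u_i^{-1}x\to ye=y$. Without the splitting step and this collapse, your ``transfer of approximations from $e$ to $y$'' has no content.
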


\section{Free profinite semigroups and symbolic dynamics}
\label{s:profinite-symbolic}

The first author established a natural bijection associating to each
minimal shift space $X$ of $A^\zz$ a regular $\green{J}$-class of $\Om
AS$~\cite{Almeida:2005c}, whenever $A$ is a finite alphabet. This
subsection aims to provide sufficient background on this mapping,
which is at the core of the present paper. When checking the
literature, the reader may notice that the bijection is frequently
established in the larger realm of irreducible shift spaces
(cf.~\cite{Almeida:2003cshort}), but here we only deal with the case
of minimal shift spaces.

For a proof of the next proposition,
see~\cite[Lemma~2.3]{Almeida:2005c} or
\cite[Section~3]{ACosta&Steinberg:2011}. In the first of these
sources, only the pseudovariety $\pv S$ is explicitly mentioned, but
the arguments extend to all pseudovarieties containing $\pv N$.

\begin{proposition}
  \label{p:J-class-of-minimal-shift-space} For every pseudovariety of
semigroups \pv V containing \pv N and every minimal shift space
$X\subseteq A^\zz$, the set $\clos V(L(X))\setminus L(X)$ is contained
in a regular \green{J}-class of \Om AV.
\end{proposition}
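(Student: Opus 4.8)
The plan is to first identify the set explicitly, then show that all its elements are $\green{J}$-equivalent, and finally exhibit an idempotent in it. Write $L=L(X)$, which is uniformly recurrent since $X$ is minimal. Because $A$ is finite and $\pv V\supseteq\pv N$, Proposition~\ref{p:A+-as-filter} (and the remarks following it) give $\clos V(L)\cap A^+=L$ and $\Om AV\setminus A^+=\{u:|u|=\infty\}$, so that
\[
  M:=\clos V(L)\setminus L=\{u\in\clos V(L):|u|=\infty\}.
\]
This set is nonempty: $L$ is infinite, so its closure is an infinite compact set in which every point of $L$ is isolated, and an infinite compact space cannot be discrete. The goal is to show that $M$ is contained in a single regular $\green{J}$-class of $\Om AV$.

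For the $\green{J}$-comparability, I would first show that every $w\in L$ is a factor of every $u\in M$. Fix $u\in M$ and a sequence $u_i\in L$ with $u_i\to u$; since finite words are isolated, the lengths $|u_i|$ necessarily tend to infinity, so by uniform recurrence there is $N$ with $w\in\fac(t)$ for every $t\in L$ with $|t|\geq N$, and hence $u_i=s_iwt_i$ with $s_i,t_i\in A^*$ for all large $i$. Passing to a subsequence along which $s_i\to s$ and $t_i\to t$ in the compact space $(\Om AV)^1$, continuity of multiplication yields $u=swt$, so $u\le_{\green{J}}w$. Next, given an arbitrary $v\in\clos V(L)$, write $v=\lim v_j$ with $v_j\in L$; each $v_j$ is a factor of $u$ by the previous step, say $u=a_jv_jb_j$, and passing to a convergent subsequence $a_j\to a$, $b_j\to b$ gives $u=avb$, i.e.\ $u\le_{\green{J}}v$. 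Applying this with $v$ also ranging over $M$ shows that any two elements of $M$ are $\green{J}$-below one another, so $M$ lies in a single $\green{J}$-class $J$.

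It remains to prove that $J$ is regular, which (by stability of compact semigroups) amounts to finding an idempotent in $J$. Here I would pass to a recurrent right-infinite word: choosing any $x\in A^\nn$ with $\fac(x)\subseteq L$, uniform recurrence forces $\fac(x)=L$ and makes $x$ recurrent. By the one-vertex case of Proposition~\ref{p:idempotents-in-Pw-Cat}, the sequence of prefixes $(x[0,n))_n$ has an idempotent cluster point $e$ in $\Om AS$; since these prefixes lie in $L$ and their lengths are unbounded, $e\in\clos S(L)$ and $|e|=\infty$. Projecting through the natural homomorphism $p_{\pv S,\pv V}\from\Om AS\to\Om AV$, which carries $\clos S(L)$ onto $\clos V(L)$, preserves length, and sends idempotents to idempotents, produces an idempotent $p_{\pv S,\pv V}(e)\in M=J$. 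Hence $J$ is regular, completing the argument.

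The routine part is the $\green{J}$-comparability, where everything follows from compactness of $(\Om AV)^1$ and continuity of multiplication. The delicate step is regularity: the closure $\clos V(L)$ is \emph{not} a subsemigroup (a product of factors of points of $X$ need not be a factor), so $u^\omega$ for $u\in M$ need not lie in $\clos V(L)$, and one genuinely needs the recurrence of $L$ to manufacture an idempotent inside $M$. I therefore expect the construction of the idempotent---via a recurrent one-sided word and the cluster-point result of Proposition~\ref{p:idempotents-in-Pw-Cat}---to be the main obstacle.
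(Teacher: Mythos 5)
Your proof is correct and follows essentially the same route as the sources the paper cites for this statement (the paper itself gives no inline proof): uniform recurrence plus compactness of $(\Om AV)^1$ to show all infinite-length elements of $\clos V(L(X))$ are mutually $\green{J}$-comparable, and regularity via an idempotent cluster point of the prefixes of a recurrent right-infinite word, which you legitimately extract from Proposition~\ref{p:idempotents-in-Pw-Cat} in $\Om AS$ and transfer to $\Om AV$ through $p_{\pv S,\pv V}$---exactly the kind of argument the paper says ``extends to all pseudovarieties containing $\pv N$.'' One cosmetic slip: writing $p_{\pv S,\pv V}(e)\in M=J$ overstates what you established, since at that point you only have $M\subseteq J$ (the equality holds when $\pv{\loc Sl}\subseteq\pv V$ by Proposition~\ref{p:infinite-closure-J-class}, but can fail for general $\pv V\supseteq\pv N$); as the inclusion is all you need to place the idempotent in $J$, the argument stands.
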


For each minimal shift space $X\subseteq A^\zz$ and pseudovariety \pv
V containing \pv N, we denote by $J_\pv{V}(X)$ the $\green{J}$-class
of $\Om AV$ containing $\clos V(L(X))\setminus L(X)$. Since
$J_\pv{V}(X)$ is a regular $\green{J}$-class, it contains maximal
subgroups of $\Om AV$, and all these maximal subgroups are isomorphic
profinite groups; we denote by $G_{\pv V}(X)$ a profinite group
representing their isomorphism class. We say that $G_{\pv V}(X)$ is
the \emph{\pv V-Sch\"utzenberger group of $X$}.  

The \pv S-Sch\"utzenberger group of $X$ is a topological conjugacy invariant~\cite{ACosta:2006}.
In fact, it is a flow invariant (flow equivalence is an important relation between shift spaces that is strictly coarser
than topological conjugacy~\cite[Section 13.6]{Lind&Marcus:1996}) with
the same holding for many other pseudovarieties, as seen in the next
theorem, which is a special case of~\cite[Corollary~6.14]{ACosta&Steinberg:2021}.

\begin{theorem}
  \label{t:invariance-under-flow-equivalence}
  Let $\pv H$ be a pseudovariety of groups. If $X$ and $Y$ are flow equivalent minimal
shift spaces, then $G_{\pvo H}(X)$ and $G_{\pvo H}(Y)$ are isomorphic
profinite groups.
\end{theorem}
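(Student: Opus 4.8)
The plan is to reduce flow equivalence to a short list of elementary moves and to verify invariance of $G_{\pvo{H}}(X)$ under each. Recall that flow equivalence of shift spaces is the equivalence relation generated by topological conjugacy together with \emph{symbol expansion}: given $X\subseteq A^\zz$ and a letter $a\in A$, one adjoins a fresh symbol $\bar a$, forms $B=A\sqcup\{\bar a\}$, and replaces every occurrence of $a$ by the block $a\bar a$, producing a minimal shift $X^{(a)}\subseteq B^\zz$ that is flow equivalent to $X$; conversely any flow equivalence between minimal shifts factors as a finite composite of conjugacies, expansions, and their inverses. Since minimality is preserved throughout, it suffices to prove that $G_{\pvo{H}}$ is a topological conjugacy invariant and that $G_{\pvo{H}}(X)\cong G_{\pvo{H}}(X^{(a)})$.

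For the conjugacy case I would invoke the known invariance of $G_{\pv{S}}(X)$ under topological conjugacy \cite{ACosta:2006} and check that its argument relativizes to an arbitrary pseudovariety of groups $\pv{H}$: a conjugacy should carry the boundary $\clos{\pvo H}(L(X))\setminus L(X)$ onto $\clos{\pvo H}(L(Y))\setminus L(Y)$, hence $J_{\pvo{H}}(X)$ onto $J_{\pvo{H}}(Y)$, inducing an isomorphism of the corresponding maximal subgroups. This part I treat as essentially known, to be transcribed from the $\pv S$-case with the relevant pseudovariety kept variable.

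The substantive step is the expansion. It is realized by the injective substitution $\theta\from A^+\to B^+$ with $\theta(a)=a\bar a$ and $\theta(x)=x$ for $x\neq a$. Its image $\theta(A)=\{a\bar a\}\cup(A\setminus\{a\})$ is a bifix code that is \emph{pure} (one checks directly that $u^n\in\theta(A)^+$ forces $u\in\theta(A)^+$, since $\bar a$ occurs in words of $\theta(A)^+$ exactly immediately after each $a$). Hence, by the remark following Theorem~\ref{t:sufficient-conditions-for-injectivity}, the pro-$\pvo{H}$ extension $\prov{\pvo{H}}{\theta}\from\Om A{\pvo{H}}\to\Om B{\pvo{H}}$ is injective. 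Since $\theta(L(X))\subseteq L(X^{(a)})$, we get $\prov{\pvo{H}}{\theta}\bigl(\clos{\pvo H}(L(X))\bigr)\subseteq\clos{\pvo H}(L(X^{(a)}))$, and because $\theta$ preserves infinite length, boundary pseudowords go to boundary pseudowords. Thus $\prov{\pvo{H}}{\theta}$ maps $J_{\pvo{H}}(X)$ into $J_{\pvo{H}}(X^{(a)})$; fixing an idempotent $e$ of $J_{\pvo{H}}(X)$, it restricts to a continuous injective homomorphism of the maximal subgroup $G_{\pvo{H}}(X)=H_e$ into $H_{\prov{\pvo{H}}{\theta}(e)}\cong G_{\pvo{H}}(X^{(a)})$.

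The hard part will be upgrading this injection to an isomorphism, that is, showing that the image of $G_{\pvo{H}}(X)$ exhausts the Schützenberger group of $X^{(a)}$. This is precisely a surjectivity phenomenon: one must check that every element of $J_{\pvo{H}}(X^{(a)})$ that is $\green{H}$-equivalent to $\prov{\pvo{H}}{\theta}(e)$ already lies in the image of $\prov{\pvo{H}}{\theta}$, which amounts to the expansion substitution being $\pvo{H}$-saturating in the sense developed in this paper. The clean uniform alternative, which avoids the case analysis entirely, is to deduce the statement directly from the categorical flow invariant of \cite[Corollary~6.14]{ACosta&Steinberg:2021}: its Karoubi envelope is preserved by flow equivalence up to natural equivalence, and it determines each local group $G_{\pvo{H}}(X)$ up to isomorphism.
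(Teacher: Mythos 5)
The paper does not actually prove this theorem: it states it as a special case of \cite[Corollary~6.14]{ACosta&Steinberg:2021}, the Karoubi-envelope flow invariant of Costa and Steinberg. So your closing ``clean uniform alternative'' \emph{is} the paper's proof, verbatim; on that reading your proposal is correct and identical in approach. The substantive content of your submission is therefore the attempted direct route, and there you leave an acknowledged gap at exactly the hard point (surjectivity of the induced map on maximal subgroups). Two remarks on that route. First, your pointer to the saturation machinery is off-target: $\pv V$-saturation is defined for \emph{primitive directive sequences}, and the single expansion substitution $\theta\from A^+\to B^+$ changes alphabet, is not iterable, and is certainly not part of a primitive sequence, so Definition~\ref{d:V-saturating} and its apparatus do not apply to it. Second --- and this is the interesting part --- the gap is closable with a tool from this very paper that you did not invoke: Proposition~\ref{p:surjective-image-pure-code}. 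You correctly verified that $C=\theta(A)$ is a finite pure code and that $\prov{\pvo H}{\theta}$ is injective (Theorem~\ref{t:sufficient-conditions-for-injectivity} with $\pv K=\pv I$, valid for every $\pv H$). Now $\img(\prov{\pvo H}{\theta})$ is the closure of $C^+$ in $\Om B{\pvo H}$, and the maximal subgroup $H_{\prov{\pvo H}{\theta}(e)}$ meets that closure (it contains $\prov{\pvo H}{\theta}(e)$); since $\pvo H\supseteq\pv A$, Proposition~\ref{p:surjective-image-pure-code} gives $H_{\prov{\pvo H}{\theta}(e)}\subseteq\img(\prov{\pvo H}{\theta})$. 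Injectivity then pulls the group structure back: if $h\in H_{\prov{\pvo H}{\theta}(e)}$ has (unique) preimage $u$, and $u'$ is the preimage of $h^{-1}$, cancellation under $\prov{\pvo H}{\theta}$ yields $eu=ue=u$ and $uu'=u'u=e$, so $u$ is a unit of the closed monoid $e\,\Om A{\pvo H}\,e$, i.e.\ $u\in H_e$; a continuous bijective homomorphism of profinite groups is a topological isomorphism. This turns your injection into the desired isomorphism for the expansion move, entirely within the paper's toolkit.

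Even so fixed, the decomposition route is not yet a complete independent proof. The conjugacy half is merely gestured at: \cite{ACosta:2006} treats $\pv S$, and ``the argument relativizes'' for arbitrary $\pvo H$ is plausible but would have to be carried out (or you cite \cite{ACosta&Steinberg:2021} again, at which point the decomposition buys you nothing). Likewise, the Parry--Sullivan fact that flow equivalence of general (not just finite-type) shift spaces is generated by conjugacy and symbol expansion is standard but needs a citation and a check that minimality is preserved throughout, as you assert. In summary: your fallback coincides with the paper's one-line proof; your direct argument, once the surjectivity step is repaired via Proposition~\ref{p:surjective-image-pure-code} and the conjugacy case is genuinely relativized, would constitute a legitimately different and more self-contained proof of the expansion-invariance half --- arguably a nice complement to the paper, since it exhibits the Schützenberger group transported concretely by the pro-$\pvo H$ extension of the expansion map rather than through the categorical invariant.
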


Recall that if $\bsigma$ is a primitive directive sequence, then
$X(\bsigma)$ is a minimal shift space
(Theorem~\ref{t:characterization-minimal-shifts}). We denote the
$\mathcal{J}$-class $J_{\pv V}(X(\bsigma))$ and the profinite group
$G_{\pv V}(X(\bsigma))$ respectively by $J_{\pv V}(\bsigma)$ and
$G_{\pv V}(\bsigma)$. In case $\varphi$ is a primitive substitution,
we also write $J_{\pv V}(\varphi)$ and $G_{\pv V}(\varphi)$ instead
of, respectively, $J_{\pv V}(X(\varphi))$ and $G_{\pv V}(X(\varphi))$.

For a pseudoword $w\in\Om AV$, we
denote by $\fac(w)$ the set of all words $u\in A^+$ such that $u$ is a
factor of $w$, assuming $\pv N\subseteq \pv V$ so that $A^+$
embeds in $\Om AV$.

\begin{proposition}\label{p:mirage} Let~$X$ be a minimal shift space
of $A^\zz$ and\/ $\pv V$ be a pseudovariety of semigroups containing
$\pv {\loc Sl}$.
Every infinite-length factor of an element of $J_{\pv V}(X)$ also
belongs to $J_{\pv V}(X)$. More precisely, for every infinite-length pseudoword $w\in \Om AV$, we have
$w\in J_{\pv V}(X)$ if and only if $\fac(w)\subseteq L(X)$.
\end{proposition}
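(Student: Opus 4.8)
The plan is to prove the sharper ``more precisely'' statement, namely that for an infinite-length pseudoword $w\in\Om AV$ one has $w\in J_{\pv V}(X)$ if and only if $\fac(w)\subseteq L(X)$; the first assertion then follows immediately, since any infinite-length factor $v$ of some $w\in J_{\pv V}(X)$ satisfies $\fac(v)\subseteq\fac(w)\subseteq L(X)$ by factoriality of $L(X)$, whence $v\in J_{\pv V}(X)$ by the criterion. I would prove the two implications separately.

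For the forward implication, suppose $w\in J_{\pv V}(X)$. Since $J_{\pv V}(X)$ is the $\green J$-class of $\clos V(L(X))\setminus L(X)$, every element of $J_{\pv V}(X)$ is a factor of some pseudoword in $\clos V(L(X))$, and conversely all such pseudowords are $\green J$-below the elements of $\clos V(L(X))$. The key point is that every finite factor $u\in\fac(w)$, being a factor of $w$, is then $\le_{\green J}$ any element of $J_{\pv V}(X)$; but the elements of $J_{\pv V}(X)$ lie in $\clos V(L(X))$, and one argues — using that $A^+$ is a discrete dense subset of $\Om AV$ and that factorisations of a limit of words in $\clos V(L(X))$ produce, for each finite factor, an honest factor of a word of $L(X)$ — that $u\in \fac(L(X))=L(X)$, the last equality holding because $L(X)$ is factorial. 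Hence $\fac(w)\subseteq L(X)$.

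For the converse, assume $w$ has infinite length and $\fac(w)\subseteq L(X)$. The goal is to place $w$ in $J_{\pv V}(X)$, and the natural strategy is to exhibit $w$ as $\green J$-equivalent to an element of $\clos V(L(X))\setminus L(X)$. Write $w$ as a limit of finite words $w_i$; each sufficiently long prefix and each finite factor of the $w_i$ eventually lies in $L(X)$ because $\fac(w)\subseteq L(X)$ and the elements of $A^+$ are isolated in $\Om AV$ (Proposition~\ref{p:A+-as-filter}). One then takes a cluster point construction inside $\clos V(L(X))$: since $L(X)$ is uniformly recurrent and $X$ is minimal, every long enough element of $L(X)$ contains every fixed factor, which lets one compare $w$ with elements of $\clos V(L(X))$ via a two-sided factor argument. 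Using stability of the compact semigroup $\Om AV$, together with Proposition~\ref{p:J-class-of-minimal-shift-space}, one concludes that $w$ and the chosen element of $\clos V(L(X))\setminus L(X)$ are mutually $\le_{\green J}$, hence $\green J$-equivalent, so $w\in J_{\pv V}(X)$.

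The main obstacle I anticipate is the converse direction, and specifically the role of the hypothesis $\pv{\loc{Sl}}\subseteq\pv V$. Containment of the local semilattices is exactly what guarantees that finite prefixes and suffixes of pseudowords are well-defined and that factoriality behaves well at the profinite level (this is the setting in which $w[0,n)$ and $w[-n,-1]$ make sense, as recalled before Proposition~\ref{p:letter-super-cancelativity}); without it one cannot reliably read off the finite factors of a pseudoword from approximating words, and the factor-based characterisation of $J_{\pv V}(X)$ can fail. I would therefore be careful to invoke $\pv{\loc{Sl}}\subseteq\pv V$ precisely at the step where I pass from ``all finite factors of $w$ lie in $L(X)$'' to ``$w$ is a factor of (equivalently, $\green J$-below) a genuine element of $\clos V(L(X))\setminus L(X)$'', rather than treating that passage as routine.
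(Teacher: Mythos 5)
Your plan has a genuine gap in the converse implication, and it sits exactly at the step that carries the whole content of the proposition. Suppose $w$ is an infinite-length pseudoword with $\fac(w)\subseteq L(X)$. Your uniform-recurrence argument does work in one direction: the prefixes $w[0,n)$ lie in $L(X)$ and have unbounded length (well-definedness of prefixes needs only $\pv{\loc I}$), so by uniform recurrence every word of $L(X)$ is a factor of $w$, and a cluster-point argument then shows that any $z\in\clos V(L(X))\setminus L(X)$ is a factor of $w$, i.e., $w\le_{\green{J}}z$. But the reverse inequality $z\le_{\green{J}}w$ — exhibiting $w$ itself as a factor of an element of $J_{\pv V}(X)$, equivalently placing $w$ in $\clos V(L(X))$ — cannot be obtained by the ``two-sided factor argument'' you sketch: taking limits of finite factors of $w$ inside elements of $\clos V(L(X))$ only produces \emph{cluster points} of prefixes or factors of $w$, never $w$ itself, and an infinite pseudoword is very far from being determined by its set of finite factors. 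That nontrivial half is precisely why the proposition is surprising (the ``mirage''), and it is not a routine compactness argument: in the literature it rests on substantial results, such as the theorem that $\clos S(L)$ is factorial in $\Om AS$ for factorial $L$ (which this paper invokes for the related Proposition~\ref{p:infinite-closure-J-class}). Note, incidentally, that the paper does not reprove Proposition~\ref{p:mirage} at all; it cites Lemma~2.3 of Almeida (2005) and Theorem~6.3 of Almeida--Costa (2007), observing that the arguments extend from $\pv S$ to any $\pv V\supseteq\pv{\loc Sl}$.

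You also mislocate the role of the hypothesis $\pv{\loc Sl}\subseteq\pv V$. Well-definedness of finite prefixes and suffixes requires only the strictly weaker $\pv{\loc I}$ (see the discussion preceding Proposition~\ref{p:letter-super-cancelativity}). What $\pv{\loc Sl}$ buys, as the paper's Remark~\ref{r:why-we-need-LSL} makes explicit, is that $\clos V(A^*uA^*)$ is clopen for every word $u\in A^+$, so that ``contains $u$ as a factor'' is a clopen condition; this is exactly what licenses the step in your \emph{forward} direction that you treat as routine (``factorisations of a limit of words in $\clos V(L(X))$ produce \ldots an honest factor of a word of $L(X)$'') — density and discreteness of $A^+$ alone do not give it. The counterexample in that remark ($\pv V=\pv{\loc I}$, where every idempotent $e$ satisfies $e=eue$ for all $u\in A^+$, whence $\fac(e)=A^+$) shows that it is the forward implication, not the converse, that collapses without $\pv{\loc Sl}$. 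A smaller issue: your forward argument asserts that all elements of $J_{\pv V}(X)$ lie in $\clos V(L(X))$, which is Proposition~\ref{p:infinite-closure-J-class} and is deduced in this paper \emph{from} the present proposition, so invoking it here is circular; you only need a single element of $\clos V(L(X))\setminus L(X)$ in the $\green{J}$-class, which is available by the very definition of $J_{\pv V}(X)$ together with Proposition~\ref{p:J-class-of-minimal-shift-space}.
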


This proposition is from~\cite[Lemma~2.3]{Almeida:2005c};
alternatively, it is found in~\cite[Theorem 6.3]{Almeida&ACosta:2007a} with  a very different proof.
In the first of these two references only the case $\pv V=\pv S$ is explicitly mentioned, but the arguments hold whenever $\pv V\supseteq\pv {\loc Sl}$.

\begin{remark}
  \label{r:why-we-need-LSL} The hypothesis in
Proposition~\ref{p:mirage} that $\pv V$ contains $\pv {\loc Sl}$ is
necessary to guarantee that $\clos V(A^*uA^*)$ is open when $u\in
A^+$, a property crucially used in the proof. The reason why $\clos
V(A^*uA^*)$ is then open is that $A^*uA^*$ is an \pv{\loc
Sl}-recognizable language \cite[Theorem~5.2.1]{Pin:1986;bk}, which
entails the desired topological property by
Theorem~\ref{t:V-recognizability}. Proposition~\ref{p:mirage} fails
for example when
$\pv V=\pv {\loc I}$; indeed, if $X\subseteq A^\zz$ is
any minimal shift space, then $e=eue$ for every idempotent $e\in\Om
A{\loc I}$ and word $u\in A^+$, entailing $\fac(e)= A^+$.
\end{remark}

\begin{corollary}
  \label{c:mirage}
  Let $X$ be a minimal shift space of $A^\zz$. If\/ $\pv V$ and $\pv W$ are
  pseudovarieties of semigroups such that $\pv{\loc Sl}\subseteq \pv
  W\subseteq\pv V$, then $p_{\pv V,\pv W}^{-1}(J_{\pv W}(X))=J_{\pv
    V}(X)$.
\end{corollary}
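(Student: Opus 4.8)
The plan is to prove the two inclusions of the equality separately, abbreviating $p = p_{\pv V,\pv W}$ and first recording the features of $p$ that I will use. It is a continuous onto homomorphism that restricts to the identity on $A^+$ (since $p\circ\iota_{\pv V} = \iota_{\pv W}$), and it commutes with the length homomorphisms: both $\prov V\ell$ and $\prov W\ell\circ p$ extend $\ell$ on $A^+$, so they coincide by uniqueness, whence $|p(w)| = |w|$ for every $w\in\Om AV$. As $A$ is finite, $p$ therefore sends infinite-length pseudowords to infinite-length pseudowords. I will also use that length is constant on each $\green{J}$-class (if $a\green{J}b$ then each is a factor of the other, forcing $|a|=|b|$), so that both $J_{\pv V}(X)$ and $J_{\pv W}(X)$ consist entirely of infinite-length pseudowords, since they meet $\clos V(L(X))\setminus L(X)\subseteq\Om AV\setminus A^+$ and $\clos W(L(X))\setminus L(X)\subseteq\Om AW\setminus A^+$ respectively, and for finite $A$ the complement of $A^+$ is exactly the set of infinite-length pseudowords.

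For the inclusion $J_{\pv V}(X)\subseteq p^{-1}(J_{\pv W}(X))$, I would show $p(J_{\pv V}(X))\subseteq J_{\pv W}(X)$ directly, through Green's relations rather than through factors. Fix $w_0\in\clos V(L(X))\setminus L(X)$; since $p$ is continuous and fixes $A^+$ pointwise, $p(w_0)\in\clos W(L(X))$, and as $p(w_0)$ has infinite length it avoids $L(X)\subseteq A^+$, so $p(w_0)\in\clos W(L(X))\setminus L(X)\subseteq J_{\pv W}(X)$. Because a homomorphism preserves $\le_{\green{J}}$ and hence $\green{J}$, every $w\in J_{\pv V}(X)$ satisfies $p(w)\green{J}p(w_0)$, giving $p(w)\in J_{\pv W}(X)$, as required.

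For the reverse inclusion $p^{-1}(J_{\pv W}(X))\subseteq J_{\pv V}(X)$, I would invoke Proposition~\ref{p:mirage} on both sides. Let $w\in\Om AV$ with $p(w)\in J_{\pv W}(X)$; then $p(w)$, and hence $w$, has infinite length. Since $\pv W\supseteq\pv{\loc Sl}$, the forward direction of Proposition~\ref{p:mirage} yields $\fac(p(w))\subseteq L(X)$. Now $\fac(w)\subseteq\fac(p(w))$ holds because $p$ is a homomorphism fixing $A^+$: if $w = s\,u\,t$ with $u\in A^+$ and $s,t\in(\Om AV)^1$, then $p(w) = p(s)\,u\,p(t)$, so $u\in\fac(p(w))$. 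Hence $\fac(w)\subseteq L(X)$, and the backward direction of Proposition~\ref{p:mirage} in $\Om AV$ (applicable since $\pv V\supseteq\pv{\loc Sl}$) gives $w\in J_{\pv V}(X)$.

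The two inclusions combine to the desired equality. I do not expect a genuine obstacle here; the points requiring care are purely of a bookkeeping nature, namely verifying that the $\green{J}$-classes in play contain only infinite-length pseudowords (so that Proposition~\ref{p:mirage} applies and finite-length pseudowords are harmless), and noticing that $p$ transports factors only in the direction $\fac(w)\subseteq\fac(p(w))$. The latter is precisely the direction needed in the harder inclusion, while the easier inclusion is settled through $\green{J}$ directly, so the opposite and more demanding factor inclusion $\fac(p(w))\subseteq\fac(w)$ is never required.
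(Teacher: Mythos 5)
Your proof is correct, and it takes a route that differs from the paper's in one substantive point. The paper's proof fixes $u\in J_{\pv W}(X)$, takes an arbitrary preimage $\hat u$ under $p_{\pv V,\pv W}$, and invokes the two-sided equality $\fac(\hat u)=\fac(u)$ — whose nontrivial half, that finite factors of the image lift to the preimage, rests on the clopen-ness of $\clos V(A^*wA^*)$ recorded in Remark~\ref{r:why-we-need-LSL} — and then concludes with Proposition~\ref{p:mirage}; the opposite inclusion, left implicit there, follows from the same factor equality. You deliberately avoid the lifting direction: in your hard inclusion only the trivial transport $\fac(w)\subseteq\fac(p_{\pv V,\pv W}(w))$ is used, which is all that Proposition~\ref{p:mirage} demands, and for the easy inclusion you bypass factor bookkeeping altogether, using that homomorphisms preserve $\le_{\green{J}}$ and hence $\green{J}$, applied to a reference element $w_0\in\clos V(L(X))\setminus L(X)$ whose image lies in $\clos W(L(X))\setminus L(X)$. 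What your route buys is that the $\pv{\loc Sl}$ hypothesis enters only through Proposition~\ref{p:mirage} itself, not additionally through the $\pv{\loc Sl}$-recognizability of the languages $A^*wA^*$, and both inclusions are made explicit; what the paper's route buys is brevity, since the bidirectional factor equality for preimages handles both inclusions at once. Your supporting verifications — that $|p_{\pv V,\pv W}(w)|=|w|$ by uniqueness of the continuous extension of the length homomorphism (legitimate since $\pv N\subseteq\pv{\loc Sl}\subseteq\pv W$), and that $J_{\pv V}(X)$ and $J_{\pv W}(X)$ consist entirely of infinite-length pseudowords because these form an ideal over a finite alphabet — are all sound.
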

 
\begin{proof}
  Take $u\in J_{\pv W}(X)$. Since $p_{\pv V,\pv W}$ is onto, we may
  consider $\hat u\in \Om AV$ such that $u=p_{\pv V,\pv W}(\hat u)$.
  Because $\pv V$ and $\pv W$ contain $\pv {\loc Sl}$, we know that
  $\fac(u)=\fac(\hat u)$ (see Remark~\ref{r:why-we-need-LSL}). Since both $u$ and $\hat u$ are infinite
  pseudowords, it follows from Proposition~\ref{p:mirage} that $\hat
  u\in J_{\pv V}(X)$.
\end{proof}

The property stated in the next proposition is new in its full generality. The special case of the so called pseudovarieties \emph{closed under concatenation}
is treated in the last section of the
paper~\cite{Almeida&ACosta:2007a}. We point out that the proof of the proposition uses the property, first shown independently in the
papers~\cite{Almeida&ACosta:2007a,Henckell&Rhodes&Steinberg:2010b}, that
$\clos{S}(L)$ is factorial in $\Om AS$ whenever $L$ is a factorial subset of $A^+$.

\begin{proposition}
  \label{p:infinite-closure-J-class} Let $X$ be a minimal shift space
of $A^\zz$. If the pseudovariety of semigroups \pv V contains
$\pv{\loc Sl}$, then the equality
  \begin{equation*} J_{\pv V}(X)=\clos V(L(X))\setminus A^+
  \end{equation*} holds.
\end{proposition}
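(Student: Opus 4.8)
The plan is to prove both inclusions after a harmless simplification. Since $A$ is finite and $\pv V\supseteq\pv{\loc Sl}\supseteq\pv N$, the words of $A^+$ are isolated in $\Om AV$, so $\clos V(L(X))\cap A^+=L(X)$ and hence $\clos V(L(X))\setminus A^+=\clos V(L(X))\setminus L(X)$. By Proposition~\ref{p:J-class-of-minimal-shift-space} the latter set lies in $J_{\pv V}(X)$, which already gives the inclusion $\clos V(L(X))\setminus A^+\subseteq J_{\pv V}(X)$. The real content is therefore the reverse inclusion $J_{\pv V}(X)\subseteq\clos V(L(X))\setminus A^+$; note also that, $A$ being finite, $\clos V(L(X))\setminus A^+$ consists exactly of the infinite-length elements of $\clos V(L(X))$.

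First I would settle the case $\pv V=\pv S$, where the key tool is the fact recalled just before the statement, namely that $\clos S(L(X))$ is factorial in $\Om AS$ (i.e.\ an upset for $\le_{\green J}$), because $L(X)$ is factorial. Since $X\neq\varnothing$, the language $L(X)$ is infinite, so $\clos S(L(X))$ has a limit point; as words are isolated, any such point lies in $\clos S(L(X))\setminus A^+$, which is thus a nonempty subset of $J_{\pv S}(X)$. Fix $v$ in it. Given an arbitrary $w\in J_{\pv S}(X)$, we have $w\green J v$, hence both $v\le_{\green J}w$ and $w\le_{\green J}v$. From $v\le_{\green J}w$ and $v\in\clos S(L(X))$, factoriality gives $w\in\clos S(L(X))$. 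From $w\le_{\green J}v$ we may write $w=avb$ with $a,b\in(\Om AS)^1$, so that $|w|=|a|+|v|+|b|=\infty$ because $|v|=\infty$; thus $w\notin A^+$. This proves $J_{\pv S}(X)\subseteq\clos S(L(X))\setminus A^+$, and hence the equality for $\pv V=\pv S$.

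To obtain the general case I would transfer along the natural projection $p=p_{\pv S,\pv V}\from\Om AS\to\Om AV$, which restricts to the identity on $A^+$. Applying Corollary~\ref{c:mirage} with $\pv{\loc Sl}\subseteq\pv V\subseteq\pv S$ gives $p^{-1}(J_{\pv V}(X))=J_{\pv S}(X)$, whence $J_{\pv V}(X)=p(J_{\pv S}(X))$ by surjectivity of $p$. Continuity and surjectivity of $p$, together with $p|_{A^+}=\mathrm{id}$, give $p(\clos S(L(X)))=\clos V(L(X))$; and since $p$ commutes with the length homomorphism (both sides extend $\ell$ on $A^+$), $p$ sends infinite-length pseudowords to infinite-length pseudowords and fixes $A^+$ setwise, so $p(\clos S(L(X))\setminus A^+)=\clos V(L(X))\setminus A^+$. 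Combining these with the $\pv S$-case yields
\[
  J_{\pv V}(X)=p\bigl(J_{\pv S}(X)\bigr)=p\bigl(\clos S(L(X))\setminus A^+\bigr)=\clos V(L(X))\setminus A^+,
\]
as desired.

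The main obstacle is essentially absorbed by the cited result: the only non-formal ingredient is the factoriality of $\clos S(L(X))$. Once it is in hand, the heart of the argument is the simple but crucial observation that two $\green J$-equivalent pseudowords are factors of one another, so that a single point of $\clos S(L(X))$ lying in the regular $\green J$-class $J_{\pv S}(X)$ drags the whole class into the factor-closed set $\clos S(L(X))$. Everything else — the isolation of words, the behaviour of the length homomorphism, and the reduction from $\pv V$ to $\pv S$ through Corollary~\ref{c:mirage} — is routine bookkeeping, the last point being where the hypothesis $\pv V\supseteq\pv{\loc Sl}$ is actually used.
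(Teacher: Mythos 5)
Your proposal is correct and takes essentially the same approach as the paper: the nontrivial inclusion rests on the factoriality of $\clos S(L(X))$ in $\Om AS$ combined with the transfer between $\pv S$ and $\pv V$ furnished by Corollary~\ref{c:mirage} and the projection $p_{\pv S,\pv V}$. The only difference is organizational—you prove the $\pv S$-case first and push the whole equality forward (using that $p_{\pv S,\pv V}$ preserves lengths and fixes $A^+$), whereas the paper lifts a single element of $J_{\pv V}(X)$ to $J_{\pv S}(X)$ and projects back—so there is nothing to correct.
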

  
  \begin{proof} Recall that $\clos{V}(L(X))\setminus A^+\subseteq
J_{\pv V}(X)$ by definition of $J_{\pv V}(X)$. Conversely, let $u\in
J_{\pv V}(X)$. By Corollary~\ref{c:mirage}, there is $\hat u\in J_{\pv
  S}(X)$ such that $u=p_{\pv S,\pv V}(\hat u)$. Since
$L(X)$ is a factorial subset of $A^+$,  the topological closure of $\clos{S}(L(X))$ is
factorial in \Om AS, by~\cite[Proposition 2.4]{Almeida&ACosta:2007a}.
Because $J_{\pv S}(X)$ intersects $\clos{S}(L(X))$, it follows that
$\hat u\in \clos{S}(L(X))$. As $p_{\pv S,\pv V}$ restricts to the
identity on $A^+$, and by continuity of $p_{\pv S,\pv V}$, we conclude
that $u\in \clos{V}(L(X))$.
  \end{proof}

  \begin{remark}
    \label{r:closure-not-factorial-for-non-minimal-shift-space} By
Proposition~\ref{p:infinite-closure-J-class}, if $X$ is a minimal
shift space of $A^{\zz}$ and $\pv V$ is a pseudovariety of semigroups
containing $\pv{\loc Sl}$, then $\clos{V}(L(X))$ is factorial in $\Om
AV$, as the finite factors of elements of $J_{\pv V}(X)$ belong to
$L(X)$ by Proposition~\ref{p:mirage}. But one may have a pseudovariety $\pv V$
containing $\pv{\loc Sl}$ and a shift space $X$ not minimal such that $\clos{V}(L(X))$ is not a factorial
subset of $\Om AV$ (cf.~\cite[Example 3.4]{ACosta&Steinberg:2021}).
  \end{remark}

Recall that if \pv V contains the pseudovariety
\pv{\loc I}, then every infinite-length pseudoword $w\in \Om AV\setminus A^+$
has a well-defined right infinite prefix $\overrightarrow{w}\in A^\nn$
and left-infinite suffix $\overleftarrow{x}\in A^{\zz_{-}}$ (see
Section~\ref{ss:rel-free}). Consider the mapping $\pli\from\Om AV\setminus A^+\to
A^\zz$ defined by $\pli(x) = \overleftarrow{x}{\cdot}\overrightarrow{x}$.
The next result shows that this mapping characterizes the
\green{H}-classes of $J_\pv{V}(X)$; it was originally proved by the
first author~\cite[Theorem~3.3]{Almeida:2003a} (see
also~\cite[Lemma~6.6]{Almeida&ACosta:2007a}).

\begin{lemma}
  \label{l:characterization-of-Green-H-in-J-class-of-minimal-shift}
Let $X$ be a minimal shift space and $\pv V$ be a pseudovariety of
semigroups containing~\pv{\loc I}. Then, for every $u,v\in
J_\pv{V}(X)$, the equality $\pli(u) = \pli(v)$ holds if and only if
$u\green{H} v$.
More precisely,
for every $u,v\in J_\pv{V}(X)$ we have
$\overrightarrow{u}=\overrightarrow{v}$ if and only if $u\green{R} v$,
and
$\overleftarrow{u}=\overleftarrow{v}$ if and only if $u\green{L} v$.
\end{lemma}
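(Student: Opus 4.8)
The plan is to reduce the whole statement to its ``More precisely'' part, and within that to a single one-sided statement. Since $\pli(u)=\overleftarrow u\cdot\overrightarrow u$, the equality $\pli(u)=\pli(v)$ is equivalent to the conjunction of $\overrightarrow u=\overrightarrow v$ and $\overleftarrow u=\overleftarrow v$; and $u\green{H}v$ means precisely $u\green{R}v$ and $u\green{L}v$. Thus it suffices to prove the two equivalences $\overrightarrow u=\overrightarrow v\iff u\green{R}v$ and $\overleftarrow u=\overleftarrow v\iff u\green{L}v$. The second is the exact left--right mirror of the first: finite suffixes are well defined because $\pv{V}\supseteq\pv{\loc I}$, stability has a symmetric $\green{L}$-form, and uniform recurrence of $L(X)$ is symmetric in the two directions. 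So I would only carry out the $\green{R}$-case and obtain the $\green{L}$-case by the same argument read with suffixes in place of prefixes.

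For the easy implication $u\green{R}v\Rightarrow\overrightarrow u=\overrightarrow v$: from $u\le_{\green{R}}v$ write $u=vs$ with $s\in(\Om AV)^1$; for each $n$, using that $\pv V\supseteq\pv{\loc I}$ provides a unique length-$n$ prefix, factor $v=v[0,n)\,t$ with $|v[0,n)|=n$, so that $u=v[0,n)\,(ts)$ and, by uniqueness of the length-$n$ prefix, $u[0,n)=v[0,n)$. Letting $n$ vary yields $\overrightarrow u=\overrightarrow v$.

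The substance is the converse $\overrightarrow u=\overrightarrow v\Rightarrow u\green{R}v$, where I would use regularity of $J=J_\pv{V}(X)$, stability of $\Om AV$, and minimality of $X$. Since $J$ is a regular $\green{J}$-class (Proposition~\ref{p:J-class-of-minimal-shift-space}), each of its $\green{R}$-classes contains an idempotent, so I pick idempotents $e\green{R}u$ and $f\green{R}v$; then $eu=u$ and $fv=v$, and by the easy implication $\overrightarrow e=\overrightarrow u=\overrightarrow v=\overrightarrow f=:w$. It now suffices to prove $e\green{R}f$, and by stability, since $e\green{J}f$, it is enough to establish the single comparison $e\le_{\green{R}}f$, that is $fe=e$, that is membership of $e$ in the closed right ideal $f(\Om AV)^1$. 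I would establish this membership by producing a net $(z_i)$ with $fz_i\to e$; here minimality enters, because the common right ray $w$ is the forward ray of a point of the minimal shift $X$ and is therefore recurrent, so its prefixes reoccur and one may invoke the idempotent prefix-accessible by $w$ (Proposition~\ref{p:idempotents-in-Pw-Cat}, applied to the one-vertex graph with edge set $A$). Combining the reoccurrences of $w$ with the idempotency of $e$ and $f$, the multipliers $z_i$ are to be taken as finite pieces of $w$ lying between consecutive returns to its prefixes; stability then upgrades the resulting comparison $e\le_{\green{R}}f$ to $e\green{R}f$, whence $u\green{R}e\green{R}f\green{R}v$.

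The hard part will be exactly this last construction: extracting from the combinatorial recurrence of $w$ an algebraic witness of the inclusion $e\in f(\Om AV)^1$ (and hence, via stability, of $e\green{R}f$). Two points demand care. First, for a general pseudovariety $\pv V\supseteq\pv{\loc I}$ the complementary factor of a prefix need not be unique---Proposition~\ref{p:letter-super-cancelativity} is available only for $\pv V=\pv S$---so the approximations must be controlled through nets and the closedness of right ideals rather than by cancellation. Second, one must ensure the limits stay inside $J$ and in the intended $\green{R}$-class, and this is precisely where stability is the decisive lever, converting the one-sided comparisons furnished by the prefix returns into full $\green{R}$-equivalence. One could alternatively first settle the case $\pv V=\pv S$, where cancellation is available, and then transport the conclusion along the projection $p_{\pv S,\pv V}$, which preserves $\green{R}$ and commutes with $\overrightarrow{\cdot}$; but lifting a prescribed element of $J_\pv{V}(X)$ back into $J_\pv{S}(X)$ is itself delicate without the stronger hypothesis $\pv V\supseteq\pv{\loc Sl}$, so I would favour the direct net argument.
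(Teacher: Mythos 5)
Your reduction to the one-sided statements and the easy implication $u\green{R}v\Rightarrow\overrightarrow{u}=\overrightarrow{v}$ are fine, but the crux --- the converse --- is not actually proved: you explicitly defer ``exactly this last construction'', and the route you sketch is circular as stated. To certify $e\le_{\green{R}}f$ you need a net $fz_i$ converging to $e$ itself, or at least to something \emph{already known} to be $\green{R}$-equivalent to $e$. Any cluster point of $fz_i$ automatically has forward ray $w$, but to conclude from that alone that the limit is $e$ (or $\green{R}$-equivalent to it) is precisely the lemma being proved; recurrence and returns of prefixes of $w$ give you occurrences, hence factorizations and $\le_{\green{J}}$-comparabilities, not the exact right-multiplier $e=fz$. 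There is a second problem with the hypotheses: your justification that $w$ is recurrent (``$w$ is the forward ray of a point of the minimal shift $X$'') rests on facts the paper only has under $\pv{\loc Sl}$ (Propositions~\ref{p:mirage} and~\ref{p:parametrization-of-H-classes}), whereas the lemma is stated for $\pv V\supseteq\pv{\loc I}$; Remark~\ref{r:why-we-need-LSL} shows that at that level the finite factors of an idempotent of $J_{\pv V}(X)$ can be all of $A^+$, so this step is unavailable in the stated generality.

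For the record, the paper does not prove this lemma; it quotes it from \cite[Theorem~3.3]{Almeida:2003a} and \cite[Lemma~6.6]{Almeida&ACosta:2007a}, where the argument is simpler and sidesteps both obstacles. Given $u,v\in J_{\pv V}(X)$ with $\overrightarrow{u}=\overrightarrow{v}$, write $u=w_nu_n$ and $v=w_nv_n$ with $w_n$ the common prefix of length $n$, and pass to a subnet along which $w_{n_i}\to t$, $u_{n_i}\to u'$, $v_{n_i}\to v'$; joint continuity of multiplication gives $u=tu'$ and $v=tv'$, i.e.\ $u,v\le_{\green{R}}t$, and once $t\in J_{\pv V}(X)$ is known, stability applied twice yields $u\green{R}t\green{R}v$ --- no idempotents, no recurrence, no return words. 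Note where minimality genuinely enters: through Proposition~\ref{p:J-class-of-minimal-shift-space}, which places $\clos V(L(X))\setminus L(X)$ in a single regular $\green{J}$-class, making the membership $t\in J_{\pv V}(X)$ accessible (for $\pv V\supseteq\pv{\loc Sl}$ it is immediate from Proposition~\ref{p:infinite-closure-J-class}, since the prefixes $w_n$ lie in $L(X)$; at the bare $\pv{\loc I}$ level this membership is the one delicate point, handled in the cited source). Your own device, redirected, gives exactly this proof: instead of hunting for multipliers $z_i$ with $fz_i\to e$, factor \emph{both} $e$ and $f$ through a common cluster point of their shared prefixes and let stability do the rest.
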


It follows that the mapping $\bar{\pli}(u/{\green{H}}) = \pli(u)$, with $u\in J_\pv{V}(X)$, is
well defined. For an element $x\in A^\zz$, let
\begin{equation*} x(-\infty,0) = \cdots x[-2]x[-1]\in A^{\zz_{-}},\quad
x[0,\infty) = x[0]x[1]\cdots\in A^\nn.
\end{equation*}
Lemma~\ref{l:characterization-of-Green-H-in-J-class-of-minimal-shift}
says in particular that the mapping $\bar{\pli}$ is a bijection
between the $\green{H}$-classes of $J_\pv{V}(X)$ and the following
set:
\begin{equation*} \{ y(-\infty,0)\cdot x[0,\infty) : x, y\in X\}.
\end{equation*}

The next result locates the maximal subgroups in $J_\pv{V}(X)$.

\begin{proposition}[{\cite[Lemma~5.3]{Almeida&ACosta:2012}}]
  \label{p:parametrization-of-H-classes} Let $X$ be a minimal shift
space and $\pv V$ be a pseudovariety of semigroups containing~\pv{\loc
Sl}. An $\green{H}$-class $H$ of $J_\pv{V}(X)$ contains an idempotent
if and only if $\bar{\pli}(H)\in X$. Moreover, every element of~$X$ is of
the form $\pli(e)$ for a unique idempotent $e$ of~$J_{\pv
V}(X)$.
\end{proposition}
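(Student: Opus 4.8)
My plan is to derive both assertions from the bijection $\bar\pli$ of Lemma~\ref{l:characterization-of-Green-H-in-J-class-of-minimal-shift}, after isolating one technical fact about finite factors of products of pseudowords. Writing $S(a)$ and $P(b)$ for the sets of finite suffixes and finite prefixes of infinite-length pseudowords $a,b\in\Om AV$, I would first show that
\begin{equation*}
  \fac(ab)\subseteq \fac(a)\cup\fac(b)\cup\{sp:s\in S(a),\ p\in P(b)\}.
\end{equation*}
This is the step I expect to require the most care. Approximating $a=\lim a_i$ and $b=\lim b_i$ by finite words, a finite word $f$ lies in $\fac(ab)$ if and only if $f\in\fac(a_ib_i)$ for all large $i$, because $\clos V(A^*fA^*)$ is open (Theorem~\ref{t:V-recognizability}, applicable since $\pv V\supseteq\pv{\loc Sl}$); for finite words, each such $f$ is a factor of $a_i$, a factor of $b_i$, or splits as (a suffix of $a_i$)(a prefix of $b_i$), and a pigeonhole argument on the finitely many length-splittings of $f$, together with continuity of the length-$n$ prefix and suffix maps, pushes this decomposition to the limit. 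Throughout I also use that, by Proposition~\ref{p:mirage}, an infinite-length $w\in\Om AV$ lies in $J_\pv V(X)$ exactly when $\fac(w)\subseteq L(X)$, and that a biinfinite word belongs to $X$ exactly when all its finite factors lie in $L(X)$.

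For the forward implication, suppose $H$ contains an idempotent $e$; then $\fac(e)\subseteq L(X)$. The finite factors of $\pli(e)=\overleftarrow e\cdot\overrightarrow e$ are either factors of $\overleftarrow e$ or of $\overrightarrow e$ (hence factors of $e$), or ``seam'' words $sp$ with $s\in S(e)$ and $p\in P(e)$. Applying the displayed inclusion to $e^2$ and using $e^2=e$, every such seam word satisfies $sp\in\fac(e^2)=\fac(e)\subseteq L(X)$. Hence $\fac(\pli(e))\subseteq L(X)$, so $\bar\pli(H)=\pli(e)\in X$.

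For the converse, put $z=\bar\pli(H)$ and suppose $z\in X$. Choosing any $w\in H$, I have $\overleftarrow w=z(-\infty,0)$ and $\overrightarrow w=z[0,\infty)$. The pseudoword $w^2$ has the same finite prefixes and suffixes as $w$, so $\overrightarrow{w^2}=\overrightarrow w$ and $\overleftarrow{w^2}=\overleftarrow w$; moreover, by the displayed inclusion its finite factors are either factors of $w\in J_\pv V(X)$ or seam words equal to the central factors $z[-k,\ell)$ of $z$, which lie in $L(X)$ since $z\in X$. Thus $\fac(w^2)\subseteq L(X)$, so $w^2\in J_\pv V(X)$ by Proposition~\ref{p:mirage}, and then Lemma~\ref{l:characterization-of-Green-H-in-J-class-of-minimal-shift} yields $w^2\green{H} w$. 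Hence $HH\cap H\neq\varnothing$, which forces the $\green{H}$-class $H$ to be a group and in particular to contain an idempotent.

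Finally, for the ``moreover'' I would combine the two implications with the injectivity of $\bar\pli$. Taking $x=y$ shows $X$ is contained in the target set $\{y(-\infty,0)\cdot x[0,\infty):x,y\in X\}$, so each $z\in X$ equals $\bar\pli(H)$ for a unique $\green{H}$-class $H$; by the converse $H$ is a group, hence contains exactly one idempotent $e$, and then $\pli(e)=z$. Uniqueness of $e$ among all idempotents of $J_\pv V(X)$ is immediate, since two idempotents with the same image under $\pli$ would be $\green{H}$-equivalent by Lemma~\ref{l:characterization-of-Green-H-in-J-class-of-minimal-shift}, hence lie in a common $\green{H}$-class and therefore coincide.
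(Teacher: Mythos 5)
Your proof is correct, but note that the paper itself offers no proof of this proposition to compare against: it is imported verbatim from \cite[Lemma~5.3]{Almeida&ACosta:2012}. Measured against the paper's toolkit, your reconstruction is a natural one and uses exactly the ingredients the paper makes available: Proposition~\ref{p:mirage} (membership in $J_{\pv V}(X)$ via finite factors), Lemma~\ref{l:characterization-of-Green-H-in-J-class-of-minimal-shift} (the $\green{H}$-class parametrization), and Green's theorem that an $\green{H}$-class $H$ with $H^2\cap H\neq\varnothing$ is a group. Your ``technical fact'' about $\fac(ab)$ is in fact already recorded in the paper as Lemma~\ref{l:place-of-finite-factor-in-ufv} (attributed to Almeida--Volkov and valid for all $\pv V\supseteq\pv{\loc Sl}$), so the careful approximation-and-pigeonhole argument you sketch, while sound (openness of $\clos V(A^*fA^*)$ is precisely what Remark~\ref{r:why-we-need-LSL} supplies, and the fixed splitting $f=sp$ is forced once $|s|$ is fixed), could simply be replaced by a citation. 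The core of your converse direction --- showing $w^2\in J_{\pv V}(X)$ because its seam factors are the central words $z[-k,\ell)$ of $z\in X$, then upgrading $\pli(w^2)=\pli(w)$ to $w^2\green{H}w$ --- is exactly the right mechanism, and the ``moreover'' clause follows as you say from injectivity of $\bar{\pli}$ plus uniqueness of idempotents in a group.

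Two small points of hygiene. First, in the forward direction you invoke ``the displayed inclusion'' to get $sp\in\fac(e^2)$, but that step needs no inclusion at all: from $e=us$ and $e=pv$ one gets $e^2=u\,(sp)\,v$ directly, and $\fac(e^2)=\fac(e)$ since $e$ is idempotent; the nontrivial inclusion is only needed in the converse, to bound $\fac(w^2)$ from above. Second, for existence of an $\green{H}$-class over each $z\in X$ you lean on the surjectivity of $\bar{\pli}$ onto $\{y(-\infty,0)\cdot x[0,\infty):x,y\in X\}$; the paper asserts this right after Lemma~\ref{l:characterization-of-Green-H-in-J-class-of-minimal-shift}, so the reliance is legitimate, but the lemma as stated literally gives only injectivity. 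If you want the argument fully self-contained, a one-line construction suffices: by uniform recurrence of $L(X)$ pick $j_n$ with $z[j_n-n,j_n)=z[-n,0)$, and take a cluster point $u$ of the words $z[0,j_n)$; then $u\in\clos V(L(X))$ has infinite length, prefix $z[0,n)$ and suffix $z[-n,0)$ for every $n$, so $u\in J_{\pv V}(X)$ and $\pli(u)=z$.
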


According to the following corollary, the shape of the
$\green{J}$-class $J_\pv{V}(X)$ is independent of the pseudovariety
$\pv{V}$, provided $\pv{\loc Sl}\subseteq\pv{V}$.

\begin{corollary}\label{c:projecting-the-J-class-of-X}
  Let $\pv V$ and $\pv W$ be pseudovarieties of semigroups such that
  $\pv{\loc Sl}\subseteq \pv W\subseteq\pv V$. Let $X$ be a minimal
  shift space of $A^\zz$. The following properties hold:
    \begin{enumerate}
    \item If $H$ is an \green{H}-class of $J_{\pv V}(X)$, then the set
$p_{\pv V,\pv W}(H)$ is an \green{H}-class of $J_{\pv W}(X)$.
    \item If $K$ is an \green{H}-class of $J_{\pv W}(X)$, then the set
$p_{\pv V,\pv W}^{-1}(K)$ is an \green{H}-class of $J_{\pv V}(X)$.
    \end{enumerate}
\end{corollary}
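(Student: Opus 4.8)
The plan is to deduce both statements directly from the characterizations of $\green{H}$-classes provided by Lemma~\ref{l:characterization-of-Green-H-in-J-class-of-minimal-shift} and Corollary~\ref{c:mirage}. The key observation is that the natural projection $p_{\pv V,\pv W}$ restricts to the identity on $A^+$, and hence commutes with the operations of extracting left-infinite suffixes $\overleftarrow{(\,\cdot\,)}$ and right-infinite prefixes $\overrightarrow{(\,\cdot\,)}$: for every infinite-length $u\in\Om AV$ one has $\overleftarrow{p_{\pv V,\pv W}(u)}=\overleftarrow{u}$ and $\overrightarrow{p_{\pv V,\pv W}(u)}=\overrightarrow{u}$, since these infinite one-sided words are determined by the finite prefixes and suffixes of all lengths, which $p_{\pv V,\pv W}$ preserves. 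Consequently $\pli(p_{\pv V,\pv W}(u))=\pli(u)$ for every $u\in\Om AV\setminus A^+$.

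First I would establish the set-up: by Corollary~\ref{c:mirage}, $p_{\pv V,\pv W}$ maps $J_{\pv V}(X)$ onto $J_{\pv W}(X)$ and the preimage of $J_{\pv W}(X)$ is exactly $J_{\pv V}(X)$. For statement (i), take an $\green{H}$-class $H$ of $J_{\pv V}(X)$ and two elements $u,v\in H$. By Lemma~\ref{l:characterization-of-Green-H-in-J-class-of-minimal-shift}, $u\green{H}v$ is equivalent to $\pli(u)=\pli(v)$; combined with the identity $\pli\circ p_{\pv V,\pv W}=\pli$ established above, this forces $\pli(p_{\pv V,\pv W}(u))=\pli(p_{\pv V,\pv W}(v))$, and hence (again by Lemma~\ref{l:characterization-of-Green-H-in-J-class-of-minimal-shift}, applied in $\Om AW$) $p_{\pv V,\pv W}(u)\green{H}p_{\pv V,\pv W}(v)$ inside $J_{\pv W}(X)$. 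This shows $p_{\pv V,\pv W}(H)$ is contained in a single $\green{H}$-class $K$ of $J_{\pv W}(X)$. To see that $p_{\pv V,\pv W}(H)$ is all of $K$, note that $K$ has a well-defined value $\pli(K)=y(-\infty,0)\cdot x[0,\infty)$ for some $x,y\in X$; any element $w\in J_{\pv V}(X)$ whose preimage data matches, i.e.\ any $w$ in the $\green{H}$-class of $J_{\pv V}(X)$ determined by the same pair $(\overleftarrow{w},\overrightarrow{w})$, projects into $K$, and such an $\green{H}$-class exists and equals $H$ by the bijectivity of $\bar{\pli}$ on $J_{\pv V}(X)$.

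For statement (ii), I would run the argument in reverse. Given an $\green{H}$-class $K$ of $J_{\pv W}(X)$, Corollary~\ref{c:mirage} already tells us $p_{\pv V,\pv W}^{-1}(K)\subseteq J_{\pv V}(X)$. Two elements $u,v$ of this preimage satisfy $p_{\pv V,\pv W}(u)\green{H}p_{\pv V,\pv W}(v)$, so $\pli(p_{\pv V,\pv W}(u))=\pli(p_{\pv V,\pv W}(v))$, whence $\pli(u)=\pli(v)$ and thus $u\green{H}v$ by Lemma~\ref{l:characterization-of-Green-H-in-J-class-of-minimal-shift}; this shows $p_{\pv V,\pv W}^{-1}(K)$ lies in a single $\green{H}$-class $H$ of $J_{\pv V}(X)$. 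Conversely, $H$ maps into some $\green{H}$-class of $J_{\pv W}(X)$ by part (i), and since $H$ meets $p_{\pv V,\pv W}^{-1}(K)$, that target class is $K$; hence $H\subseteq p_{\pv V,\pv W}^{-1}(K)$, giving equality.

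The step I expect to require the most care is verifying surjectivity onto the full $\green{H}$-class in part (i)—that is, that no $\green{H}$-class of $J_{\pv W}(X)$ is only partially hit. The clean way to handle this is to invoke the bijectivity of $\bar{\pli}$ on the $\green{H}$-classes of both $J_{\pv V}(X)$ and $J_{\pv W}(X)$ (Lemma~\ref{l:characterization-of-Green-H-in-J-class-of-minimal-shift}): since $\pli\circ p_{\pv V,\pv W}=\pli$ and both $\bar{\pli}$ parametrize the same bi-infinite words by the same formula $\{y(-\infty,0)\cdot x[0,\infty):x,y\in X\}$, the map induced by $p_{\pv V,\pv W}$ on $\green{H}$-classes is forced to be the identity under this parametrization, and in particular a bijection. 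This simultaneously yields both inclusions and is the conceptual heart of the corollary.
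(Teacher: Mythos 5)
Your proposal is correct and takes essentially the same approach as the paper: the paper's own (very terse) proof consists precisely of the observation $\pli(p_{\pv V,\pv W}(u))=\pli(u)$ combined with Corollary~\ref{c:mirage}, with Lemma~\ref{l:characterization-of-Green-H-in-J-class-of-minimal-shift} supplying the implicit link between $\pli$ and the $\green{H}$-classes. Your write-up is simply a careful expansion of that one-line argument, including the surjectivity detail (via surjectivity of $p_{\pv V,\pv W}$ and the $\bar\pli$-parametrization) that the paper leaves to the reader.
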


\begin{proof} Note that $\pli(p_{\pv V,\pv W}(u))=\pli(u)$ for every
$u\in\Om AV$. The corollary now follows immediately from
Corollary~\ref{c:mirage}.
\end{proof}

We apply again Proposition~\ref{p:parametrization-of-H-classes} to show the following lemma.

\begin{lemma}
   \label{l:cancelative-property-in-JX}
   Let $X$ be a minimal shift
   space of $A^\zz$ and $\pv V$ be a pseudovariety of semigroups
   containing~\pv{\loc Sl}. Let $u,v\in\Om AV$ and $x,y\in A^*$ be such that $|x|=|y|$.
\begin{enumerate}
\item If $xu\in J_{\pv V}(X)$ and $xu=yv$, then we have $x=y$ and $u=v$.\label{item::cancelative-property-in-JX-1}
\item If $ux\in J_{\pv V}(X)$ and $ux=vy$, then we have  $x=y$ and $u=v$.\label{item::cancelative-property-in-JX-2}
\end{enumerate}
\end{lemma}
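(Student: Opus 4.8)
The plan is to separate the two conclusions. First I establish the equality of the finite factors, $x=y$, by a length comparison; then I deduce $u=v$ by realizing multiplication by $x$ as a one-sided Green translation confined to a single Green class of $J_{\pv V}(X)$, where stability forces such a translation to be injective. I carry out~\ref{item::cancelative-property-in-JX-1} in detail and indicate how~\ref{item::cancelative-property-in-JX-2} follows by a symmetric argument.

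For~\ref{item::cancelative-property-in-JX-1}, I may assume $x,y\in A^+$, the case $x=\emptyw$ being immediate. Since $\pv{\loc Sl}\supseteq\pv{\loc I}\supseteq\pv N$, the prefix calculus of Section~\ref{ss:rel-free} is available in $\Om AV$. The element $xu\in J_{\pv V}(X)$ has infinite length and $x$ is finite, so $u$ has infinite length and the unique prefix of $xu$ of length $|x|$ is $x$; likewise the unique prefix of $yv$ of length $|y|$ is $y$. As $xu=yv$ and $|x|=|y|$, these prefixes coincide, giving $x=y$. It remains to prove that $xu=xv$ with $xu\in J_{\pv V}(X)$ forces $u=v$. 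Being an infinite-length suffix of $xu\in J_{\pv V}(X)$, the pseudoword $u$ lies in $J_{\pv V}(X)$ by Proposition~\ref{p:mirage}, and the same holds for $v$. Deleting the initial factor $x$ from the equality $xu=xv$ shows that $u$ and $v$ have the same finite prefixes, hence $\overrightarrow u=\overrightarrow v$, so $u\green{R}v$ by Lemma~\ref{l:characterization-of-Green-H-in-J-class-of-minimal-shift}; write $R$ for their common $\green{R}$-class.

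The key step is to cancel $x$ on the left inside $J_{\pv V}(X)$. Since $u$ is a suffix of $xu$ we have $xu\le_{\green{L}}u$, and $xu\green{J}u$ because both belong to $J_{\pv V}(X)$; stability of the compact semigroup $\Om AV$ then yields $xu\green{L}u$. Writing this relation as $xu=x\cdot u$ together with $u=t\cdot(xu)$ for a suitable $t\in(\Om AV)^1$, Green's Lemma shows that left translation $w\mapsto xw$ restricts to a bijection of $R$ onto the $\green{R}$-class of $xu$; in particular it is injective on $R$. As $u,v\in R$ and $x\cdot u=xu=xv=x\cdot v$, we conclude $u=v$. Statement~\ref{item::cancelative-property-in-JX-2} is obtained dually: a suffix comparison gives $x=y$; from $ux=vx$ one gets $ux\green{R}u$ by stability, while $\overleftarrow u=\overleftarrow v$ gives $u\green{L}v$; and Green's Lemma makes right translation by $x$ injective on the common $\green{L}$-class, forcing $u=v$. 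The only real obstacle is that $\Om AV$ is not cancellative in general; the hypothesis that $xu$ (resp.\ $ux$) lies in $J_{\pv V}(X)$ is precisely what keeps the argument inside one $\green{J}$-class, where stability upgrades one-sided divisibility to a Green relation and thus to an injective translation.
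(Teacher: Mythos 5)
Your proof is correct and takes essentially the same route as the paper's: uniqueness of finite-length prefixes gives $x=y$, Proposition~\ref{p:mirage} places $u,v$ in $J_{\pv V}(X)$, stability upgrades $xu\le_{\green{L}}u$ to $xu\green{L}u$, and Green's Lemma is invoked to cancel $x$. The only cosmetic difference is that you use the $\green{R}$-class form of Green's Lemma, needing only $u\green{R}v$ (from $\overrightarrow{u}=\overrightarrow{v}$), whereas the paper additionally derives $u\green{L}v$ to get $u\green{H}v$ and applies the bijection $H_u\to H_{xu}$.
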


\begin{proof}
  Suppose that $xu=yv\in J_{\pv V}(X)$. As $\pv V$ contains $\pv{\loc I}$, every pseudoword of $\Om AV$ of length at least $n$
  has a unique prefix and a unique suffix of length $n$, whenever $n\in\nn$. In particular, we have $x=y$.
  Since $x$ has finite length and $xu$ has infinite length, both $u,v$ and have infinite length,
  whence $u,v\in J_{\pv V}(X)$ by Proposition~\ref{p:mirage}.
  As $\Om AV$ is stable, it follows that $u\green{L}xu=xv\green{L}v$.
  Also because $x$ has finite length, we have
  \begin{equation*}
    x\cdot\overrightarrow{u}=\overrightarrow{xu}=\overrightarrow{xv}=x\cdot \overrightarrow{v},
  \end{equation*}
  thus $\overrightarrow{u}=\overrightarrow{v}$.
  We deduce from Lemma~\ref{l:characterization-of-Green-H-in-J-class-of-minimal-shift} that $u\green{H}v$.
  
  By Green's Lemma, the mapping $H_u\to H_{xu}$
  sending each element $w$ in the $\green{H}$-class $H_u$ to $xw$ is a bijection (see~\cite[Lemma~A.3.1]{Rhodes&Steinberg:2009qt}).
  In particular, since $u,v\in H_u$, it follows from
  the equality $xu=xv$ that $u=v$.
  This shows~\ref{item::cancelative-property-in-JX-1},
  and the proof of~\ref{item::cancelative-property-in-JX-2}
  follows by symmetric arguments.
\end{proof}

\begin{remark}
  \label{r:weak-generalization-letter-super-cancelativity}
  When $\pv V=\pv S$, Lemma~\ref{l:cancelative-property-in-JX} is a special case
  of Proposition~\ref{p:letter-super-cancelativity}.
  While Proposition~\ref{p:letter-super-cancelativity}
  still holds if we replace $\pv S$ by many other pseudovarieties $\pv V$~\cite[Proposition 6.4]{Almeida&ACosta&Goulet-Ouellet:2024a}, it does not hold for all
  $\pv V$ containing~$\pv{\loc Sl}$ (cf.~\cite[Proposition 6.2]{Almeida&ACosta&Costa&Zeitoun:2019}).
\end{remark}

The following proposition is used in the proof of Theorem~\ref{t:a-sort-of-converse-of-surjectivity-theorem}.

 \begin{proposition}
   \label{p:shift-of-idempotents}
   Let $X$ be a minimal shift
space and $\pv V$ be a pseudovariety of semigroups containing~\pv{\loc Sl}. Let $e,f$ be idempotents in $J_{\pv V}(X)$.
Let  $n$ be a positive integer.
The following conditions are equivalent:
\begin{enumerate}
\item the equality $\pli(e)=\shift^n(\pli(f))$ holds;\label{item:shift-of-idempotents-1}
\item one has $pe=fp$ for some word $p$ of length $n$;\label{item:shift-of-idempotents-2}
\item one has $pe\green{H}fp$ for some word $p$ of length $n$.\label{item:shift-of-idempotents-3}
\end{enumerate}
Moreover, if $p$ is a word of length $n$
such that $pe\green{H}fp$, then $pe$ and $fp$ belong to $J_{\pv V}(X)$, and the equalities $p=f{[}0,n{)}=e{[}-n,-1{]}$
and $pe=fp$ hold.
\end{proposition}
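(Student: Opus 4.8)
The plan is to establish the cycle (ii)$\Rightarrow$(iii)$\Rightarrow$(i)$\Rightarrow$(ii) and then to read the final assertion off the intermediate steps. The implication (ii)$\Rightarrow$(iii) is trivial. Throughout I will use two facts that hold because $\pv V\supseteq\pv{\loc Sl}\supseteq\pv{\loc I}$: first, every pseudoword of length at least $n$ has a well-defined length-$n$ prefix and suffix, so that for a word $p$ of length $n$ the pseudoword $pe$ (resp.\ $fp$) has $p$ as its length-$n$ prefix (resp.\ suffix); and second, by Proposition~\ref{p:mirage}, an infinite-length pseudoword lies in $J_{\pv V}(X)$ exactly when all of its finite factors lie in $L(X)$.

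For (iii)$\Rightarrow$(i), assume $pe\green H fp$ for a word $p$ of length $n$. From $fp\le_{\green R}pe$ we get $fp\in pe(\Om AV)^1$, and comparing length-$n$ prefixes yields $p=f[0,n)$; symmetrically, $pe\le_{\green L}fp$ gives $p=e[-n,-1]$. Consequently every finite factor of $fp=f\cdot f[0,n)$ straddling the central join is of the form $f[-m,-1]\,f[0,k)$, hence a factor of $f^2=f$; similarly every straddling factor of $pe=e[-n,-1]\cdot e$ is a factor of $e^2=e$. Thus $\fac(fp)\subseteq\fac(f)$ and $\fac(pe)\subseteq\fac(e)$ are contained in $L(X)$, so $fp,pe\in J_{\pv V}(X)$ by the second standing fact. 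Now Lemma~\ref{l:characterization-of-Green-H-in-J-class-of-minimal-shift} converts $pe\green R fp$ and $pe\green L fp$ into $\overrightarrow{pe}=\overrightarrow{fp}$ and $\overleftarrow{pe}=\overleftarrow{fp}$, that is $p\,\overrightarrow e=\overrightarrow f$ and $\overleftarrow e=\overleftarrow f\,p$; unwinding these two identities is precisely the statement $\pli(e)=\shift^n(\pli(f))$.

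The implication (i)$\Rightarrow$(ii) carries the main difficulty, and I expect the identity $gp=e$ below to be the central obstacle. Set $p=f[0,n)$ and write $f=pg$, where $g$ is the infinite-length pseudoword determined by this equation (unique by Lemma~\ref{l:cancelative-property-in-JX}, since $f\in J_{\pv V}(X)$); note that (i) also gives $p=e[-n,-1]$. To prove $gp=e$, first cancel $p$ on the left in $p(gf)=(pg)f=f^2=f=pg$ (Lemma~\ref{l:cancelative-property-in-JX}) to obtain $gf=g$, whence $(gp)^2=g(pg)p=gfp=(gf)p=gp$, so $gp$ is idempotent. A factor computation as in the previous paragraph shows $gp\in J_{\pv V}(X)$, while (i) gives $\overrightarrow{gp}=\overrightarrow g=f[n]f[n+1]\cdots=\overrightarrow e$ and $\overleftarrow{gp}=\overleftarrow f\,p=\overleftarrow e$, so $\pli(gp)=\pli(e)\in X$. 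As $gp$ and $e$ are two idempotents of $J_{\pv V}(X)$ with the same image under $\pli$, the uniqueness clause of Proposition~\ref{p:parametrization-of-H-classes} forces $gp=e$. Therefore $fp=(pg)p=p(gp)=pe$, which is (ii).

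Finally, the closing assertion follows from the same computations: if $pe\green H fp$, then the first part of the (iii)$\Rightarrow$(i) argument already yields $p=f[0,n)=e[-n,-1]$, the memberships $fp,pe\in J_{\pv V}(X)$, and statement (i); feeding this very $p=f[0,n)$ into the (i)$\Rightarrow$(ii) argument then gives $pe=fp$, completing the proof.
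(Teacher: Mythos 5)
Your proof is correct and takes essentially the same route as the paper's: the same cycle (ii)$\Rightarrow$(iii)$\Rightarrow$(i)$\Rightarrow$(ii), with the same key construction in (i)$\Rightarrow$(ii) — factor $f=pg$ with $p=f[0,n)$, show $gp$ is idempotent, and identify $gp$ with $e$ via $\pli$ — and the moreover clause read off the same computations. The only variations are in which auxiliary facts you cite (cancellation via Lemma~\ref{l:cancelative-property-in-JX} instead of stability to get $gf=g$; the uniqueness clause of Proposition~\ref{p:parametrization-of-H-classes} instead of Lemma~\ref{l:characterization-of-Green-H-in-J-class-of-minimal-shift} to conclude $gp=e$; and an explicit straddling-factor argument for $pe,fp\in J_{\pv V}(X)$ where the paper defers this to the end), all of which are equivalent and valid.
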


 \begin{proof}
  $ \ref{item:shift-of-idempotents-2}
  \Rightarrow
  \ref{item:shift-of-idempotents-3}$ This implication is trivial. 
   
   $\ref{item:shift-of-idempotents-3}
   \Rightarrow
   \ref{item:shift-of-idempotents-1}$
   From $pe\green{H}fp$ we get, on one hand, the equalities $\overrightarrow{pe}=\overrightarrow{fp}=\overrightarrow{f}$, whence
   \begin{equation}
     \label{eq:shift-of-idempotents-1}
     e{[}0,\infty{)}=f{[}n,\infty{)};
   \end{equation}
   and, on the other hand, the equalities $\overleftarrow{e}=\overleftarrow{pe}=\overleftarrow{fp}$,
   thus
   \begin{equation}
     \label{eq:shift-of-idempotents-2}
    e{(}-\infty,-1{]}=f{(}-\infty,n-1{]}.
   \end{equation}
   Combining~\eqref{eq:shift-of-idempotents-1} and~\eqref{eq:shift-of-idempotents-2}, we obtain $ \pli(e)=\shift^n(\pli(f))$.
   
   $\ref{item:shift-of-idempotents-1}
   \Rightarrow
   \ref{item:shift-of-idempotents-2}$
   Assuming that $\pli(e)=\shift^n(\pli(f))$, we have $e{[}-n,-1{]}=f{[}0,n{)}$. Set $p=e{[}-n,-1{]}$, and consider the factorization $f=pt$.
     By Proposition~\ref{p:mirage}, the infinite-length pseudoword $t$ belongs to $J_{\pv V}(X)$, and so
     $f\green{L}t$ because profinite semigroups are stable. As $f$ is idempotent, we then have $t=tf$.

   Consider the infinite-length pseudoword $g=tp$. Since $f=f^2=ptpt=pgt$, we know that $g\in J_{\pv V}(X)$ by Proposition~\ref{p:mirage}.
   Note that $pg=fp$. Hence,  it suffices to show that $g=e$ to
   conclude the proof of the implication
   $\ref{item:shift-of-idempotents-1}
   \Rightarrow
   \ref{item:shift-of-idempotents-2}$.
   We first check that $g$ is idempotent: indeed, as $pt=f$ and $t=tf$, we have $g^2=tptp=tfp=tp=g$.
   Then, it follows from the equality $pg=fp$ and the already
   established implication
   $\ref{item:shift-of-idempotents-3}
   \Rightarrow
   \ref{item:shift-of-idempotents-1}$
   (with $g$ playing the role of $e$ in that implication) that
   \begin{equation*}
     \pli(g)=\shift^n(\pli(f))=\pli(e).
   \end{equation*}
   This implies $g\green{H}e$ by Lemma~\ref{l:characterization-of-Green-H-in-J-class-of-minimal-shift},
   which means that $g=e$ as $g$ and $e$ are idempotents.

   We have therefore established the chain of equivalences
   $\ref{item:shift-of-idempotents-1}
   \Leftrightarrow
   \ref{item:shift-of-idempotents-2}
   \Leftrightarrow
   \ref{item:shift-of-idempotents-3}$.
   It remains to justify the last sentence in the proposition.
   Suppose that $pe\green{H}fp$ for a word $p$ of length $n$. Then
   in fact we have $pe=fp$, as we already proved the implication
   $\ref{item:shift-of-idempotents-3}
   \Rightarrow
   \ref{item:shift-of-idempotents-2}$.
   Note that $f$ is a prefix of $pe$, whence $p=f{[}0,n{)}$
   by the unicity of the prefix of length $n$ in any infinite-length pseudoword of $\Om AV$.
   Similarly, $p$ is the suffix $e{[}-n,-1{]}$ of $e$.
   Let $t$ be such that $e=tp$. Then $e=e^2=tpe$. Since $pe$ is an infinite-length factor of $e$, it follows from Proposition~\ref{p:mirage}
   that $pe\in J_{\pv V}(X)$. Similarly, we have $fp\in J_{\pv V}(X)$.
 \end{proof}
 
\section{Profinite images of directive sequences}
\label{sec:profinite-images}

In this section, we consider a directive
sequence $\bsigma = (\sigma_n)_{n\in\nn}$, with $\sigma_n\from A_{n+1}^+\to A_n^+$,
and a pseudovariety  of semigroups $\pv V$ containing $\pv N$.

Recall that $\prov V\sigma_n\from\Om {A_{n+1}}V\to\Om{A_n}V$
and $\prov V\sigma_{m,n}\from \Om {A_{n}}V\to\Om{A_m}V$
are the unique continuous homomorphisms extending
$\sigma_n\from A_{n+1}^+\to A_n^+$ and
$\sigma_{m,n}\from A_{n}^+\to A_m^+$, respectively,
and that $\prov V\sigma_{m,n}=\prov V\sigma_m\circ\cdots\circ\prov V\sigma_{n-1}$
(cf.~Subsection~\ref{ss:rel-free}).

\begin{definition}
  \label{d:V-image}
  The \emph{$\pv{V}$-image} of $\bsigma$, denoted $\img_\pv{V}(\bsigma)$, is the intersection
  \begin{equation*}
    \bigcap_{n\in\nn} \img(\prov V\sigma_{0,n}).
  \end{equation*}
   By a \emph{profinite image} of $\bsigma$ we mean a
 set of the form $\img_{\pv V}(\bsigma)$ for some pseudovariety~$\pv V$.
\end{definition}

\begin{remark}
  \label{r:V-image}
  Since the sequence of sets $\img(\sigma_{0,n}^{\pv V})$ is a chain for the reverse inclusion,
  we have 
  \begin{equation*}
    \img_\pv{V}(\bsigma)=\bigcap_{k\in\nn} \img(\prov V\sigma_{0,n_k})
  \end{equation*}
  for every strictly increasing sequence $(n_k)_{k\in\nn}$ of nonnegative integers.
\end{remark}

\begin{remark}
  \label{r:V-image-is-a-semigroup}
  The set $\img_\pv{V}(\bsigma)$ is a closed subsemigroup of $\Om {A_0}V$; indeed, $\img_\pv{V}(\bsigma)$ is a nonempty compact space by the finite intersection property of compact spaces.
\end{remark}

We next register that a contraction does not change the $\pv V$-image,
which is an immediate consequence of Remark~\ref{r:V-image}.

 \begin{lemma}
   \label{l:V-image-contraction}
   If $\btau$ is a contraction of $\bsigma$, then
   $\img_\pv{V}(\btau)=\img_\pv{V}(\bsigma)$.
 \end{lemma}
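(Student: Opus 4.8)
The plan is to unwind the definition of the contraction and reduce everything to Remark~\ref{r:V-image}. Writing $\btau = (\tau_k)_{k\in\nn}$ with $\tau_k = \sigma_{n_k,n_{k+1}}$, I would first compute the iterated compositions $\tau_{0,k} = \tau_0 \circ \cdots \circ \tau_{k-1}$. Since each $\tau_j = \sigma_{n_j} \circ \cdots \circ \sigma_{n_{j+1}-1}$, these compositions telescope, yielding $\tau_{0,k} = \sigma_{n_0} \circ \cdots \circ \sigma_{n_k - 1} = \sigma_{n_0, n_k}$; and because $n_0 = 0$ by the definition of contraction, this equals $\sigma_{0,n_k}$.

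Next I would invoke the functoriality of the pro-$\pv V$ extension recalled in Subsection~\ref{ss:rel-free}, namely $\prov V{(\varphi \circ \psi)} = \prov V\varphi \circ \prov V\psi$, to transport the previous equality to the profinite level: $\prov V\tau_{0,k} = \prov V\sigma_{0,n_k}$ for every $k\in\nn$. Consequently $\img(\prov V\tau_{0,k}) = \img(\prov V\sigma_{0,n_k})$ for every $k\in\nn$.

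Finally, taking the intersection over all $k \in \nn$ and applying Definition~\ref{d:V-image} to the directive sequence $\btau$, I obtain $\img_\pv V(\btau) = \bigcap_{k\in\nn} \img(\prov V\sigma_{0,n_k})$. Since $(n_k)_{k\in\nn}$ is a strictly increasing sequence of nonnegative integers, Remark~\ref{r:V-image} identifies the right-hand side with $\img_\pv V(\bsigma)$, which finishes the argument. There is no genuine obstacle here: the only points requiring care are the bookkeeping in the telescoping identity and the explicit use of the condition $n_0 = 0$, both of which are routine, so the result is indeed an immediate consequence of Remark~\ref{r:V-image}.
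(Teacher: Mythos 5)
Your proof is correct and follows exactly the paper's intended argument: the paper gives no separate proof, stating only that the lemma is an immediate consequence of Remark~\ref{r:V-image}, and your telescoping computation $\tau_{0,k}=\sigma_{0,n_k}$ together with functoriality of the pro-$\pv V$ extension is precisely the routine verification being left implicit. The bookkeeping, including the use of $n_0=0$, is accurate.
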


We proceed to establish an elementary technical lemma which will be used several times.
\begin{lemma}
    \label{l:cluster}
    Let $(I_n)_{n\in\nn}$ be a sequence of subsets of a compact metric space $M$. Let $C$ be the set of cluster points of sequences $(x_n)_{n\in\nn}$ such that $x_n\in I_n$ for all $n\in\nn$. 
    \begin{enumerate}
        \item The set $C$ is closed.\label{i:cluster-closed}
        \item If $(I_n)_{n\in\nn}$ is a descending chain, then $C = \bigcap_{n\in\nn}\overline I_n$.\label{i:cluster-chain}
    \end{enumerate}
\end{lemma}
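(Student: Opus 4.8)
The plan is to treat the two parts separately, handling at the outset the degenerate case in which some $I_n$ is empty. If $I_n=\varnothing$ for some $n$, then there is no sequence $(x_n)_{n\in\nn}$ with $x_n\in I_n$ for every $n$, so $C=\varnothing$; this is closed, which settles~\ref{i:cluster-closed}, and when $(I_n)_{n\in\nn}$ is a descending chain it forces $I_m=\varnothing$ for all $m\geq n$, hence $\bigcap_{n\in\nn}\overline{I_n}=\varnothing=C$, which settles~\ref{i:cluster-chain}. So from here on I assume every $I_n$ is nonempty, and I call a sequence $(x_n)_{n\in\nn}$ \emph{admissible} if $x_n\in I_n$ for all $n$.

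For part~\ref{i:cluster-closed}, I would take $p$ in the closure of $C$ and produce an admissible sequence having $p$ as a cluster point. Pick $p^{(k)}\to p$ with each $p^{(k)}\in C$, and for each $k$ an admissible sequence $(x^{(k)}_n)_{n\in\nn}$ of which $p^{(k)}$ is a cluster point. The key step is a diagonal selection: recursively choose indices $n_1<n_2<\cdots$ such that $d(x^{(k)}_{n_k},p^{(k)})<1/k$, which is possible because $p^{(k)}$ being a cluster point guarantees infinitely many indices $n$ with $d(x^{(k)}_{n},p^{(k)})<1/k$, so one can always be taken beyond $n_{k-1}$. Then set $x_{n_k}=x^{(k)}_{n_k}$ and fill the remaining positions with arbitrary elements of the (nonempty) sets $I_n$. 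The triangle inequality gives $d(x_{n_k},p)\leq d(x^{(k)}_{n_k},p^{(k)})+d(p^{(k)},p)\to 0$, so the subsequence $(x_{n_k})_k$ converges to $p$; since the $n_k$ are strictly increasing, $p$ is a cluster point of the admissible sequence $(x_n)_{n\in\nn}$, whence $p\in C$.

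For part~\ref{i:cluster-chain}, assuming $(I_n)_{n\in\nn}$ descending, I would prove the two inclusions directly. For $C\subseteq\bigcap_{n\in\nn}\overline{I_n}$: if $p\in C$ is a cluster point of an admissible $(x_n)_{n\in\nn}$, take a subsequence $x_{n_k}\to p$; for each fixed $m$, all but finitely many $n_k$ satisfy $n_k\geq m$, whence $x_{n_k}\in I_{n_k}\subseteq I_m$ by the descending hypothesis, so the limit $p$ lies in $\overline{I_m}$, and $m$ was arbitrary. For $\bigcap_{n\in\nn}\overline{I_n}\subseteq C$: if $p\in\overline{I_n}$ for every $n$, choose $x_n\in I_n$ with $d(x_n,p)<1/(n+1)$; then the admissible sequence $(x_n)_{n\in\nn}$ converges to $p$, so $p$ is in particular a cluster point of it, giving $p\in C$.

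The main obstacle is the diagonal argument in part~\ref{i:cluster-closed}: one must simultaneously keep the selected indices strictly increasing and the selected terms close to $p$, while guaranteeing that the assembled sequence is genuinely admissible, which is precisely why the reduction to nonempty $I_n$ is made first. Note that compactness of $M$ plays no essential role here; only the metric, through the characterization of cluster points as subsequential limits, is used. The descending-chain part is then routine once that characterization is unwound.
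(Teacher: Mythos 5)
Your proof is correct and takes essentially the same route as the paper's: part~(i) by a recursive diagonal selection combined with the triangle inequality, and part~(ii) by the same two direct inclusions (descending chain for $C\subseteq\bigcap_n\overline{I_n}$, and choosing $x_n\in I_n$ with $d(x_n,p)<1/(n+1)$ for the reverse). If anything, you are slightly more careful than the paper, which leaves implicit both the reduction to the case where every $I_n$ is nonempty and the step of completing the selected terms $y_k\in I_{n_k}$ into a full admissible sequence before concluding membership in~$C$.
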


\begin{proof}
    \ref{i:cluster-closed} Let $(x_n)_{n\in\nn}$ be a sequence of elements of $C$
    converging to an element $x\in M$.
    Up to taking a subsequence, we may assume that $d(x,x_k)<\frac{1}{2k}$ for every $k\geq 1$.
    We recursively build a strictly increasing sequence $(n_k)_{k\geq 1}$ of positive integers,
    together with a sequence $(y_k)_{k\geq 1}$ of elements such that $y_k\in I_{n_k}$, as follows:
    \begin{itemize}
        \item $n_0=0$ and $y_0$ is any element of $I_0$;
        \item if $k>0$, then $n_k\in\nn$ and $y_k\in I_{n_k}$ are chosen such that $n_k>n_{k-1}$ and $d(x_{k},y_k)<\frac{1}{2k}$.
            Such $n_k$ and $y_k$ must exist by the definition of the set $C$, to which $x_{k}$ belongs.
    \end{itemize}
    We then have $d(x,y_k)\leq d(x,x_{k})+d(x_{k},y_k)<\frac{1}{2k}+\frac{1}{2k}=\frac{1}{k}$ for every $k\geq 1$.
    It follows that $\lim_k y_k=x$, whence $x\in C$. 
    This proves that $C$ is closed in $M$.

    \ref{i:cluster-chain} Let $I = \bigcap_{n\in\nn}\overline I_n$.
    We first establish the inclusion $C\subseteq I$.   
    Let $x\in C$. Take a
    strictly increasing sequence $(n_k)_{k\in\nn}$
    of positive integers and a sequence $(x_k)_{k\in\nn}$ of elements of $M$
    converging to $x$ such that $x_k\in I_{n_k}$ for each $k\in\nn$.
    Fix $r\in\nn$. If $k\geq r$, then $n_k\geq r$ and so the inclusion $I_{n_k}\subseteq I_r$ holds.
    Hence, $x=\lim_{k\geq r} x_k$ is in the closed subspace $\overline{I_r}$.
    Since $r$ is arbitrary, this shows that $x\in I$.

    Finally we prove the inclusion $I\subseteq C$.
    Let $x\in I$.
    Let $k$ be a positive integer. Since $x$ is in the topological closure of $I_k$,
    there is $y_k\in I_k$ such that $d(x,y_k)<\frac{1}{k}$.
    It follows that $\lim_k y_k = x$, which shows that $x\in C$.
\end{proof}

Applying Lemma~\ref{l:cluster}\ref{i:cluster-chain} to the sequences of subsets
\[
  (\img(\prov V\sigma_{0,n}))_{n\in\nn},\quad (\img(\sigma_{0,n}))_{n\in\nn}
\]
yields the following, which we state for convenience.
 
\begin{lemma}
  \label{l:V-image-as-a-set-of-cluster-points}
  Consider the following sets:
  \begin{enumerate}
  \item the set $C$ of cluster points, in the space $\Om {A_0}V$, 
    of sequences $(w_n)_{n\in\nn}$
    of pseudowords
   such that $w_n\in \img(\prov V\sigma_{0,n})$ for every $n\in\nn$;
 \item the set $D$ of cluster points, in the space $\Om {A_0}V$, 
   of sequences $(w_n)_{n\in\nn}$ of words
   such that $w_n\in \img(\sigma_{0,n})$ for every $n\in\nn$.
  \end{enumerate}
   Then the equalities $\img_\pv{V}(\bsigma)=C=D$ hold.
 \end{lemma}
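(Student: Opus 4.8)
The plan is to apply Lemma~\ref{l:cluster}\ref{i:cluster-chain} to each of the two families of subsets of $\Om{A_0}V$ in turn. Since the alphabet $A_0$ is finite and $\pv V$ contains $\pv N$, the space $\Om{A_0}V$ is compact and metrizable (cf.\ Subsection~\ref{ss:rel-free}), so it can serve as the ambient compact metric space $M$ required by Lemma~\ref{l:cluster}. It then remains to verify that each family is a descending chain, to identify the relevant closures, and to match the resulting intersections with $\img_\pv{V}(\bsigma)$.

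First I would treat $C$. Because $\sigma_{0,n+1}=\sigma_{0,n}\circ\sigma_n$, functoriality of the pro-$\pv V$ extension gives $\prov V\sigma_{0,n+1}=\prov V\sigma_{0,n}\circ\prov V\sigma_n$, whence $\img(\prov V\sigma_{0,n+1})\subseteq\img(\prov V\sigma_{0,n})$; thus $(\img(\prov V\sigma_{0,n}))_{n\in\nn}$ is a descending chain. Each of its terms is the continuous image of the compact space $\Om{A_n}V$, hence compact, hence closed in $\Om{A_0}V$. Applying Lemma~\ref{l:cluster}\ref{i:cluster-chain} therefore yields $C=\bigcap_{n\in\nn}\overline{\img(\prov V\sigma_{0,n})}=\bigcap_{n\in\nn}\img(\prov V\sigma_{0,n})=\img_\pv{V}(\bsigma)$, the last equality being Definition~\ref{d:V-image}.

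Next I would treat $D$. The same composition identity at the level of finite words gives $\img(\sigma_{0,n+1})\subseteq\img(\sigma_{0,n})$, so $(\img(\sigma_{0,n}))_{n\in\nn}$ is a descending chain in $A_0^+\subseteq\Om{A_0}V$, and Lemma~\ref{l:cluster}\ref{i:cluster-chain} gives $D=\bigcap_{n\in\nn}\overline{\img(\sigma_{0,n})}$. The crux is then the identification $\overline{\img(\sigma_{0,n})}=\img(\prov V\sigma_{0,n})$ for each $n$: since $A_n^+$ is dense in $\Om{A_n}V$ and $\prov V\sigma_{0,n}$ is continuous with compact domain, the set $\img(\prov V\sigma_{0,n})$ is closed and contains the image $\prov V\sigma_{0,n}(A_n^+)=\sigma_{0,n}(A_n^+)=\img(\sigma_{0,n})$ as a dense subset; hence it equals the closure $\overline{\img(\sigma_{0,n})}$. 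Combining this with the previous paragraph gives $D=\bigcap_{n\in\nn}\img(\prov V\sigma_{0,n})=\img_\pv{V}(\bsigma)=C$, as required.

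The only step that is not a bookkeeping verification is this last identification $\overline{\img(\sigma_{0,n})}=\img(\prov V\sigma_{0,n})$, which I expect to be the main (though mild) obstacle: it rests on the density of $A_n^+$ in $\Om{A_n}V$ together with the fact that a continuous image of a compact space is closed. Everything else reduces to the two descending-chain observations and direct appeals to Lemma~\ref{l:cluster}\ref{i:cluster-chain}.
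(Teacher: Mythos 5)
Your proposal is correct and takes essentially the same route as the paper, which obtains this lemma precisely by applying Lemma~\ref{l:cluster}\ref{i:cluster-chain} to the two descending chains $(\img(\prov V\sigma_{0,n}))_{n\in\nn}$ and $(\img(\sigma_{0,n}))_{n\in\nn}$. The verifications you spell out---closedness of each $\img(\prov V\sigma_{0,n})$ as a continuous image of a compact space, and the identification $\overline{\img(\sigma_{0,n})}=\img(\prov V\sigma_{0,n})$ via density of $A_n^+$ in $\Om{A_n}V$---are exactly the details the paper leaves implicit.
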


There is a natural relationship between profinite images of $\bsigma$ relative
to comparable pseudovarieties.

\begin{proposition}\label{p:projecting-profinite-image}
  Let $\pv V$ and $\pv W$ be pseudovarieties of semigroups such that
  $\pv {N}\subseteq\pv{W}\subseteq\pv{V}$. The following equality holds:
  \begin{equation*}
    \img_\pv{W}(\bsigma) = p_{\pv{V},\pv{W}}(\img_\pv{V}(\bsigma)).
  \end{equation*}
\end{proposition}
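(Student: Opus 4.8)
The plan is to reduce the asserted equality of intersections to a level-by-level statement and then commute the projection with the intersection using compactness. The key identity is the naturality square
\begin{equation*}
  p_{\pv V,\pv W}\circ\prov V\sigma_{0,n}=\prov W\sigma_{0,n}\circ p_{\pv V,\pv W},
\end{equation*}
where on the left $p_{\pv V,\pv W}$ is the natural projection $\Om{A_0}V\to\Om{A_0}W$ and on the right it is the natural projection $\Om{A_n}V\to\Om{A_n}W$. First I would verify this square: both composites are continuous homomorphisms $\Om{A_n}V\to\Om{A_0}W$, so it suffices to check they agree on the generators $A_n$. Each sends $a\in A_n$ to the image in $\Om{A_0}W$ of the word $\sigma_{0,n}(a)\in A_0^+$; here one uses $\pv N\subseteq\pv W\subseteq\pv V$, so that $p_{\pv V,\pv W}$ restricts to the canonical embedding of $A_0^+$ into $\Om{A_0}W$. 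Density of $A_n^+$ and continuity then give commutativity.

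From the square, together with surjectivity of the natural projection $\Om{A_n}V\to\Om{A_n}W$, I would deduce the level-wise equality
\begin{equation*}
  p_{\pv V,\pv W}\bigl(\img(\prov V\sigma_{0,n})\bigr)=\img(\prov W\sigma_{0,n}).
\end{equation*}
The inclusion $\subseteq$ is immediate from the square. For $\supseteq$, an arbitrary element of $\img(\prov W\sigma_{0,n})$ has the form $\prov W\sigma_{0,n}(v_n)$; lifting $v_n$ to some $\tilde v_n\in\Om{A_n}V$ by surjectivity and applying the square shows it equals $p_{\pv V,\pv W}(\prov V\sigma_{0,n}(\tilde v_n))$, which lies in the left-hand side.

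It then remains to commute $p_{\pv V,\pv W}$ with the intersection $\bigcap_n\img(\prov V\sigma_{0,n})$. Writing $C_n=\img(\prov V\sigma_{0,n})$, these are compact (continuous images of the compact space $\Om{A_n}V$) and form a descending chain by functoriality of pro-$\pv V$ extensions (cf.~Remark~\ref{r:V-image}). For such a chain of compact sets and the continuous map $p_{\pv V,\pv W}$, one has $p_{\pv V,\pv W}(\bigcap_n C_n)=\bigcap_n p_{\pv V,\pv W}(C_n)$: the inclusion $\subseteq$ is trivial, and for $\supseteq$, given $v\in\bigcap_n p_{\pv V,\pv W}(C_n)$, the sets $p_{\pv V,\pv W}^{-1}(v)\cap C_n$ are nonempty, closed, and descending in the compact space $\Om{A_0}V$, so their intersection is nonempty by the finite intersection property, furnishing a preimage of $v$ in $\bigcap_n C_n$. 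Chaining this with the level-wise equality yields $p_{\pv V,\pv W}(\img_\pv V(\bsigma))=\bigcap_n\img(\prov W\sigma_{0,n})=\img_\pv W(\bsigma)$. The main obstacle is exactly this last step: continuous images do not in general commute with infinite intersections, and the argument relies essentially on the descending-chain structure of the $C_n$ so that the finite intersection property applies to the fibres over $v$.
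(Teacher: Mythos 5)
Your proposal is correct and follows essentially the same route as the paper: the naturality square $p_{\pv V,\pv W}\circ\prov V\sigma_{0,n}=\prov W\sigma_{0,n}\circ p_{\pv V,\pv W}$ gives the level-wise equality $p_{\pv V,\pv W}(\img(\prov V\sigma_{0,n}))=\img(\prov W\sigma_{0,n})$, the forward inclusion is immediate, and the converse is a compactness argument exploiting the descending chain. The only cosmetic difference is that the paper realizes the lift as a cluster point of a sequence of level-wise preimages $(v_n)_n$ (invoking Lemma~\ref{l:V-image-as-a-set-of-cluster-points}), whereas you apply the finite intersection property directly to the closed fibres $p_{\pv V,\pv W}^{-1}(w)\cap\img(\prov V\sigma_{0,n})$; these are interchangeable formulations of the same argument.
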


\begin{proof}
  As $\prov W\sigma_{0,n}\circ p_{\pv{V},\pv{W}}=p_{\pv{V},\pv{W}}\circ\prov V\sigma_{0,n}$, we clearly have
  $
    \img(\prov W\sigma_{0,n})=p_{\pv{V},\pv{W}}(\img(\prov V\sigma_{0,n}))
  $
  and so we immediately obtain
  \begin{equation*}
    p_{\pv{V},\pv{W}}(\img_\pv{V}(\bsigma))
    \subseteq\bigcap_{n\in\nn} p_{\pv{V},\pv{W}}(\img(\prov V\sigma_{0,n}))=\img_\pv{W}(\bsigma).
  \end{equation*}
  
  Conversely, let $w\in \img_\pv{W}(\bsigma)$. For each $n\in\nn$, we may take
  $v_n\in\img(\prov V\sigma_{0,n})$ such that $w=p_{\pv{V},\pv{W}}(v_n)$.
  Let $v$ be a cluster point of the sequence $(v_n)_n$. By continuity, we get $w=p_{\pv{V},\pv{W}}(v)$.
  On the other hand, we have $v\in \img_\pv{V}(\bsigma)$ by Lemma~\ref{l:V-image-as-a-set-of-cluster-points}.
  This establishes the inclusion $\img_\pv{W}(\bsigma)\subseteq p_{\pv{V},\pv{W}}(\img_\pv{V}(\bsigma))$, thus concluding the proof.
\end{proof}

Denote by $\Lambda_{\pv V}(\bsigma)$
the set of pseudowords
of $\Om {A_0}V$ that are cluster points of
some sequence $(w_n)_{n\in\nn}$ such that $w_n\in \sigma_{0,n}(A_n)$ for every $n\in\nn$.
The following is a simple application of Lemma~\ref{l:cluster}.

\begin{lemma}
  \label{p:lambda-is-closed}
  The set $\Lambda_{\pv V}(\bsigma)$ is a closed subspace of\/ $\img_\pv{V}(\bsigma)$.
\end{lemma}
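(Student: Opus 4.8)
The plan is to read the statement as two separate assertions --- that $\Lambda_{\pv V}(\bsigma)$ is closed and that it is contained in $\img_\pv{V}(\bsigma)$ --- and to obtain each of them directly from Lemma~\ref{l:cluster}, applied to the descending relevance of the sets $I_n = \sigma_{0,n}(A_n)$. The whole argument rests on the observation that, since $A_0$ is a finite alphabet, $\Om {A_0}V$ is a compact metric space (cf.~Subsection~\ref{ss:rel-free}), so that Lemma~\ref{l:cluster} is available with $M = \Om {A_0}V$. Note also that each $I_n = \sigma_{0,n}(A_n)$ is a finite subset of $A_0^+ \subseteq \Om {A_0}V$, so the $I_n$ genuinely lie in $M$.

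First I would handle closedness. By definition, $\Lambda_\pv{V}(\bsigma)$ is precisely the set $C$ of cluster points of sequences $(w_n)_{n\in\nn}$ with $w_n \in I_n = \sigma_{0,n}(A_n)$ for all $n\in\nn$. Hence Lemma~\ref{l:cluster}\ref{i:cluster-closed} immediately yields that $\Lambda_\pv{V}(\bsigma)$ is closed in $\Om {A_0}V$.

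For the containment, I would exploit the trivial chain of inclusions $\sigma_{0,n}(A_n) \subseteq \img(\sigma_{0,n}) \subseteq \img(\prov V\sigma_{0,n})$, valid for every $n\in\nn$. Consequently, any sequence $(w_n)_{n\in\nn}$ witnessing membership of a pseudoword $x$ in $\Lambda_\pv{V}(\bsigma)$ also satisfies $w_n \in \img(\prov V\sigma_{0,n})$ for every $n$. Thus $x$ is a cluster point of a sequence lying termwise in $\img(\prov V\sigma_{0,n})$, which places $x$ in the set $C$ of Lemma~\ref{l:V-image-as-a-set-of-cluster-points}. Since that lemma identifies this set $C$ with $\img_\pv{V}(\bsigma)$, we conclude $x \in \img_\pv{V}(\bsigma)$, giving the inclusion $\Lambda_\pv{V}(\bsigma) \subseteq \img_\pv{V}(\bsigma)$.

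I do not anticipate any real obstacle: the statement is essentially a packaging of Lemma~\ref{l:cluster} together with the two lemmas describing $\img_\pv{V}(\bsigma)$ as a set of cluster points. The only point meriting a moment's care is verifying that the ambient space $\Om {A_0}V$ is a compact metric space, so that Lemma~\ref{l:cluster} applies; this follows from the finiteness of $A_0$. Combining the two steps then completes the proof.
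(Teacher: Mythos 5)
Your proof is correct and follows essentially the same route as the paper: closedness via Lemma~\ref{l:cluster}\ref{i:cluster-closed} applied to $I_n=\sigma_{0,n}(A_n)$, and the inclusion in $\img_\pv{V}(\bsigma)$ via the cluster-point description of the $\pv V$-image from Lemma~\ref{l:V-image-as-a-set-of-cluster-points}. Your only extra care --- checking metrizability of $\Om{A_0}V$ and noting that the sets $\sigma_{0,n}(A_n)$ need not form a chain (which part \ref{i:cluster-closed} of Lemma~\ref{l:cluster} does not require) --- is a sound elaboration of what the paper leaves implicit.
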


\begin{proof}
  The inclusion $\Lambda_{\pv V}(\bsigma) \subseteq
  \img_\pv{V}(\bsigma)$ follows from the instance of
  Lemma~\ref{l:cluster}\ref{i:cluster-chain} presented in
  Lemma~\ref{l:V-image-as-a-set-of-cluster-points}. The fact that
  $\Lambda_{\pv V}(\bsigma)$ is closed follows from
  Lemma~\ref{l:cluster}\ref{i:cluster-closed}.
\end{proof}

We next establish some properties of the set $\Lambda_{\pv V}(\bsigma)$ in the case on which we focus: the case where $\bsigma$ is primitive.

\begin{proposition}
  \label{p:properties-of-lambda}
  Let $\bsigma$ be a primitive directive sequence.
  The following properties hold:
  \begin{enumerate}
  \item $\Lambda_{\pv V}(\bsigma)$ generates a dense subsemigroup of
    $\img_{\pv V}(\bsigma)$;\label{item:properties-of-lambda-0}
  \item $\Lambda_{\pv V}(\bsigma)\subseteq J_{\pv V}(\bsigma)\cap
    \img_{\pv V}(\bsigma)$;\label{item:properties-of-lambda-1}
  \item $\img_{\pv V}(\bsigma)=\Lambda_{\pv V}(\bsigma)\cdot \img_{\pv
      V}(\bsigma)=\img_{\pv V}(\bsigma)\cdot\Lambda_{\pv
      V}(\bsigma)$;\label{item:properties-of-lambda-2}
  \item $\Lambda_{\pv V}(\bsigma)$ is contained in a regular
    $\green{J}$-class of the semigroup $\img_{\pv
      V}(\bsigma)$.\label{item:properties-of-lambda-3}
  \end{enumerate}
\end{proposition}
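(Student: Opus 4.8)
The plan is to treat the four items in turn, using throughout two preliminary observations. First, since $\bsigma$ is primitive it admits a positive contraction, so the lengths $\min_{a\in A_n}|\sigma_{0,n}(a)|$ tend to infinity; hence every cluster point of a sequence $(w_n)$ with $w_n\in\sigma_{0,n}(A_n)$ (or with $w_n\in\img(\sigma_{0,n})$) has infinite length. Second, because $A_n$ generates a dense subsemigroup of $\Om{A_n}V$, the image $\img(\prov V\sigma_{0,n})$ is exactly the closed subsemigroup of $\Om{A_0}V$ generated by the finite set $\sigma_{0,n}(A_n)$. Item~\ref{item:properties-of-lambda-1} is then immediate: the inclusion $\Lambda_{\pv V}(\bsigma)\subseteq\img_{\pv V}(\bsigma)$ is Lemma~\ref{p:lambda-is-closed}; each generator $\sigma_{0,n}(a)$ lies in $L(\bsigma)=L(X(\bsigma))$ (Theorem~\ref{t:characterization-minimal-shifts}), so any $\lambda\in\Lambda_{\pv V}(\bsigma)$ lies in $\clos V(L(X(\bsigma)))$ and has infinite length, whence $\lambda\in\clos V(L(X(\bsigma)))\setminus L(X(\bsigma))\subseteq J_{\pv V}(\bsigma)$ by Proposition~\ref{p:J-class-of-minimal-shift-space}.

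For item~\ref{item:properties-of-lambda-0} I would argue by residual finiteness. Given $w\in\img_{\pv V}(\bsigma)$ and a continuous homomorphism $\varphi\from\Om{A_0}V\to S$ with $S$ finite, it suffices to find an element of the subsemigroup generated by $\Lambda_{\pv V}(\bsigma)$ with the same image under $\varphi$. Since $w\in\img(\prov V\sigma_{0,n})=\overline{\langle\sigma_{0,n}(A_n)\rangle}$, the element $\varphi(w)$ is a product of elements of $\varphi(\sigma_{0,n}(A_n))$; collapsing repeated partial products, one may take such a factorization of length at most $\card(S)$. By the pigeonhole principle a single length $r$ occurs for infinitely many $n$, and passing to cluster points of the $r$ factor-sequences (each of which lies in $\Lambda_{\pv V}(\bsigma)$ after filling in the remaining levels arbitrarily) produces $\lambda_1,\dots,\lambda_r\in\Lambda_{\pv V}(\bsigma)$ with $\varphi(\lambda_1\cdots\lambda_r)=\varphi(w)$.

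For item~\ref{item:properties-of-lambda-2}, the inclusions $\Lambda_{\pv V}(\bsigma)\cdot\img_{\pv V}(\bsigma),\ \img_{\pv V}(\bsigma)\cdot\Lambda_{\pv V}(\bsigma)\subseteq\img_{\pv V}(\bsigma)$ hold because $\img_{\pv V}(\bsigma)$ is a semigroup containing $\Lambda_{\pv V}(\bsigma)$ (Remark~\ref{r:V-image-is-a-semigroup}). For the reverse inclusions, take $w=\lim_k\sigma_{0,n_k}(u_{n_k})$ as in Lemma~\ref{l:V-image-as-a-set-of-cluster-points}. Choosing $p_k<n_k$ with $p_k\to\infty$ and $\prov V\sigma_{p_k,n_k}$ positive, write $\sigma_{0,n_k}(u_{n_k})=\sigma_{0,p_k}(b_k)\cdot\sigma_{0,p_k}(t_k)$, where $b_k\in A_{p_k}$ is the first letter of the word $\sigma_{p_k,n_k}(u_{n_k})$ (of length $\ge 2$) and $t_k\in A_{p_k}^+$ is the rest. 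Passing to a subnet so that $\sigma_{0,p_k}(b_k)\to\lambda$ and $\sigma_{0,p_k}(t_k)\to v$, continuity of multiplication gives $w=\lambda v$ with $\lambda\in\Lambda_{\pv V}(\bsigma)$ and $v\in\img_{\pv V}(\bsigma)$ (Lemma~\ref{l:V-image-as-a-set-of-cluster-points}); splitting off the last letter instead yields $w\in\img_{\pv V}(\bsigma)\cdot\Lambda_{\pv V}(\bsigma)$.

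The substance of the proposition is item~\ref{item:properties-of-lambda-3}, and its engine is a diagonal positivity argument showing that for any $\lambda,\mu\in\Lambda_{\pv V}(\bsigma)$ one has $\lambda=x\mu y$ with $x,y\in\img_{\pv V}(\bsigma)$. Writing $\lambda=\lim_k\sigma_{0,p_k}(a_k)$ and $\mu=\lim_j\sigma_{0,q_j}(b_j)$, for each $k$ I would pick $j_k$ with $q_{j_k}<p_k$, $q_{j_k}\to\infty$, $\sigma_{0,q_{j_k}}(b_{j_k})\to\mu$, and $\prov V\sigma_{q_{j_k},p_k}$ positive; positivity then lets me locate $b_{j_k}$ as an \emph{interior} factor $\sigma_{q_{j_k},p_k}(a_k)=r_k\,b_{j_k}\,s_k$ with $r_k,s_k$ nonempty, so that $\sigma_{0,p_k}(a_k)=\sigma_{0,q_{j_k}}(r_k)\cdot\sigma_{0,q_{j_k}}(b_{j_k})\cdot\sigma_{0,q_{j_k}}(s_k)$. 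The crucial point, and the main obstacle, is that the co-factors $\sigma_{0,q_{j_k}}(r_k)$ and $\sigma_{0,q_{j_k}}(s_k)$ sit in images at levels $q_{j_k}\to\infty$, so their cluster points $x,y$ land in $\img_{\pv V}(\bsigma)$ by Lemma~\ref{l:V-image-as-a-set-of-cluster-points}; passing to a subnet then gives $\lambda=x\mu y$. The symmetric argument gives $\mu\le_{\green J}\lambda$ as well, so $\Lambda_{\pv V}(\bsigma)$ lies in a single $\green{J}$-class $J^*$ of $\img_{\pv V}(\bsigma)$; moreover item~\ref{item:properties-of-lambda-2} shows every element of $\img_{\pv V}(\bsigma)$ is $\le_{\green J}$ some element of $\Lambda_{\pv V}(\bsigma)$, so $J^*$ is the top $\green{J}$-class. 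Finally, applying the factorization with $\mu=\lambda$ yields $\lambda=x\lambda y$ with $x,y\in\img_{\pv V}(\bsigma)$; iterating gives $\lambda=x^m\lambda y^m$ for all $m\ge 1$, and letting $x^{n!}\to x^\omega$, $y^{n!}\to y^\omega$ produces $\lambda=x^\omega\lambda y^\omega$ with $x^\omega$ an idempotent of $\img_{\pv V}(\bsigma)$. Then $\lambda\le_{\green J}x^\omega$, while $x^\omega\le_{\green J}\lambda$ because $J^*$ is the top class; hence $x^\omega\mathrel{\green{J}}\lambda$, so $J^*$ contains an idempotent and is therefore regular in the compact (hence stable) semigroup $\img_{\pv V}(\bsigma)$.
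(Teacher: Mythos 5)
Your proposal reproduces the paper's proof in all essentials: the same minimal-length factorization bounded by $\card(S)$ (with a pigeonhole on lengths and cluster points of the factor sequences) for density; the same closure-plus-infinite-length argument for $\Lambda_{\pv V}(\bsigma)\subseteq J_{\pv V}(\bsigma)$; the same splitting-off-a-letter-at-a-deep-level argument for the product decomposition (you descend to lower levels $p_k<n_k$ where the paper pushes the approximants up to higher levels $r(k)$ and re-reads them at level $n_k$ --- an immaterial difference); and the same positivity-based factorization followed by the $\omega$-power trick for regularity.

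There is one step whose justification fails as written. In the last item you claim that positivity of $\sigma_{q_{j_k},p_k}$ lets you locate $b_{j_k}$ as an \emph{interior} factor, $\sigma_{q_{j_k},p_k}(a_k)=r_k\,b_{j_k}\,s_k$ with \emph{both} $r_k$ and $s_k$ nonempty. Positivity only guarantees that the letter occurs somewhere in the image, possibly only at the boundary: the homomorphism $\varphi\from\ltr a\mapsto \ltr{bc}$, $\ltr b\mapsto\ltr{cb}$, $\ltr c\mapsto \ltr{bc}$ is positive, yet $\ltr b$ occurs in $\varphi(\ltr a)$ only as a prefix. Consequently your later assertion that $\lambda=x\mu y$ with $x,y\in\img_{\pv V}(\bsigma)$ (both nonempty) is not justified. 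The paper's proof records only that at least one of $r_k,s_k$ is nonempty (which expansiveness does give, since the image has length at least $2$), takes the limit pair $(x,y)$ in $(\Om{A_0}V)^1\times(\Om{A_0}V)^1$, and uses the fact that $\emptyw$ is isolated to conclude that at least one of $x,y$ lies in $\img_{\pv V}(\bsigma)$. Your argument survives this weakening with trivial edits: all the $\le_{\green{J}}$ comparisons hold in $\bigl(\img_{\pv V}(\bsigma)\bigr)^1$, and in the regularity step one assumes without loss of generality $x\neq\emptyw$, iterates $\lambda=x\lambda y$ to get $\lambda=x^\omega\lambda y^\omega$ (reading $y^\omega$ as the adjoined identity if $y=\emptyw$), and needs only the single idempotent $x^\omega$ to conclude, exactly as the paper does. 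Alternatively, you could genuinely force an interior occurrence by interposing an extra positive level: if $\sigma_{q,m}$ is positive and every letter has $\sigma_{m,p}$-image of length at least $3$, then the image of each letter under $\sigma_{q,p}$ splits into at least three blocks each containing every letter of $A_q$, so an occurrence inside a middle block has nonempty co-factors on both sides. Either repair is routine, but as stated the interiority claim is the one genuine flaw in an otherwise faithful reconstruction.
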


\begin{proof}
  \ref{item:properties-of-lambda-0} Note that profinite semigroups
  embed as topological semigroups into products of finite semigroups.
  Hence, it suffices to show that, for every continuous homomorphism
  $\varphi:\Om{A_0}V\to S$ into a finite semigroup $S$, and every
  element $w\in\img_{\pv V}(\bsigma)$, there is a product $u$ of
  finitely many elements of~$\Lambda_{\pv V}(\bsigma)$ such that
  $\varphi(w)=\varphi(u)$. Let $(w_k)_{k\in\mathbb{N}}$ be a sequence
  of words $w_k\in A_{n_k}^+$ such that $w=\lim\sigma_{0,n_k}(w_k)$.
  For each $k\in\mathbb{N}$, let $u_k\in A_{n_k}^+$ be a word of
  minimum length such that
  $\varphi\bigl(\sigma_{0,n_k}(w_k)\bigr)=\varphi\bigl(\sigma_{0,n_k}(u_k)\bigr)$.
  By the minimality assumption on $u_k$, the values under
  $\varphi\circ\sigma_{0,n_k}$ of the prefixes of~$u_k$ must be
  distinct, so that $|u_k|\le|S|$. By taking subsequences, we may well
  assume that all $u_k$ have the same length $\ell$. For each
  $k\in\mathbb{N}$, write $u_k=a_{k,1}\cdots a_{k,\ell}$ with the
  $a_{k,i}\in A_{n_k}^+$. By compactness, up to further taking
  subsequences, we may assume that each of the sequences
  $\bigl(\sigma_{0,n_k}(a_{k,i})\bigr)_k$ converges to the element
  $a_i$ of $\Lambda_{\pv V}(\bsigma)$. Then $u=a_1\cdots a_\ell$ has
  the required property as, for all sufficiently large $k$, the
  following equalities hold:
  \begin{align*}
    \varphi(w)
    &=\varphi\bigl(\sigma_{0,n_k}(w_k)\bigr)
    =\varphi\bigl(\sigma_{0,n_k}(u_k)\bigr)\\
    &=\varphi\bigl(\sigma_{0,n_k}(a_{k,1})\bigr)
      \cdots\varphi\bigl(\sigma_{0,n_k}(a_{k,\ell})\bigr)
    =\varphi(a_1\cdots a_\ell)
    =\varphi(u).
  \end{align*}

  \ref{item:properties-of-lambda-1} Since the set $\sigma_{0,n}(A_n)$
  is contained in $L(\bsigma)$ for every $n\in\nn$, we clearly have
  $\Lambda_{\pv V}(\bsigma)\subseteq \clos V(L(\bsigma))$. Moreover,
  the fact that $\bsigma$ is primitive also ensures that
  $\lim_{n\to\infty}\min\{|\sigma_{0,n}(a)|:a\in A_n\}=\infty$.
  Therefore, and in view of Lemma~\ref{p:lambda-is-closed}, we have
  indeed $\Lambda_{\pv V}(\bsigma)\subseteq J_{\pv V}(\bsigma)\cap
  \img_{\pv V}(\bsigma)$.
  
  \ref{item:properties-of-lambda-2} We show the equality $\img_{\pv
    V}(\bsigma)=\Lambda_{\pv V}(\bsigma)\cdot \img_{\pv V}(\bsigma)$.

  The inclusion $\Lambda_{\pv V}(\bsigma)\cdot \img_{\pv
    V}(\bsigma)\subseteq \img_{\pv V}(\bsigma)$ clearly holds as
  $\Lambda_{\pv V}(\bsigma)\subseteq \img_{\pv V}(\bsigma)$ and
  $\img_{\pv V}(\bsigma)$ is a semigroup. Conversely, let $w\in
  \img_{\pv V}(\bsigma)$. Then, by
  Lemma~\ref{l:V-image-as-a-set-of-cluster-points}, we have
  $w=\lim\sigma_{0,n_k}(u_k)$ for some strictly increasing sequence
  $(n_k)_{k\in\nn}$ of positive integers and a sequence
  $(u_k)_{k\in\nn}$ such that $u_k\in (A_{n_k})^+$ for every
  $k\in\nn$. Since $\bsigma$ is primitive, for each $k\in\nn$ we may
  choose some $r(k)\in\nn$ such that the word
  $w_k=\sigma_{n_k,n_{r(k)}}(u_{r(k)})$ has length at least two.
  Moreover, we may build the sequence $(r(k))_{k\in\nn}$ so that it is
  strictly increasing. For such a sequence, we have
  $\lim_{k\to\infty}\sigma_{0,n_k}(w_k)
  =\lim_{k\to\infty}\sigma_{0,n_{r(k)}}(u_{r(k)})=w$.
  
  For each $k\in\nn$, since $|w_k|\geq 2$, there are $a_k\in A_{n_k}$
  and $s_k\in (A_{n_k})^+$ such that $u_k=a_ks_k$. Let $(a,s)$ be an
  accumulation point in $\Om {A_{0}}V\times \Om {A_{0}}V$ of the
  sequence $(\sigma_{0,n_k}(a_k),\sigma_{0,n_k}(s_k))_{k}$. Since
  $\lim \sigma_{0,n_k}(a_k)\sigma_{0,n_k}(s_k)=\lim
  \sigma_{0,n_k}(u_k)=w$, we have $w=as$ by continuity of the
  multiplication. Note also that $(a,s)\in \Lambda_{\pv
    V}(\bsigma)\times \img_{\pv V}(\bsigma)$ by the definition of $
  \Lambda_{\pv V}(\bsigma)$ and by
  Lemma~\ref{l:V-image-as-a-set-of-cluster-points}. Therefore, we have
  $w\in \Lambda_{\pv V}(\bsigma)\cdot \img_{\pv V}(\bsigma)$. This
  concludes the proof of the equality $\img_{\pv
    V}(\bsigma)=\Lambda_{\pv V}(\bsigma)\cdot \img_{\pv V}(\bsigma)$.
  The proof of the equality $\img_{\pv V}(\bsigma)= \img_{\pv
    V}(\bsigma)\cdot \Lambda_{\pv V}(\bsigma)$ is entirely similar.

  \ref{item:properties-of-lambda-3} Let $a,b\in\Lambda_{\pv
    V}(\bsigma)$. Then there are strictly increasing sequences
  $(n_k)_{k\in\nn}$ and $(m_k)_{k\in\nn}$ of positive integers such
  that
  \begin{equation*}
    a=\lim\sigma_{0,n_k}(a_k)\qquad\text{and}\qquad b=\lim\sigma_{0,m_k}(b_k)
  \end{equation*}
  for some sequences $(a_k)_{k\in\nn}$ and $(b_k)_{k\in\nn}$ for which
  we have $a_k\in A_{n_k}$ and $b_k\in B_{m_k}$ for every $k\in\nn$.
  Since $\bsigma$ is primitive, for each $k\in \nn$ we may choose some
  $r(k)>k$ such that $n_{r(k)}>m_k$ and
  $\fac(\sigma_{m_k,n_{r(k)}}(A_{n_{r(k)}}))\supseteq A_{m_k}$.
  Moreover, the sequence $(r(k))_{k\in\nn}$ may be chosen to be
  strictly increasing. Going on with such a choice, we have, for each
  $k\in\nn$, a factorization
  $$\sigma_{m_k,n_{r(k)}}(a_{n_{r(k)}})=p_kb_ks_k$$
  with $p_k,s_k\in (A_{n_k})^*$, and with at least one of the words
  $p_k,s_k$ being nonempty. By compactness, we may extract from the
  sequence $(\sigma_{0,m_k}(p_k),\sigma_{0,m_k}(s_k))_{k\in\nn}$ a
  subsequence
  $(\sigma_{0,m_{k_i}}(p_{k_i}),\sigma_{0,m_{k_i}}(s_{k_i}))_{i\in\nn}$
  converging in $(\Om {A_0}V)^1\times (\Om {A_0}V)^1$ to some pair
  $(p,s)$. We then have
  \begin{equation}\label{eq:properties-of-lambda-1}
    a=\lim_{i\in\nn}\sigma_{0,n_{r(k_i)}}(a_{n_{r(k_i)}})
    =\lim_{i\in\nn}\Bigl(\sigma_{0,m_{k_i}}(p_{k_i})\cdot
    \sigma_{0,m_{k_i}}(b_{k_i})\cdot
    \sigma_{0,m_{k_i}}(s_{k_i})\Bigr)
    =pbs.
  \end{equation}
  Note that $p,s\in \img_{\pv V}(\bsigma)\cup\{\emptyw\}$ by
  Lemma~\ref{l:V-image-as-a-set-of-cluster-points}. This shows that
  $a\leq_{\green{J}}b$ in $\img_{\pv V}(\bsigma)$. Since $a,b$ are
  arbitrary elements of $\Lambda_{\pv V}(\bsigma)$, we then get
  $a\green{J}b$ in $\img_{\pv V}(\bsigma)$. Going back
  to~\eqref{eq:properties-of-lambda-1}, and taking $a=b$, we get
  $a=pas$. Since $ps=\lim \sigma_{0,m_k}(p_ks_k)$ and
  $p_ks_k\neq\emptyw$ for every $k\in\nn$, at least one of the
  pseudowords $p,s$ is not the empty word. Without loss of generality,
  assume that $p\neq\emptyw$. From $a=pas$, we obtain $a=p^kas^k$ for
  every $k\in\nn$, whence $a=p^\omega as^\omega$, which in turn yields
  $a\leq_{\green{J}}p^\omega$ in $\img_{\pv V}(\bsigma)$. On the other
  hand, for some $c\in \Lambda_{\pv V}(\bsigma)$ we have
  $p^\omega\leq_{\green{J}}c$ in $\img_{\pv V}(\bsigma)$, by the
  already shown item~\ref{item:properties-of-lambda-2}. But we already
  proved that all elements of $\Lambda_{\pv V}(\bsigma)$ are
  $\green{J}$-equivalent in $\img_{\pv V}(\bsigma)$. Joining all
  pieces, we see that $p^\omega\green{J}a$ in $\img_{\pv V}(\bsigma)$.
  This shows that $a$ is regular in $\img_{\pv V}(\bsigma)$,
  concluding the proof that $\Lambda_{\pv V}(\bsigma)$ is contained in
  a regular $\green{J}$-class of the profinite semigroup $\img_{\pv
    V}(\bsigma)$.
\end{proof}

The next theorem will play a key role in Section~\ref{sec:satur-direct-sequ}.

\begin{theorem}
  \label{t:Im-intersection-J}
  Let $\bsigma$ be a primitive directive sequence. The set $J_{\pv
    V}(\bsigma)\cap\img_{\pv V}(\bsigma)$ is a regular
  $\green{J}$-class of the semigroup\/ $\img_{\pv V}(\bsigma)$.
\end{theorem}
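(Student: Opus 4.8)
The plan is to identify $J_{\pv V}(\bsigma)\cap\img_{\pv V}(\bsigma)$ with the regular $\green{J}$-class $K$ of the compact semigroup $I:=\img_{\pv V}(\bsigma)$ that contains $\Lambda:=\Lambda_{\pv V}(\bsigma)$, whose existence is granted by Proposition~\ref{p:properties-of-lambda}\ref{item:properties-of-lambda-3}. Throughout I write $J:=J_{\pv V}(\bsigma)$ for the ambient regular $\green{J}$-class of $\Om{A_0}V$, and I use that both $\Om{A_0}V$ and $I$ are compact, hence stable. Note first that, by Proposition~\ref{p:properties-of-lambda}\ref{item:properties-of-lambda-2}, every element $w$ of $I$ factors as $w=ax$ with $a\in\Lambda$ and $x\in I$, so $w\leq_{\green{J}}a$ holds in $I$; thus $K$ is the maximum $\green{J}$-class of $I$. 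Reducing the theorem to the equality $J\cap I=K$ is then the whole task, since $K$ is regular in $I$ by Proposition~\ref{p:properties-of-lambda}\ref{item:properties-of-lambda-3}.

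For the inclusion $K\subseteq J\cap I$, I would observe that $\green{J}$-comparability computed inside $I$ implies $\green{J}$-comparability in $\Om{A_0}V$: a factorisation $c=udv$ with $u,v\in I^1$ is also a factorisation in $(\Om{A_0}V)^1$. Hence $K$, being the $\green{J}$-class of elements of $\Lambda\subseteq J$ (Proposition~\ref{p:properties-of-lambda}\ref{item:properties-of-lambda-1}), is contained in $J$; as $K\subseteq I$ as well, we obtain $K\subseteq J\cap I$.

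The substantial half is the reverse inclusion $J\cap I\subseteq K$, which amounts to showing that every $w\in J\cap I$ is $\green{J}$-maximal in $I$. Given such a $w$, I would first use Proposition~\ref{p:properties-of-lambda}\ref{item:properties-of-lambda-2} to write $w=ax=yb$ with $a,b\in\Lambda$ and $x,y\in I$. Since $w$ and $a$ lie in the single ambient $\green{J}$-class $J$ and $w\leq_{\green{R}}a$ in $\Om{A_0}V$, stability of $\Om{A_0}V$ yields $w\green{R}a$ there; symmetrically $w\green{L}b$. It then remains to promote this ambient comparability to $I$, namely to establish $a\leq_{\green{J}}w$ in $I$ ($w$ a factor of $a$ using cofactors from $I^1$): combined with $w\leq_{\green{J}}a$ in $I$ this gives $w\green{J}a$ in $I$, whence $w\in K$. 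To get $a\leq_{\green{J}}w$ in $I$ I would re-run the primitivity argument from the proof of Proposition~\ref{p:properties-of-lambda}\ref{item:properties-of-lambda-3}, now feeding in the hypothesis $w\in J$. Writing $w=\lim_k\sigma_{0,n_k}(v_{n_k})$ and $a=\lim_k\sigma_{0,m_k}(c_{m_k})$ as in Lemma~\ref{l:V-image-as-a-set-of-cluster-points} (with the $c_{m_k}$ letters), the membership $w\in J$ should allow the approximating words $v_{n_k}$ to be taken inside the language $L(\bsigma^{(n_k)})$; positivity of $\sigma_{n_k,m}$ for $m$ large then makes each such word occur as a factor of $\sigma_{n_k,m}(c)$ for a suitable deep letter $c$, and passing to the limit exhibits $w$ as a factor of $a$ whose two cofactors are cluster points of sequences of $\sigma_{0,\ast}$-images, hence lie in $I^1$ by Lemma~\ref{l:V-image-as-a-set-of-cluster-points}.

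The hard part is precisely this last transfer from $\Om{A_0}V$ to $I$. The formal consequences of Proposition~\ref{p:properties-of-lambda} alone do not force it: one can contrive an abstract closed subsemigroup in which an ambient regular $\green{J}$-class meets the subsemigroup in strictly more than its top class while still satisfying the analogues of \ref{item:properties-of-lambda-0}--\ref{item:properties-of-lambda-3}. Thus both primitivity and the hypothesis $w\in J$ must enter in an essential combinatorial way, with stability supplying the passage from $\leq_{\green{R}},\leq_{\green{L}}$ to $\green{R},\green{L}$ and Lemma~\ref{l:V-image-as-a-set-of-cluster-points} keeping the produced cofactors inside $I$. I expect the delicate point to be the justification that $w\in J$ lets one choose the approximants $v_{n_k}$ in the language, which is what ultimately converts membership in the ambient $\green{J}$-class into $\green{J}$-maximality inside $I$.
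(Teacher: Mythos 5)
Your setup (reduce to showing that every $u\in J_{\pv V}(\bsigma)\cap\img_{\pv V}(\bsigma)$ lies in the top class $K$ of $\img_{\pv V}(\bsigma)$ containing $\Lambda_{\pv V}(\bsigma)$, the easy inclusion $K\subseteq J_{\pv V}(\bsigma)\cap\img_{\pv V}(\bsigma)$, and the reduction to $a\leq_{\green{J}}w$ inside $\img_{\pv V}(\bsigma)$) is correct and matches the paper. But the central step of your reverse inclusion is a genuine gap, and you flag it yourself without closing it: you assert that $w\in J_{\pv V}(\bsigma)$ ``should allow'' the approximants to be taken as words $v_{n_k}\in L(\bsigma^{(n_k)})$ with $\sigma_{0,n_k}(v_{n_k})\to w$. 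This amounts to the de-substitution statement $J_{\pv V}(\bsigma)\cap\img_{\pv V}(\bsigma)\subseteq \clos V\bigl(\sigma_{0,n}(L(\bsigma^{(n)}))\bigr)$ for every $n$, i.e.\ that $w$ admits a $\prov V\sigma_{0,n}$-preimage lying in $\clos V(L(\bsigma^{(n)}))$. Nothing in Lemma~\ref{l:V-image-as-a-set-of-cluster-points} or primitivity delivers this: the preimages one can actually produce (via Corollary~\ref{c:V-image-sent-to-image}) live in $\img_{\pv V}(\bsigma^{(n)})$, and Example~\ref{e:example-that-img-may-not-be-in-J} shows that $\img_{\pv V}(\bsigma^{(n)})$ need not be contained in $J_{\pv V}(\bsigma^{(n)})$ --- concatenations of images of level-$n$ letters can create two-letter blocks outside $L(\bsigma^{(n)})$ while their level-$0$ images remain factor-clean. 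So there is no evident way to select language-compatible approximants, and this claim looks at least as hard as the theorem itself. A second, independent problem: your route presupposes the factor characterization of membership in $J_{\pv V}(\bsigma)$ (Proposition~\ref{p:mirage}), which requires $\pv{\loc Sl}\subseteq\pv V$ (Remark~\ref{r:why-we-need-LSL}), whereas the theorem is stated, and proved in the paper, for every $\pv V$ containing only $\pv N$ --- e.g.\ for $\pv V=\pv{\loc I}$ your combinatorial strategy does not even make sense.

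Moreover, your justification for why a combinatorial input must enter --- that the formal content of Proposition~\ref{p:properties-of-lambda} alone cannot force the conclusion, because one can contrive abstract counterexamples --- is mistaken: the paper's proof is precisely such an abstract argument. Given $u\in J_{\pv V}(\bsigma)\cap\img_{\pv V}(\bsigma)$, items of Proposition~\ref{p:properties-of-lambda} together with stability of $\Om{A_0}V$ yield idempotents $e,f$ in the top class with $u=eu=uf$; ambient stability and Green's Lemma produce an ambient $x$ with $ux=e$, $x=fxe$ and $uH_x=H_e$; regularity of the top class of $\img_{\pv V}(\bsigma)$ and its stability give an element $w\in\img_{\pv V}(\bsigma)$ with $f\green{R}w\green{L}e$ computed in $\img_{\pv V}(\bsigma)$, forcing $w\in H_x$ and hence $uw\in H_e$; finally, since $H_e$ is a compact group, $z=w(uw)^{\omega-1}$ lies in the closed subsemigroup $\img_{\pv V}(\bsigma)$ and satisfies $uz=e$, so $u\green{J}e$ holds in $\img_{\pv V}(\bsigma)$. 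The transfer from ambient Green relations to Green relations of $\img_{\pv V}(\bsigma)$ --- exactly the point you identified as hard --- is achieved by this maximal-subgroup inversion trick, with no recourse to languages, recognizability-type parsing, or primitivity beyond what Proposition~\ref{p:properties-of-lambda} already encodes.
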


\begin{proof}
  By Proposition~\ref{p:properties-of-lambda}, the set $\Lambda_{\pv
    V}(\bsigma)$ is contained in a regular $\green{J}$-class $J$ of
  $\img_{\pv V}(\bsigma)$. We also know by
  Proposition~\ref{p:properties-of-lambda} that $\Lambda_{\pv
    V}(\bsigma)\subseteq J_{\pv V}(\bsigma)$, and so we already know
  that $J\subseteq J_{\pv V}(\bsigma)\cap\img_{\pv V}(\bsigma)$.

  Conversely, let $u$ be an element of $J_{\pv
    V}(\bsigma)\cap\img_{\pv V}(\bsigma)$. By
  Proposition~\ref{p:properties-of-lambda}, there are idempotents
  $e\in J$ and $f\in J$ such that $u=eu=uf$. In particular, it
  suffices to show that $e\in u\img_{\pv V}(\bsigma)$ to get $u\in J$.

  As $u$ and $e$ belong to the same $\green{J}$-class of the stable
  semigroup $\Om {A_0}V$, the equality $u=eu$ yields the existence of
  some pseudoword $x$ such that $ux=e$. We may assume that $x=fxe$,
  because $e$ and $f$ are idempotents and $u=uf$. Under such
  assumption, the pseudowords $x$ and $e$ belong the same
  $\green{L}$-class of $\Om {A_0}V$. Therefore, the equality
  \begin{equation*}
    uH_x=H_e
  \end{equation*}
  holds by Green's Lemma
  (cf.~\cite[Lemma~A.3.1]{Rhodes&Steinberg:2009qt}), where $H_s$
  denotes the $\green{H}$-class in $\Om {A_0}V$ of the pseudoword $s$.

  From the equalities $x=fxe$ and $e=ux$, and from $e\green{J}_{\Om
    {A_0}V}f$, we obtain $f\green{R}_{\Om {A_0}V}x\green{L}_{\Om
    {A_0}V}e$, by stability of $\Om {A_0}V$; on the other hand, since
  $e,f\in J$, there is $w\in \img_{\pv V}(\bsigma)$ such that
  $f\green{R}_{\img_{\pv V}(\bsigma)}w\green{L}_{\img_{\pv
      V}(\bsigma)}e$. This implies that $w\in H_x$, thus $uw\in H_e$.
  Since $H_e$ is a profinite group with identity $e$, it follows that
  $(uw)^{\omega}=e$. That is, for the pseudoword $z=w(uw)^{\omega-1}$,
  we have $e=uz$. Since $u,w\in \img_{\pv V}(\bsigma)$, the pseudoword
  $z$ belongs to $\img_{\pv V}(\bsigma)$. This shows that indeed $e\in
  u\img_{\pv V}(\bsigma)$, which, as already noted, yields $u\in J$.
  This establishes the inclusion $J_{\pv V}(\bsigma)\cap\img_{\pv
    V}(\bsigma)\subseteq J$.
\end{proof}

\begin{corollary}
  \label{c:Im-intersection-J}
  Let $\bsigma$ be a primitive directive sequence. The inclusion
  $\img_{\pv V}(\bsigma)\subseteq J_{\pv V}(\bsigma)$ holds if and
  only if the profinite semigroup $\img_{\pv V}(\bsigma)$ is simple.
\end{corollary}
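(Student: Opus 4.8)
The plan is to read off both implications directly from Theorem~\ref{t:Im-intersection-J}, which already identifies $J_{\pv V}(\bsigma)\cap\img_{\pv V}(\bsigma)$ as a single regular $\green{J}$-class of the semigroup $\img_{\pv V}(\bsigma)$. The one point that requires care throughout is that the relevant $\green{J}$-relation is the one computed \emph{inside} the semigroup $\img_{\pv V}(\bsigma)$, not the ambient relation of $\Om {A_0}V$. Recall also that, by definition, $\img_{\pv V}(\bsigma)$ is simple precisely when its $\green{J}$-relation is universal, that is, when it consists of a single $\green{J}$-class.

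For the forward implication, I would assume the inclusion $\img_{\pv V}(\bsigma)\subseteq J_{\pv V}(\bsigma)$, so that $J_{\pv V}(\bsigma)\cap\img_{\pv V}(\bsigma)=\img_{\pv V}(\bsigma)$. Theorem~\ref{t:Im-intersection-J} then asserts that the whole semigroup $\img_{\pv V}(\bsigma)$ is a single $\green{J}$-class, which is exactly the statement that $\img_{\pv V}(\bsigma)$ is simple. For the converse, I would assume that $\img_{\pv V}(\bsigma)$ is simple, so that $\img_{\pv V}(\bsigma)$ is its own unique $\green{J}$-class. By Theorem~\ref{t:Im-intersection-J}, the set $J_{\pv V}(\bsigma)\cap\img_{\pv V}(\bsigma)$ is a $\green{J}$-class of $\img_{\pv V}(\bsigma)$, hence nonempty; nonemptiness is in any case guaranteed independently by Proposition~\ref{p:properties-of-lambda}\ref{item:properties-of-lambda-1}, since $\Lambda_{\pv V}(\bsigma)$ is a nonempty subset of this intersection. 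Being a nonempty $\green{J}$-class of a semigroup possessing only one such class, it must coincide with the whole semigroup, giving $J_{\pv V}(\bsigma)\cap\img_{\pv V}(\bsigma)=\img_{\pv V}(\bsigma)$ and therefore $\img_{\pv V}(\bsigma)\subseteq J_{\pv V}(\bsigma)$.

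Since all the substantive work is carried by Theorem~\ref{t:Im-intersection-J}, there is no real obstacle in this corollary; the only things to watch are the bookkeeping of which semigroup the $\green{J}$-relation refers to, and the (automatic) nonemptiness of the intersection, which is what licenses the ``unique $\green{J}$-class'' step in the converse direction.
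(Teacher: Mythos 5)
Your proposal is correct and takes essentially the same route as the paper: both directions are read off from Theorem~\ref{t:Im-intersection-J}, which identifies $J_{\pv V}(\bsigma)\cap\img_{\pv V}(\bsigma)$ as a (hence nonempty) $\green{J}$-class of the semigroup $\img_{\pv V}(\bsigma)$, so that this class equals the whole semigroup exactly when the inclusion holds. Your extra remarks on tracking which semigroup's $\green{J}$-relation is in play, and on nonemptiness via $\Lambda_{\pv V}(\bsigma)$, are sound but add nothing beyond the paper's one-line argument.
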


\begin{proof}
  By Theorem~\ref{t:Im-intersection-J}, the intersection $J_{\pv
    V}(\bsigma)\cap\img_{\pv V}(\bsigma)$ is a $\green{J}$-class of
  the semigroup\/ $\img_{\pv V}(\bsigma)$. Therefore, the
  $\green{J}$-relation of $\img_{\pv V}(\bsigma)$ is universal in
  $\img_{\pv V}(\bsigma)$ if and only if the inclusion $\img_{\pv
    V}(\bsigma)\subseteq J_{\pv V}(\bsigma)$ holds.
\end{proof}

In the next example we see that $\img_{\pv V}(\bsigma)$ may not be
contained in $J_{\pv V}(\bsigma)$.

\begin{example}
  \label{e:example-that-img-may-not-be-in-J}
  Let $\sigma$ be the primitive substitution on the alphabet $A=\{\ltr a,\ltr
  b,\ltr c\}$ defined by
  \begin{equation*}
    \sigma\from
    \ltr a\mapsto \ltr{ac},\
    \ltr b\mapsto \ltr{bcb},\
    \ltr c\mapsto \ltr{ba},
  \end{equation*}
  and consider the constant directive sequence
  $\bsigma=(\sigma,\sigma,\ldots)$.
  Let $w$ be any cluster point in $\Om A{\pv S}$
  of the sequence $\sigma^{2n}(\ltr a)$.
  Note that $w\in\Lambda_{\pv S}(\bsigma)$,
  and so we have  $w\in J_{\pv S}(\bsigma)\cap \img_{\pv S}(\bsigma)$.
  Since $\sigma^2(\ltr a)=\ltr{acba}$, the pseudoword $w$ starts and
  ends with $\ltr a$, thus $\ltr a^2$ is a factor of $w^2$. 
  On the other hand, since $\ltr a^2$ is not a factor of any of the
  words $\sigma^{2n}(\ltr a)$, we also know that 
  $\ltr a^2$ is not a factor of $w$.
  Hence, the element $w^2$ of $\img_{\pv S}(\bsigma)$ does not
  belong to~$J_{\pv S}(\bsigma)$.
\end{example}

We proceed to see how the $\pv V$-images of the tails of a directive
sequence are related with each other. The proof of the next lemma uses
only standard arguments about compact metric spaces.
 
\begin{lemma}
  \label{l:preservation-of-lambda}
  The equality
  $\Lambda_{\pv V}(\bsigma)=\prov V\sigma_{0,n}(\Lambda_{\pv V}(\bsigma^{(n)}))$
  holds for all $n\in\nn$.
\end{lemma}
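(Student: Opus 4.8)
The plan is to prove the two inclusions separately, exploiting the fact that, since each $A_m$ is finite, the relatively free profinite semigroups $\Om{A_0}V$ and $\Om{A_n}V$ are metrizable, so that cluster points may be described through convergent subsequences. Throughout I use the two composition identities $\sigma_{0,k}=\sigma_{0,n}\circ\sigma_{n,k}$ (for $k\geq n$) and $\sigma^{(n)}_{0,m}=\sigma_{n,n+m}$, the latter expressing the tail composition in terms of $\bsigma$. Consequently $\sigma^{(n)}_{0,m}(A_{n+m})=\sigma_{n,n+m}(A_{n+m})$, which is precisely the set of generating blocks entering the definition of $\Lambda_{\pv V}(\bsigma^{(n)})\subseteq\Om{A_n}V$, while $\prov V\sigma_{0,n}$ is a continuous homomorphism $\Om{A_n}V\to\Om{A_0}V$.

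For the inclusion $\prov V\sigma_{0,n}(\Lambda_{\pv V}(\bsigma^{(n)}))\subseteq\Lambda_{\pv V}(\bsigma)$, I would start from $v\in\Lambda_{\pv V}(\bsigma^{(n)})$, realized as the limit of a subsequence $(v_{m_i})_i$ with $v_{m_i}\in\sigma_{n,n+m_i}(A_{n+m_i})$. Applying $\prov V\sigma_{0,n}$ and using $\sigma_{0,n}\circ\sigma_{n,n+m_i}=\sigma_{0,n+m_i}$, each image lies in $\sigma_{0,n+m_i}(A_{n+m_i})$, and $\prov V\sigma_{0,n}(v)=\lim_i\prov V\sigma_{0,n}(v_{m_i})$ by continuity. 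It then remains to pad this cofinal family into a full sequence $(w_k)_{k\in\nn}$ with $w_k\in\sigma_{0,k}(A_k)$ for every $k$, filling the missing indices with arbitrary elements of the nonempty sets $\sigma_{0,k}(A_k)$. Since $\prov V\sigma_{0,n}(v)$ is the limit of a cofinal subsequence of $(w_k)$, it is a cluster point of $(w_k)$, whence it lies in $\Lambda_{\pv V}(\bsigma)$.

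For the reverse inclusion I would take $w\in\Lambda_{\pv V}(\bsigma)$, written as $w=\lim_j w_{k_j}$ with $w_{k_j}\in\sigma_{0,k_j}(A_{k_j})$ and $(k_j)_j$ strictly increasing; discarding finitely many terms I may assume $k_j\geq n$. Choosing letters $a_j\in A_{k_j}$ with $w_{k_j}=\sigma_{0,k_j}(a_j)$ and setting $v_j=\sigma_{n,k_j}(a_j)\in\sigma_{n,k_j}(A_{k_j})$, the factorization $\sigma_{0,k_j}=\sigma_{0,n}\circ\sigma_{n,k_j}$ gives $w_{k_j}=\prov V\sigma_{0,n}(v_j)$. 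By compactness of $\Om{A_n}V$ the sequence $(v_j)_j$ has a convergent subsequence $v_{j_i}\to v$; padding as above (now over the shifted indices $m_i=k_{j_i}-n$) shows $v\in\Lambda_{\pv V}(\bsigma^{(n)})$, and continuity of $\prov V\sigma_{0,n}$ yields $\prov V\sigma_{0,n}(v)=\lim_i w_{k_{j_i}}=w$, so $w\in\prov V\sigma_{0,n}(\Lambda_{\pv V}(\bsigma^{(n)}))$.

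The only real obstacle is bookkeeping: the definition of $\Lambda_{\pv V}$ quantifies over sequences indexed by all of $\nn$ with the block at index $k$ drawn from $\sigma_{0,k}(A_k)$, whereas the natural objects produced along the way are indexed by shifted, sparse sets of indices. The padding step — completing a cofinal subsequence to a full admissible sequence and invoking that a cluster point of a cofinal subsequence is a cluster point of the whole sequence — is what reconciles this, and correctly tracking the index shift $k\leftrightarrow m=k-n$ together with the two composition identities is the point that needs care.
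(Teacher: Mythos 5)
Your proof is correct and takes essentially the same route as the paper's: both inclusions are established by combining the factorization $\sigma_{0,k}=\sigma_{0,n}\circ\sigma_{n,k}$ with continuity of $\prov V\sigma_{0,n}$ (for the direct inclusion) and compactness of $\Om{A_n}V$ to extract a convergent subsequence of $(\sigma_{n,k_j}(a_j))_j$ (for the reverse one). The padding step you spell out is handled implicitly in the paper, which simply treats cluster points of sequences indexed cofinally by $k>n$ without comment.
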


\begin{proof}
  Let $a\in \Lambda_{\pv V}(\bsigma^{(n)})$. Then $a$ is a cluster
  point in $\Om {A_n}V$ of a sequence $(\sigma_{n,k}(a_k))_{k> n}$
  such that $a_k\in A_k$ for every $k>n$. As
  $\sigma_{0,k}(a_k)=\sigma_{0,n}(\sigma_{n,k}(a_k))$, it then follows
  by continuity of $\prov V\sigma_{0,n}$ that $\prov V\sigma_{0,n}(a)$
  is a cluster point of the sequence $(\sigma_{0,k}(a_k))_{k>n}$.
  Hence $\prov V\sigma_{0,n}(a)\in \Lambda_{\pv V}(\bsigma)$, thus
  showing the inclusion $\prov V\sigma_{0,n}(\Lambda_{\pv
    V}(\bsigma^{(n)}))\subseteq \Lambda_{\pv V}(\bsigma)$.

  Conversely, let $a\in \Lambda_{\pv V}(\bsigma)$. We may pick a
  strictly increasing sequence $(m_r)_{r\in\nn}$ of integers greater
  than $n$ and a sequence $(a_r)_{r\in\nn}$ such that $a_r\in
  A_{m_r}$, for every $r\in\nn$, and $a=\lim\sigma_{0,m_r}(a_r)$. By
  compactness of $\Om {A_n}V$, the sequence
  $(\sigma_{n,m_r}(a_r))_{r\in\nn}$ has a subsequence
  $(\sigma_{n,m_{r_s}}(a_{r_s}))_{s\in\nn}$ converging in $\Om {A_n}V$
  to a pseudoword $b$. Note that $b\in \Lambda_{\pv
    V}(\bsigma^{(n)})$. By continuity of $\prov V\sigma_{0,n}$, we
  have
  \begin{equation*}
    \prov V\sigma_{0,n}(b)
    =\lim_{s\to\infty}\prov V\sigma_{0,n}(\sigma_{n,m_{r_s}}(a_{r_s}))
    =\lim_{s\to\infty}\sigma_{0,m_{r_s}}(a_{r_s})=a.
  \end{equation*}
  This establishes the inclusion
  $\Lambda_{\pv V}(\bsigma)\subseteq
  \prov V\sigma_{0,n}(\Lambda_{\pv V}(\bsigma^{(n)}))$, finishing the proof.
\end{proof}

Combining Lemma~\ref{l:preservation-of-lambda} with
Proposition~\ref{p:properties-of-lambda}\ref{item:properties-of-lambda-0},
we obtain the following corollary.

\begin{corollary}
  \label{c:V-image-sent-to-image}
  The equality $\img_\pv{V}(\bsigma)=\prov
  V\sigma_{0,n}\bigl(\img_\pv{V}(\sigma^{(n)})\bigr)$ holds for all
  $n\in\nn$.
\end{corollary}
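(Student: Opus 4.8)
The plan is to identify both sides of the claimed equality as closed subsemigroups generated by the relevant sets $\Lambda_{\pv V}$, and then to transport the generation across the continuous homomorphism $\prov V\sigma_{0,n}$. First I would note that $\bsigma^{(n)}$ is primitive whenever $\bsigma$ is, so that Proposition~\ref{p:properties-of-lambda}\ref{item:properties-of-lambda-0} applies both to $\bsigma$ and to $\bsigma^{(n)}$. This tells us that $\img_{\pv V}(\bsigma)$ is the closed subsemigroup of $\Om{A_0}V$ generated by $\Lambda_{\pv V}(\bsigma)$, and likewise that $\img_{\pv V}(\bsigma^{(n)})$ is the closed subsemigroup of $\Om{A_n}V$ generated by $\Lambda_{\pv V}(\bsigma^{(n)})$.

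The key tool is a general fact about continuous homomorphisms of compact semigroups: if $\varphi\from S\to T$ is such a homomorphism and $X\subseteq S$, then $\varphi$ maps the closed subsemigroup of $S$ generated by $X$ onto the closed subsemigroup of $T$ generated by $\varphi(X)$. Indeed, writing $\langle X\rangle$ for the algebraic subsemigroup generated by $X$, the homomorphism property gives $\varphi(\langle X\rangle)=\langle\varphi(X)\rangle$. Since $S$ is compact and $\varphi$ is continuous, the image of the closure $\overline{\langle X\rangle}$ is compact, hence closed in the Hausdorff space $T$; moreover it contains $\langle\varphi(X)\rangle$ and, by continuity, is contained in $\overline{\langle\varphi(X)\rangle}$. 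Squeezing a closed set between $\langle\varphi(X)\rangle$ and its closure forces $\varphi(\overline{\langle X\rangle})=\overline{\langle\varphi(X)\rangle}$.

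Applying this fact to $\varphi=\prov V\sigma_{0,n}$ and $X=\Lambda_{\pv V}(\bsigma^{(n)})$, I obtain that $\prov V\sigma_{0,n}(\img_{\pv V}(\bsigma^{(n)}))$ is the closed subsemigroup generated by $\prov V\sigma_{0,n}(\Lambda_{\pv V}(\bsigma^{(n)}))$. By Lemma~\ref{l:preservation-of-lambda} this last set equals $\Lambda_{\pv V}(\bsigma)$, whose closed generated subsemigroup is $\img_{\pv V}(\bsigma)$, again by Proposition~\ref{p:properties-of-lambda}\ref{item:properties-of-lambda-0}. Chaining these identifications yields the desired equality. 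I expect no serious obstacle here; the only point requiring care is the compactness step in the general fact, which is precisely what guarantees that the continuous image of the closed generated subsemigroup is itself closed, so that it coincides with the closed subsemigroup generated by the image rather than being merely dense in it.
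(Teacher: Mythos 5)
Your proposal is correct and takes essentially the same route as the paper, which derives Corollary~\ref{c:V-image-sent-to-image} precisely by combining Lemma~\ref{l:preservation-of-lambda} with Proposition~\ref{p:properties-of-lambda}\ref{item:properties-of-lambda-0}; you merely make explicit the routine compactness fact (a continuous homomorphism of compact semigroups sends the closed subsemigroup generated by a set onto the closed subsemigroup generated by its image) that the paper leaves as a one-line remark. Your observation that primitivity of $\bsigma$ (hence of every tail $\bsigma^{(n)}$) is what licenses the use of Proposition~\ref{p:properties-of-lambda} matches the paper's implicit standing hypothesis.
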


Applying Corollary~\ref{c:V-image-sent-to-image} to the tails of a
directive sequence $\bsigma$ allows us to consider the following
inverse system of onto continuous (restricted) homomorphisms of
pro-$\pv V$ semigroups:
\begin{equation*}
  \mathcal F=\{\prov V\sigma_{n,m}\from \img_{\pv V}(\bsigma^{(m)})\to
  \img_{\pv V}(\bsigma^{(n)})\mid m,n\in\nn,\,m\geq n\}.
\end{equation*}
Similarly, Lemmas~\ref{l:preservation-of-lambda}
and~\ref{p:lambda-is-closed} yield the following inverse system of
onto continuous functions between compact spaces:
\begin{equation*}
  \mathcal G=\{\prov V\sigma_{n,m}\from \Lambda_{\pv
    V}(\bsigma^{(m)})\to
  \Lambda_{\pv V}(\bsigma^{(n)})\mid m,n\in\nn,\,m\geq n\}.
\end{equation*}
We denote the inverse limits $\varprojlim\mathcal F$ and
$\varprojlim\mathcal G$ respectively by $\img_\pv{V}^\infty(\bsigma)$
and $\Lambda_{\pv V}^\infty(\bsigma)$. By compactness, these sets are
nonempty~\cite[Theorem 3.2.13]{Engelking:1989}, with
$\img_\pv{V}^\infty(\bsigma)$ is a pro-$\pv V$ semigroup and
$\Lambda_{\pv V}^\infty(\bsigma)$ is a closed subspace of
$\img_\pv{V}^\infty(\bsigma)$. The corresponding projections
$\img_\pv{V}^\infty(\bsigma)\to \img_\pv{V}(\bsigma^{(n)})$ are onto
continuous homomorphisms~\cite[Theorem 3.2.15]{Engelking:1989}, which
we denote by~$\prov V\sigma_{n,\infty}$, for every $n\in\nn$. Bear
also in mind that $\prov
V\sigma_{n,\infty}\bigl(\Lambda_{\pv{V}}^\infty(\bsigma)\bigr)
=\Lambda_{\pv V}(\bsigma^{(n)})$.

The next proposition is deduced from
Proposition~\ref{p:properties-of-lambda} with routine arguments.

\begin{proposition}
  \label{p:at-the-limit-properties-of-lambda}
  Let $\bsigma$ be a primitive directive sequence.
  The following properties hold:
  \begin{enumerate}
  \item $\Lambda_{\pv V}^\infty(\bsigma)$ generates a dense
    subsemigroup of
    $\img_{\pv{V}}^\infty(\bsigma)$;\label{item:at-the-limit-properties-of-lambda-0}
  \item the set $\Lambda_{\pv V}^\infty(\bsigma)$ is contained in a regular
    $\green{J}$-class of $\img_{\pv V}^\infty(\bsigma)$;\label{item:at-the-limit-properties-of-lambda-2}
  \item $\img_{\pv V}^\infty(\bsigma)=\Lambda_{\pv V}^\infty(\bsigma)\cdot \img_{\pv V}^\infty(\bsigma)=\img_{\pv V}^\infty(\bsigma)\cdot\Lambda_{\pv V}^\infty(\bsigma)$.\label{item:at-the-limit-properties-of-lambda-1}
  \end{enumerate}
\end{proposition}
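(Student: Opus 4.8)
The plan is to derive each of the three statements from its finite-level counterpart in Proposition~\ref{p:properties-of-lambda}, applied to the tails $\bsigma^{(n)}$ (which are again primitive), and to pass to the inverse limit via the standard fact that an inverse limit of a directed system of nonempty compact Hausdorff spaces is nonempty \cite[Theorem~3.2.13]{Engelking:1989}. To ease notation I would write $S_n=\img_{\pv V}(\bsigma^{(n)})$ and $\Lambda_n=\Lambda_{\pv V}(\bsigma^{(n)})$, and let $J_n$ be the regular $\green{J}$-class of $S_n$ containing $\Lambda_n$ provided by Proposition~\ref{p:properties-of-lambda}\ref{item:properties-of-lambda-3}. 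Throughout I use freely that the projections $\prov V\sigma_{n,\infty}$ are onto continuous homomorphisms with $\prov V\sigma_{n,\infty}(\img_{\pv V}^\infty(\bsigma))=S_n$ and $\prov V\sigma_{n,\infty}(\Lambda_{\pv V}^\infty(\bsigma))=\Lambda_n$, and that the bonding maps $\prov V\sigma_{n,m}$ ($m\ge n$) carry $S_m$ onto $S_n$ (Corollary~\ref{c:V-image-sent-to-image}) and $\Lambda_m$ onto $\Lambda_n$ (Lemma~\ref{l:preservation-of-lambda}), in both cases applied to tails.

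For \ref{item:at-the-limit-properties-of-lambda-0} I would argue directly on the topology. Since the index set is directed, every basic open subset of $\img_{\pv V}^\infty(\bsigma)$ can be rewritten as $(\prov V\sigma_{N,\infty})^{-1}(O)$ for some level $N$ and some open $O\subseteq S_N$. Given $w$ in such a set, so that $w_N=\prov V\sigma_{N,\infty}(w)\in O$, Proposition~\ref{p:properties-of-lambda}\ref{item:properties-of-lambda-0} applied to $\bsigma^{(N)}$ produces a product $a_1\cdots a_r$ of elements of $\Lambda_N$ lying in $O$. Lifting each $a_i$ to some $u_i\in\Lambda_{\pv V}^\infty(\bsigma)$ with $\prov V\sigma_{N,\infty}(u_i)=a_i$ (possible because $\prov V\sigma_{N,\infty}(\Lambda_{\pv V}^\infty(\bsigma))=\Lambda_N$), the product $u_1\cdots u_r$ lands in the prescribed basic set since $\prov V\sigma_{N,\infty}$ is a homomorphism. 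As $w$ and the basic set were arbitrary, the subsemigroup generated by $\Lambda_{\pv V}^\infty(\bsigma)$ is dense.

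For \ref{item:at-the-limit-properties-of-lambda-1} and \ref{item:at-the-limit-properties-of-lambda-2} I would use a uniform gluing scheme: at each level the desired relation holds by Proposition~\ref{p:properties-of-lambda}, the corresponding witness sets are nonempty and compact, the bonding maps carry them into one another, and any point of the resulting (nonempty) inverse limit furnishes a coherent witness in $\img_{\pv V}^\infty(\bsigma)$. Concretely, for \ref{item:at-the-limit-properties-of-lambda-1} and a fixed $w=(w_n)_n$, the sets $D_N=\{(a,b)\in\Lambda_N\times S_N:ab=w_N\}$ are nonempty by Proposition~\ref{p:properties-of-lambda}\ref{item:properties-of-lambda-2} and form an inverse system under $(a,b)\mapsto(\prov V\sigma_{N,m}(a),\prov V\sigma_{N,m}(b))$; a point of $\varprojlim D_N$ yields $(a,b)\in\Lambda_{\pv V}^\infty(\bsigma)\times\img_{\pv V}^\infty(\bsigma)$ with $ab=w$, proving $\img_{\pv V}^\infty(\bsigma)=\Lambda_{\pv V}^\infty(\bsigma)\cdot\img_{\pv V}^\infty(\bsigma)$, the other equality being symmetric.

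For \ref{item:at-the-limit-properties-of-lambda-2} the same scheme is applied twice. Given $a,b\in\Lambda_{\pv V}^\infty(\bsigma)$, the relation $a_N\green{J}b_N$ in $S_N$ (both lie in $J_N$) makes the sets $E_N=\{(x,y)\in(S_N^1)^2:xb_Ny=a_N\}$ nonempty and compact; a point of $\varprojlim E_N$ witnesses $a\le_{\green{J}}b$, and by symmetry $a\green{J}b$, so $\Lambda_{\pv V}^\infty(\bsigma)$ lies in a single $\green{J}$-class. Regularity follows by running the scheme on $R_N=\{z\in S_N:a_Nza_N=a_N\}$, nonempty since $a_N\in J_N$ is regular, which produces $z\in\img_{\pv V}^\infty(\bsigma)$ with $aza=a$. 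The one point demanding care—the main obstacle, such as it is—is the bookkeeping with adjoined identities in the $E_N$: one must check that the bonding maps extend to the monoids $S_N^1$ preserving the adjoined identity and that $\varprojlim S_N^1=(\img_{\pv V}^\infty(\bsigma))^1$, so that the witness $(x,y)$ genuinely lives in $((\img_{\pv V}^\infty(\bsigma))^1)^2$. Everything else is the routine verification that the bonding maps respect the witness sets $D_N$, $E_N$ and $R_N$.
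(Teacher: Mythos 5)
Your proposal is correct, and its overall strategy---reduce everything to Proposition~\ref{p:properties-of-lambda} applied to the tails and pass to the inverse limit---is the same as the paper's; the difference lies in how the inverse-limit step is executed. For item~\ref{item:at-the-limit-properties-of-lambda-0} your basic-open-set/lifting argument is just a spelled-out version of the paper's one-line projection argument. For item~\ref{item:at-the-limit-properties-of-lambda-2} the paper simply cites the folklore fact that in an inverse limit of compact semigroups $\green{J}$-equivalence and regularity are detected coordinatewise (with references to Rhodes--Steinberg and to \cite[Corollary~5.6.2]{Almeida:1994a}), whereas you reprove that fact by hand via the nonempty inverse limits of the witness sets $E_N$ and $R_N$---which is in fact the standard proof of the cited folklore. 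For item~\ref{item:at-the-limit-properties-of-lambda-1} your route genuinely diverges: the paper derives it abstractly from items~\ref{item:at-the-limit-properties-of-lambda-0} and~\ref{item:at-the-limit-properties-of-lambda-2} (a density-plus-compactness argument showing every element lies in $\Lambda_{\pv V}^\infty(\bsigma)\bigl(\img_{\pv V}^\infty(\bsigma)\bigr)^1$, then absorbing the adjoined identity using regularity), while you run the witness-set scheme directly on $D_N$, using that inverse limits commute with finite products. Your approach is more self-contained; the paper's is shorter and reuses standard machinery. On the one point you flag: the bookkeeping with $S_N^1$ does close, and the surjectivity of the natural projections $\prov V\sigma_{n,\infty}$ is exactly what rules out the pathological case---if $\varprojlim S_N$ had an identity $u=(u_N)$, surjectivity forces each $u_N$ to be an identity of $S_N$, so the coherent family of identities of the $S_N^1$ lies in $\varprojlim S_N$, and one gets $\varprojlim S_N^1=(\varprojlim S_N)^1$ as needed (alternatively, a limit point whose coordinates are cofinally adjoined identities must be the identity everywhere, so at most one extra point appears).
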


\begin{proof}
  \ref{item:at-the-limit-properties-of-lambda-0} As $\prov
  V\sigma_{n,\infty}\bigl(\Lambda_{\pv{V}}^\infty(\bsigma)\bigr)
  =\Lambda_{\pv V}(\bsigma^{(n)})$, the subsemigroup of
  $\img_{\pv V}^\infty(\bsigma)$ generated by
  $\Lambda_{\pv{V}}^\infty(\bsigma)$ is mapped by $\prov
  V\sigma_{n,\infty}$ to the subsemigroup of $\img_{\pv
    V}(\bsigma^{(n)})$ generated by $\Lambda_{\pv V}(\bsigma^{(n)})$.
  Since, for every $n\in\nn$, the latter is a dense subsemigroup by
  Proposition~\ref{p:properties-of-lambda}~\ref{item:properties-of-lambda-0},
  so is the former.
  
  \ref{item:at-the-limit-properties-of-lambda-2} It is folklore, whose
  proof is an easy exercise, the fact that in an inverse limit
  $S=\lim_{i\in I}S_i$ of compact semigroups, and for all elements
  $s=(s_i)_{i\in I}$ and $t=(t_i)_{i\in I}$ of $S$, one has
  $s\green{J}t$ if and only if $s_i\green{J}t_i$ for every $i\in I$;
  and that $s$ is regular if and only if $s_i$ is regular for every
  $i\in I$ (e.g., cf.~\cite[Propositions 9.1 and
  9.3]{Rhodes&Steinberg:2001} or~\cite[Corollary
  5.6.2]{Almeida:1994a}). With this on hand, the second item follows
  immediately from Proposition~\ref{p:properties-of-lambda}.

  \ref{item:at-the-limit-properties-of-lambda-1} The inclusion of the
  two products in the set $\img_{\pv V}^\infty(\bsigma)$ follows from
  the fact that this set is a subsemigroup of~$\Om{A_0}V$. It follows
  from \ref{item:at-the-limit-properties-of-lambda-0} and compactness
  that every element of $\img_{\pv V}^\infty(\bsigma)$ belongs to
  $\Lambda_{\pv V}^\infty(\bsigma)\bigl(\img_{\pv
    V}^\infty(\bsigma)\bigr)^1$. Since $\Lambda_{\pv
    V}^\infty(\bsigma)\subseteq\Lambda_{\pv
    V}^\infty(\bsigma)\img_{\pv V}^\infty(\bsigma)$
  by~\ref{item:at-the-limit-properties-of-lambda-2}, we obtain the
  inclusion $\img_{\pv V}^\infty(\bsigma)\subseteq\Lambda_{\pv
    V}^\infty(\bsigma)\img_{\pv V}^\infty(\bsigma)$. This establishes
  the equality $\img_{\pv V}^\infty(\bsigma) =\Lambda_{\pv
    V}^\infty(\bsigma)\cdot \img_{\pv V}^\infty(\bsigma)$. The
  equality $\img_{\pv V}^\infty(\bsigma) =\img_{\pv
    V}^\infty(\bsigma)\cdot\Lambda_{\pv V}^\infty(\bsigma)$ follows by
  symmetric arguments.
\end{proof}

Denote by $J_{\pv V}^\infty(\bsigma)$ the regular $\green{J}$-class
of $\img_{\pv V}^\infty(\bsigma)$ containing the set $\Lambda_{\pv V}^\infty(\bsigma)$.

\begin{corollary}
 \label{c:at-the-limit-properties-of-lambda}
 Let $\bsigma$ be a primitive directive sequence. Then the following hold,
 for every $n,m\in\nn$, with $n\leq m$:
 \begin{enumerate}
 \item $\prov V\sigma_{n,\infty}(J_{\pv V}^\infty(\bsigma))\subseteq J_{\pv V}(\bsigma^{(n)})\cap\img_{\pv V}(\bsigma^{(n)})$.\label{item:corollary-at-the-limit-properties-of-lambda-1}
 \item $\prov V\sigma_{n,m}\Bigl(J_{\pv V}(\bsigma^{(m)})\cap\img_{\pv V}(\bsigma^{(m)})\Bigr)\subseteq J_{\pv V}(\bsigma^{(n)})\cap\img_{\pv V}(\bsigma^{(n)})$.\label{item:corollary-at-the-limit-properties-of-lambda-2}
 \end{enumerate}
\end{corollary}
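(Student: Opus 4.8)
The plan is to reduce both inclusions to a single elementary principle: a continuous homomorphism preserves $\green{J}$-equivalence, so that if $\varphi\from S\to R$ is \emph{onto}, then $s\green{J}t$ in $S$ implies $\varphi(s)\green{J}\varphi(t)$ in $R$. Indeed, $\varphi$ respects the quasi-order $\le_{\green{J}}$, and surjectivity guarantees that $\varphi(S)=R$, so the relation $\green{J}$ transported into the image is genuinely the relation $\green{J}$ of $R$. The surjectivity we need is exactly what is provided by Corollary~\ref{c:V-image-sent-to-image} (in its tail form) for the homomorphisms $\prov V\sigma_{n,m}$, and by the remark preceding the statement for the projections $\prov V\sigma_{n,\infty}$. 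On top of this, both parts use the transport of the distinguished subsets $\Lambda$ and the fact, from Theorem~\ref{t:Im-intersection-J}, that $J_{\pv V}(\bsigma^{(n)})\cap\img_{\pv V}(\bsigma^{(n)})$ is a \emph{whole} $\green{J}$-class of $\img_{\pv V}(\bsigma^{(n)})$.

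For item~\ref{item:corollary-at-the-limit-properties-of-lambda-1}, I would take $u\in J_{\pv V}^\infty(\bsigma)$. By definition this set is the $\green{J}$-class of $\img_{\pv V}^\infty(\bsigma)$ containing $\Lambda_{\pv V}^\infty(\bsigma)$, which is a well-defined $\green{J}$-class by Proposition~\ref{p:at-the-limit-properties-of-lambda}\ref{item:at-the-limit-properties-of-lambda-2}; hence I may choose $a\in\Lambda_{\pv V}^\infty(\bsigma)$ with $u\green{J}a$. Applying the onto homomorphism $\prov V\sigma_{n,\infty}$ and the principle above yields $\prov V\sigma_{n,\infty}(u)\green{J}\prov V\sigma_{n,\infty}(a)$ in $\img_{\pv V}(\bsigma^{(n)})$. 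Since $\prov V\sigma_{n,\infty}(a)\in\Lambda_{\pv V}(\bsigma^{(n)})$ by the displayed equality preceding the corollary, and since $\Lambda_{\pv V}(\bsigma^{(n)})\subseteq J_{\pv V}(\bsigma^{(n)})\cap\img_{\pv V}(\bsigma^{(n)})$ by Proposition~\ref{p:properties-of-lambda}\ref{item:properties-of-lambda-1}, the conclusion follows from Theorem~\ref{t:Im-intersection-J}, because $\prov V\sigma_{n,\infty}(u)$ is $\green{J}$-equivalent to a member of that full $\green{J}$-class.

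Item~\ref{item:corollary-at-the-limit-properties-of-lambda-2} is handled identically, with $\prov V\sigma_{n,m}$ in place of $\prov V\sigma_{n,\infty}$: the tail form of Corollary~\ref{c:V-image-sent-to-image} shows that $\prov V\sigma_{n,m}$ restricts to an onto homomorphism $\img_{\pv V}(\bsigma^{(m)})\to\img_{\pv V}(\bsigma^{(n)})$, and the tail form of Lemma~\ref{l:preservation-of-lambda} shows it sends $\Lambda_{\pv V}(\bsigma^{(m)})$ onto $\Lambda_{\pv V}(\bsigma^{(n)})$. Given $u$ in the source $\green{J}$-class $J_{\pv V}(\bsigma^{(m)})\cap\img_{\pv V}(\bsigma^{(m)})$, which contains $\Lambda_{\pv V}(\bsigma^{(m)})$ by Theorem~\ref{t:Im-intersection-J}, I pick $a\in\Lambda_{\pv V}(\bsigma^{(m)})$ with $u\green{J}a$, transport the relation by $\prov V\sigma_{n,m}$, and land in the target $\green{J}$-class exactly as before. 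The only delicate point—hence the thing to state carefully rather than the source of any real difficulty—is the use of surjectivity onto $\img_{\pv V}(\bsigma^{(n)})$, which is what ensures that the $\green{J}$-equivalence we preserve is computed in $\img_{\pv V}(\bsigma^{(n)})$ itself and not merely in some proper subsemigroup; everything else is a direct appeal to Theorem~\ref{t:Im-intersection-J} together with the $\Lambda$-transport equalities already established.
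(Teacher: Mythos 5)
Your proof is correct and follows essentially the same route as the paper's: the transport equalities $\prov V\sigma_{n,\infty}(\Lambda_{\pv V}^\infty(\bsigma))=\Lambda_{\pv V}(\bsigma^{(n)})$ and $\Lambda_{\pv V}(\bsigma^{(m)})=\prov V\sigma_{n,m}{}^{-1}$-free analogue from Lemma~\ref{l:preservation-of-lambda}, the inclusions of the $\Lambda$-sets in the relevant $\green{J}$-classes, and the fact that homomorphisms send $\green{J}$-classes into $\green{J}$-classes, with your only addition being the explicit appeal to Theorem~\ref{t:Im-intersection-J} to identify $J_{\pv V}(\bsigma^{(n)})\cap\img_{\pv V}(\bsigma^{(n)})$ as a full $\green{J}$-class of $\img_{\pv V}(\bsigma^{(n)})$, a step the paper's terser proof leaves implicit. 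One small conceptual correction: surjectivity is not what ensures the transported relation is the $\green{J}$-relation of $\img_{\pv V}(\bsigma^{(n)})$, since $\green{J}$-equivalence inside the image subsemigroup already implies $\green{J}$-equivalence in any semigroup containing it (factorizations persist), so the preservation step is valid for an arbitrary homomorphism into $\img_{\pv V}(\bsigma^{(n)})$ --- though the maps here are indeed onto, so nothing in your argument breaks.
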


\begin{proof}
  Recall that  $\prov V\sigma_{n,\infty}(\img_{\pv V}^\infty(\bsigma))=\img_{\pv V}(\bsigma^{(n)})$
  and $\prov V\sigma_{n,\infty}(\Lambda_{\pv V}^\infty(\bsigma))=\Lambda_{\pv V}(\bsigma^{(n)})$.
  On the other hand, respectively by the definition of $J_{\pv V}^\infty(\bsigma)$
  and by Proposition~\ref{p:properties-of-lambda},
  we have the inclusions $\Lambda_{\pv V}^\infty(\bsigma)\subseteq J_{\pv V}^\infty(\bsigma)$
  and $\Lambda_{\pv V}(\bsigma^{(n)})\subseteq J_{\pv V}(\bsigma^{(n)})$. Since any homomorphism sends $\green{J}$-classes into $\green{J}$-classes, this establishes
  item~\ref{item:corollary-at-the-limit-properties-of-lambda-1}.

  We also have $\prov V\sigma_{n,m}(\img_{\pv V}(\bsigma^{(m)}))=\img_{\pv V}(\bsigma^{(n)})$
  and $\prov V\sigma_{n,m}(\Lambda_{\pv V}(\bsigma)^{(m)})=\Lambda_{\pv V}(\bsigma^{(n)})$ and, by Proposition~\ref{p:properties-of-lambda},
  also $\Lambda_{\pv V}(\bsigma^{(k)})\subseteq J_{\pv V}(\bsigma^{(k)})\cap \img_{\pv V}(\bsigma^{(k)})$
  for every $k\in\nn$. Again because homomorphisms map $\green{J}$-classes into $\green{J}$-classes,
  this shows~\ref{item:corollary-at-the-limit-properties-of-lambda-2}.
\end{proof}

\begin{corollary}
  \label{c:chain-of-idempotents-in-the-inverse-limit}
  Let $\bsigma$ be a primitive directive sequence.
  There is a sequence $(e_n)_{n\in\nn}$ of idempotent pseudowords
  satisfying $e_n\in J_{\pv V}(\bsigma^{(n)})\cap\img_{\pv V}(\bsigma^{(n)})$
  and $e_n=\prov V\sigma_{n,m}(e_m)$
  for every $n,m\in\nn$ such that $n\leq m$.
\end{corollary}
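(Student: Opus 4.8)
The plan is to extract the whole sequence $(e_n)_{n\in\nn}$ from a single idempotent living in the inverse limit semigroup $\img_{\pv V}^\infty(\bsigma)$, so that the coherence condition $e_n=\prov V\sigma_{n,m}(e_m)$ comes for free from the inverse limit structure. First I would invoke Proposition~\ref{p:at-the-limit-properties-of-lambda}\ref{item:at-the-limit-properties-of-lambda-2}, according to which $\Lambda_{\pv V}^\infty(\bsigma)$ lies in a regular $\green{J}$-class of $\img_{\pv V}^\infty(\bsigma)$; this class is precisely $J_{\pv V}^\infty(\bsigma)$ by definition. Since a regular $\green{J}$-class of a compact semigroup contains an idempotent, I may fix an idempotent $e\in J_{\pv V}^\infty(\bsigma)$.

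Next I would unwind the inverse limit. As $e$ is an element of $\img_{\pv V}^\infty(\bsigma)=\varprojlim\mathcal F$, putting $e_n=\prov V\sigma_{n,\infty}(e)$ produces a compatible family, meaning that $e_n\in\img_{\pv V}(\bsigma^{(n)})$ and $e_n=\prov V\sigma_{n,m}(e_m)$ for all $n\leq m$, which is exactly the coherence condition required in the statement. Because each projection $\prov V\sigma_{n,\infty}$ is a continuous homomorphism and $e$ is idempotent, every $e_n$ is idempotent as well.

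Finally, membership in the prescribed $\green{J}$-class follows from Corollary~\ref{c:at-the-limit-properties-of-lambda}\ref{item:corollary-at-the-limit-properties-of-lambda-1}: since $e\in J_{\pv V}^\infty(\bsigma)$, we obtain $e_n=\prov V\sigma_{n,\infty}(e)\in J_{\pv V}(\bsigma^{(n)})\cap\img_{\pv V}(\bsigma^{(n)})$ for every $n\in\nn$, which produces the desired sequence. There is no genuine obstacle here: all the substance was already packaged into the inverse system $\mathcal F$ and into Proposition~\ref{p:at-the-limit-properties-of-lambda} and Corollary~\ref{c:at-the-limit-properties-of-lambda}. The only point deserving a word of care is that idempotency and the image condition must be verified on the components $e_n$ rather than on $e$ itself, but both transfer immediately through the projection homomorphisms $\prov V\sigma_{n,\infty}$.
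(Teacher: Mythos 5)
Your proposal is correct and follows essentially the same route as the paper: fix an idempotent $e$ in the regular $\green{J}$-class $J_{\pv V}^\infty(\bsigma)$, set $e_n=\prov V\sigma_{n,\infty}(e)$ so that coherence and idempotency are immediate from the inverse limit structure and continuity of the projection homomorphisms, and conclude membership in $J_{\pv V}(\bsigma^{(n)})\cap\img_{\pv V}(\bsigma^{(n)})$ from Corollary~\ref{c:at-the-limit-properties-of-lambda}. The only (cosmetic) divergence is that you invoke item~\ref{item:corollary-at-the-limit-properties-of-lambda-1} of that corollary, which is in fact the more direct citation, whereas the paper cites item~\ref{item:corollary-at-the-limit-properties-of-lambda-2}.
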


\begin{proof}
  We may take an idempotent $e$ in the regular $\green{J}$-class
  $J_{\pv V}^\infty(\bsigma)$ and consider, for each $k\in\nn$, the idempotent $e_k=\prov V\sigma_{k,\infty}(e)$.
  By the definition of the inverse limit $\img_{\pv V}^\infty(\bsigma)$,
  this immediately yields the equality $e_n=\prov V\sigma_{n,m}(e_m)$
  for every $n,m\in\nn$ such that $n\leq m$.
  The remaining of the statement follows from Corollary~\ref{c:at-the-limit-properties-of-lambda}~\ref{item:corollary-at-the-limit-properties-of-lambda-2}.
\end{proof}

\section{Simplicity of profinite images of directive sequences}
\label{sec:proper-case}

Consider a directive sequence $\bsigma=(\sigma_n)_{n\in\nn}$,
with $\sigma_n\from A_{n+1}^+\to A_n^+$ for each $n\in\nn$. In this section, we investigate more systematically necessary and sufficient conditions
for $\img_{\pv V}(\bsigma)$ to be a simple profinite semigroup (cf.~Corollary~\ref{c:Im-intersection-J}). It turns out that being left or right proper is such a sufficient condition (cf.~Theorems~\ref{t:right-proper-in-an-R-class} and~\ref{t:proper-is-group}).

A \emph{limit word} of $\bsigma$ is an element of $\bigcap_{n\geq 1}\sigma_{0,n}(A_n^\zz)$. For a discussion about the significance of this notion,
see the introductory paragraphs of~\cite[Section~4]{Berthe&Steiner&Thuswaldner&Yassawi:2019}.

\begin{theorem}
  \label{t:characterization-of-image-inside-Jsigma}
  Let $\bsigma$ be a primitive directive sequence.
  Let $\pv V$ be a pseudovariety of semigroups containing $\pv{\loc Sl}$.
  The following statements are equivalent:
  \begin{enumerate}
  \item the profinite semigroup $\img_{\pv V}(\bsigma)$ is
    simple;\label{i:characterization-of-image-inside-Jsigma-1}
  \item the inclusion $\img_{\pv V}(\bsigma)\subseteq J_{\pv
      V}(\bsigma)$
    holds;\label{i:characterization-of-image-inside-Jsigma-2}
  \item all limit words of $\bsigma$ belong to $X(\bsigma)$.\label{i:characterization-of-image-inside-Jsigma-3} 
  \end{enumerate}
\end{theorem}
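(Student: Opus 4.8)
The equivalence of \ref{i:characterization-of-image-inside-Jsigma-1} and \ref{i:characterization-of-image-inside-Jsigma-2} is immediate from Corollary~\ref{c:Im-intersection-J}, so the plan is to prove \ref{i:characterization-of-image-inside-Jsigma-2}$\Leftrightarrow$\ref{i:characterization-of-image-inside-Jsigma-3}. First I would recast both conditions combinatorially. Every element of $\img_{\pv V}(\bsigma)$ has infinite length: since $\bsigma$ is primitive, $\min\{|\sigma_{0,n}(a)|:a\in A_n\}\to\infty$ (the fact used in the proof of Proposition~\ref{p:properties-of-lambda}), so the minimum length of a nonempty element of $\img(\prov V{\sigma_{0,n}})$ tends to infinity with $n$. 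Hence, using $L(X(\bsigma))=L(\bsigma)$ from Theorem~\ref{t:characterization-minimal-shifts} and Proposition~\ref{p:mirage}, condition \ref{i:characterization-of-image-inside-Jsigma-2} is equivalent to $\fac(w)\subseteq L(\bsigma)$ for every $w\in\img_{\pv V}(\bsigma)$. On the other hand, since $X(\bsigma)=\{x\in A_0^\zz:\fac(x)\subseteq L(\bsigma)\}$, condition \ref{i:characterization-of-image-inside-Jsigma-3} is equivalent to $\fac(x)\subseteq L(\bsigma)$ for every limit word $x$. So it suffices to prove that a word $u\in A_0^+$ with $u\notin L(\bsigma)$ is a factor of some element of $\img_{\pv V}(\bsigma)$ if and only if it is a factor of some limit word.

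The engine for both implications is the following block analysis. Fix $u\notin L(\bsigma)$ and take $n$ large enough that every block $\sigma_{0,n}(a)$, with $a\in A_n$, is longer than $u$. Then any occurrence of $u$ inside an image $\sigma_{0,n}(v)$ of a word $v\in A_n^+$ spans at most two consecutive blocks; moreover it cannot lie inside a single block $\sigma_{0,n}(a)$, for otherwise $u\in\fac(\sigma_{0,n}(a))\subseteq L(\bsigma)$. Thus every such occurrence straddles exactly two consecutive blocks $\sigma_{0,n}(c_1)\sigma_{0,n}(c_2)$ with $c_1c_2\in\fac(v)$, and writing $u=sp$ with $s$ a nonempty suffix of $\sigma_{0,n}(c_1)$ and $p$ a nonempty prefix of $\sigma_{0,n}(c_2)$, one has $|s|<|u|$.

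For \ref{i:characterization-of-image-inside-Jsigma-3}$\Rightarrow$\ref{i:characterization-of-image-inside-Jsigma-2} I argue by contraposition. If \ref{i:characterization-of-image-inside-Jsigma-2} fails, pick $w\in\img_{\pv V}(\bsigma)\setminus J_{\pv V}(\bsigma)$ and $u\in\fac(w)\setminus L(\bsigma)$. For each large $n$ write $w=\prov V{\sigma_{0,n}}(w_n)$ (so $w_n$ has infinite length); approximating $w_n$ by its finite prefixes and using that $\{z:u\in\fac(z)\}$ is clopen by Theorem~\ref{t:V-recognizability}, the occurrence of $u$ is witnessed by a straddling pair $c_1^{(n)}c_2^{(n)}\in A_n^2$ as above. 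I then choose any $z_n\in A_n^\zz$ with $z_n[-1]=c_1^{(n)}$ and $z_n[0]=c_2^{(n)}$, and set $X_n=\sigma_{0,n}(z_n)$, so that $u$ occurs in $X_n$ at the bounded position $-|s_n|$ with $|s_n|\in\{1,\dots,|u|-1\}$. Passing to a subsequence along which $|s_n|$ is a constant $s$, a cluster point $x$ of $(X_n)$ satisfies $x[-s,-s+|u|)=u$, since reading a fixed-length window at a fixed position is continuous; hence $u\in\fac(x)$. Crucially, for each fixed $m$ one has $X_n\in\sigma_{0,m}(A_m^\zz)$ for all $n\ge m$ (factoring $z_n$ through $\sigma_{m,n}$), and $\sigma_{0,m}(A_m^\zz)$ is closed; therefore $x\in\bigcap_{m\ge1}\sigma_{0,m}(A_m^\zz)$ is a limit word carrying the forbidden factor $u$, so \ref{i:characterization-of-image-inside-Jsigma-3} fails.

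The reverse implication \ref{i:characterization-of-image-inside-Jsigma-2}$\Rightarrow$\ref{i:characterization-of-image-inside-Jsigma-3} is symmetric but simpler. Starting from a limit word $x=\sigma_{0,n}(x_n)\notin X(\bsigma)$ and a factor $u\in\fac(x)\setminus L(\bsigma)$, the same block analysis yields, for each large $n$, a window $v_n\in\fac(x_n)$ of at least $2n$ letters with $u\in\fac(\sigma_{0,n}(v_n))$; a cluster point $w$ of $(\sigma_{0,n}(v_n))$ lies in $\img_{\pv V}(\bsigma)$ by Lemma~\ref{l:V-image-as-a-set-of-cluster-points}, has infinite length, and retains $u$ as a factor (the condition $u\in\fac(\cdot)$ being clopen on $\Om {A_0}V$), so that $w\in\img_{\pv V}(\bsigma)\setminus J_{\pv V}(\bsigma)$ by Proposition~\ref{p:mirage} and \ref{i:characterization-of-image-inside-Jsigma-2} fails. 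I expect the main obstacle to be precisely the asymmetry between these two directions, which forces the re-centering device placing the straddling pair at positions $-1,0$: the property ``$u$ is a factor'' is clopen on pseudowords but only \emph{open} on $A_0^\zz$, and the sets $\sigma_{0,m}(A_m^\zz)$ are not shift-invariant. One therefore cannot translate the bad factor to a standard position after the fact; it must be pinned near position $0$ from the outset so as to survive in the cluster point while remaining inside every $\sigma_{0,m}(A_m^\zz)$.
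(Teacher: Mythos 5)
Your proposal is correct in substance, and two of its three pieces match the paper: the equivalence \ref{i:characterization-of-image-inside-Jsigma-1}$\Leftrightarrow$\ref{i:characterization-of-image-inside-Jsigma-2} via Corollary~\ref{c:Im-intersection-J}, and your contrapositive of \ref{i:characterization-of-image-inside-Jsigma-2}$\Rightarrow$\ref{i:characterization-of-image-inside-Jsigma-3} (bad limit word yields a bad element of $\img_{\pv V}(\bsigma)$), which is the paper's argument up to contraposition --- the paper likewise takes a cluster point $w$ of the centered windows $\sigma_{0,n}(x_n[-1,0])$, notes $w\in\img_{\pv V}(\bsigma)$ by Lemma~\ref{l:V-image-as-a-set-of-cluster-points}, and applies Proposition~\ref{p:mirage}. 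The genuinely different part is the other direction. Given an element of $\img_{\pv V}(\bsigma)$ with a forbidden finite factor, the paper produces $\alpha,\beta\in\Lambda_{\pv V}(\bsigma)$ straddling that factor and then, for \emph{each} level $r$, lifts them through $\prov V\sigma_{0,r}$ to infinite-length preimages $\alpha',\beta'\in\Om{A_r}V$, assembling the limit word level by level out of the finite prefixes and suffixes of these pseudowords (which exist and are unique because $\pv{\loc I}\subseteq\pv V$). You never leave the symbolic space: you complete the straddling pair $c_1^{(n)}c_2^{(n)}$ to an arbitrary $z_n\in A_n^\zz$, push down to $X_n=\sigma_{0,n}(z_n)$, pin the occurrence of $u$ at a bounded position, and extract a limit word as a cluster point, using compactness of $A_0^\zz$ together with the closedness of each $\sigma_{0,m}(A_m^\zz)$ (continuity of $\sigma_{0,m}$ on biinfinite words, which you should state but is routine). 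Your route is more elementary on this step --- pure topological dynamics in place of pseudoword bookkeeping --- and your closing remark correctly isolates the real tension: factor occurrence is clopen on $\Om{A_0}V$ but only open on $A_0^\zz$, which is exactly why the occurrence must be pinned near the origin before passing to the limit, and why the paper instead anchors everything at the pseudoword level.

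One local slip needs repair: you cannot approximate $w_n$ ``by its finite prefixes,'' since an infinite-length pseudoword is in general not a cluster point of its finite prefixes --- for instance $w=a^\omega b$ in $\Om{\{a,b\}}S$ has prefixes $a^k$, whose closure contains no pseudoword involving $b$. What you need, and what works, is density of $A_n^+$ in $\Om{A_n}V$ (valid as $\pv V\supseteq\pv N$): take any sequence of words $v_k\to w_n$; then $\sigma_{0,n}(v_k)\to w$ by continuity, and since $(\Om{A_0}V)^1u(\Om{A_0}V)^1$ is clopen (the point of Remark~\ref{r:why-we-need-LSL}, via Theorem~\ref{t:V-recognizability}), some word $\sigma_{0,n}(v_k)$ contains $u$, after which your block analysis applies verbatim. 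With that repair the proof is complete.
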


\begin{proof}
  $\ref{i:characterization-of-image-inside-Jsigma-1} \Leftrightarrow
  \ref{i:characterization-of-image-inside-Jsigma-2}$ This equivalence
  holds by Corollary~\ref{c:Im-intersection-J}.
   
  $\ref{i:characterization-of-image-inside-Jsigma-2} \Rightarrow
  \ref{i:characterization-of-image-inside-Jsigma-3}$ Suppose that
  $\img_{\pv V}(\bsigma)\subseteq J_{\pv V}(\bsigma)$. Let $x$ be a
  limit word of $\bsigma$. Take $k\in\nn$. We want to show that
  $x[-k,k]\in L(\bsigma)$. For each $n\in\nn$, take $x_n\in A_n^\zz$
  such that $x=\sigma_{0,n}(x_n)$. Let $w$ be a cluster point in $\Om
  {A_0}V$ of the sequence $(\sigma_{0,n}(x_n[-1,0]))_{n\in\nn}$. Since
  $\bsigma$ is primitive, the word $x[-k,k]$ is a factor of
  $\sigma_{0,n}(x_n[-1,0])$ for every sufficiently large~$n$.
  Therefore, $x[-k,k]$ is also a factor of~$w$. Note that $w\in
  \img_{\pv V}(\bsigma)$, by
  Lemma~\ref{l:V-image-as-a-set-of-cluster-points}. By the assumption
  that $\img_{\pv V}(\bsigma)\subseteq J_{\pv V}(\bsigma)$, all
  finite-length factors of $w$ belong to $L(\bsigma)$, by
  Proposition~\ref{p:mirage}. In particular, we have $x[-k,k]\in
  L(\bsigma)$, for every $k\in\nn$. This means that $x\in X(\bsigma)$.

  $\ref{i:characterization-of-image-inside-Jsigma-3} \Rightarrow
  \ref{i:characterization-of-image-inside-Jsigma-2}$ Suppose that all
  limit words of $\bsigma$ belong to $X(\bsigma)$. Let $u\in \img_{\pv
    V}(\bsigma)$. By Lemma~\ref{l:V-image-as-a-set-of-cluster-points},
  we know that there is a strictly increasing sequence
  $(n_k)_{k\in\nn}$ of positive integers and a sequence
  $(u_k)_{k\in\nn}$ of words, with $u_k\in (A_{n_k})^+$, such that
  $\sigma_{0,n_k}(u_k)\to u$. In particular, the pseudoword $u$ has
  infinite length.
   
  Let $v$ be a finite-length factor of $u$. We claim that $v\in
  L(\bsigma)$. Note that the set $(\Om AV)^1v(\Om AV)^1$ is clopen, as
  $\pv V$ contains $\pv{\loc Sl}$. Hence, taking subsequences, we may
  suppose that $v$ is a factor of $\sigma_{0,n_k}(u_k)$ for every
  $k\in\nn$. Since $\bsigma$ is primitive, we may further assume that
  all words in $\sigma_{0,n_k}(A_{n_k})$ have length greater than that
  of $v$. Therefore, for each $k\in\nn$, we may take letters
  $a_k,b_k\in A_{n_k}$ such that $v$ is a factor of
  $\sigma_{0,n_k}(a_kb_k)$. If $v$ is a factor of
  $\sigma_{0,n_k}(a_k)$ or of $\sigma_{0,n_k}(b_k)$ for some $k$, then
  $v\in L(\bsigma)$ and the claim is proved. Therefore, we may as well
  assume that for every $k\in\nn$ there is a factorization $v=s_kp_k$
  such that $s_k$ is a nonempty suffix of $\sigma_{0,n_k}(a_k)$ and
  $p_k$ is a nonempty prefix of $\sigma_{0,n_k}(b_k)$. In fact, again
  by taking subsequences, we are reduced to the case where $(s_k,p_k)$
  has constant value $(s,p)$. Repeating the process of taking
  subsequences, we may as well suppose that the sequence
  $(\sigma_{0,n_k}(a_k),\sigma_{0,n_k}(b_k))_{k\geq 1}$ converges in
  $\Om {A_0}V\times \Om {A_0}V$ to some pair $(\alpha,\beta)$ of
  elements of $\Lambda_{\pv V}(\bsigma)$. Note that $s$ is a
  finite-length suffix of $\alpha$ and $p$ is a finite-length prefix
  of $\beta$. Consider the element $x$ of $A_0^{\zz}$ such that, for
  every positive integer $n$, the words $x{[-n,-1]}$ and
  $x{{[}0,n{)}}$ are respectively the suffix of length of $n$ of
  $\alpha$ and the prefix of length $n$ of $\beta$. Then we have
  $v=x{{[}-|s|,|p|{)}}$. Therefore, to show that $v\in L(\bsigma)$, it
  suffices to show that $x$ is a limit word of $\bsigma$.
   
  Let $r\in\nn$. As $\alpha,\beta\in \img_{\pv V}(\bsigma)$, we may
  take infinite-length pseudowords $\alpha',\beta'\in \Om {A_r}V$ such
  that $\alpha=\prov V\sigma_{0,r}(\alpha')$ and $\beta=\prov
  V\sigma_{0,r}(\beta')$. Let $y$ be the element of $A_r^{\zz}$ such
  that, for every positive integer $n$, the words $y{[-n,-1]}$ and
  $y{{[}0,n{)}}$ are respectively the suffix of length of $n$ of
  $\alpha'$ and the prefix of length $n$ of $\beta'$. Then
  $\sigma_{0,r}(y{[-n,-1]})$ and $\sigma_{0,r}(y{{[}0,n{)}})$ are
  respectively a suffix of length at least $n$ of $\alpha$ and a
  prefix of length at least $n$ of $\beta$. As this holds for very
  $n\in\nn$, it follows that $x=\sigma_{0,r}(y)$. Since $r$ is an
  arbitrary element of $\nn$, we conclude that
  $x\in\bigcap_{r\in\zz}\sigma_{0,r}(A_r^\zz)$, that is to say, that
  $x$ is a limit word of $\bsigma$. By assumption, we therefore have
  $x\in X(\bsigma)$, establishing the claim that $v\in L(\bsigma)$.

  Since $v$ is an arbitrary finite-length factor of $u$, by
  Proposition~\ref{p:mirage} we deduce that $u\in J_{\pv V}(\bsigma)$.
  This establishes the inclusion $\img_{\pv V}(\bsigma)\subseteq
  J_{\pv V}(\bsigma)$.
\end{proof}
 
\begin{remark}
  \label{r:downward-inheritance-Im-in-J}
  In view of item~\ref{i:characterization-of-image-inside-Jsigma-3} in
  Theorem~\ref{t:characterization-of-image-inside-Jsigma}, the choice
  of $\pv{V}$ plays no role in the statement of the theorem. More
  precisely, one has $ \img_{\pv S}(\bsigma)\subseteq J_{\pv
    S}(\bsigma)$ if and only if $\img_\pv V(\bsigma)\subseteq J_\pv
  V(\bsigma)$, when $\pv V$ is a pseudovariety of semigroups
  containing~$\pv{\loc Sl}$. This equivalence also follows directly
  from Corollary~\ref{c:mirage} and
  Proposition~\ref{p:projecting-profinite-image}.
\end{remark}
 
Denote by $\fac_n(w)$ the set of factors of length $n$ of a pseudoword~$w$, for each~$n\in\nn$. 

\begin{definition}
  \label{d:stable-directive-sequences}
  We say that the directive sequence $\bsigma$ is \emph{stable} if for every $n\in\nn$
  and every $a,b\in A_{n+1}$, the inclusion $\fac_2(\sigma_n(ab))\subseteq L(\bsigma^{(n)})$ holds.
\end{definition}

\begin{remark}
  \label{r:stable-contractions}
  Thanks to Lemma~\ref{l:contraction-does-not-change-the-language}, we
  know that if $\bsigma$ is a directive sequence such that
  $A_n\subseteq \fac(\sigma_n(A_{n+1}))$ for every $n\in\nn$ (which
  happens if $\bsigma$ is primitive), then the following equivalence
  holds: the directive sequence $\bsigma$ has a stable contraction if
  and only if there exists a strictly increasing sequence
  $(n_k)_{k\in\nn}$ such that $n_0=0$ and, for all $k\geq 1$, and all
  $u\in A_{n_{k+1}}^2$ (equivalently, all $u\in A_{n_{k+1}}^+$), every
  factor of length two of $\sigma_{n_{k},n_{k+1}}(u)$ belongs to
  $L(\bsigma^{(n_k)})$.
\end{remark}

\begin{example}
  \label{eg:Prouhet-Thue-Morse-stable}
  Set $A=\{\ltr a,\ltr b\}$.
  Consider the Prouhet-Thue-Morse substitution
  \begin{equation*}
    \tau\from \ltr{a}\mapsto \ltr{ab},\ \ltr{b}\mapsto \ltr{ba}.
  \end{equation*}
  Then we have $\tau(A^2)\subseteq L(\tau)$, and
  so the constant primitive directive sequence $\btau=(\tau,\tau,\tau,\ldots)$ is stable.
\end{example}

Our next step is to see how the property of having a stable contraction is preserved by contractions.

\begin{proposition}
  \label{p:contraction-of-contraction-stable}
  Let $\bsigma$ be a directive sequence
  such that $A_n\subseteq \fac(\sigma_n(A_{n+1}))$ for every $n\in\nn$.
  Assume that $\bsigma$ has a stable contraction. Then every contraction
  of $\bsigma$ has a stable contraction.
\end{proposition}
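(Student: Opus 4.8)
The plan is to reduce the statement, via the characterization of stable contractions in Remark~\ref{r:stable-contractions} together with Lemma~\ref{l:contraction-does-not-change-the-language}, to a purely combinatorial selection of cutting points, and then to isolate a single ``stability is inherited for all sufficiently large right endpoints'' lemma that carries the real weight. First I would record three elementary facts about the tail languages, valid for any directive sequence. For $s\le m$ and $c\in A_m$ one has $\fac(\sigma_{s,m}(c))\subseteq L(\bsigma^{(s)})$ directly from the definition of $L(\bsigma^{(s)})$; the set $L(\bsigma^{(s)})$ is factorial, being a union of factor sets; and from $\sigma_{s,m}\circ\sigma_{m,m'}=\sigma_{s,m'}$ one gets the inclusion $\sigma_{s,m}(L(\bsigma^{(m)}))\subseteq L(\bsigma^{(s)})$ for $s\le m$. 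I would also note that the factor-containment hypothesis propagates through telescoping: if $A_n\subseteq\fac(\sigma_n(A_{n+1}))$ for all $n$, then $A_s\subseteq\fac(\sigma_{s,t}(A_t))$ for all $s<t$ by induction on $t-s$, so that Lemma~\ref{l:contraction-does-not-change-the-language} applies to every contraction of $\bsigma$, and the language of any tail of such a contraction with cutting point $q$ equals $L(\bsigma^{(q)})$.

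Next I would extract the stability data. Let $(n_k)_{k\in\nn}$ be the cutting points of a stable contraction $\btau_0$ of $\bsigma$. Using Lemma~\ref{l:contraction-does-not-change-the-language} to identify $L(\btau_0^{(k)})=L(\bsigma^{(n_k)})$, stability of $\btau_0$ reads: for every $k$ and every $u\in A_{n_{k+1}}^+$, $\fac_2(\sigma_{n_k,n_{k+1}}(u))\subseteq L(\bsigma^{(n_k)})$; call this $(\star_k)$. The core step is the sub-lemma: \emph{for every $s\in\nn$ there is $N\ge s$ such that $\fac_2(\sigma_{s,t}(u))\subseteq L(\bsigma^{(s)})$ for all $t\ge N$ and all $u\in A_t^+$.} To prove it I would pick $k$ with $n_k\ge s$ and set $N=n_{k+1}$. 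For $t\ge N$ and $u\in A_t^+$, factor $\sigma_{s,t}(u)=\sigma_{s,n_k}(w)$ with $w=\sigma_{n_k,n_{k+1}}(\sigma_{n_{k+1},t}(u))\in A_{n_k}^+$; then $\fac_2(w)\subseteq L(\bsigma^{(n_k)})$ by $(\star_k)$. Every length-two factor of $\sigma_{s,n_k}(w)$ either lies inside a single image $\sigma_{s,n_k}(w[r])$, hence in $L(\bsigma^{(s)})$ by the first fact, or straddles two consecutive images and so is a factor of $\sigma_{s,n_k}(w[r]w[r+1])$ with $w[r]w[r+1]\in\fac_2(w)\subseteq L(\bsigma^{(n_k)})$; by the third fact $\sigma_{s,n_k}(w[r]w[r+1])\in L(\bsigma^{(s)})$, and by factoriality so is that factor.

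Finally I would assemble the contraction. Let $\btau=(\sigma_{p_j,p_{j+1}})_{j}$ be an arbitrary contraction of $\bsigma$. I build a subsequence $(q_i)=(p_{j_i})$ with $q_0=p_0=0$ recursively: given $q_i$, apply the sub-lemma to $s=q_i$ to obtain $N$, and choose $q_{i+1}\in\{p_j\}$ with $q_{i+1}\ge N$ and $q_{i+1}>q_i$, which is possible since $p_j\to\infty$. Then $\fac_2(\sigma_{q_i,q_{i+1}}(A_{q_{i+1}}^+))\subseteq L(\bsigma^{(q_i)})$ for every $i$. The sequence $\btau''=(\sigma_{q_i,q_{i+1}})_i$ is a contraction of $\btau$, since its cutting points $q_i=p_{j_i}$ form a subsequence of the $p_j$ and $\sigma_{q_i,q_{i+1}}=\sigma_{p_{j_i},p_{j_{i+1}}}$ is the corresponding telescoping of $\btau$; and since $L(\btau''^{(i)})=L(\bsigma^{(q_i)})$ by Lemma~\ref{l:contraction-does-not-change-the-language}, the displayed inclusions say exactly that $\btau''$ is stable.

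I expect the main obstacle to be precisely the sub-lemma, namely decoupling the two families of cutting points: the stable blocks $[n_k,n_{k+1}]$ are fixed in advance, whereas the admissible left endpoints $s=q_i$ are forced to come from the prescribed sequence $(p_j)$ and need not lie among the $n_k$. The key realisation is that a single stable block sandwiched inside $[s,t]$ already suffices, because the outer prefix map $\sigma_{s,n_k}$ transports the controlled length-two factors from $L(\bsigma^{(n_k)})$ into $L(\bsigma^{(s)})$ and factoriality of $L(\bsigma^{(s)})$ disposes of the straddling factors; once this is seen, the right endpoint $t$ may be taken to be any sufficiently large element of $(p_j)$, which is exactly what permits alignment with an arbitrary contraction.
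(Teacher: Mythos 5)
Your proof is correct and follows essentially the same route as the paper's: both arguments sandwich a full stable block $[n_k,n_{k+1}]$ of the given stable contraction inside each block of a suitably chosen sub-contraction, then transport the controlled length-two factors from $L(\bsigma^{(n_k)})$ down to $L(\bsigma^{(s)})$ via the inclusion $\sigma_{s,n_k}(L(\bsigma^{(n_k)}))\subseteq L(\bsigma^{(s)})$, factoriality, and Lemma~\ref{l:contraction-does-not-change-the-language}. Your packaging of the sandwich step as a standalone sub-lemma (and your explicit treatment of length-two factors lying inside a single letter image versus straddling two images) is a minor organizational difference, not a different method.
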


\begin{proof}
  Let $\btau$ be a stable contraction of $\bsigma$,
  and let $\btheta$ be a contraction of $\bsigma$.
  Take increasing sequences $(n_k)_{n\in\nn}$
  and $(m_k)_{k\in\nn}$
  of integers, with $n_0=m_0=0$,
  such that $\btau=(\sigma_{n_k,n_{k+1}})_{k\in\nn}$
  and $\btheta=(\sigma_{m_k,m_{k+1}})_{k\in\nn}$.
  The sequence $(m_k)_{k\in\nn}$
  has a subsequence $(m_{k_i})_{i\in\nn}$
  such that, for every $i\in\nn$, we there is $j(i)\in\nn$
  satisfying $m_{k_i}<n_{j(i)}< n_{j(i)+1}<m_{k_{i+1}}$.
  For such a choice of a subsequence,
  we have therefore the factorization
  \begin{equation*}
    \sigma_{m_{k_i},m_{k_{i+1}}}=\sigma_{m_{k_i},n_{j(i)}}\circ \sigma_{n_{j(i)},n_{j(i)+1}}\circ \sigma_{n_{j(i)+1},m_{k_{i+1}}}
\end{equation*}
for every $i\in\nn$. Set $\zeta_i=\sigma_{m_{k_i},m_{k_i+1}}$. Note that $\bzeta=(\zeta_i)_{i\in\nn}$
is a contraction of $\btheta$. We claim that $\bzeta$ is stable.
Fix $i\in\nn$, and let $a,b\in A_{1+m_{k_i+1}}$. We want to show that $\fac_2(\zeta_i(ab))\subseteq L(\bzeta^{(i)})$.
To do that, observe that
\begin{equation*}
  \zeta_i(ab)=\sigma_{m_{k_i},n_{j(i)}}(\sigma_{n_{j(i)},n_{j(i)+1}}(w))
\end{equation*}
for some word $w$.
Since $\sigma_{n_{j(i)},n_{j(i)+1}}=\tau_{j(i)}$
and $\btau$ is stable, we know that
\begin{equation*}
  \fac_2(\tau_{j(i)}(w))\subseteq L(\btau^{(j(i))}).
\end{equation*}
Bear in mind that, by Lemma~\ref{l:contraction-does-not-change-the-language}, the equality  $L(\btau^{(j(i))})=L(\bsigma^{(n_{j(i)})})$ holds.
Since $\zeta_i(ab)=\sigma_{m_{k_i},n_{j(i)}}(\tau_{j(i)}(w))$,
every factor of length two of $\zeta_i(ab)$
is a factor of $\sigma_{m_{k_i},n_{j(i)}}(u)$
for some factor $u$ of length two of $\tau_{j(i)}(w)$. We already saw that $u\in L(\bsigma^{(n_{j(i)})})$,
and so, by Lemma~\ref{l:contraction-does-not-change-the-language}, we have
\begin{equation*}
\sigma_{m_{k_i},n_{j(i)}}(L(\bsigma^{(n_{j(i)})})\subseteq L(\bsigma^{(m_{k_i})})=L(\btheta^{(k_i)})=L(\bzeta^{(i)}). 
\end{equation*}
This shows that $\fac_2(\zeta_i(ab))\subseteq L(\bzeta^{(i)})$.
This in turn establishes that $\bzeta$ is a stable contraction of $\btheta$.  
\end{proof}

Next we see that, under a very mild condition, satisfied by primitive directive sequences,
all left or right proper directive sequences are stable. Note that the Prouhet-Thue-Morse substitution provides
an example of a primitive stable directive sequence that is neither left proper nor right proper (Example~\ref{eg:Prouhet-Thue-Morse-stable}).

\begin{proposition}
  \label{p:left-proper-implies-stable}
  Let $\bsigma$ be a directive sequence
  such that $A_n\subseteq L(X(\bsigma^{(n)}))$
  for every $n\in\nn$.
  If $\bsigma$ is left proper or right proper, then it is stable.
 \end{proposition}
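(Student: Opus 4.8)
The plan is to split the length-two factors of $\sigma_n(ab)=\sigma_n(a)\sigma_n(b)$ into those contained in $\sigma_n(a)$, those contained in $\sigma_n(b)$, and the single \emph{crossing} factor $zb^{\flat}$, where $z$ is the last letter of $\sigma_n(a)$ and $b^\flat$ is the first letter of $\sigma_n(b)$. The internal factors cause no trouble: since $a,b\in A_{n+1}$, both $\sigma_n(a)$ and $\sigma_n(b)$ lie in $L(\bsigma^{(n)})$ (being images of single letters), and $L(\bsigma^{(n)})$ is factorial, so every factor of $\sigma_n(a)$ or of $\sigma_n(b)$ --- in particular every length-two factor --- already belongs to $L(\bsigma^{(n)})$. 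Thus the whole statement reduces to showing that the crossing factor belongs to $L(\bsigma^{(n)})$.

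Two routine ingredients prepare this. First, directly from the definition of the languages $L(\bsigma^{(k)})$, the identity $\sigma_n\circ\sigma_{n+1,m}=\sigma_{n,m}$, and the fact that homomorphisms carry factors to factors, one obtains the inclusion $\sigma_n\bigl(L(\bsigma^{(n+1)})\bigr)\subseteq L(\bsigma^{(n)})$. Second, the hypothesis $A_{n+1}\subseteq L(X(\bsigma^{(n+1)}))$ forces $X(\bsigma^{(n+1)})$ to be a nonempty shift space, so its language is extendable; hence every letter of $A_{n+1}$ can be extended, both on the left and on the right, to a length-two word lying in $L(X(\bsigma^{(n+1)}))\subseteq L(\bsigma^{(n+1)})$.

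Assume now that $\bsigma$ is left proper, and let $b^\flat\in A_n$ be the common first letter of all images, so that $\sigma_n(c)\in b^\flat A_n^*$ for every $c\in A_{n+1}$; in particular the crossing factor of $\sigma_n(ab)$ is $zb^\flat$. Using right-extendability, choose $c\in A_{n+1}$ with $ac\in L(\bsigma^{(n+1)})$. Then $\sigma_n(ac)=\sigma_n(a)\sigma_n(c)\in L(\bsigma^{(n)})$ by the inclusion above, and since $\sigma_n(c)$ also starts with $b^\flat$, the crossing factor of $\sigma_n(ac)$ is again $zb^\flat$; being a length-two factor of an element of the factorial language $L(\bsigma^{(n)})$, it lies in $L(\bsigma^{(n)})$. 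As this is exactly the crossing factor of $\sigma_n(ab)$, the left-proper case is done. The right-proper case is symmetric: right properness fixes the last letter $b^\sharp$ of every image, so the crossing factor of $\sigma_n(ab)$ is $b^\sharp v$ with $v$ the first letter of $\sigma_n(b)$; left-extending $b$ to some $cb\in L(\bsigma^{(n+1)})$ and applying $\sigma_n$ exhibits $b^\sharp v$ as a crossing factor of $\sigma_n(cb)\in L(\bsigma^{(n)})$.

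The argument is mostly bookkeeping. The one genuinely load-bearing observation --- which I expect to be the crux rather than a real obstacle --- is that left (resp.\ right) properness makes the first (resp.\ last) letter of $\sigma_n(c)$ the same for every $c\in A_{n+1}$. This is precisely what lets us replace $b$ by a right-extension $c$ of $a$ (resp.\ $a$ by a left-extension $c$ of $b$) without disturbing the crossing factor, and it is exactly here that the hypothesis $A_{n+1}\subseteq L(X(\bsigma^{(n+1)}))$, which supplies the required extensions, is used.
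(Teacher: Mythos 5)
Your proof is correct and follows essentially the same route as the paper's: the same split of $\fac_2(\sigma_n(ab))$ into internal factors and the single crossing factor, the same use of the hypothesis $A_{n+1}\subseteq L(X(\bsigma^{(n+1)}))$ to extend $a$ (resp.\ $b$) inside $L(\bsigma^{(n+1)})$, and the same observation that properness fixes the relevant boundary letter so the crossing factor is unchanged. The only cosmetic difference is that you derive the inclusion $\sigma_n\bigl(L(\bsigma^{(n+1)})\bigr)\subseteq L(\bsigma^{(n)})$ directly from the definitions, where the paper invokes its Lemma~\ref{l:changing-levels-shift}.
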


 \begin{proof}
   By symmetry, it suffices to consider the case where $\bsigma$ is left proper.
   Take $a,b\in A_{n+1}$.
   Let $w\in\fac_2(\sigma_n(ab))$.
   We wish to show that $w\in L(\bsigma^{(n)})$.
   Denote by $c$ the first letter of $\sigma_n(b)$.
   Then $w$ is a factor of $\sigma_n(a)$, or of $\sigma_n(b)$, or
   is a suffix of $\sigma_n(a)c$.
   Note that, by the definition of $L(\bsigma^{(n)})$,
   both words $\sigma_n(a)$ and $\sigma_n(b)$ belong to $L(\bsigma^{(n)})$.
   Therefore, to show that $w\in L(\bsigma^{(n)})$, it suffices to consider the case where
   $w$ is a suffix of $\sigma_n(a)c$.
   Since $A_{n+1}\subseteq L(X(\bsigma^{(n+1)}))$, we may choose a letter $d\in A_{n+1}$ such that $ad\in L(\bsigma^{(n+1)})$.
   Moreover, by Lemma~\ref{l:changing-levels-shift}, we have $\sigma_n(ad)\in L(X(\bsigma^{(n)}))$.
   Since $\bsigma$ is left proper, the words $\sigma_n(d)$ and $\sigma_n(b)$ have the same first letter,
   and so $\sigma_n(a)c$ is a prefix of $\sigma_n(ad)$.
   Since we are assuming that $w$ is suffix of $\sigma_n(a)c$, it follows that $w\in L(\bsigma^{(n)})$.
   This concludes the proof that $\fac_2(\sigma_n(ab))\subseteq L(\bsigma^{(n)})$,
   thus showing that $\bsigma$ is stable.
 \end{proof}

The following theorem improves a similar result that
the second author obtained for the special case of primitive substitutive directive sequences~\cite[Lemma~3.12]{ACosta:2023}.

\begin{theorem}
  \label{t:contraction-stable}
  Let $\bsigma$ be a primitive directive sequence.
  Let \pv V be a pseudovariety of semigroups containing~\pv{\loc Sl}.
  The following statements are equivalent:
    \begin{enumerate}
    \item $\bsigma$ has a stable contraction; \label{i:contraction-stable-1}
    \item $\img_\pv{V}(\bsigma^{(k)})$ is a simple semigroup, for every $k\geq 0$;\label{i:contraction-stable-2}
    \item $\img_\pv{V}^\infty(\bsigma)$ is a simple semigroup.\label{i:contraction-stable-3}
    \end{enumerate}
\end{theorem}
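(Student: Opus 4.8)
The plan is to separate the easy equivalence $\ref{i:contraction-stable-2}\Leftrightarrow\ref{i:contraction-stable-3}$, which is a general fact about inverse limits, from the equivalence $\ref{i:contraction-stable-1}\Leftrightarrow\ref{i:contraction-stable-2}$, whose real content is combinatorial and which I would route through the limit-word criterion of Theorem~\ref{t:characterization-of-image-inside-Jsigma}. For $\ref{i:contraction-stable-2}\Leftrightarrow\ref{i:contraction-stable-3}$, recall that $\img_\pv{V}^\infty(\bsigma)=\varprojlim_m\img_\pv{V}(\bsigma^{(m)})$ with onto projections $\prov V\sigma_{m,\infty}$. If $\img_\pv{V}^\infty(\bsigma)$ is simple, then each $\img_\pv{V}(\bsigma^{(m)})=\prov V\sigma_{m,\infty}\bigl(\img_\pv{V}^\infty(\bsigma)\bigr)$ is simple, because an onto continuous homomorphism carries a single $\green{J}$-class onto a single $\green{J}$-class; conversely, by the folklore about Green's relations in inverse limits already invoked in the proof of Proposition~\ref{p:at-the-limit-properties-of-lambda} (namely $s\green{J}t$ iff this holds componentwise), if every $\img_\pv{V}(\bsigma^{(m)})$ is a single $\green{J}$-class then so is $\img_\pv{V}^\infty(\bsigma)$.

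The technical heart of $\ref{i:contraction-stable-1}\Leftrightarrow\ref{i:contraction-stable-2}$ is the following lemma, which I would prove first: if $\bsigma$ is primitive and stable, then every limit word of $\bsigma$ lies in $X(\bsigma)$; combined with Theorem~\ref{t:characterization-of-image-inside-Jsigma} this is exactly simplicity of $\img_\pv{V}(\bsigma)$. To prove it, let $x=\sigma_{0,n}(x_n)$ be a limit word and $v$ a finite factor of $x$. Using that $\bsigma$ is primitive, so that $\min_{a}|\sigma_{0,n}(a)|\to\infty$, I choose $n$ large enough that both $\min_a|\sigma_{0,n}(a)|$ and $\min_a|\sigma_{0,n-1}(a)|$ exceed $|v|$. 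Then $v$ occurs inside $\sigma_{0,n}(cd)$ for two consecutive letters $cd$ of $x_n$ (an arbitrary, possibly illegal, pair). If $v$ sits inside a single image it is in $L(\bsigma)$ at once; otherwise write $v=sp$ across the seam, decompose $\sigma_{0,n}=\sigma_{0,n-1}\circ\sigma_{n-1}$, and let $c',d'$ be the last and first letters of $\sigma_{n-1}(c)$ and $\sigma_{n-1}(d)$. Stability at level $n-1$ gives $c'd'\in\fac_2(\sigma_{n-1}(cd))\subseteq L(\bsigma^{(n-1)})$, while the length bound forces $s$ and $p$ to be respectively a suffix of $\sigma_{0,n-1}(c')$ and a prefix of $\sigma_{0,n-1}(d')$, so $v$ is a factor of $\sigma_{0,n-1}(c'd')$. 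Since $c'd'\in L(\bsigma^{(n-1)})$ it is a factor of some $\sigma_{n-1,m}(a)$, whence $v\in\fac(\sigma_{0,m}(a))\subseteq L(\bsigma)$.

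With this lemma, $\ref{i:contraction-stable-1}\Rightarrow\ref{i:contraction-stable-2}$ is bookkeeping. If $\btau=(\sigma_{n_j,n_{j+1}})_j$ is a stable contraction of $\bsigma$, then each tail $\btau^{(j)}$ is again stable (stability is a condition at every level) and is a contraction of $\bsigma^{(n_j)}$; so by the lemma applied to $\btau^{(j)}$ and Lemma~\ref{l:V-image-contraction}, the semigroup $\img_\pv{V}(\bsigma^{(n_j)})$ is simple for every $j$. Given an arbitrary $k$, I pick $n_j\ge k$ and use Corollary~\ref{c:V-image-sent-to-image} to write $\img_\pv{V}(\bsigma^{(k)})=\prov V\sigma_{k,n_j}\bigl(\img_\pv{V}(\bsigma^{(n_j)})\bigr)$, an onto homomorphic image of a simple semigroup, hence simple.

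The converse $\ref{i:contraction-stable-2}\Rightarrow\ref{i:contraction-stable-1}$ is the hard direction, and I would argue it contrapositively against the criterion in Remark~\ref{r:stable-contractions}: a stable contraction exists exactly when, for every level $p$, some larger level $q$ makes every seam pair (the last letter of $\sigma_{p,q}(a)$ followed by the first letter of $\sigma_{p,q}(b)$, for $a,b\in A_q$) belong to $L(\bsigma^{(p)})$, these choices being chained greedily. Suppose this fails at some $p$: for every $q>p$ there are $a_q,b_q\in A_q$ with an illegal seam pair $x_qy_q\notin L(\bsigma^{(p)})$. Passing to a subnet I fix $x_q=x$, $y_q=y$ and take cluster points $\alpha,\beta\in\Lambda_\pv{V}(\bsigma^{(p)})$ of $\sigma_{p,q}(a_q)$ and $\sigma_{p,q}(b_q)$; these have infinite length since $\Lambda_\pv{V}(\bsigma^{(p)})\subseteq J_\pv{V}(\bsigma^{(p)})$ by Proposition~\ref{p:properties-of-lambda}, and by continuity of the first- and last-letter maps $\overleftarrow{\alpha}$ ends in $x$ and $\overrightarrow{\beta}$ begins with $y$. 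The bi-infinite word $z=\overleftarrow{\alpha}\cdot\overrightarrow{\beta}$ then contains the illegal factor $xy$, so $z\notin X(\bsigma^{(p)})$. The main obstacle is to verify that $z$ is nonetheless a genuine limit word of $\bsigma^{(p)}$: using Lemma~\ref{l:preservation-of-lambda} to write $\alpha=\prov V\sigma_{p,r}(\alpha_r)$ and $\beta=\prov V\sigma_{p,r}(\beta_r)$ with $\alpha_r,\beta_r\in\Lambda_\pv{V}(\bsigma^{(r)})$, together with the identities $\overleftarrow{\prov V\sigma_{p,r}(\alpha_r)}=\sigma_{p,r}(\overleftarrow{\alpha_r})$ and $\overrightarrow{\prov V\sigma_{p,r}(\beta_r)}=\sigma_{p,r}(\overrightarrow{\beta_r})$, one obtains $z=\sigma_{p,r}\bigl(\overleftarrow{\alpha_r}\cdot\overrightarrow{\beta_r}\bigr)$ for every $r>p$, so $z\in\bigcap_{r>p}\sigma_{p,r}(A_r^\zz)$. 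This produces a limit word of $\bsigma^{(p)}$ outside $X(\bsigma^{(p)})$, contradicting simplicity of $\img_\pv{V}(\bsigma^{(p)})$ through Theorem~\ref{t:characterization-of-image-inside-Jsigma}. I expect the delicate points to be the tracking of the seam letters through the cluster-point passage and the verification that $z$ remains in the image at every level $r$.
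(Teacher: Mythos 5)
Your proof is correct, but it follows a genuinely different decomposition from the paper's. The equivalence $\ref{i:contraction-stable-2}\Leftrightarrow\ref{i:contraction-stable-3}$ is the same in substance in both (the paper quotes the pseudoidentity $(xy)^\omega x=x$ where you quote the componentwise description of $\green{J}$ in inverse limits; both are one-line citations). The real divergence is in $\ref{i:contraction-stable-1}\Leftrightarrow\ref{i:contraction-stable-2}$: you route both directions through the limit-word criterion \ref{i:characterization-of-image-inside-Jsigma-3} of Theorem~\ref{t:characterization-of-image-inside-Jsigma}, in effect proving the purely combinatorial equivalence ``$\bsigma$ has a stable contraction if and only if every limit word of every tail $\bsigma^{(k)}$ lies in $X(\bsigma^{(k)})$'' and then transferring it to simplicity, whereas the paper stays at the pseudoword level throughout: for $\ref{i:contraction-stable-1}\Rightarrow\ref{i:contraction-stable-2}$ it shows every finite factor of an element of $\img_{\pv V}(\bsigma)$ lies in $L(\bsigma)$ and concludes via Proposition~\ref{p:mirage} and Corollary~\ref{c:Im-intersection-J}, and for $\ref{i:contraction-stable-2}\Rightarrow\ref{i:contraction-stable-1}$ it takes cluster points $\alpha,\beta$ of the seam words exactly as you do, but then simply observes that the product $\alpha\beta$ belongs to $\img_{\pv V}(\bsigma)$ while having an illegal two-letter factor, so $\alpha\beta\notin J_{\pv V}(\bsigma)$ --- no bi-infinite word needs to be built. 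The underlying computations coincide (the same one-level-down seam-pair trick via stability of $\sigma_{n-1}$, the same use of finiteness of $A_p$ to freeze the illegal two-letter word); what differs is where the profinite content sits. The paper's version of $\ref{i:contraction-stable-2}\Rightarrow\ref{i:contraction-stable-1}$ is shorter, because your verification that $z=\overleftarrow{\alpha}\cdot\overrightarrow{\beta}$ is a limit word at every level essentially replays the computation already inside the proof of the implication $\ref{i:characterization-of-image-inside-Jsigma-3}\Rightarrow\ref{i:characterization-of-image-inside-Jsigma-2}$ of Theorem~\ref{t:characterization-of-image-inside-Jsigma}; multiplying $\alpha\beta$ inside the image lets you skip that entirely. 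On the other hand, your bookkeeping for $\ref{i:contraction-stable-1}\Rightarrow\ref{i:contraction-stable-2}$ --- applying your lemma to the tails $\btau^{(j)}$ of the stable contraction and covering the intermediate levels $k$ via Corollary~\ref{c:V-image-sent-to-image} and the fact that homomorphic images of simple semigroups are simple --- is a genuine improvement in rigor: it sidesteps the paper's unproved assertion that every tail $\bsigma^{(k)}$ of a sequence with a stable contraction again has one, which is not immediate when $k$ falls strictly inside a contraction block. Two trivial tightenings: take the cluster point of the pair $\bigl(\sigma_{p,q}(a_q),\sigma_{p,q}(b_q)\bigr)$ jointly rather than of each sequence separately (you implicitly use a common subnet), and note that tracking the first and last letters through the limit uses that prescribing a prefix or suffix of length one is a closed condition, which is available since $\pv V\supseteq\pv{\loc I}$.
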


\begin{proof}  
    $\ref{i:contraction-stable-1}
    \Rightarrow
    \ref{i:contraction-stable-2}$
    If $\bsigma$ has a stable contraction, then so does $\bsigma^{(k)}$
    for every $k\geq 0$. Therefore, it suffices to show that the inclusion $\img_{\pv V}(\bsigma)\subseteq J_{\pv V}(\bsigma)$ holds,
    which, by Corollary~\ref{c:Im-intersection-J},
    means that the semigroup $\img_{\pv V}(\bsigma)$ is simple.
    Moreover, since taking a contraction leaves the $\pv V$-image unchanged (Lemma~\ref{l:V-image-contraction}), we may as well suppose that
    $\bsigma$ is stable.
    
    Let $u\in \img_{\pv V}(\bsigma)$.
    By Lemma~\ref{l:V-image-as-a-set-of-cluster-points},
    we may take a strictly increasing sequence $(n_k)_{k\in\nn}$ of positive integers and a sequence $(u_k)_{k\in\nn}$ of words, with $u_k\in (A_{n_k})^+$,
   such that $\sigma_{0,n_k}(u_k)\to u$.
   If $|u_k|\to 1$, then $u\in\Lambda_{\pv V}(\bsigma)$, and so $u\in J_{\pv V}(\bsigma)$ by Proposition~\ref{p:properties-of-lambda}.
   Therefore, we may as well assume that $|u_k|\geq 2$ for every $k\in\nn$.
   Let $w$ be a finite-length factor of $u=\lim \sigma_{0,n_k}(u_k)$.
   Since $\pv V$ contains \pv{\loc Sl},
   by taking subsequences, we may further assume that
   $w$ is a factor of $\sigma_{0,n_k}(u_k)$ for every $k\in\nn$.
   Because $\bsigma$ is primitive, there is $k_0\in\nn$ such that, for every $k>k_0$ and every letter $a$ that is a factor of $u_k$,
   we have $|\sigma_{0,n_k}(a)|>|w|$.
   Hence, for every $k>k_0$, and because $w$ is a factor of $\sigma_{0,n_{k+1}}(u_{k+1})$,
   there is a factor $v_k$ of length two of the word $\sigma_{n_k,n_{k+1}}(u_{k+1})$
   such that $w$ is a factor of $\sigma_{0,n_k}(v_k)$.
   Since $\bsigma$ is stable, we have
   $\fac_2(\sigma_{n_k,n_{k+1}}(u_{k+1}))\subseteq L(\bsigma^{(n_k)})$, thus $v_k\in  L(\bsigma^{(n_k)})$.
   It follows that $\fac(\sigma_{0,n_k}(v_k))\subseteq L(\bsigma)$ (cf.~Lemma~\ref{l:contraction-does-not-change-the-language}), whence $w\in L(\bsigma)$. Since $w$ is an arbitrary finite-length factor of $u$, we conclude that $u\in J_{\pv V}(\bsigma)$
   by Proposition~\ref{p:mirage}.
   This establishes the inclusion $\img_{\pv V}(\bsigma)\subseteq J_{\pv V}(\bsigma)$.

   $\ref{i:contraction-stable-2}
   \Rightarrow
   \ref{i:contraction-stable-1}$
   Suppose that property \ref{i:contraction-stable-2} holds
   for $\bsigma$. Then the same property holds for $\bsigma^{(n)}$
   for every $n\in\nn$, as $(\bsigma^{(n)})^{(k)}=\bsigma^{(n+k)}$
   for every $n,k\in\nn$. Therefore, to
   establish the implication
   $\ref{i:contraction-stable-2}
   \Rightarrow
   \ref{i:contraction-stable-1}$,
   it suffices to establish
   the inclusion $\fac_2(\sigma_{0,n}(A_{n}^2))\subseteq L(\bsigma)$
   for some positive integer~$n$.
   
   Suppose, on the contrary, that we have
   $\fac_2(\sigma_{0,n}(A_{n}^2))\nsubseteq L(\bsigma)$ for every
   positive integer $n$. Then, for each $n\geq 1$, we may take letters
   $a_n,b_n\in A_n$ and a word $w_n\in A_0^2\setminus L(\bsigma)$ such
   that $\sigma_{0,n}(a_nb_n)\in A^*w_nA^*$. Let $(\alpha,\beta,w)$ be
   a cluster point in $(\Om {A_0}V)^3$ of the sequence
   $(\sigma_{0,n}(a_n),\sigma_{0,n}(b_n),w_n)_{n\geq 1}$. Note that
   $w$ is a factor of $\alpha\beta$. Moreover, since $A_0$ is a finite
   alphabet, one must have $w=w_m$ for infinitely many integers $m$.
   Therefore, $\alpha\beta$ has a finite-length factor (namely $w$)
   not in $L(\bsigma)$. This implies that $\alpha\beta\notin J_{\pv
     V}(\bsigma)$ by Proposition~\ref{p:mirage}. On the other hand, we
   have $\alpha,\beta\in J_{\pv V}(\bsigma)\cap \img_{\pv V}(\bsigma)$
   by Proposition~\ref{p:properties-of-lambda}. It follows that
   $\alpha\beta\in \img_{\pv V}(\bsigma)\setminus J_{\pv V}(\bsigma)$,
   which contradicts the assumption that $\img_{\pv V}(\bsigma)$ is
   simple. To avoid the contradiction, we indeed must have
   $\fac_2(\sigma_{0,n}(A_{n}^2))\subseteq L(\bsigma)$ for some
   positive integer $n$.

   $\ref{i:contraction-stable-3}
   \Rightarrow
   \ref{i:contraction-stable-2}$
   Immediately after defining the inverse limit
   $\img_\pv{V}^\infty(\bsigma)=\varprojlim
   \img_\pv{V}(\bsigma^{(k)})$, we observed that
   $\img_\pv{V}(\bsigma^{(k)})$ is a homomorphic image of
   $\img_\pv{V}^\infty(\bsigma)$, for every natural number $k$. This
   gives the implication, as the homomorphic image of a simple
   semigroup is also simple.

   $\ref{i:contraction-stable-2}
   \Rightarrow
   \ref{i:contraction-stable-3}$
   It is well known that properties that can be
   defined by so-called \emph{pseudoidentities} are preserved by
   taking products and subsemigroups, whence by inverse limits
   \cite{Almeida:2003cshort}.
   This is the case of simplicity for 
   profinite semigroups, which is defined by the pseudoidentity
   $(xy)^\omega x=x$. Alternatively, see~\cite[Corollary
   9.2]{Rhodes&Steinberg:2001}.
 \end{proof}

 A semigroup $S$ is said to be \emph{right simple} if the relation
 $\green{R}$ on $S$ is the universal relation; likewise, there is a
 dual notion of \emph{left simplicity} where $\green{R}$ is replaced
 by $\green{L}$ (cf.~\cite[Section~A.1]{Rhodes&Steinberg:2009qt}).
 Note that both of these properties are strictly stronger than
 simplicity.

\begin{theorem}
  \label{t:right-proper-in-an-R-class}
  Let $\bsigma$ be a primitive directive sequence.
  Let \pv V be a pseudovariety of semigroups containing~\pv{\loc Sl}.
  The following statements are equivalent:
  \begin{enumerate}
  \item $\bsigma$ has a left proper
    contraction; \label{i:right-proper-in-an-R-class-1}
  \item $\img_\pv{V}(\bsigma^{(k)})$ is a right simple semigroup for
    every $k\geq 0$;\label{i:right-proper-in-an-R-class-2}
  \item $\img_\pv{V}^\infty(\bsigma)$ is a right simple
    semigroup.\label{i:right-proper-in-an-R-class-3}
  \end{enumerate}    
\end{theorem}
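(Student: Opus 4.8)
The plan is to follow closely the architecture of the proof of Theorem~\ref{t:contraction-stable}, with right simplicity playing the role that simplicity played there. The starting observation is that, just as simplicity of a profinite semigroup is captured by the pseudoidentity $(xy)^\omega x=x$, right simplicity is captured by the pseudoidentity $x^\omega y=y$: if $S$ is right simple then every idempotent is a left identity, giving $x^\omega y=y$; conversely, from $x^\omega y=y$ one recovers $y=x^\omega y=x\,x^{\omega-1}y\in xS$, so $y\le_{\green R}x$ for all $x,y$, i.e.\ $\green R$ is universal. Since pseudoidentities are preserved by continuous homomorphic images and by inverse limits, the equivalence $\ref{i:right-proper-in-an-R-class-2}\Leftrightarrow\ref{i:right-proper-in-an-R-class-3}$ then follows verbatim as in Theorem~\ref{t:contraction-stable}: each $\img_\pv{V}(\bsigma^{(k)})$ is an onto continuous homomorphic image of $\img_\pv{V}^\infty(\bsigma)$, and conversely $\img_\pv{V}^\infty(\bsigma)=\varprojlim\img_\pv{V}(\bsigma^{(k)})$.

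For $\ref{i:right-proper-in-an-R-class-1}\Rightarrow\ref{i:right-proper-in-an-R-class-2}$ I would first reduce to the case where $\bsigma$ itself is left proper. Each tail $\bsigma^{(k)}$ again admits a left proper contraction (composing a left proper map with any homomorphism on either side again yields a left proper map), and by Lemma~\ref{l:V-image-contraction} the $\pv V$-image is unchanged under contraction; so it suffices to prove that $\img_\pv{V}(\btau)$ is right simple whenever $\btau$ is left proper and primitive. By Proposition~\ref{p:left-proper-implies-stable} such a $\btau$ is stable, so $\img_\pv{V}(\btau)\subseteq J_\pv{V}(\btau)$ by Theorem~\ref{t:contraction-stable} and Corollary~\ref{c:Im-intersection-J}. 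The key combinatorial input is that left properness forces a single right-infinite prefix: writing $c_j$ for the common first letter of the images of $\sigma_j$, the words $\sigma_{0,j}(c_j)$ form a chain of prefixes of growing length (growth guaranteed by primitivity), converging to a right-infinite word $t\in A_0^\nn$, and every element of $\img_\pv{V}(\btau)$ has $t$ as its right-infinite prefix. Then for $x,y\in\img_\pv{V}(\btau)$ the idempotent $x^\omega$ and the element $y$ both lie in $J_\pv{V}(\btau)$ with $\overrightarrow{x^\omega}=\overrightarrow{y}=t$, so Lemma~\ref{l:characterization-of-Green-H-in-J-class-of-minimal-shift} gives $y\le_{\green R}x^\omega$ in $\Om{A_0}V$, whence $y=x^\omega z$ and $x^\omega y=x^\omega x^\omega z=x^\omega z=y$. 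Thus $\img_\pv{V}(\btau)$ satisfies $x^\omega y=y$ and is right simple.

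For $\ref{i:right-proper-in-an-R-class-2}\Rightarrow\ref{i:right-proper-in-an-R-class-1}$ I would argue by contradiction, mirroring the corresponding implication of Theorem~\ref{t:contraction-stable}. It is enough to show that right simplicity of all $\img_\pv{V}(\bsigma^{(k)})$ yields, for each $k$, some $m>k$ with $\sigma_{k,m}$ left proper, since such maps telescope into a left proper contraction. Applying this with $\bsigma^{(k)}$ in place of $\bsigma$, it suffices to show: if $\sigma_{0,m}$ fails to be left proper for every $m$, then $\img_\pv{V}(\bsigma)$ is not right simple. Failure of left properness at level $m$ provides letters $a_m,a_m'\in A_m$ such that $\sigma_{0,m}(a_m)$ and $\sigma_{0,m}(a_m')$ begin with distinct letters; passing to a subsequence along which these first letters are constant (possible since $A_0$ is finite), cluster points $\alpha,\alpha'$ of $(\sigma_{0,m}(a_m))_m$ and $(\sigma_{0,m}(a_m'))_m$ lie in $\Lambda_\pv{V}(\bsigma)\subseteq J_\pv{V}(\bsigma)\cap\img_\pv{V}(\bsigma)$ by Proposition~\ref{p:properties-of-lambda}\ref{item:properties-of-lambda-1}, yet satisfy $\overrightarrow{\alpha}\neq\overrightarrow{\alpha'}$. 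Hence $\alpha$ and $\alpha'$ are not $\green R$-related in $\Om{A_0}V$ by Lemma~\ref{l:characterization-of-Green-H-in-J-class-of-minimal-shift}, so a fortiori not $\green R$-related in the subsemigroup $\img_\pv{V}(\bsigma)$, contradicting right simplicity.

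The main obstacle I anticipate is this last direction $\ref{i:right-proper-in-an-R-class-2}\Rightarrow\ref{i:right-proper-in-an-R-class-1}$, where the algebraic hypothesis must be converted back into the purely combinatorial statement that $\bsigma$ has a left proper contraction. The delicate points are the correct transfer of the $\green R$-relation between $\img_\pv{V}(\bsigma)$ and $\Om{A_0}V$ (secured by the containment $\Lambda_\pv{V}(\bsigma)\subseteq J_\pv{V}(\bsigma)$ together with the characterization of $\green R$ via $\overrightarrow{\phantom{u}}$) and the bookkeeping required to telescope the level-wise left proper maps into an honest contraction, including checking that every tail still admits such a contraction.
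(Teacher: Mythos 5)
Your proposal is correct, and on two of the three equivalences it coincides with the paper's proof: the equivalence \ref{i:right-proper-in-an-R-class-2}$\Leftrightarrow$\ref{i:right-proper-in-an-R-class-3} via the pseudoidentity $x^\omega y=y$ (preserved by inverse limits, with homomorphic images giving the converse direction), and the implication \ref{i:right-proper-in-an-R-class-2}$\Rightarrow$\ref{i:right-proper-in-an-R-class-1} by the same contradiction argument with cluster points $\alpha,\alpha'\in\Lambda_{\pv V}(\bsigma)$ having distinct first letters; there you are in fact more explicit than the paper on two points it leaves tacit, namely the transfer of non-$\green{R}$-relatedness from $\Om{A_0}V$ down to the subsemigroup $\img_{\pv V}(\bsigma)$, and the bookkeeping that telescopes the level-wise left proper homomorphisms $\sigma_{k,m}$ into an actual contraction. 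The genuinely different step is \ref{i:right-proper-in-an-R-class-1}$\Rightarrow$\ref{i:right-proper-in-an-R-class-2}. The paper fixes the distinguished first letters $b_n$ with $\sigma_n(A_{n+1})\subseteq b_nA_n^*$, takes a cluster point $\beta$ of $\bigl(\sigma_{0,n_k}(b_{n_k})\bigr)_k$, uses regularity of $\beta$ in $\img_{\pv V}(\bsigma)$ (Proposition~\ref{p:properties-of-lambda}) to produce an idempotent $e$ with $e\green{R}\beta$, proves $u=eu$ for every $u\in\img_{\pv V}(\bsigma)$ by a limit-factorization argument, and finally converts $u=eu$ into $u\green{R}e$ using simplicity of $\img_{\pv V}(\bsigma)$ (via Proposition~\ref{p:left-proper-implies-stable} and Theorem~\ref{t:contraction-stable}) together with stability of compact semigroups. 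You instead show that left properness forces the single right-infinite word $t=\lim_j\sigma_{0,j}(c_j)$ to be the common right-infinite prefix of all elements of the image, and then invoke the characterization $\overrightarrow{u}=\overrightarrow{v}\iff u\green{R}v$ inside $J_{\pv V}(\btau)$ (Lemma~\ref{l:characterization-of-Green-H-in-J-class-of-minimal-shift}) together with the containment $\img_{\pv V}(\btau)\subseteq J_{\pv V}(\btau)$, which you obtain from the same stability input via Theorem~\ref{t:contraction-stable} and Corollary~\ref{c:Im-intersection-J}. Both routes consume the same two stability results; yours trades the paper's algebraic construction of an idempotent left identity for the dynamical description of $\green{R}$ by right-infinite prefixes, and buys as a by-product the clean structural fact that every element of $\img_{\pv V}(\btau)$ has $\overrightarrow{u}=t$, while the paper's route avoids the $\overrightarrow{(\cdot)}$ machinery in this implication and would survive in settings where that characterization is unavailable. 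One small step you should write out: to see that $u\in\img(\prov V\sigma_{0,n})$ has $\sigma_{0,n-1}(c_{n-1})$ as a prefix, factor a preimage $v\in\Om{A_n}V$ as $v=v[0]\,v'$ with $v'\in(\Om{A_n}V)^1$ using the unique length-one prefix (available since $\pv V\supseteq\pv{\loc I}$), so that the finite word $\sigma_{0,n}(v[0])$, which begins with $\sigma_{0,n-1}(c_{n-1})$, is a prefix of~$u$; primitivity gives $|\sigma_{0,n-1}(c_{n-1})|\to\infty$, whence indeed $\overrightarrow{u}=t$.
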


Before establishing Theorem~\ref{t:right-proper-in-an-R-class}, we
state the following lemma used in its proof.

\begin{lemma}
  \label{l:place-of-finite-factor-in-ufv}
  Let $\pv V$ be a pseudovariety of finite semigroups that is
  contained in $\pv{\loc{Sl}}$. Consider a finite alphabet $A$. Let
  $u,v\in\Om AV$. If $x$ is a finite-length factor of $uv$, then
  either $x$ is a factor of $u$, or of $v$, or $x=sp$ for some suffix
  $s$ of $u$ and some prefix $p$ of $v$. In particular, if $x$ is a
  finite factor of $uwv$ and $w\in\Om AV\setminus A^+$, then $x$ is a
  factor of $uw$ or of $wv$.
\end{lemma}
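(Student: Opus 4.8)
The plan is to reduce the statement to a combinatorial fact about finite words by approximation, exploiting that membership in the set of pseudowords admitting a fixed finite factor is a clopen condition. Throughout I rely on the two standing consequences of the hypothesis $\pv{\loc Sl}\subseteq\pv V$ used across this section: first, that every pseudoword of length at least $m$ has a well-defined prefix and suffix of length $m$ (valid since $\pv V\supseteq\pv{\loc I}$, cf.\ Section~\ref{ss:rel-free}), with the associated prefix-of-length-$m$ and suffix-of-length-$m$ maps continuous into the finite discrete set $A^m$; and second, that for each finite word $z\in A^+$ the set $\clos V(A^*zA^*)=\{w\in\Om AV:z\in\fac(w)\}$ is clopen, because $A^*zA^*$ is $\pv{\loc Sl}$-recognizable (Theorem~\ref{t:V-recognizability} and Remark~\ref{r:why-we-need-LSL}).

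For the main assertion I would first choose sequences of words $u_i\to u$ and $v_i\to v$ in $\Om AV$, so that $u_iv_i\to uv$ by continuity of multiplication. Since $x\in\fac(uv)$ and $\clos V(A^*xA^*)$ is clopen, there is $N$ with $x\in\fac(u_iv_i)$ for all $i\ge N$. As $u_iv_i$ is a finite word, each occurrence of $x$ in it is of exactly one of three kinds: entirely inside $u_i$; entirely inside $v_i$; or straddling the cut, in which case $x=s_ip_i$ with $s_i$ a nonempty suffix of $u_i$ and $p_i$ a nonempty prefix of $v_i$. Because $x$ admits only finitely many factorizations $x=sp$, I can pass to a subsequence along which the kind of occurrence is constant and, in the straddling case, the factorization $(s_i,p_i)=(s,p)$ is constant as well.

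In the first case $x\in\fac(u_i)$ for infinitely many $i$ with $u_i\to u$, so $x\in\fac(u)$ since $\clos V(A^*xA^*)$ is closed; the second case symmetrically gives $x\in\fac(v)$. In the straddling case $s$ is a suffix of each $u_i$ and $p$ a prefix of each $v_i$ along the subsequence. As each such $u_i$ has length at least $|s|$, so does the limit $u$ (a finite $u$ would force $u_i=u$ eventually, since finite words are isolated), and continuity of the suffix-of-length-$|s|$ map yields $u[-|s|,-1]=s$; hence $s$ is a suffix of $u$, and dually $p$ is a prefix of $v$, producing the required $x=sp$. For the ``in particular'' clause, assume $x\in\fac(uwv)$ with $w\in\Om AV\setminus A^+$; as $A$ is finite and $\pv V\supseteq\pv N$, this means $|w|=\infty$. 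Applying the main statement to the product $u\cdot(wv)$ gives three possibilities: $x\in\fac(u)\subseteq\fac(uw)$; or $x\in\fac(wv)$; or $x=sp$ with $s$ a suffix of $u$ and $p$ a finite-length prefix of $wv$. In the last case $|w|=\infty$ forces $p=(wv)[0,|p|)=w[0,|p|)$, so $p$ is a prefix of $w$; writing $u=\alpha s$ and $w=p\gamma$ exhibits $x=sp$ as a factor of $uw$. In every case $x$ is a factor of $uw$ or of $wv$.

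The step I expect to be the main obstacle is the straddling case: one must ensure that the suffix $s$ and prefix $p$ survive the passage to the limit, which is exactly where the continuity of the finite prefix/suffix maps and the clopenness of the factor sets are indispensable, together with the small bookkeeping needed to rule out the degenerate situation in which $u$ (or $v$) is a finite word shorter than $s$ (resp.\ $p$).
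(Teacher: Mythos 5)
Your proof is correct and takes essentially the approach the paper has in mind: the paper omits the argument, calling it an easy exercise subsumed into Lemma~8.2 of Almeida and Volkov, and that argument is exactly your approximation-by-finite-words scheme (clopenness of $\clos V(A^*xA^*)$ granted by $\pv{\loc Sl}\subseteq\pv V$, extraction of a constant straddling factorization $x=sp$, and continuity of the length-$|s|$ suffix and length-$|p|$ prefix maps to pass to the limit). You also correctly read the hypothesis as $\pv{\loc Sl}\subseteq\pv V$ rather than the statement's literal ``contained in $\pv{\loc Sl}$'', which is a slip in the paper, as the accompanying remark on Almeida--Volkov makes clear.
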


This lemma, whose proof is an easy exercise, is subsumed into
Lemma~8.2 of the paper~\cite{Almeida&Volkov:2006}. The statement
in~\cite{Almeida&Volkov:2006} concerns the pseudovariety $\pv S$ of
all finite semigroups, but the proof given there holds for all
pseudovarieties containing $\pv{\loc{Sl}}$.

\begin{proof}[Proof of Theorem~\ref{t:right-proper-in-an-R-class}]
   $\ref{i:right-proper-in-an-R-class-2}
   \Rightarrow
   \ref{i:right-proper-in-an-R-class-3}$
   As in the end of the proof of Theorem~\ref{t:contraction-stable},
   it suffices to observe that right simplicity in  finite semigroups
   is defined by the pseudoidentity $x^\omega y=y$.

   $\ref{i:right-proper-in-an-R-class-3}
   \Rightarrow
   \ref{i:right-proper-in-an-R-class-2}$
   Every homomorphic image of a right simple semigroup is right simple.

   $\ref{i:right-proper-in-an-R-class-1}
   \Rightarrow
   \ref{i:right-proper-in-an-R-class-2}$
   Note that $\bsigma$ has a left proper contraction if and only if
   $\bsigma^{(k)}$ has a left proper contraction, for every [some] $k\geq 0$.
   Therefore, it suffices to show that $\img_\pv{V}(\bsigma)$
   is right simple.
   We may as well assume that $\sigma_n$ is left proper
   for every $n\in\nn$, thanks to Lemma~\ref{l:V-image-contraction}.
   
   For each $n\in\nn$, let $b_n\in A_n$ be such that
   $\sigma_n(A_{n+1})\subseteq b_nA_n^*$. By compactness of $\Om
   {A_0}V$, we may pick a strictly increasing sequence
   $(n_k)_{k\in\nn}$ of positive integers such that
   $(\sigma_{0,n_k}(b_{n_k}))_{k\in\nn}$ converges to some pseudoword
   $\beta$ of $\Om {A_{0}}V$. Note that $\beta\in\Lambda_{\pv
     V}(\bsigma)$. Hence, $\beta$ is a regular element of the
   semigroup $\img_{\pv V}(\bsigma)$ by
   Proposition~\ref{p:properties-of-lambda}, and so we may select an
   idempotent $e$ in the $\green{R}$-class of $\beta$ in $\img_{\pv
     V}(\bsigma)$. In particular, the equality $\beta =e\beta$ holds.

   We claim that
   \begin{equation}
     \label{eq:right-proper-in-an-R-class-1}
     \forall u\in \img_{\pv V}(\bsigma),\quad u=eu.
   \end{equation}
   Let $u\in \img_{\pv V}(\bsigma)$.
   By Lemma~\ref{l:V-image-as-a-set-of-cluster-points},
   there is a strictly increasing sequence $(m_k)_{k\in\nn}$
   of positive integers
   such that $u=\lim\sigma_{0,m_k}(u_k)$
   for some sequence $(u_k)_{k\in\nn}$ of words.
   By taking a subsequence of $(m_k)_{k\in\nn}$,
   we may as well assume that $m_k>n_k$
   for every $k\in\nn$.
   For each $k\in\nn$, let $s_k\in (A_{n_k})^*$ be a word such that
   $\sigma_{n_k,m_k}(u_k)=b_{n_k}s_k$.
   Further taking subsequences, we may assume that
   the sequence $(\sigma_{0,n_k}(s_{k}))_k$ converges to some pseudoword
   $s\in\Om {A_0}V$.
   We then have
   \begin{equation*}
     u=\lim\sigma_{0,m_k}(u_k)
     =\lim\sigma_{0,n_k}(b_{n_k})\sigma_{0,n_k}(s_{k})
     =\beta s=e\beta s=eu.
   \end{equation*}
   This establishes the claim that
   \eqref{eq:right-proper-in-an-R-class-1} holds.

   As we are assuming that $\bsigma$ is left proper,
   we get from Proposition~\ref{p:left-proper-implies-stable},
   that $\bsigma$ is stable. Hence, by Theorem~\ref{t:contraction-stable},
    $\img_{\pv V}(\bsigma)$ is a simple semigroup. Taking into
   account~\eqref{eq:right-proper-in-an-R-class-1}, it now follows
   from the stability of the profinite simple semigroup $\img_{\pv
     V}(\bsigma)$ that $u$ is in the $\green{R}$-class of $\img_{\pv
     V}(\bsigma)$ containing $e$. Since $u$ is an arbitrary element of
   $\img_{\pv V}(\bsigma)$, this establishes that $\img_{\pv
     V}(\bsigma)$ is a right simple semigroup.

   $\ref{i:right-proper-in-an-R-class-2}
   \Rightarrow
   \ref{i:right-proper-in-an-R-class-1}$
   Since $\bsigma$ has a left proper contraction if and only if every
   tail of $\bsigma$ has a left proper contraction, it suffices to show
   that there is a positive integer $n$ such that $\sigma_{0,n}$ is left proper. 
   Suppose that, on the contrary,
   for every positive integer~$n$ the homomorphism $\sigma_{0,n}$ is
   not left proper.
   Then, for each $n\geq 1$, there are letters $a_n,b_n\in A_0$,
   with $a_n\neq b_n$,
   and $c_n,d_n\in A_n$ 
   such that  the words $\sigma_{0,n}(c_n)$
   and $\sigma_{0,n}(d_n)$ respectively start with $a_n$ and $b_n$.
   Let $(\gamma,\delta,a,b)$ be a cluster
   point in $(\Om {A_0}V)^4$ of the sequence
   $(\sigma_{0,n}(c_n),\sigma_{0,n}(d_n),a_n,b_n)_{n\geq 1}$. 
   Since $A_o$~is finite, we have $(a,b)=(a_m,b_m)$ for infinitely
   many integers $m$, and so $a$ and $b$ 
   are distinct letters of $A_0$. Moreover, the pseudowords $\gamma$
   and $\delta$ start with $a$ and $b$, respectively.
   If two elements of $\Om {A_0}V$ are $\green{R}$-equivalent, then
   they have the same finite-length prefixes. Hence, $\gamma$ and
   $\delta$ are not 
   $\green{R}$-equivalent. But this contradicts
   our assumption that $\img_{\pv V}(\bsigma)$
   is right simple. Therefore, there is indeed a positive integer $n$
   such that $\sigma_{0,n}$ is left proper.
 \end{proof}

 We end this section with the analog of
 Theorem~\ref{t:right-proper-in-an-R-class} for proper directive
 sequences.

\begin{theorem}
  \label{t:proper-is-group}
  Let $\bsigma$ be a primitive directive sequence.
  Let \pv V be a pseudovariety of semigroups containing~\pv{\loc Sl}.
  The following statements are equivalent:
  \begin{enumerate}
  \item $\bsigma$ has a proper contraction; \label{i:proper-is-group-1}
  \item $\img_\pv{V}(\bsigma^{(k)})$ is a group for every $k\geq
    0$;\label{i:proper-is-group-2}
  \item $\img_\pv{V}^\infty(\bsigma)$ is a group.\label{i:proper-is-group-3}
  \end{enumerate}    
\end{theorem}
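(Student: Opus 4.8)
The plan is to reduce everything to the one-sided Theorem~\ref{t:right-proper-in-an-R-class} together with its left–right dual, using the observation that a directive sequence is proper exactly when it is both left proper and right proper. The elementary fact I would record first is that being a group is the conjunction of right simplicity ($\green{R}$ universal) and left simplicity ($\green{L}$ universal): if a compact semigroup $S$ is both right and left simple, then $\green{H}=\green{R}\cap\green{L}$ is universal, so $S$ is a single $\green{H}$-class; since every compact semigroup contains an idempotent $e$, we get $S=H_e$, which is a maximal subgroup and hence a group.

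For $\ref{i:proper-is-group-1}\Rightarrow\ref{i:proper-is-group-2}$: if $\btau$ is a proper contraction of $\bsigma$, then $\btau$ witnesses both that $\bsigma$ has a left proper contraction and that it has a right proper contraction. Theorem~\ref{t:right-proper-in-an-R-class} then yields that $\img_\pv V(\bsigma^{(k)})$ is right simple for every $k$, and its left–right dual yields that it is left simple for every $k$; by the elementary fact above, each $\img_\pv V(\bsigma^{(k)})$ is a group. The equivalence $\ref{i:proper-is-group-2}\Leftrightarrow\ref{i:proper-is-group-3}$ I would handle exactly as in Theorems~\ref{t:contraction-stable} and~\ref{t:right-proper-in-an-R-class}: the implication $\ref{i:proper-is-group-3}\Rightarrow\ref{i:proper-is-group-2}$ holds because $\img_\pv V(\bsigma^{(k)})$ is a continuous homomorphic image of $\img_\pv V^\infty(\bsigma)$ and a homomorphic image of a group is a group, while $\ref{i:proper-is-group-2}\Rightarrow\ref{i:proper-is-group-3}$ follows since being a group is defined among profinite semigroups by the pair of pseudoidentities $x^\omega y=y$ and $y x^\omega=y$ (these jointly force a unique two-sided identity idempotent), and such properties pass to inverse limits.

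The crux is $\ref{i:proper-is-group-2}\Rightarrow\ref{i:proper-is-group-1}$, where I must produce a \emph{single} contraction that is simultaneously left and right proper, rather than two separate one-sided ones. Here I would use the propagation of one-sided properness under composition: if $\sigma_{m,n}$ is left proper, then $\sigma_{m,p}=\sigma_{m,n}\circ\sigma_{n,p}$ is left proper for every $p\ge n$ (the outer left-proper factor fixes the common first letter), and dually if $\sigma_{n,p}$ is right proper, then $\sigma_{m,p}$ is right proper for every $m\le n$ (the inner right-proper factor fixes the common last letter, which is preserved by the outer map). Assuming $\ref{i:proper-is-group-2}$, each $\img_\pv V(\bsigma^{(m)})$ is a group, hence both right and left simple, so by Theorem~\ref{t:right-proper-in-an-R-class} and its dual, for every $m$ there is $p>m$ with $\sigma_{m,p}$ left proper, and for every $m$ there is $q>m$ with $\sigma_{m,q}$ right proper.

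I would then build indices $n_0=0<n_1<\cdots$ inductively: given $n_k$, first choose $p>n_k$ with $\sigma_{n_k,p}$ left proper, then choose $n_{k+1}>p$ with $\sigma_{p,n_{k+1}}$ right proper (the latter exists since $\img_\pv V(\bsigma^{(p)})$ is left simple). The factorization $\sigma_{n_k,n_{k+1}}=\sigma_{n_k,p}\circ\sigma_{p,n_{k+1}}$ is then left proper by the outer factor and right proper by the inner factor, hence proper, so $(\sigma_{n_k,n_{k+1}})_{k\in\nn}$ is a proper contraction, establishing $\ref{i:proper-is-group-1}$. The main obstacle is precisely this interleaving step: ensuring that the two propagation lemmas deliver \emph{both} sides of properness on the \emph{same} segment $\sigma_{n_k,n_{k+1}}$, which is what forces the particular order (left proper prefix, right proper suffix) in the inductive choice.
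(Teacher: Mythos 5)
Your proposal is correct and takes essentially the same route as the paper's proof: \ref{i:proper-is-group-1}$\Rightarrow$\ref{i:proper-is-group-2} via Theorem~\ref{t:right-proper-in-an-R-class} and its dual together with the fact that a semigroup is a group if and only if it is both left and right simple, the equivalence \ref{i:proper-is-group-2}$\Leftrightarrow$\ref{i:proper-is-group-3} by standard inverse-limit facts, and the converse direction by extracting left proper and right proper segments from the one-sided theorems (applied to tails) and merging them into proper compositions. One minor remark: your interleaving order is not actually forced, since left (or right) properness of \emph{any} factor of a composition propagates to the whole composition, which is exactly how the paper's proof of \ref{i:proper-is-group-3}$\Rightarrow$\ref{i:proper-is-group-1} combines a left proper $\sigma_{r,s}$ and a right proper $\sigma_{r',s'}$ located anywhere within $[n,k]$ to conclude that $\sigma_{n,k}$ is proper.
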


\begin{proof}
  $\ref{i:proper-is-group-1}
  \Rightarrow
  \ref{i:proper-is-group-2}$
  This implication follows from
  Theorem~\ref{t:right-proper-in-an-R-class} and its dual, because a
  semigroup is a group if and only if it is both left and right
  simple~\cite[Lemma~A.3.1]{Rhodes&Steinberg:2009qt}.

  $\ref{i:proper-is-group-2}
  \Rightarrow
  \ref{i:proper-is-group-3}$
  Any inverse limit of profinite groups is a profinite group.

  $\ref{i:proper-is-group-3} \Rightarrow \ref{i:proper-is-group-1}$ By
  Theorem~\ref{t:right-proper-in-an-R-class} and its dual, $\bsigma$
  has a left proper contraction and a right proper contraction, which
  means that it has a proper contraction: indeed, for any $n\in\nn$,
  provided $k\geq n$ is sufficiently large, there will be
  $r,s,r',s'\in\nn$ with $n\leq r<s\leq k$ and $n\leq r'<s'\leq k$
  such that $\sigma_{r,s}$ is left proper and $\sigma_{r',s'}$ is
  right proper, hence $\sigma_{n,k}$ is proper.
\end{proof}

\section{Profinite images of bounded directive sequences}
\label{sec:case-bound-direct}

Let us say that the directive sequence $\bsigma =
(\sigma_n)_{n\in\nn}$, with $\sigma_n\from A_{n+1}^+\to A_n^+$, is
\emph{bounded} when the set $\{ A_n : n\in\nn\}$ of its alphabets is
finite.

\begin{remark}
  \label{r:bounded-versus-finite-alphabet-rank}
  A directive sequence has finite alphabet rank if and only if it has
  some contraction that is, up to relabeling of its alphabets,
  bounded. Moreover, if $\bsigma'$ is a contraction of $\bsigma$, then
  the relabeled directive sequence $\bsigma''$ obtained from
  $\bsigma'$ may be chosen, by not relabeling $A_0$, such that
  $X(\bsigma)=X(\bsigma'')$ and $\img_{\pv V}(\bsigma)=\img_{\pv
    V}(\bsigma'')$ for every pseudovariety of semigroups $\pv V$
  containing $\pv N$.
\end{remark}

For technical reasons, related with the convenience of using
finite-vertex profinite categories, we mostly prefer to work directly
with bounded directive sequences, although they have the same
expressive power of directive sequences with finite alphabet rank, as
seen in the previous remark.

A way of thinking about the directive sequence $\bsigma$ is to
visualize it as a left-infinite path
\begin{equation}\label{eq:visualize-directive-sequence}
  A_0^+
  \xleftarrow{\sigma_0}A_1^+
  \xleftarrow{\sigma_1}A_2^+
  \xleftarrow{\sigma_2}A_3^+
  \xleftarrow{\sigma_3}\cdots
\end{equation}
over the graph $\Gamma(\bsigma)$ whose vertices are the free
semigroups $A_n^+$ and where the arrows from $A_k^+$ to $A_l^+$ are
the homomorphisms from $A_k^+$ to $A_l^+$. Note that $\bsigma$ being
bounded means that $\Gamma(\bsigma)$ has a finite number of vertices.

From hereon, let $\pv V$ be a pseudovariety of finite semigroups
containing the pseudovariety $\pv N$ of finite nilpotent semigroups.
Consider the following set of finitely generated profinite semigroups:
\begin{equation*}
  \mathcal{F}_{\pv V}({\bsigma}) = \{ \Om {A_n}V : n\in\nn\}.
\end{equation*}
Let $\catsig_{\pv V}(\bsigma)$ denote the category
$\catpro[\mathcal{F}_{\pv V}(\bsigma)]$, consisting of continuous
homomorphisms between elements of $\mathcal{F}_{\pv V}(\bsigma)$.
Closely associated to the left-infinite
path~\eqref{eq:visualize-directive-sequence} in $\Gamma(\bsigma)$, we
also have the following left-infinite path
\begin{equation}\label{eq:visualize-directive-sequence-profinite-level}
  \Om {A_0}V
  \xleftarrow{\prov V\sigma_0}\Om {A_1}V
  \xleftarrow{\prov V\sigma_1}\Om {A_2}V
  \xleftarrow{\prov V\sigma_2}\Om {A_3}V
  \xleftarrow{\prov V\sigma_3}\cdots,
\end{equation}
which is a path in the graph $\catsig_{\pv V}(\bsigma)$.

The set $\mathcal{F}_{\pv V}(\bsigma)$ is finite precisely
when~$\bsigma$ is bounded. Therefore, assuming that $\bsigma$ is
bounded, the category $\catsig_{\pv V}(\bsigma)$ is a finite-vertex
profinite category
(cf.~Proposition~\ref{p:hom-is-a-profinite-category}).

Let us say that a continuous homomorphism $\psi\from\Om AV\to\Om BV$
is \emph{primitive} when every element of $B$ is a factor of every
element of $\psi(A)$. In particular, if $\varphi\from A^+\to B^+$ is a
primitive substitution, then its extension $\prov V\varphi\from \Om
AV\to\Om BV$ is a primitive homomorphism.

\begin{lemma}
 \label{l:space-of-primitive-endomorphisms}
 Let $\bsigma$ be a bounded directive sequence. The set of primitive
 homomorphisms between elements of $\mathcal{F}_{\pv V}({\bsigma})$ is
 a closed subspace of $\catsig_{\pv V}(\bsigma)$.
\end{lemma}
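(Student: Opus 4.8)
The plan is to reduce the statement to a closedness assertion inside a single hom-set and then express the primitivity condition as a finite intersection of preimages of closed sets. Since $\bsigma$ is bounded, the set of objects $\mathcal{F}_{\pv V}({\bsigma})=\{\Om {A_n}V:n\in\nn\}$ is finite, so, by the way the pointwise topology on $\catsig_{\pv V}(\bsigma)$ is defined, the space of morphisms is the topological sum of the finitely many hom-sets $\hom(\Om {A_m}V,\Om {A_n}V)$ (cf.\ Proposition~\ref{p:hom-is-a-profinite-category}). In such a coproduct each summand is clopen, so a set is closed precisely when its trace on each summand is closed. It therefore suffices to fix two alphabets $A,B\in\{A_n:n\in\nn\}$ and to prove that the set $P_{A,B}$ of primitive homomorphisms in $\hom(\Om AV,\Om BV)$ is closed.

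First I would record the correct reformulation of primitivity. By definition, $\psi\in\hom(\Om AV,\Om BV)$ is primitive exactly when $b$ is a factor of $\psi(a)$ for every $a\in A$ and every $b\in B$; that is, $\psi(a)\le_{\green{J}} b$ in $\Om BV$ for all such $a$ and $b$ (recall that $B$ embeds in $\Om BV$ since $\pv V\supseteq\pv N$). Writing $F_b=\{w\in\Om BV: b\in\fac(w)\}=\{w\in\Om BV: w\le_{\green{J}} b\}$, this reads
\[
  P_{A,B}=\bigcap_{a\in A}\bigcap_{b\in B}\{\psi\in\hom(\Om AV,\Om BV):\psi(a)\in F_b\}.
\]
Because $A$ and $B$ are finite, this is a finite intersection, so it is enough to show that each set $\{\psi:\psi(a)\in F_b\}$ is closed.

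For each fixed $a\in A$, the evaluation map $\hom(\Om AV,\Om BV)\to\Om BV$, $\psi\mapsto\psi(a)$, is continuous: it is a coordinate projection for the pointwise topology, and continuity also follows from Corollary~\ref{c:evaluation-mapping-is-continuous}. Hence $\{\psi:\psi(a)\in F_b\}$ is the preimage of $F_b$ under a continuous map, and the whole argument reduces to the single point that each $F_b$ is closed in $\Om BV$. This is the step that carries the actual content, and the main obstacle — such as it is — lies here; it is resolved by a standard compactness observation. Indeed, in the compact semigroup $\Om BV$ the Green quasi-order $\le_{\green{J}}$ is a closed subset of $\Om BV\times\Om BV$ (Subsection~\ref{ss:green-relations}), so $F_b$, being the preimage of $\le_{\green{J}}$ under the continuous map $w\mapsto(w,b)$, is closed; equivalently, $F_b=(\Om BV)^1\,b\,(\Om BV)^1$ is the image of the compact space $(\Om BV)^1\times(\Om BV)^1$ under the continuous map $(u,v)\mapsto ubv$, hence compact and therefore closed. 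Combining these observations, each $\{\psi:\psi(a)\in F_b\}$ is closed, so $P_{A,B}$ is closed; passing back through the finite coproduct, the set of all primitive homomorphisms is closed in $\catsig_{\pv V}(\bsigma)$.
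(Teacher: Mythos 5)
Your proof is correct and uses essentially the same ingredients as the paper's: the paper verifies closedness via a convergent net of primitive homomorphisms, reducing to one hom-set because the vertex space is discrete (your clopen-summand reduction), and then invokes exactly the two facts you isolate — that $\le_{\green{J}}$ is closed in the compact semigroup $\Om BV$ and that point evaluations are continuous for the pointwise topology. Your static formulation as a finite intersection of preimages of closed sets is just a repackaging of the same argument, so nothing further is needed.
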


\begin{proof}
  Let $(\varphi_i)_{i\in I}$ be a net of primitive homomorphisms
  between elements of $\mathcal{F}_{\pv V}({\bsigma})$ converging in
  $\catsig_{\pv V}(\bsigma)$ to a homomorphism $\varphi$ from $\Om AV$
  to $\Om BV$. Since the space of vertices in the category
  $\catsig_{\pv V}(\bsigma)$ is a finite discrete space, we may assume
  that $\varphi_i$ is always a homomorphism from $\Om AV$ to $\Om BV$.
  Let $a\in A$ and $b\in B$. As $\varphi_i$ is primitive, we have
  $\varphi_i(a)\leq_{\green{J}} b$ for every $i\in I$. Since
  $\leq_{\green{J}}$ is a closed relation in $\Om BV$ and we are
  dealing with the pointwise topology of $\catsig_{\pv V}(\bsigma)$,
  we conclude that $\varphi(a)\leq_{\green{J}} b$ for every $a\in A$
  and $b\in B$. This means that $\varphi$ is primitive, thereby
  concluding the proof.
\end{proof}

\begin{definition}
  \label{d:compressions}
  Let $\bsigma= (\sigma_n)_{n\in\nn}$ be a bounded directive sequence,
  and let $\pv V$ be a pseudovariety of semigroups containing $\pv N$.
  A\/ \emph{$\pv V$-compression of $\bsigma$} is a cluster point of
  the sequence $\bigl(\sigma_{0,n}^{\pv V}\bigr)_{n\in\nn}$, in the
  profinite category $\catsig_{\pv V}(\bsigma)$.
\end{definition}

A $\pv V$-compression $\xi$ of a bounded directive sequence $\bsigma$
must be a continuous homomorphism from $\Om {A_k}V$ to $\Om {A_0}V$,
for some $k\geq 0$. If $\bsigma$ is primitive, then $\xi$ is
primitive, by Lemma~\ref{l:space-of-primitive-endomorphisms}.

\begin{example}
  \label{e:omega-power-V-compression}
  Let $\sigma\from A^+\to A^+$ be a substitution. 
  Consider the constant directive sequence $\bsigma=(\sigma,\sigma,\ldots)$.
  Then $(\prov V\sigma)^\omega$ is a $\pv V$-compression
  of $\bsigma$.
\end{example}

The next theorem says, in particular, that when the directive sequence
$\bsigma$ is bounded primitive, the profinite semigroup $\img_{\pv
  V}(\bsigma)$ is generated by elements of $J_{\pv V}(\bsigma)$. A
similar result, concerning primitive directive sequences of
substitutions over a constant alphabet, appeared in earlier work by
the first author~\cite[Theorem~3.7]{Almeida:2003a}.

\begin{theorem}
  \label{t:letters-mapped-to-Js}
  Let $\xi\from\Om BV\to\Om{A_0}V$ be a $\pv V$-compression of a
  bounded directive sequence $\bsigma$. The equality
  $\img(\xi)=\img_{\pv V}(\bsigma)$ holds. Moreover, the inclusion
  $\xi(B)\subseteq \Lambda_{\pv V}(\bsigma)$ holds.
\end{theorem}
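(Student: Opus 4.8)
The plan is to prove the two assertions in the natural order: first I would locate $\xi(B)$ inside $\Lambda_{\pv V}(\bsigma)$, and then bootstrap this to the image equality using density together with the joint continuity of evaluation. I begin by recording the shape that a $\pv V$-compression must have. Since $\bsigma$ is bounded, $\catsig_{\pv V}(\bsigma)$ is a finite-vertex profinite category whose morphism space carries the coproduct topology over its finitely many hom-sets; hence any neighbourhood of $\xi$ contains only homomorphisms sharing its domain $\Om BV$ and its codomain $\Om{A_0}V$. As $\xi$ is a cluster point of the sequence $(\prov V\sigma_{0,n})_{n\in\nn}$, there are infinitely many $n$ with $A_n=B$ for which $\prov V\sigma_{0,n}$ lies arbitrarily close to $\xi$ in the pointwise topology. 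Evaluating at a fixed letter $b\in B$, this shows that $\xi(b)$ is a cluster point of the subsequence $(\sigma_{0,n}(b))_{n\,:\,A_n=B}$, hence, after padding the remaining indices by arbitrary elements of $\sigma_{0,n}(A_n)$, a cluster point of a full sequence witnessing membership in $\Lambda_{\pv V}(\bsigma)$. This yields $\xi(B)\subseteq\Lambda_{\pv V}(\bsigma)$.

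The inclusion $\img(\xi)\subseteq\img_{\pv V}(\bsigma)$ then follows formally. Since $B^+$ is dense in $\Om BV$ and $\xi$ is continuous with compact domain, one has $\img(\xi)=\overline{\langle\xi(B)\rangle}$, whereas $\img_{\pv V}(\bsigma)$ is a closed subsemigroup containing $\Lambda_{\pv V}(\bsigma)\supseteq\xi(B)$ by Proposition~\ref{p:properties-of-lambda}. For the reverse inclusion, because $\Lambda_{\pv V}(\bsigma)$ generates a dense subsemigroup of $\img_{\pv V}(\bsigma)$ (Proposition~\ref{p:properties-of-lambda}\ref{item:properties-of-lambda-0}) and $\img(\xi)$ is closed, it suffices to prove the single inclusion $\Lambda_{\pv V}(\bsigma)\subseteq\img(\xi)$.

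This last inclusion is the core of the argument and I expect it to be the main obstacle, since it requires producing, for a given $\lambda\in\Lambda_{\pv V}(\bsigma)$, an \emph{actual} preimage in $\Om BV$. Writing $\lambda=\lim_k\sigma_{0,n_k}(a_{n_k})$ with $a_{n_k}\in A_{n_k}$, I would exploit the factorization $\sigma_{0,n_k}=\sigma_{0,m}\circ\sigma_{m,n_k}$, valid for $m\le n_k$ with $A_m=B$, by setting $y_{m,k}=\prov V\sigma_{m,n_k}(a_{n_k})\in B^+\subseteq\Om BV$, so that $\prov V\sigma_{0,m}(y_{m,k})=\sigma_{0,n_k}(a_{n_k})$. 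Indexing over neighbourhoods of $\xi$ and of $\lambda$ together with a lower bound on the level, the two cluster-point properties let me select pairs $(m,k)$ along a common directed index for which $\prov V\sigma_{0,m}\to\xi$ and $\prov V\sigma_{0,m}(y_{m,k})\to\lambda$ hold simultaneously. Passing to a subnet along which $y_{m,k}$ converges to some $v\in\Om BV$, which is possible by compactness, the joint continuity of the evaluation map (Corollary~\ref{c:evaluation-mapping-is-continuous}) forces $\xi(v)=\lim\prov V\sigma_{0,m}(y_{m,k})=\lambda$, whence $\lambda\in\img(\xi)$. The delicate point is precisely the bookkeeping that organizes the net so that both convergences occur along a single directed index; once that is arranged, continuity of evaluation does the rest and gives $\img_{\pv V}(\bsigma)\subseteq\img(\xi)$, completing the proof of the equality.
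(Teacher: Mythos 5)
There is a genuine gap, and it lies exactly where you did not expect it: not in the net bookkeeping (which can be arranged as you describe, and in fact both $\hom(\Om BV,\Om{A_0}V)$ and $\Om{A_0}V$ are metrizable here, so sequences would even suffice), but in the reduction that precedes it. Theorem~\ref{t:letters-mapped-to-Js} is stated for an \emph{arbitrary} bounded directive sequence, with no primitivity hypothesis, and it is applied in that generality later (e.g.\ in Corollary~\ref{c:compressions-versus-models}). Your reduction of the inclusion $\img_{\pv V}(\bsigma)\subseteq\img(\xi)$ to the single inclusion $\Lambda_{\pv V}(\bsigma)\subseteq\img(\xi)$ invokes Proposition~\ref{p:properties-of-lambda}\ref{item:properties-of-lambda-0}, whose hypothesis is that $\bsigma$ is primitive; the same remark applies to your citation of Proposition~\ref{p:properties-of-lambda} for the containment $\Lambda_{\pv V}(\bsigma)\subseteq\img_{\pv V}(\bsigma)$, where the correct (hypothesis-free) reference is Lemma~\ref{p:lambda-is-closed}. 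As written, your argument therefore proves the theorem only for primitive sequences, which is strictly less than what is claimed.

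The deeper point is that the density reduction is unnecessary, because you overlooked that $\img_{\pv V}(\bsigma)$ is \emph{by definition} the intersection $\bigcap_n\img(\prov V\sigma_{0,n})$: any $w\in\img_{\pv V}(\bsigma)$ has an \emph{exact} preimage $u_i\in\Om BV$ with $w=\prov V\sigma_{0,n_i}(u_i)$ at every level $n_i$ of the subnet along which $\prov V\sigma_{0,n_i}\to\xi$ and $A_{n_i}=B$. Clustering the net $(u_i)_{i\in I}$ to some $u\in\Om BV$ by compactness and applying the continuity of evaluation (Corollary~\ref{c:evaluation-mapping-is-continuous}) gives $w=\xi(u)$ in one step; this is the paper's proof, it needs no primitivity, and it makes the approximation of $w$ by limits of words, the factorization through an intermediate level $m$, and the two-parameter net over pairs of neighbourhoods all superfluous. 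Your mechanism---evaluation continuity applied to a clustered subnet of preimages---is precisely the right tool; you simply deployed it only for elements of $\Lambda_{\pv V}(\bsigma)$, behind a reduction that both complicates the argument and smuggles in a hypothesis the theorem does not have. Your easy inclusion $\img(\xi)\subseteq\img_{\pv V}(\bsigma)$ and the claim $\xi(B)\subseteq\Lambda_{\pv V}(\bsigma)$ are correct (modulo replacing the citation by Lemma~\ref{p:lambda-is-closed}), and agree with the paper's treatment.
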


\begin{proof}
  We may take a subnet $\bigl(\sigma_{0,n_i}^{\pv V}\bigr)_{i\in I}$
  of $\bigl(\sigma_{0,n}^{\pv V}\bigr)_{n\in\nn}$ such that
  $\xi=\lim_{i\in I}\prov V\sigma_{0,n_i}$ in $\catsig_{\pv
    V}(\bsigma)$ and $A_{n_i}=B$ for all $i\in I$, as the profinite
  category $\catsig_{\pv V}(\bsigma)$ has a discrete vertex space.
  
  Let $u\in \Om BV$. Since we are dealing with the pointwise topology
  of $\catsig_{\pv V}(\bsigma)$, we have $\xi(u)=\lim_{i\in I}\prov
  V\sigma_{0,n_i}(u)$. This implies that $\xi(u)\in \img_{\pv
    V}(\bsigma)$ by Lemma~\ref{l:V-image-as-a-set-of-cluster-points},
  thus establishing the inclusion $\img(\xi)\subseteq\img_{\pv
    V}(\bsigma)$.

  Conversely, let $w\in \img_{\pv V}(\bsigma)$. Then, for each $i\in
  I$, there is $u_i\in\Om BV$ such that $w=\prov V\sigma_{n_i}(u_i)$.
  Let $u$ be a cluster point of the net $(u_i)_{i\in I}$. By
  continuity of the evaluation mapping $\hom(\Om BV,\Om {A_0}V)\times
  \Om BV\to \Om {A_0}V$, considered in
  Corollary~\ref{c:evaluation-mapping-is-continuous}, it follows that
  $w=\xi(u)$, thus establishing the inclusion $\img_{\pv
    V}(\bsigma)\subseteq \img(\xi)$.

  When $u\in B$, from the equality $\xi(u)=\lim_{i\in I}\prov
  V\sigma_{0,n_i}(u)$ we get $\xi(u)\in\Lambda_{\pv V}(\bsigma)$ by
  the definition of $\Lambda_{\pv V}(\bsigma)$.
\end{proof}
  
\begin{corollary}
  \label{c:letters-mapped-to-Js}
  Let $\bsigma$ be a primitive directive sequence with finite alphabet
  rank~$n$.
  \begin{enumerate}
  \item\label{item:letters-mapped-to-Js-1} There exists a $\pv
    V$-compression $\xi\from\Om BV\to\Om{A_0}V$ of $\bsigma$ with
    $|B|\le n$.
  \item\label{item:letters-mapped-to-Js-2} For every a $\pv
    V$-compression $\xi\from\Om BV\to\Om{A_0}V$ of $\bsigma$ with
    $|B|\le n$, the profinite semigroup $\img_{\pv V}(\bsigma)$ is
    generated by the finite subset $\xi(B)$ of $J_{\pv V}(\bsigma)$
    with at most $n$ elements.
  \end{enumerate}
\end{corollary}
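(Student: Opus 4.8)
The plan is to reduce everything to the bounded case and then invoke Theorem~\ref{t:letters-mapped-to-Js} essentially verbatim. For part~\ref{item:letters-mapped-to-Js-1}, I would first exploit the hypothesis $\liminf_{k}\card(A_k)=n$: since these are positive integers, the set $K=\{k\in\nn:\card(A_k)=n\}$ is infinite, so I can pick a strictly increasing sequence $(n_k)_{k\in\nn}$ with $n_0=0$ and $n_k\in K$ for every $k\ge1$. The resulting contraction $\btau=(\sigma_{n_k,n_{k+1}})_{k\in\nn}$ has \emph{all} its alphabets above level $0$ of cardinality exactly $n$; relabeling these to a single $n$-letter alphabet $B$ while keeping $A_0$ fixed, Remark~\ref{r:bounded-versus-finite-alphabet-rank} supplies a bounded directive sequence $\bsigma''$ with $X(\bsigma'')=X(\bsigma)$ and $\img_{\pv V}(\bsigma'')=\img_{\pv V}(\bsigma)$.

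Next I would extract the compression. For every $m\ge1$ the composite $\prov V{\sigma''_{0,m}}$ is a continuous homomorphism $\Om BV\to\Om{A_0}V$, that is, an element of the single hom-set $\hom(\Om BV,\Om{A_0}V)$, which is a Stone space (in particular compact) by Theorem~\ref{t:hunter} and which is a clopen component of the morphism space of $\catsig_{\pv V}(\bsigma'')$. Hence the sequence $(\prov V{\sigma''_{0,m}})_{m\ge1}$ has a cluster point $\xi\from\Om BV\to\Om{A_0}V$ lying in this component; being a cluster point of a subsequence, $\xi$ is also a cluster point of the full sequence $(\prov V{\sigma''_{0,m}})_{m\in\nn}$, so $\xi$ is a $\pv V$-compression of $\bsigma''$ with $|B|=n$, which proves~\ref{item:letters-mapped-to-Js-1}.

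Part~\ref{item:letters-mapped-to-Js-2} should then follow almost formally. Given any $\pv V$-compression $\xi\from\Om BV\to\Om{A_0}V$ with $|B|\le n$, Theorem~\ref{t:letters-mapped-to-Js} yields both $\img(\xi)=\img_{\pv V}(\bsigma'')=\img_{\pv V}(\bsigma)$ and $\xi(B)\subseteq\Lambda_{\pv V}(\bsigma'')$. By Proposition~\ref{p:properties-of-lambda}\ref{item:properties-of-lambda-1} one has $\Lambda_{\pv V}(\bsigma'')\subseteq J_{\pv V}(\bsigma'')=J_{\pv V}(\bsigma)$, the last equality holding because $X(\bsigma'')=X(\bsigma)$; hence $\xi(B)\subseteq J_{\pv V}(\bsigma)$. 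Finally, since $B$ generates a dense subsemigroup of $\Om BV$ and $\xi$ is a continuous homomorphism, $\xi(\Om BV)$ is the closed subsemigroup of $\Om{A_0}V$ generated by $\xi(B)$; as $\xi(\Om BV)=\img(\xi)=\img_{\pv V}(\bsigma)$, the profinite semigroup $\img_{\pv V}(\bsigma)$ is generated by the set $\xi(B)\subseteq J_{\pv V}(\bsigma)$, which has at most $|B|\le n$ elements.

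The main obstacle is conceptual rather than computational: the notion of $\pv V$-compression is only defined for \emph{bounded} sequences, whereas the corollary speaks of finite alphabet rank. The crux is therefore the first step—choosing the contraction so that every higher alphabet already realizes the minimal cardinality $n$, which is exactly what forces the domain $B$ of the compression to have $n$ letters, and then confining the limiting process to the single compact hom-set $\hom(\Om BV,\Om{A_0}V)$ so that the cluster point is again a homomorphism out of $\Om BV$ rather than landing in a different component of the morphism space.
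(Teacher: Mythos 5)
Your proposal is correct and follows essentially the same route as the paper: reduce to the bounded case via Remark~\ref{r:bounded-versus-finite-alphabet-rank}, obtain a compression by compactness (the paper takes a cluster point in the compact category $\catsig_{\pv V}(\bsigma)$ after picking an alphabet $B$ with $\card(B)=n$ recurring infinitely often, while you equivalently confine the limit to the compact hom-set $\hom(\Om BV,\Om{A_0}V)$), and then combine Theorem~\ref{t:letters-mapped-to-Js} with the inclusion $\Lambda_{\pv V}(\bsigma)\subseteq J_{\pv V}(\bsigma)$ from Proposition~\ref{p:properties-of-lambda}. Your extra care in choosing the contraction so that all higher alphabets have cardinality exactly $n$, and your citation of Proposition~\ref{p:properties-of-lambda}\ref{item:properties-of-lambda-1} for the containment in $J_{\pv V}(\bsigma)$, are sound refinements of the same argument.
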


\begin{proof}
  We may as well assume that $\bsigma$ is bounded with alphabet rank
  $n$ (cf.~Remark~\ref{r:bounded-versus-finite-alphabet-rank}).

  \ref{item:letters-mapped-to-Js-1}
  We may pick an alphabet $B$ such that $\card(B)=n$ and $B=A_k$ for
  infinitely many values of $k$. 
  Then, by compactness of the category $\catsig_{\pv V}(\bsigma)$,
  there exists a $\pv V$-compression $\xi\from\Om BV\to\Om{A_0}V$ of
  $\bsigma$.

  \ref{item:letters-mapped-to-Js-2} By
  Theorem~\ref{t:letters-mapped-to-Js}, the profinite semigroup
  $\img_{\pv V}(\bsigma)$ is generated by the subset $\xi(B)$ of
  $\Lambda_{\pv V}(\bsigma)$, which is contained in $J_{\pv
    V}(\bsigma)$ by
  Proposition~\ref{p:properties-of-lambda}\ref{item:properties-of-lambda-3}.
\end{proof}

The next example illustrates that past investigation, concerning the
endomorphism $(\prov S{\varphi})^\omega$ associated to a primitive
substitution
$\varphi$~\cite{Almeida:2005c,Almeida&ACosta:2013,Goulet-Ouellet:2022d,Goulet-Ouellet:2022c},
may be seen as a special case of the study of $\pv V$-images of
bounded directive sequences.

\begin{example}
  \label{e:V-image-omega-power}
  Let $\sigma\from A^+\to A^+$ be a substitution. Consider the
  constant directive sequence $\bsigma=(\sigma,\sigma,\ldots)$. Then
  the equality $\img_{\pv V}(\bsigma)=\img((\prov V\sigma)^\omega)$
  holds (cf.~Example~\ref{e:omega-power-V-compression}), and so
  $\img_{\pv V}(\bsigma)$ is generated by $\card(A)$ elements of
  $J_{\pv V}(\bsigma)$.
\end{example}
 
\section{Kernel endomorphisms for bounded directive sequences}
\label{sec:models}

When arguing about a $\pv V$-compression $\xi=\lim_{i\in I}\prov V\sigma_{0,n_i}$,
where the limit of the net is being taken in $\catsig_{\pv V}(\bsigma)$,
it will be convenient to keep
track of the path $\bigl(\prov V\sigma_0,\prov V\sigma_1,\ldots,\prov V\sigma_{n_i-1}\bigr)$ of the graph $\catsig_{\pv V}(\bsigma)$, which produces
the homomorphism $\prov V\sigma_{0,n_i}$ by multiplication of its edges.
Further abstracting, we are
led to Definition~\ref{d:model} below, which will play a key role in the proof of Theorem~\ref{t:recurrent-encoding-implies-saturating}. In what follows, for any mapping $\psi$ and element $x$ of the domain of $\psi$, we may denote
$\psi(x)$ by $\psi_x$.

\begin{definition}[Model of directive sequence]
  \label{d:model}
  Let $\bsigma$ be a bounded directive sequence.
  A \emph{\pv{V}-model} of $\bsigma$ is a triple
  $\boldsymbol{\psi} = (\Gamma, \psi, x)$ consisting of:
  \begin{enumerate}
    \item a finite-vertex graph $\Gamma$;
    \item a continuous category homomorphism
      $\psi\from\Om\Gamma{Cat}\to
      \catsig_{\pv V}(\bsigma)$;
    \item a prefix accessible pseudopath $x$ of $\Om\Gamma{Cat}$ such that $\psi_{x[n]} =
      \prov V\sigma_n$ for all $n\in\nn$.
  \end{enumerate}
\end{definition}

\begin{proposition}
  \label{p:compressions-versus-models}
  Let $\bsigma$ be a bounded directive sequence.
  Let $\xi$ be a morphism of the category $\catsig_{\pv V}(\bsigma)$.
  Then $\xi$ is a $\pv V$-compression of $\bsigma$ if and only if
  $\xi=\psi_x$ for some $\pv V$-model $(\Gamma, \psi, x)$.
\end{proposition}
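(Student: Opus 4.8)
The plan is to prove the two implications separately. The forward direction (a model yields a compression) is a short continuity argument, whereas the converse requires constructing a model by hand; both rest on the same technical point, namely that a cluster point of the net of finite prefixes of a right-infinite path has those prefixes as its own finite-length prefixes.

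Suppose first that $(\Gamma,\psi,x)$ is a $\pv V$-model and let $w$ be a right-infinite path of $\Gamma$ witnessing that $x$ is prefix accessible, so that $x$ is a cluster point of $(w[0,n))_n$. The first step is to check that $x[0,n)=w[0,n)$ for every $n$: working in the compact semigroup $\Om{\edg(\Gamma)}S$, the principal right ideal $w[0,n)\cdot(\Om{\edg(\Gamma)}S)^1$ is closed and contains $w[0,m)$ for all $m>n$, hence contains the cluster point $x$; since $x$ has infinite length (the length map being continuous into $\nn_+\cup\{\infty\}$), Proposition~\ref{p:factoriality-of-pseudopaths} and the uniqueness of finite-length prefixes force $x[0,n)=w[0,n)$, whence $x[n]=w[n]$. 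Using the model condition $\psi_{x[n]}=\prov V\sigma_n$ and functoriality of the pro-$\pv V$ extension, this gives $\psi(w[0,n))=\prov V\sigma_0\circ\cdots\circ\prov V\sigma_{n-1}=\prov V\sigma_{0,n}$. Finally, since $\psi$ is continuous and a continuous map carries a cluster point of a net to a cluster point of the image net, $\psi_x=\psi(x)$ is a cluster point of $(\prov V\sigma_{0,n})_n$, i.e.\ a $\pv V$-compression.

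For the converse, given a $\pv V$-compression $\xi$, I would build the model explicitly. Take $\Gamma$ to be the finite-vertex graph whose vertex set is the set $\{A_n:n\in\nn\}$ (finite because $\bsigma$ is bounded) and which has, for each $n\in\nn$, a single edge $e_n$ from $A_{n+1}$ to $A_n$. The assignment $A_n\mapsto\Om{A_n}V$, $e_n\mapsto\prov V\sigma_n$ is a graph homomorphism into $\catsig_{\pv V}(\bsigma)$, which is a profinite category by Proposition~\ref{p:hom-is-a-profinite-category}; by the universal property of $\Om\Gamma{Cat}$ it extends to a unique continuous category homomorphism $\psi$ with $\psi_{e_n}=\prov V\sigma_n$, and $w=e_0e_1e_2\cdots$ is a right-infinite path with $\psi(w[0,n))=\prov V\sigma_{0,n}$. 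From the subnet of $(\prov V\sigma_{0,n})_n$ converging to $\xi$ I would pass to the corresponding subnet of $(w[0,n))_n$, which by compactness of $\Om\Gamma{Cat}$ has a convergent subnet with limit a pseudopath $x$ that is then prefix accessible by $w$; continuity of $\psi$ together with the Hausdorff property gives $\psi(x)=\xi$, while the required condition $\psi_{x[n]}=\prov V\sigma_n$ holds because, exactly as above, any cluster point of $(w[0,n))_n$ satisfies $x[0,n)=w[0,n)$. The main obstacle is precisely this identification $x[0,n)=w[0,n)$, on which both implications depend; once it is secured via closedness of principal right ideals, Proposition~\ref{p:factoriality-of-pseudopaths}, and uniqueness of finite-length prefixes, the remaining steps are routine applications of functoriality, continuity, and compactness.
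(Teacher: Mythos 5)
Your proof is correct, and its skeleton matches the paper's: realize each $\prov V\sigma_{0,n}$ as the $\psi$-image of the length-$n$ prefix of a right-infinite path, then pass to cluster points using continuity, compactness, and the universal property of the free profinite category. The interesting divergence is in the converse. The paper builds no bespoke graph: it takes $\Gamma$ to be $\catsig_{\pv V}(\bsigma)$ itself viewed as a finite-vertex graph, lets $\psi$ be the unique continuous extension of the identity, and uses the paths $x_n=(\prov V\sigma_0,\ldots,\prov V\sigma_{n-1})$, so the model condition $\psi_{x[n]}=\prov V\sigma_n$ holds because edges are fixed by $\psi$. Your countable graph with one formal edge $e_n$ per index is equally valid for this proposition (both graphs are finite-vertex with infinite edge sets, which the framework permits) and is arguably tidier; but the paper's ``tautological'' choice has a payoff later: when $\bsigma$ is recurrent, the right-infinite word $(\prov V\sigma_n)_{n}$ over the edges of $\catsig_{\pv V}(\bsigma)$ is a recurrent word, which is exactly what lets Proposition~\ref{p:idempotents-in-Pw-Cat} produce an idempotent model in the proof of Theorem~\ref{t:recurrent-encoding-implies-saturating}. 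With your formal edges no letter of $e_0e_1e_2\cdots$ ever repeats, so that later argument would need you to identify $e_m$ with $e_n$ whenever $\sigma_m=\sigma_n$. The second difference is to your credit: you prove explicitly the identification $x[0,n)=w[0,n)$ for a cluster point $x$ of the prefixes of $w$ (closedness of principal right ideals in $\Om{\edg(\Gamma)}S$, infinite length of $x$, uniqueness of finite-length prefixes, Proposition~\ref{p:factoriality-of-pseudopaths}), and your argument for it is sound. The paper uses this step silently in both directions --- when asserting that a prefix accessible pseudopath is the limit of a net of its own finite prefixes, and when asserting that the cluster point in the converse satisfies $\psi_{x[n]}=\prov V\sigma_n$ --- importing it from the companion paper (cf.~Remark~\ref{r:extension-pseudowords-to-pseudopaths} and the citation in Lemma~\ref{l:cutting-finite-prefixes-Pw-translation-for-models}). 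Nothing is missing from your proposal; it is a complete and slightly more self-contained proof.
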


\begin{proof}
  First consider a $\pv V$-model $(\Gamma, \psi, x)$. 
  For each $n\in\nn$, let $x_n$ be the prefix of length $n$ of $x$.
  Note that $\psi_{x_n}=\prov V\sigma_{0,n}$.
  Since $x$ is a prefix accessible pseudopath, there is a net $(x_{n_i})_{i\in I}$ converging in $\Om\Gamma{Cat}$ to $x$.
  Then $\psi_x=\lim \prov V\sigma_{0,n_i}$
  is a $\pv V$-compression of $\bsigma$.
  
  Next suppose that $\xi=\lim_{i\in I}\prov V\sigma_{0,n_i}$,
  where the limit of the net is being taken in $\catsig_{\pv V}(\bsigma)$.
  Since  $\bsigma$ is bounded, the graph $\Gamma=\catsig_{\pv V}(\bsigma)$ is finite.
  Therefore, the identity mapping on $\Gamma$ extends to a unique continuous
  homomorphism of categories
  $\psi\from\Om\Gamma{Cat}\to \catsig_{\pv V}(\bsigma)$.
  For each $n\in\nn$, consider the following path in the graph $\Gamma$:
  \begin{equation*}
    x_n=(\prov V\sigma_0,\prov V\sigma_1,\ldots,\prov V\sigma_{n-1}).
  \end{equation*}
  We then have $\psi_{x_n}=\prov V\sigma_{0,n}$.
  By compactness, we may consider a cluster point $x$ of the net $(x_{n_i})_{i\in I}$ in $\Om{\catsig_{\pv V}(\bsigma)}{Cat}$. Note that $x$ is a prefix accessible pseudopath of $\Om{\catsig_{\pv V}(\bsigma)}{Cat}$, whence $(\catsig_{\pv V}(\bsigma), \psi, x)$ is a $\pv V$-model of $\bsigma$. 
  Moreover, by continuity of
  $\psi$, we must have $\psi_x=\xi$.
\end{proof}

\begin{remark}
  \label{r:existence-V-model}
  Since every bounded directive has a $\pv{V}$ compression, it follows
  from \ref{p:compressions-versus-models} that every bounded directive
  sequence has a $\pv{V}$-model.
\end{remark}

\begin{corollary}
  \label{c:compressions-versus-models}
  Let $\bsigma$ be a bounded directive sequence.
  If $(\Gamma, \psi, x)$ is a $\pv V$-model of~$\bsigma$,
  then $\img_{\pv V}(\bsigma)=\img(\psi_x)$.
\end{corollary}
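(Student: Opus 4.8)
The plan is to reduce the statement directly to the two results immediately preceding it, namely Proposition~\ref{p:compressions-versus-models} and Theorem~\ref{t:letters-mapped-to-Js}. The key observation is that a $\pv V$-model is precisely a presentation of a $\pv V$-compression, so once we recognize $\psi_x$ as a compression, the desired equality is already recorded in Theorem~\ref{t:letters-mapped-to-Js}.

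Concretely, first I would apply Proposition~\ref{p:compressions-versus-models}: since $(\Gamma, \psi, x)$ is a $\pv V$-model of $\bsigma$, the morphism $\psi_x$ of $\catsig_{\pv V}(\bsigma)$ is a $\pv V$-compression of $\bsigma$. By the discussion following Definition~\ref{d:compressions}, any $\pv V$-compression is a continuous homomorphism of the form $\Om BV\to\Om{A_0}V$ for some alphabet $B=A_k$ with $k\ge 0$, so $\psi_x$ falls within the hypotheses of Theorem~\ref{t:letters-mapped-to-Js}. Second, I would invoke that theorem, which asserts $\img(\xi)=\img_{\pv V}(\bsigma)$ for every $\pv V$-compression $\xi$ of $\bsigma$; taking $\xi=\psi_x$ yields $\img(\psi_x)=\img_{\pv V}(\bsigma)$, as required.

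There is essentially no obstacle here: the substantive content has already been established in Theorem~\ref{t:letters-mapped-to-Js}, and Proposition~\ref{p:compressions-versus-models} supplies the translation between models and compressions. The only point requiring a moment of care is verifying that the domain of $\psi_x$ is of the form $\Om BV$ for some alphabet $B$ occurring in $\bsigma$, which is guaranteed by the fact that $x$ is a prefix accessible pseudopath with $\psi_{x[n]}=\prov V\sigma_n$, forcing the source vertex of $\psi_x$ to be one of the $\Om{A_k}V$.
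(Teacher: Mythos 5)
Your proposal is correct and follows exactly the paper's argument: the paper's one-line proof likewise combines Proposition~\ref{p:compressions-versus-models} (to see that $\psi_x$ is a $\pv V$-compression) with Theorem~\ref{t:letters-mapped-to-Js} (to conclude $\img(\psi_x)=\img_{\pv V}(\bsigma)$). Your extra remark about the domain of $\psi_x$ being some $\Om{A_k}V$ is a valid, if unstated in the paper, sanity check.
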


\begin{proof}
  This follows from combining Theorem~\ref{t:letters-mapped-to-Js} with Proposition~\ref{p:compressions-versus-models}.
\end{proof}

The following may be convenient to deal with tails of a directive sequence, as it often occurs. Recall that if $x$
is a pseudopath with prefix $u$ of finite length $k$, then $x^{(k)}$ is the unique pseudopath $w$ such that $x=uw$.

  \begin{lemma}
    \label{l:cutting-finite-prefixes-Pw-translation-for-models}
   Let $\bsigma$ be a bounded directive sequence.
    Let $k\in\nn$.
    If $(\Gamma,\psi,x)$ is a \pv V-model of $\bsigma$,
    then $(\Gamma,\psi,x^{(k)})$ is a \pv V-model of $\bsigma^{(k)}$.
  \end{lemma}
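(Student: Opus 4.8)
The plan is to verify, with $\Gamma$ and $\psi$ kept fixed, the three defining conditions of Definition~\ref{d:model} for the triple $(\Gamma,\psi,x^{(k)})$ relative to the tail $\bsigma^{(k)}=(\sigma_{n+k})_{n\in\nn}$. I would begin by observing that $x$ has infinite length: the model hypothesis prescribes an edge $x{[}n{]}$ with $\psi_{x[n]}=\prov V\sigma_n$ for every $n\in\nn$, so $x$ has edges in all positions and hence, by Remark~\ref{r:extension-pseudowords-to-pseudopaths}, a unique finite prefix $x{[}0,k{)}$ of length $k$; thus $x^{(k)}=(x{[}0,k{)})^{-1}x$ is well defined. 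Condition (i) is immediate since $\Gamma$ is unchanged and finite-vertex. For condition (ii), since $\bsigma$ is bounded we have $\mathcal F_{\pv V}(\bsigma^{(k)})=\{\Om{A_{n+k}}V:n\in\nn\}\subseteq\mathcal F_{\pv V}(\bsigma)$, so $\catsig_{\pv V}(\bsigma^{(k)})$ is a full subcategory of $\catsig_{\pv V}(\bsigma)$ into which $\psi$ maps along $x^{(k)}$; the same $\psi$ therefore serves as the required continuous category homomorphism (restricting $\Gamma$ to the subgraph met by $x^{(k)}$ if an exact codomain is desired). For condition (iii) the one bookkeeping identity needed is $x^{(k)}{[}n{]}=x{[}n+k{]}$, which follows from $x=x{[}0,k{)}\cdot x^{(k)}$ and $|x{[}0,k{)}|=k$; consequently $\psi_{x^{(k)}[n]}=\psi_{x[n+k]}=\prov V\sigma_{n+k}$, exactly the pro-$\pv V$ extension of the $n$-th term of $\bsigma^{(k)}$.

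The substantive step, which I expect to be the only one requiring genuine (if routine) work, is establishing that $x^{(k)}$ is prefix accessible. Since $x$ is prefix accessible by some right-infinite path $w$ of $\Gamma$, there is a subnet realizing $x=\lim_i w{[}0,n_i{)}$, and we may assume $n_i\geq k$ throughout. Factoring $w{[}0,n_i{)}=w{[}0,k{)}\cdot w{[}k,n_i{)}$ and using compactness of $\Om\Gamma{Cat}$, I would pass to a further subnet along which $w{[}k,n_i{)}$ converges to some pseudopath $y$; continuity of composition then gives $x=w{[}0,k{)}\cdot y$. As $w{[}0,k{)}$ has finite length $k$, the uniqueness of the factorization of $x$ into its length-$k$ prefix and the corresponding cofactor (Remark~\ref{r:extension-pseudowords-to-pseudopaths}) forces $w{[}0,k{)}=x{[}0,k{)}$ and $y=(x{[}0,k{)})^{-1}x=x^{(k)}$. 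Since the words $w{[}k,n_i{)}$ are finite prefixes of the right-infinite path $(w{[}k{]},w{[}k+1{]},\dots)$ of $\Gamma$, this exhibits $x^{(k)}$ as a cluster point of their net, i.e.\ as a pseudopath prefix accessible by that path.

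The main obstacle is precisely this cancellation/cofactor step: it depends on the uniqueness of the length-$k$ prefix of an infinite-length pseudopath together with continuity of composition in $\Om\Gamma{Cat}$, both available through Remark~\ref{r:extension-pseudowords-to-pseudopaths}. I would be careful to perform the subnet extractions compatibly—first selecting the subnet that realizes $x$ as a cluster point of $(w{[}0,n{)})_n$, then thinning once more by compactness so that the tails $w{[}k,n_i{)}$ converge—after which everything reduces to the indexing identities already noted.
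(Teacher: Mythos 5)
Your proof is correct, and it follows the same skeleton as the paper's: the indexing identity $(x^{(k)})[n]=x[k+n]$ together with preservation of prefix accessibility under deletion of a finite prefix. The difference is one of self-containedness. The paper's proof is two lines: it states the identity and outsources the prefix-accessibility claim to \cite[Proposition 6.10]{Almeida&ACosta&Goulet-Ouellet:2024a}, whereas you prove that claim directly, and your argument for it is sound: you may indeed assume $n_i\geq k$ (since finite paths are isolated in $\Om\Gamma{Cat}$ and $x$ has infinite length, the indices of any subnet converging to $x$ eventually avoid each finite value); factoring $w[0,n_i)=w[0,k)\cdot w[k,n_i)$, extracting a convergent subnet of the tails by compactness, and invoking continuity of composition gives $x=w[0,k)\cdot y$, after which uniqueness of the length-$k$ prefix and cancellation of finite prefixes (Proposition~\ref{p:letter-super-cancelativity} transported to pseudopaths via Remark~\ref{r:extension-pseudowords-to-pseudopaths}) identify $y=x^{(k)}$ as a cluster point of the prefixes of the shifted path $(w[k],w[k+1],\ldots)$. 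This buys a proof readable without the companion paper, at the cost of bookkeeping the authors chose to black-box. One small quibble: your parenthetical remedy for the codomain of $\psi$ (restricting $\Gamma$ to the subgraph met by $x^{(k)}$) is unnecessary and would itself require justifying that $x^{(k)}$ survives as a pseudopath of the smaller free profinite category; the cleaner observation is simply that $\catsig_{\pv V}(\bsigma^{(k)})$ is a full subcategory of $\catsig_{\pv V}(\bsigma)$, with identical hom-sets between shared objects, containing every morphism that condition (iii) of Definition~\ref{d:model} actually constrains, namely $\prov V\sigma_{n+k}\from\Om{A_{n+k+1}}V\to\Om{A_{n+k}}V$ — a point the paper itself silently glosses over.
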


  \begin{proof}
    For every infinite-length pseudopath $x$, and every $n\in\nn$, the equality
    $(x^{(k)})[n]=x[k+n]$ holds. If moreover $x$ is a prefix accessible pseudopath, then $x^{(k)}$ is also a prefix accessible pseudopath~\cite[Proposition 6.10]{Almeida&ACosta&Goulet-Ouellet:2024a}.
  \end{proof}
  
The following notion allows us to give a new description of $\img^\infty_\pv{V}(\bsigma)$, found in Proposition~\ref{p:isomorphism-with-inverse-limit}. It is also  important in the companion papers~\cite{Almeida&ACosta&Goulet-Ouellet:2024c,Almeida&ACosta&Goulet-Ouellet:2024d}.

\begin{definition}[Kernel endomorphism of a directive sequence]
  \label{d:V-kernel-endomorphism}
  Let $\bsigma$ be a bounded directive sequence.
  A \emph{$\pv V$-kernel endomorphism for $\bsigma$} is an endomorphism
  of $\Om {A_{\alpha(y)}}V$ of the form $\psi_y$ for some $\pv V$-model $(\Gamma,\psi,x)$
  and some element $y$ of the kernel of the right stabilizer $\rstab_{\Om\Gamma{Cat}}(x)$.
\end{definition}

\begin{lemma}
  \label{l:V-kernel-endomorphism}
  Every $\pv V$-kernel endomorphism for $\bsigma$ is an idempotent continuous homomorphism.
  Moreover, if $\xi$ is a $\pv V$-compression of $\bsigma$, then
  $\xi=\xi\circ \zeta$ for some $\pv V$-kernel endomorphism $\zeta$ for $\bsigma$.
\end{lemma}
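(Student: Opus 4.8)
The plan is to treat the two assertions separately; both follow quickly from the functoriality of $\psi$ together with the structural results on right stabilizers from Section~\ref{sec:profinite-cats}.

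For the first assertion, suppose $\zeta = \psi_y$ is a $\pv V$-kernel endomorphism, coming from a $\pv V$-model $(\Gamma, \psi, x)$ and an element $y$ of the kernel of $\rstab_{\Om\Gamma{Cat}}(x)$. Since $y$ is a loop at $\alpha(y) = \alpha(x)$ and $\psi$ is a continuous category homomorphism, $\psi_y$ is a continuous endomorphism of the free profinite semigroup $\psi(\alpha(y))$, which already accounts for $\zeta$ being a continuous homomorphism. For idempotency I would appeal to Theorem~\ref{t:right-stabilizers-are-R-trivial-bands}, by which the kernel of $\rstab_{\Om\Gamma{Cat}}(x)$ is a left-zero semigroup; as every element of a left-zero semigroup is idempotent, we have $y\cdot y = y$ in $\Om\Gamma{Cat}$, and applying the functor $\psi$ gives $\zeta\circ\zeta = \psi_{y\cdot y} = \psi_y = \zeta$.

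For the second assertion, I would first invoke Proposition~\ref{p:compressions-versus-models} to write the given $\pv V$-compression as $\xi = \psi_x$ for some $\pv V$-model $(\Gamma, \psi, x)$. The crucial elementary observation is that every $y \in \rstab_{\Om\Gamma{Cat}}(x)$ satisfies $x\cdot y = x$ by the very definition of the right stabilizer, so functoriality of $\psi$ yields $\xi \circ \psi_y = \psi_{x\cdot y} = \psi_x = \xi$. Hence it suffices to exhibit some $y$ in the kernel of $\rstab_{\Om\Gamma{Cat}}(x)$ and set $\zeta = \psi_y$: such a $\zeta$ is a $\pv V$-kernel endomorphism by definition, and the computation above gives $\xi = \xi \circ \zeta$.

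The only step needing genuine care --- and the nearest thing to an obstacle --- is guaranteeing that the kernel of $\rstab_{\Om\Gamma{Cat}}(x)$ is nonempty. For this I would observe that $\rstab_{\Om\Gamma{Cat}}(x)$ contains the local identity $1_{\alpha(x)}$, since $x\cdot 1_{\alpha(x)} = x$, so it is a nonempty semigroup; moreover it is profinite because $\Gamma$ is finite-vertex and hence $\Om\Gamma{Cat}$ is a profinite category (as recalled just before Theorem~\ref{t:right-stabilizers-are-R-trivial-bands}). Every nonempty profinite semigroup has a nonempty kernel, so a suitable $y$ exists, completing the argument for both parts.
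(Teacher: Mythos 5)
Your proof is correct and follows essentially the same route as the paper: idempotency of $y$ via Theorem~\ref{t:right-stabilizers-are-R-trivial-bands} (left-zero kernels consist of idempotents) pushed through the functor $\psi$, and the identity $\xi=\psi_{xy}=\psi_x\circ\psi_y$ obtained from Proposition~\ref{p:compressions-versus-models} and the defining equation $xy=x$ of the right stabilizer. Your extra paragraph verifying that the kernel of $\rstab_{\Om\Gamma{Cat}}(x)$ is nonempty makes explicit a point the paper leaves implicit (it records beforehand that $\rstab(x)$ is a profinite semigroup, hence has a kernel), so it is a welcome but not essentially different addition.
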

  
\begin{proof}
  If $(\Gamma,\psi,x)$ is \pv V-model of $\bsigma$, and $y$ is an element of the kernel of $\rstab(x)$,
  then $y$ is idempotent by Theorem~\ref{t:right-stabilizers-are-R-trivial-bands}, and so
  $\psi_y$ is an idempotent endomorphism.
  
  Moreover, if $\xi$ is a $\pv V$-compression of $\bsigma$, then $\xi=\psi_x$ for some \pv V-model $(\Gamma,\psi,x)$ of $\bsigma$,
  by Proposition~\ref{p:compressions-versus-models}.
  For $y$ in the kernel of $\rstab(x)$, set $\zeta=\psi_y$. Then we have $\xi=\psi_{xy}=\psi_x\circ\psi_y=\xi\circ\zeta$.
\end{proof}

\begin{proposition}\label{p:isomorphism-with-inverse-limit}
  Let \pv V be a pseudovariety of semigroups containing~\pv N and let $\bsigma$
  be a bounded primitive directive sequence.
  Suppose that $\zeta\from\Om BV\to\Om BV$ is a $\pv V$-kernel endomorphism for $\bsigma$.
  Then  the following hold:
  \begin{enumerate}
  \item $\zeta$ is primitive;\label{item:isomorphism-with-inverse-limit-1}
  \item the profinite semigroups $\img(\zeta)$ and $\img_{\pv V}^\infty(\bsigma)$ are isomorphic;\label{item:isomorphism-with-inverse-limit-3}
    \item the set $\zeta(B)$ is contained in a
    regular $\green{J}$-class of the semigroup $\img(\zeta)$.\label{item:isomorphism-with-inverse-limit-2}
  \end{enumerate}
\end{proposition}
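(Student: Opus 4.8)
The plan is to write $\zeta=\psi_y$ for the given $\pv V$-model $(\Gamma,\psi,x)$ and element $y$ of the kernel of $\rstab_{\Om\Gamma{Cat}}(x)$, to set $B=A_{\alpha(y)}$, and to reduce everything to the compressions of the tails $\bsigma^{(n)}$. For each $n\in\nn$ I put $\xi_n=\psi_{x^{(n)}}$. By Lemma~\ref{l:cutting-finite-prefixes-Pw-translation-for-models} the triple $(\Gamma,\psi,x^{(n)})$ is a $\pv V$-model of the bounded primitive tail $\bsigma^{(n)}$, so Proposition~\ref{p:compressions-versus-models} and Theorem~\ref{t:letters-mapped-to-Js} give that each $\xi_n$ is a $\pv V$-compression of $\bsigma^{(n)}$ with $\img(\xi_n)=\img_{\pv V}(\bsigma^{(n)})$ and $\xi_n(B)\subseteq\Lambda_{\pv V}(\bsigma^{(n)})$; note $\xi_0=\psi_x$, and by Lemma~\ref{l:V-kernel-endomorphism} the map $\zeta$ is idempotent, whence $\img(\zeta)=\{w:\zeta(w)=w\}$ is a closed subsemigroup.

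First I would record two structural identities. Cancelling the finite prefix $x[0,n)$ in the equality $xy=x$ of $\Om\Gamma{Cat}$ (legitimate by Proposition~\ref{p:letter-super-cancelativity} applied to pseudopaths, via Remark~\ref{r:extension-pseudowords-to-pseudopaths}) yields $x^{(n)}y=x^{(n)}$, hence $\xi_n\circ\zeta=\xi_n$ by functoriality of $\psi$; and the factorization $x^{(n)}=x[n,m)\,x^{(m)}$ for $n\le m$, together with $\psi_{x[n,m)}=\prov V\sigma_{n,m}$, gives $\xi_n=\prov V\sigma_{n,m}\circ\xi_m$. Next, by Theorem~\ref{t:characterization-of-L-minimal-right-stabilizers} I fix a net of finite prefixes $x[0,k_i)\to x$ with $x^{(k_i)}\to y$; passing to a subnet I may assume $A_{k_i}=B$, so that each $\xi_{k_i}$ is an endomorphism of $\Om BV$ and $\xi_{k_i}\to\zeta$ pointwise by continuity of $\psi$. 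Each $\xi_{k_i}$, being a compression of the primitive sequence $\bsigma^{(k_i)}$, is primitive (as observed after Definition~\ref{d:compressions}); since the primitive homomorphisms form a closed subspace by Lemma~\ref{l:space-of-primitive-endomorphisms}, the limit $\zeta$ is primitive, which proves~\ref{item:isomorphism-with-inverse-limit-1}.

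For~\ref{item:isomorphism-with-inverse-limit-3} I would define $\Phi\from\img(\zeta)\to\img_{\pv V}^\infty(\bsigma)$ by $\Phi(w)=(\xi_n(w))_{n\in\nn}$; the identity $\xi_n=\prov V\sigma_{n,m}\circ\xi_m$ shows that $\Phi(w)$ is a thread of the inverse system $\mathcal F$, and $\Phi$ is a continuous homomorphism. For injectivity, if $\xi_n(w)=\xi_n(w')$ for all $n$, then specializing to $n=k_i$ and letting $i\to\infty$ gives $\zeta(w)=\zeta(w')$, i.e.\ $w=w'$ since $w,w'$ are $\zeta$-fixed. For surjectivity I would use that $\xi_N$ maps $\img(\zeta)$ onto $\img_{\pv V}(\bsigma^{(N)})$, because $\xi_N\circ\zeta=\xi_N$ forces $\xi_N(\img(\zeta))=\xi_N(\Om BV)=\img_{\pv V}(\bsigma^{(N)})$: given a thread $(s_n)_n$, any $w\in\img(\zeta)$ with $\xi_N(w)=s_N$ already satisfies $\xi_n(w)=\prov V\sigma_{n,N}(s_N)=s_n$ for all $n\le N$, so the nested closed sets $K_N=\{w\in\img(\zeta):\xi_n(w)=s_n\text{ for }n\le N\}$ are nonempty, and any point of $\bigcap_N K_N$ maps to $(s_n)_n$. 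A continuous bijective homomorphism between compact semigroups being an isomorphism, $\Phi$ is an isomorphism.

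Finally, for~\ref{item:isomorphism-with-inverse-limit-2}, for each $b\in B$ the identity $\xi_n\circ\zeta=\xi_n$ gives $\Phi(\zeta(b))=(\xi_n(\zeta(b)))_n=(\xi_n(b))_n$, a thread each of whose components lies in $\Lambda_{\pv V}(\bsigma^{(n)})$, so $\Phi(\zeta(B))\subseteq\Lambda_{\pv V}^\infty(\bsigma)$. Since $\Lambda_{\pv V}^\infty(\bsigma)$ lies in the regular $\green{J}$-class $J_{\pv V}^\infty(\bsigma)$ of $\img_{\pv V}^\infty(\bsigma)$ by Proposition~\ref{p:at-the-limit-properties-of-lambda}\ref{item:at-the-limit-properties-of-lambda-2}, transporting back through $\Phi$ places $\zeta(B)$ inside the regular $\green{J}$-class $\Phi^{-1}(J_{\pv V}^\infty(\bsigma))$ of $\img(\zeta)$. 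The hard part will be the bookkeeping required to realize $\zeta$ as a genuine pointwise limit of tail-compressions with domain and codomain both equal to $\Om BV$ — this needs the kernel characterization of Theorem~\ref{t:characterization-of-L-minimal-right-stabilizers} and a careful passage to a subnet along which the codomains stabilize — together with a rigorous proof of the two structural identities inside the free profinite category $\Om\Gamma{Cat}$; once these are secured, parts~\ref{item:isomorphism-with-inverse-limit-1}--\ref{item:isomorphism-with-inverse-limit-2} follow formally.
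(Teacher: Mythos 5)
Your proposal is correct and takes essentially the same route as the paper's own proof: you realize $\zeta=\psi_y$ as a pointwise limit of tail compressions $\psi_{x^{(k_i)}}$ via Theorem~\ref{t:characterization-of-L-minimal-right-stabilizers} (giving primitivity through Lemma~\ref{l:space-of-primitive-endomorphisms}), derive the same two cancellation identities $x^{(n)}y=x^{(n)}$ and $x^{(n)}=x{[}n,m{)}\,x^{(m)}$ by Proposition~\ref{p:letter-super-cancelativity}, and build the same componentwise homomorphism into $\varprojlim_n \img_{\pv V}(\bsigma^{(n)})$, with injectivity from idempotency of $\zeta$ and surjectivity from $\xi_n\circ\zeta=\xi_n$. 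Your minor deviations — indexing $\xi_n$ over all $n\in\nn$ rather than over the paper's cofinal set $M=\{|x_i|:i\in I\}$ (thereby skipping the cofinality step), proving surjectivity by an explicit nested-compact-sets argument instead of citing general facts about inverse systems, and obtaining item~\ref{item:isomorphism-with-inverse-limit-2} by pushing $\zeta(B)$ directly into $\Lambda_{\pv V}^\infty(\bsigma)$ and pulling back through the isomorphism (which avoids invoking primitivity, where the paper argues via the unique maximal regular $\green{J}$-class from Proposition~\ref{p:at-the-limit-properties-of-lambda}) — are all sound streamlinings of the same argument.
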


\begin{proof}
  \ref{item:isomorphism-with-inverse-limit-1}
  By definition of $\pv V$-kernel endomorphism, there is a $\pv V$-model $(\Gamma,\psi,x)$
  of $\bsigma$ and some loop $y$ in the kernel of $\rstab(x)$
  such that $\zeta=\psi_y$.  
  By Theorem~\ref{t:characterization-of-L-minimal-right-stabilizers},
  there is a net $(x_i)_{i\in I}$ of finite-length prefixes of $x$ such that
  $x_i\to x$ and $x_i^{-1}x\to y$. Since the space of vertices of the category $\Om \Gamma{Cat}$ is discrete,
  we may as well assume that $x_i^{-1}x$ is a loop at $\alpha(y)=\omega(y)$.

  For every $i\in I$,
  the triple
  $(\Gamma,\psi,x_i^{-1}x)$ is a \pv V-model of $\bsigma^{(|x_i|)}$ by Lemma~\ref{l:cutting-finite-prefixes-Pw-translation-for-models},
  whence $\psi_{x_i^{-1}x}$ is a $\pv V$-compression of
  $\bsigma^{(|x_i|)}$ by Proposition~\ref{p:compressions-versus-models}.
  Therefore,  by Lemma~\ref{l:space-of-primitive-endomorphisms}, the continuous endomorphism $\psi_{x_i^{-1}x}$ is primitive for every $i\in I$, and so is $\zeta=\lim\psi_{x_i^{-1}x}$.
  This establishes item \ref{item:isomorphism-with-inverse-limit-1} in the statement.
  
  \ref{item:isomorphism-with-inverse-limit-3}
  Consider the infinite set $M=\{|x_i|:i\in I\}$.
  For each $n\in M$, let $\Psi_n$ denote the continuous endomorphism $\psi_{x_i^{-1}x}\from \Om BV\to \Om BV$
  when $i\in I$ is such that $x_i$ is the prefix of $x$ with length $n$.
  Since $\psi_{x_i^{-1}x}$ is a $\pv V$-compression of $\bsigma^{(|x_i|)}$,
  by Theorem~\ref{t:letters-mapped-to-Js} the equality
  $\img(\Psi_n)=\img_{\pv V}(\bsigma^{(n)})$ holds for every $n\in M$.
  
  As $\lim |x_i|=\infty$, the set $M$ is cofinal in $\nn$, and so
  the profinite semigroup
  $\img_{\pv V}^\infty(\bsigma)=\varprojlim_{n\in\nn} \img_{\pv V}(\bsigma^{(n)})$ is isomorphic to the inverse limit $\varprojlim_{n\in M}\img_{\pv V}(\bsigma^{(n)})$.

  Consider the mapping $\Psi\from \img(\zeta)\to \prod_{n\in M}\img_{\pv V}(\bsigma^{(n)})$
  defined by
  the formula
  $\Psi(u)=(\Psi_n(u))_{n\in M}$ for every $u\in \img(\zeta)$. Note that $\Psi$ is a continuous homomorphism, as all the mappings $\Psi_n$ are continuous homomorphisms. Hence, to prove
  item~\ref{item:isomorphism-with-inverse-limit-3} it suffices
  to show that $\img(\Psi)=\varprojlim_{n\in M}\img_{\pv V}(\bsigma^{(n)})$ and that $\Psi$ is injective. 
  
  Let $n\in M$, and take $i\in I$ such that $n=|x_i|$.
  Let $m\in M$ be such that $m>n$, and take $j\in I$ such that $|x_j|=m$.
  Then we have $x_j=x_iz$ for a path $z$ of length $m-n$, with
  $\psi_z=\prov V\sigma_{n,m}$.
  Since
  \begin{equation*}
    x_iz(x_j^{-1}x)=x_j(x_j^{-1}x)=x=x_i(x_i^{-1}x),
  \end{equation*}
  canceling the finite-length prefix $x_i$ we obtain
  $z(x_j^{-1}x)=x_i^{-1}x$ (cf.~Proposition~\ref{p:letter-super-cancelativity} and~Remark~\ref{r:extension-pseudowords-to-pseudopaths}). Therefore, for every $u\in\img(\zeta)$, we have
  \begin{equation*}
    \prov V\sigma_{n,m}(\Psi_m(u))=\psi_z\psi_{x_j^{-1}x}(u)=\psi_{x_i^{-1}x}(u)=\Psi_n(u).
  \end{equation*}
  This shows that the inclusion $\img(\Psi)\subseteq \varprojlim_{n\in M}\img_{\pv V}(\bsigma^{(n)})$ holds.

  We claim that $\Psi_n=\Psi_n\circ\zeta$ for every $n\in M$. Letting $i\in I$ be such that $|x_i|=n$, one has
  \begin{equation*}
    x_i(x_i^{-1}x)y=xy=x=x_i(x_i^{-1}x),
  \end{equation*}
  thus $(x_i^{-1}x)y=x_i^{-1}x$ by cancellation of the finite-length
  prefix $x_i$ (see Remark~\ref{r:extension-pseudowords-to-pseudopaths}). As $\Psi_n=\psi_{x_i^{-1}x}$ and $\zeta=\psi_y$, this
  establishes the claim that $\Psi_n=\Psi_n\circ\zeta$. Therefore, we
  have $\Psi_n(\img(\zeta))=\img(\Psi_n)=\img_{\pv V}(\bsigma^{(n)})$
  for every $n\in M$. This entails the equality
  $\img(\Psi)=\varprojlim_{n\in M}\img_{\pv V}(\bsigma^{(n)})$ by well
  known properties of the continuous mappings involving inverse
  systems of compact spaces~\cite[Theorem 3.2.14]{Engelking:1989}.
  
  It remains to show that $\Psi$ is injective. Let $u,v\in\img(\zeta)$
  be such that $\Psi(u)=\Psi(v)$. Then we have $\Psi_n(u)=\Psi_n(v)$
  for every $n\in M$. This is the same as saying that
  $\psi_{x_i^{-1}x}(u)=\psi_{x_i^{-1}x}(v)$ for every $i\in I$. Since
  we are endowing $\hom(\Om BV,\Om BV)$ with the pointwise topology,
  we get
  \begin{equation*}
    \zeta(u)=\psi_y(u)=\lim_{i\in I}\psi_{x_i^{-1}x}(u)=\lim_{i\in I}\psi_{x_i^{-1}x}(v)=\psi_y(v)=\zeta(v).
  \end{equation*}
  But $\zeta$ is idempotent (cf.~Lemma~\ref{l:V-kernel-endomorphism}), and so it restricts to the identity on $\img(\zeta)$.
  Hence we have $u=v$. This establishes the injectivity of $\Psi$ and finishes the proof of item~\ref{item:isomorphism-with-inverse-limit-3} of the proposition.

    \ref{item:isomorphism-with-inverse-limit-2}
  Note that, since $\zeta$ is primitive by~\ref{item:isomorphism-with-inverse-limit-1},
  every element of  $\img(\zeta)$ admits every element of $\zeta(B)$ as a factor.
  Hence, to prove item  \ref{item:isomorphism-with-inverse-limit-2}, it suffices to show that
  the semigroup has $\img(\zeta)$ has a unique maximal $\green{J}$-class, which is regular.
  Since, by Proposition~\ref{p:at-the-limit-properties-of-lambda},
  the semigroup $\img_{\pv V}^\infty(\bsigma)$ has that property, item~\ref{item:isomorphism-with-inverse-limit-2} follows immediately
  from item~\ref{item:isomorphism-with-inverse-limit-3}.
\end{proof}

In the setting of Proposition~\ref{p:isomorphism-with-inverse-limit},
let $J$ be the regular $\green{J}$-class of $\Om BV$ containing the
set $\zeta(B)$. If $\varphi\from B^+\to B^+$ is a primitive
substitution, then we know that the $\green{J}$-class $J_{\pv
  V}(\varphi)$ is $\le_{\green{J}}$-maximal among the regular
$\green{J}$-classes of $\Om BV$, whenever $\pv V$ contains $\pv {\loc
  Sl}$ (cf.~Proposition~\ref{p:mirage}). Hence, as $\zeta$ is a
primitive continuous endomorphism of $\Om BV$, it is natural to ask
whether the $\green{J}$-class $J$ is also $\le_{\green{J}}$-maximal
among the regular $\green{J}$-classes of $\Om BV$. The following
example shows that that may not be the case.

\begin{example}
  \label{eg:not-J-maximal}
  Consider the sequence of substitutions $\sigma_n$ over the alphabet
  $A=\{\ltr{a},\ltr{b}\}$ defined by
  \begin{equation*}
    \sigma_n\from  \ltr{a}\mapsto \ltr{ab^n},\ \ltr{b}\mapsto \ltr{a}.
  \end{equation*}
  Note that $\bsigma=(\sigma_n)_{n\in\nn}$ is a bounded primitive
  directive sequence. Let \pv V, $(\Gamma,\psi,x)$, $y$, $x_i$ and
  $y_i$ be as in the statement and proof of
  Proposition~\ref{p:isomorphism-with-inverse-limit}. Then,
  $\ltr{ab}^{|x_i|}$ is a prefix of~$\psi_{y_i}(\ltr{a})$, so that
  $\ltr{ab}^\omega$ is a prefix of $\psi_y(\ltr{a})$. Hence,
  $\ltr{b}^\omega$ is an idempotent which lies strictly
  $\le_{\green{J}}$-above $\psi_y(\ltr{a})$ provided \pv V
  contains~\pv{Sl}. Thus, for such \pv V, the $\green{J}$-class of
  $\psi_y(\ltr{a})$ is not $\le_{\green{J}}$-maximal among the regular
  $\green{J}$-classes of~$\Om AV$.
\end{example}

\section{Saturating directive sequences}
\label{sec:satur-direct-sequ}

We saw in Section~\ref{sec:proper-case} that when the primitive
directive sequence $\bsigma$ has a proper contraction, the $\pv
V$-image of $\bsigma$ is a closed subgroup of the free pro-$\pv V$
semigroup over the alphabet of $X(\bsigma)$. It is natural to ask for
necessary and sufficient conditions under which this subgroup is a
\emph{maximal} subgroup of that free pro-$\pv V$ semigroup. In this
section, we investigate that question in a more general framework,
assuming only that $\bsigma$ is primitive, not necessarily having a
proper contraction. In the process, we establish a strong link with
the notion of recognizable directive sequence.

This section is divided into three subsections. In the first one, we
lay the foundations for our framework by introducing the notion of
\emph{$\pv{V}$-saturating} directive sequence (\pv{V} a
pseudovariety). We give a straightforward proof that primitive
directive sequences consisting of pure encodings are
$\pv{S}$-saturating (Theorem~\ref{t:surjectivity-pure-encoding}), and
study the case where $\bsigma$ is recurrent and consists of encodings
that may not be pure
(Theorem~\ref{t:recurrent-encoding-implies-saturating}). In the second
subsection, we show that recognizability of $\bsigma$ is sufficient
for $\bsigma$ to be $\pv{S}$-saturating (Theorem~\ref{t:surjectivity})
Finally, in the last subsection we consider cases where
recognizability is a necessary condition for $\bsigma$ to be
$\pv{S}$-saturating
(Theorem~\ref{t:a-sort-of-converse-of-surjectivity-theorem}), leading
to new criteria for recognizable directive sequences
(Corollary~\ref{c:surjectivity-pure-encoding} and
Theorem~\ref{t:recurrent-encoding-made-implies-recognizable}).

\subsection{The notion of \texorpdfstring{$\pv S$}{S}-saturating directive sequence}
\label{sec:notion-pv-s}

In what follows, $\bsigma$ is a directive sequence $(\sigma_n)_{n\in\nn}$ with $\sigma_n$ a homomorphism from $A_{n+1}^+$ to $A_n^+$.
The following definition is the cornerstone upon which this entire
section is built.

\begin{definition}
  \label{d:V-saturating}
  Let $\bsigma$ be a primitive directive sequence and $\pv{V}$ be a
  pseudovariety containing~$\pv{N}$. We say that $\bsigma$ is
  \emph{\pv{V}-saturating} if $\img_\pv{V}(\bsigma)$ contains a maximal subgroup
  of $J_{\pv V}(\bsigma)$.
\end{definition}

\begin{remark}
  \label{r:V-saturating-proper}
  If $\bsigma$ is primitive and has a proper contraction, then
  $\bsigma$ is $\pv S$-saturating if and only if
  $\img_\pv{V}(\bsigma)$ is a maximal subgroup
  of $J_{\pv V}(\bsigma)$, by Theorem~\ref{t:proper-is-group}.
\end{remark}

In the next proposition we present several equivalent
alternatives for Definition~\ref{d:V-saturating}.

\begin{proposition}
  \label{p:saturation-equivalence}
  Let $\bsigma$ be a primitive directive sequence and $\pv{V}$
  be a pseudovariety containing~$\pv{N}$. The following conditions are
  equivalent:
  \begin{enumerate}
  \item $\bsigma$ is $\pv{V}$-saturating;
    \label{i:saturation-equivalence-1}
  \item $\img_\pv{V}(\bsigma)$ contains an $\green{H}$-class of
    $J_{\pv V}(\bsigma)$;
    \label{i:saturation-equivalence-2}
  \item $J_{\pv V}(\bsigma)\cap \img_\pv{V}(\bsigma)$ is a union of
    $\green{H}$-classes of $J_{\pv V}(\bsigma)$;
    \label{i:saturation-equivalence-3}
  \item if $p,q,r$ are elements of~$\Om{A_0}V$ such that
    the relations $p\green{R}q\green{L}r$ hold in $\Om {A_0}V$, and $p$ and $r$ belong to~$J_{\pv
      V}(\bsigma)\cap \img_\pv{V}(\bsigma)$, then so does $q$.
    \label{i:saturation-equivalence-4}
  \end{enumerate}
\end{proposition}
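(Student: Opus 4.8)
The plan is to prove the four conditions equivalent through the cycle $\ref{i:saturation-equivalence-1}\Rightarrow\ref{i:saturation-equivalence-2}\Rightarrow\ref{i:saturation-equivalence-3}\Rightarrow\ref{i:saturation-equivalence-4}\Rightarrow\ref{i:saturation-equivalence-1}$. Throughout I abbreviate $S=\Om{A_0}V$ (the ambient free pro-$\pv V$ semigroup), $M=\img_{\pv V}(\bsigma)$ (a closed, hence profinite and stable, subsemigroup of $S$), $\mathcal J=J_{\pv V}(\bsigma)$ (a regular $\green{J}$-class of $S$), and $K=\mathcal J\cap M$. The decisive structural input is Theorem~\ref{t:Im-intersection-J}, giving that $K$ is a \emph{regular} $\green{J}$-class of $M$; since $M$ is stable, inside $K$ every $\green{R}_M$-class meets every $\green{L}_M$-class, and $K$ contains idempotents. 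I will use freely the trivial inclusions $\green{R}_M\subseteq\green{R}_S$ and $\green{L}_M\subseteq\green{L}_S$ (as $M$ is a subsemigroup), together with Green's Lemma \cite[Lemma~A.3.1]{Rhodes&Steinberg:2009qt}. The point to keep in mind is that the $\green{H}$-classes appearing in \ref{i:saturation-equivalence-2}--\ref{i:saturation-equivalence-4} are those of $\mathcal J$, i.e.\ of $S$, whereas the egg-box I control directly is that of $K$ inside $M$.

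The implication $\ref{i:saturation-equivalence-1}\Rightarrow\ref{i:saturation-equivalence-2}$ is immediate, since a maximal subgroup of $\mathcal J$ is by definition the $\green{H}_S$-class of an idempotent of $\mathcal J$, hence in particular an $\green{H}_S$-class of $\mathcal J$. The heart of the argument is $\ref{i:saturation-equivalence-2}\Rightarrow\ref{i:saturation-equivalence-3}$. Assume one $\green{H}_S$-class $H_0\subseteq M$; then $H_0\subseteq K$, and I must show that for every $q\in K$ one has $H_q^S\subseteq M$ (inclusion in $\mathcal J$ being automatic). Fix $a_0\in H_0$ and $q\in K$. As $a_0,q$ lie in the single $\green{J}_M$-class $K$, the $\green{R}_M$-class of $a_0$ meets the $\green{L}_M$-class of $q$ in some $w\in K$, so there are $u,t\in M^1$ with $a_0u=w$ and $tw=q$ (with reverse multipliers also in $M^1$). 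Applying Green's Lemma in $S$ with these multipliers drawn from $M^1$ yields bijections $x\mapsto xu$ from $H_{a_0}^S=H_0$ onto $H_w^S$, and $x\mapsto tx$ from $H_w^S$ onto $H_q^S$; since $H_0\subseteq M$ and $u,t\in M^1$, both images remain inside $M$, whence $H_w^S\subseteq M$ and then $H_q^S\subseteq M$, as required.

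For $\ref{i:saturation-equivalence-3}\Rightarrow\ref{i:saturation-equivalence-4}$, suppose $p\green{R}q\green{L}r$ in $S$ with $p,r\in K$. Using again that $K$ is a single regular $\green{J}_M$-class, pick $q'\in K$ with $q'\green{R}_M p$ and $q'\green{L}_M r$; then $q'\green{R}_S p\green{R}_S q$ and $q'\green{L}_S r\green{L}_S q$, so $q'\green{H}_S q$. As $q'\in K$, the $\green{H}_S$-class $H_q^S$ meets $K$, and \ref{i:saturation-equivalence-3} forces $H_q^S\subseteq K$, giving $q\in K$. Finally, for $\ref{i:saturation-equivalence-4}\Rightarrow\ref{i:saturation-equivalence-1}$, choose an idempotent $e\in K$ (available since $K$ is regular); its $\green{H}_S$-class is a maximal subgroup of $\mathcal J$, and for any $q\green{H}_S e$ one has $e\green{R}_S q\green{L}_S e$ with $e\in K$, so \ref{i:saturation-equivalence-4} places $q$ in $K\subseteq M$. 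Thus $H_e^S\subseteq M$, which is exactly \ref{i:saturation-equivalence-1}.

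The main obstacle is conceptual rather than computational: one must keep rigorously separate the two layers of Green's relations — those of the ambient $S=\Om{A_0}V$, in which the statement is phrased, and those of the subsemigroup $M=\img_{\pv V}(\bsigma)$, where Theorem~\ref{t:Im-intersection-J} provides a usable regular $\green{J}$-class — and exploit that, although these relations differ in general, any $\green{R}_M$- or $\green{L}_M$-relation can be realized by multipliers taken from $M^1$. This is precisely what allows Green's Lemma to be run inside $S$ while never leaving $M$, and it is the mechanism by which the single $\green{H}$-class hypothesis in \ref{i:saturation-equivalence-2} propagates across the whole egg-box to yield \ref{i:saturation-equivalence-3}.
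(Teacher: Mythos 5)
Your proposal is correct and follows essentially the same route as the paper's proof: the same cycle of implications, with Theorem~\ref{t:Im-intersection-J} supplying the regular $\green{J}$-class $J_{\pv V}(\bsigma)\cap\img_{\pv V}(\bsigma)$ of $\img_{\pv V}(\bsigma)$, Green's Lemma run in $\Om{A_0}V$ with multipliers drawn from $\img_{\pv V}(\bsigma)$ for \ref{i:saturation-equivalence-2}$\Rightarrow$\ref{i:saturation-equivalence-3}, stability producing the intermediate element $t$ (your $q'$) for \ref{i:saturation-equivalence-3}$\Rightarrow$\ref{i:saturation-equivalence-4}, and an idempotent with $p=r=e$ for \ref{i:saturation-equivalence-4}$\Rightarrow$\ref{i:saturation-equivalence-1}. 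The only cosmetic difference is that the paper compresses your two one-sided translations into the single map $x\mapsto uxv$.
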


\begin{proof}
  Let $J=J_{\pv V}(\bsigma)\cap \img_\pv{V}(\bsigma)$ and recall that
  $J$ is a regular $\green{J}$-class of the semigroup $\img_{\pv V}(\bsigma)$, by
  Theorem~\ref{t:Im-intersection-J}.

  The implication
  $\ref{i:saturation-equivalence-1}
  \Rightarrow
  \ref{i:saturation-equivalence-2}$
  holds because every maximal
  subgroup of a semigroup is an $\green{H}$-class of that same semigroup.
  
  For
  $\ref{i:saturation-equivalence-2}
  \Rightarrow
  \ref{i:saturation-equivalence-3}$,
  suppose that $H$ is an
  $\green{H}$-class of~$J_{\pv V}(\bsigma)$ contained in $\img_{\pv V}(\bsigma)$. Take $h\in H$. Let $s\in J$.
  Since $H\subseteq J$, by Theorem~\ref{t:Im-intersection-J} there are
  $u,v\in\img_{\pv V}(\bsigma)$ such that $uhv=s$. By Green's Lemma (cf.~\cite[Lemma~A.3.1]{Rhodes&Steinberg:2009qt}), applied to
  $\Om{A_0}V$ and $\img_{\pv V}(\bsigma)$, we deduce that $uHv$ is
  the $\green{H}$-class of $s$ in both $\Om{A_0}V$ and~$\img_{\pv V}(\bsigma)$.
  As $s$ is an arbitrary element of $J$, we conclude that
  $J$ is a union of $\green{H}$-classes of~$J_{\pv V}(\bsigma)$.

  We proceed to show that
  $\ref{i:saturation-equivalence-3}
  \Rightarrow
  \ref{i:saturation-equivalence-4}$.
  For each Green's relation symbol $\mathcal K$, denote by $\mathcal K'$
  the corresponding Green's relation in $\img_{\pv V}(\bsigma)$, that is, $\mathcal K'=\mathcal K_{\img_{\pv V}(\bsigma)}$.
  Take $p,q,r$ as
  in~\ref{i:saturation-equivalence-4}.
  Since $J$ is a $\green{J}'$-class by Theorem~\ref{t:Im-intersection-J}, there is $t\in J$ such
  that $p\green{R}'t\green{L}'r$. Hence, $q$ lies in the same $\green{H}$-class of~$J_{\pv V}(\bsigma)$ as $t$.
  Since $t$ belongs
  to~$\img_{\pv V}(\bsigma)$, it follows from
  \ref{i:saturation-equivalence-3} that $q$
  also belongs to $\img_{\pv V}(\bsigma)$.

  For $\ref{i:saturation-equivalence-4} \Rightarrow
  \ref{i:saturation-equivalence-1}$, take an idempotent $e\in J$.
  In~\ref{i:saturation-equivalence-4} we may take $p=r=e$ and $q$ an
  arbitrary element in the maximal subgroup of $\Om {A_0}V$ containing
  $e$, and then conclude that $q\in J$.
\end{proof}

Before proceeding, it is worth noting the following simple observation.

\begin{proposition}
  \label{p:saturation-inheritance}
  If $\bsigma$ is $\pv{V}$-saturating and \pv{W} is a pseudovariety
  such that $\pv{\loc Sl}\subseteq\pv{W}\subseteq\pv{V}$, then
  $\bsigma$ is also $\pv{W}$-saturating.
\end{proposition}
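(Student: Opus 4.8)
The plan is to reformulate saturation in terms of $\green{H}$-classes and then transport it along the natural projection $p_{\pv V,\pv W}\from\Om{A_0}V\to\Om{A_0}W$. Note first that, being $\pv V$-saturating, the sequence $\bsigma$ is primitive, so $X(\bsigma)$ is a minimal shift space by Theorem~\ref{t:characterization-minimal-shifts}, and the results of Section~\ref{s:profinite-symbolic} apply to $J_{\pv V}(\bsigma)=J_{\pv V}(X(\bsigma))$ and to $J_{\pv W}(\bsigma)$. Since $\pv V\supseteq\pv{\loc Sl}\supseteq\pv N$, I would invoke Proposition~\ref{p:saturation-equivalence}, condition~\ref{i:saturation-equivalence-2}, to obtain an $\green{H}$-class $H$ of $J_{\pv V}(\bsigma)$ contained in $\img_\pv{V}(\bsigma)$. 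The goal is then to produce an $\green{H}$-class of $J_{\pv W}(\bsigma)$ sitting inside $\img_\pv{W}(\bsigma)$, after which Proposition~\ref{p:saturation-equivalence} read for $\pv W$ (legitimate because $\pv W\supseteq\pv{\loc Sl}\supseteq\pv N$) concludes that $\bsigma$ is $\pv W$-saturating.

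The key steps are as follows. Using the hypothesis $\pv{\loc Sl}\subseteq\pv W\subseteq\pv V$, Corollary~\ref{c:projecting-the-J-class-of-X}(i) shows that $p_{\pv V,\pv W}(H)$ is an $\green{H}$-class of $J_{\pv W}(\bsigma)$. On the other hand, Proposition~\ref{p:projecting-profinite-image} (which needs only $\pv N\subseteq\pv W\subseteq\pv V$, granted here) gives the equality $\img_\pv{W}(\bsigma)=p_{\pv V,\pv W}(\img_\pv{V}(\bsigma))$. Since $H\subseteq\img_\pv{V}(\bsigma)$, applying $p_{\pv V,\pv W}$ yields $p_{\pv V,\pv W}(H)\subseteq\img_\pv{W}(\bsigma)$. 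Thus $\img_\pv{W}(\bsigma)$ contains an $\green{H}$-class of $J_{\pv W}(\bsigma)$, which is exactly condition~\ref{i:saturation-equivalence-2} of Proposition~\ref{p:saturation-equivalence} for $\pv W$, giving the desired $\pv W$-saturation.

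There is no serious obstacle here; the only point requiring care is the choice of the $\green{H}$-class formulation of saturation rather than working directly with maximal subgroups. A maximal subgroup is a particular $\green{H}$-class, but the continuous image of a maximal subgroup is not visibly a maximal subgroup; it is precisely the statement that $\green{H}$-classes project to $\green{H}$-classes (Corollary~\ref{c:projecting-the-J-class-of-X}(i)) that makes the transport clean, and the reformulation of Proposition~\ref{p:saturation-equivalence} is what lets us exploit it. Alternatively, one may keep track of an idempotent $e\in H$: its image $p_{\pv V,\pv W}(e)$ is again idempotent and lies in $J_{\pv W}(\bsigma)$ by Corollary~\ref{c:mirage}, so $p_{\pv V,\pv W}(H)$ is a genuine maximal subgroup inside $\img_\pv{W}(\bsigma)$, recovering the statement in its original form.
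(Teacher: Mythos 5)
Your proof is correct and follows essentially the same route as the paper, which also combines Proposition~\ref{p:projecting-profinite-image} with Corollary~\ref{c:projecting-the-J-class-of-X} to project a maximal subgroup $H\subseteq\img_\pv{V}(\bsigma)$ of $J_{\pv V}(\bsigma)$ down to $\img_\pv{W}(\bsigma)$. Your detour through condition~\ref{i:saturation-equivalence-2} of Proposition~\ref{p:saturation-equivalence} is a cosmetic variant, and the idempotent-tracking alternative you sketch at the end is exactly the paper's argument.
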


\begin{proof}
  Suppose that $\img_\pv{V}(\bsigma)$ contains a maximal subgroup $H$
  of $J_{\pv V}(\bsigma)$. Then $\img_\pv{W}(\bsigma)$ contains
  $p_{\pv{V}, \pv{W}}(H)$ by
  Proposition~\ref{p:projecting-profinite-image}. Moreover, the set
  $p_{\pv{V}, \pv{W}}(H)$ is a maximal subgroup of $J_{\pv
    W}(\bsigma)$ by Corollary~\ref{c:projecting-the-J-class-of-X}.
  Hence, $\bsigma$ is $\pv W$-saturating.
\end{proof}  

Let $\bsigma = (\sigma_n)_{n\in\nn}$ be a directive sequence. We say
that $\bsigma$ is \emph{pure} if $\sigma_n$ is a pure encoding for all
$n\in\nn$.

\begin{theorem}
  \label{t:surjectivity-pure-encoding}
  Let $\bsigma$ be a primitive directive sequence. If $\bsigma$ is
  pure, then it is $\pv{S}$-saturating.
\end{theorem}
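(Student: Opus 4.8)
The plan is to verify Definition~\ref{d:V-saturating} directly for $\pv V=\pv S$, by exhibiting a maximal subgroup of $J_{\pv S}(\bsigma)$ contained in $\img_{\pv S}(\bsigma)$. The starting point is to recognize each $\img(\prov S\sigma_{0,n})$ as a profinite closure of a code. Since $\Om{A_n}S$ is the closure of $A_n^+$ and $\prov S\sigma_{0,n}$ is continuous on a compact domain, its image is the closure of $\sigma_{0,n}(A_n^+)=C_n^+$ in $\Om{A_0}S$, where $C_n=\sigma_{0,n}(A_n)$; that is, $\img(\prov S\sigma_{0,n})=\clos S(C_n^+)$. Hence $\img_{\pv S}(\bsigma)=\bigcap_{n\in\nn}\clos S(C_n^+)$, and the whole argument reduces to controlling these closures via Proposition~\ref{p:surjective-image-pure-code}.

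The key step, and the least routine one, is to show that each finite code $C_n$ is \emph{pure}. I would prove this by induction on $n$, the engine being that a composition of pure encodings is again a pure encoding. Concretely, if $\sigma\from A^+\to B^+$ and $\tau\from C^+\to A^+$ are pure encodings, then $\sigma\circ\tau$ is injective, so it is an encoding, and I claim $(\sigma\circ\tau)(C)$ is pure: given $u\in B^+$ and $k\ge 1$ with $u^k\in\sigma(\tau(C^+))$, purity of $\sigma(A)$ yields $u=\sigma(v)$ for some $v\in A^+$; writing $u^k=\sigma(\tau(w))$ and cancelling the injective $\sigma$ gives $v^k=\tau(w)\in\tau(C^+)$, so purity of $\tau(C)$ gives $v=\tau(w')$, whence $u=(\sigma\circ\tau)(w')\in(\sigma\circ\tau)(C^+)$. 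Applying this to $\sigma_{0,n}=\sigma_{0,n-1}\circ\sigma_{n-1}$, with the trivial base case $C_0=A_0$, shows every $C_n$ is a finite pure code.

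Finally I would assemble the conclusion. By Theorem~\ref{t:Im-intersection-J} the set $J_{\pv S}(\bsigma)\cap\img_{\pv S}(\bsigma)$ is a regular $\green{J}$-class of $\img_{\pv S}(\bsigma)$, hence contains an idempotent $e$ (alternatively one invokes Corollary~\ref{c:chain-of-idempotents-in-the-inverse-limit}); note $e$ is then an idempotent of $\Om{A_0}S$ lying in $J_{\pv S}(\bsigma)$. Let $H$ be the maximal subgroup of $\Om{A_0}S$ containing $e$, so $H$ is a maximal subgroup of $J_{\pv S}(\bsigma)$. For each $n\in\nn$ we have $e\in H\cap\clos S(C_n^+)\neq\varnothing$, so Proposition~\ref{p:surjective-image-pure-code} (valid since $\pv A\subseteq\pv S$ and $C_n$ is a finite pure code) gives $H\subseteq\clos S(C_n^+)$. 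Intersecting over $n$ yields $H\subseteq\bigcap_{n\in\nn}\clos S(C_n^+)=\img_{\pv S}(\bsigma)$, so $\img_{\pv S}(\bsigma)$ contains a maximal subgroup of $J_{\pv S}(\bsigma)$, which is exactly $\pv S$-saturation. The only genuine obstacle is the purity-of-composition lemma in the middle paragraph; once that is in place, steps~1 and~3 are routine applications of the machinery already developed.
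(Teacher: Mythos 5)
Your proof is correct and follows essentially the same route as the paper's: identify $\img(\prov S\sigma_{0,n})$ with $\clos S(\sigma_{0,n}(A_n)^+)$, obtain a maximal subgroup $H$ of $J_{\pv S}(\bsigma)$ meeting $\img_{\pv S}(\bsigma)$ via Theorem~\ref{t:Im-intersection-J}, and conclude with Proposition~\ref{p:surjective-image-pure-code}. The only difference is presentational: the paper asserts without proof that a composition of pure encodings is pure, whereas you supply the (correct) injectivity-and-cancellation argument for that step.
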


\begin{proof}
  There is a maximal subgroup $H$ of $J_{\pv S}(\bsigma)$ such that
  $H\cap \img_{\pv S}(\bsigma)\neq\emptyset$, by
  Theorem~\ref{t:Im-intersection-J}. Let $n\in\nn$. In particular, we
  have $H\cap \img(\prov S\sigma_{0,n})\neq\emptyset$. The
  homomorphism $\sigma_{0,n}$ is pure, as every composition of pure
  encodings remains pure. In other words, the set
  $C=\sigma_{0,n}(A_{n})$ is a pure code. Since $\img(\prov
  S\sigma_{0,n})=\clos S(C^+)$, it follows from
  Proposition~\ref{p:surjective-image-pure-code} that
  $H\subseteq\img(\prov S\sigma_{0,n})$. As $n$ is arbitrary, this
  shows that $H\subseteq \img_{\pv S}(\bsigma)$, thereby establishing
  that $\bsigma$ is $\pv S$-saturating.
\end{proof}

The next example illustrates Theorem~\ref{t:surjectivity-pure-encoding}.

\begin{example}
  \label{e:example-that-img-may-not-be-in-J-continued}
  Recall the primitive substitution over
  $A=\{\ltr{a},\ltr{b},\ltr{c}\}$ considered in
  Example~\ref{e:example-that-img-may-not-be-in-J}:
  \begin{equation*}
    \sigma\from
    \ltr a\mapsto \ltr{ac},\
    \ltr b\mapsto \ltr{bcb},\
    \ltr c\mapsto \ltr{ba},
  \end{equation*}
  Set $C=\sigma(A)$. No element of $C$ is a prefix or a suffix of
  another element of $C$, that is, $C$ is a \emph{bifix} code
  (cf.~\cite{Berstel&Perrin&Reutenauer:2010}). Using, for instance, a
  GAP package~\cite{Delgado&Morais:sgpviz}, one may check that the
  syntactic semigroup of $C^+$ is aperiodic. Hence, $\sigma$ is a pure
  encoding, and so the directive sequence
  $\bsigma=(\sigma,\sigma,\ldots)$ is \pv S-saturating by
  Theorem~\ref{t:surjectivity-pure-encoding}. Denote by $\pro\sigma$
  the unique continuous endomorphism $\prov S\sigma\from\Om AS\to\Om
  AS$ extending $\sigma$. Recall that $\img_{\pv
    S}(\bsigma)=\img(\pro\sigma^\omega)$
  (cf.~Example~\ref{e:V-image-omega-power}).

  From
  Proposition~\ref{p:saturation-equivalence}\ref{i:saturation-equivalence-3},
  it follows that $J_{\pv S}(\bsigma)\cap \img_\pv{S}(\bsigma)$ is a
  union of $\green{H}$-classes. Since $\pro\sigma^\omega$ is
  idempotent, these $\green{H}$-classes are determined by the first
  and last letters of the images of~$\pro\sigma^\omega$
  (Table~\ref{tb:omega-letters}).
  \begin{table}[ht]
    \begin{tabular}{ccc}
      \toprule
      $\ell$
      & first letter of $\pro\sigma^\omega(\ell)$
      & last letter of $\pro\sigma^\omega(\ell)$ \\
      \midrule
      $\ltr a$ & $\ltr a$ & $\ltr a$ \\
      $\ltr b$ & $\ltr b$ & $\ltr b$ \\
      $\ltr c$ & $\ltr b$ & $\ltr c$ \\
      \bottomrule
      \addlinespace
    \end{tabular}
    \caption{First and last letters of the images of~$\pro\sigma^\omega$}
    \label{tb:omega-letters}
  \end{table}
  We therefore conclude that $J_{\pv S}(\bsigma)\cap
  \img_\pv{S}(\bsigma)$ consists of six $\green{H}$-classes of $\Om
  AS$, of which $\pro\sigma^\omega(\ltr{a})$,
  $\pro\sigma^\omega(\ltr{ab})$, $\pro\sigma^\omega(\ltr{ac})$,
  $\pro\sigma^\omega(\ltr{ba})$, $\pro\sigma^\omega(\ltr{b})$,
  $\pro\sigma^\omega(\ltr{c})$ are representative elements. As seen in
  Example~\ref{e:example-that-img-may-not-be-in-J}, the pseudoword
  $\pro\sigma^\omega(\ltr{a}^2)$ is not in $J_{\pv S}(\bsigma)\cap
  \img_\pv{S}(\bsigma)$, and therefore the $\green{H}$-class of
  $\pro\sigma^\omega(\ltr{a})$ is not a group.
\end{example}

We say that a directive sequence $\bsigma=(\sigma_n)_{n\in\nn}$ is
\emph{recurrent} if, seen as a right-infinite word over the alphabet
$\{\sigma_n : n\in\nn\}$, it is a recurrent right-infinite word.

\begin{theorem}
  \label{t:recurrent-encoding-implies-saturating}
  Let $\bsigma$ be a bounded primitive directive sequence.
  Suppose moreover that $\bsigma$ is recurrent and encoding.
  If there is $k\in\nn$ such that $\bsigma^{(k)}$
  is \pv S-saturating, then $\bsigma$ is \pv S-saturating.
\end{theorem}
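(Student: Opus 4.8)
The plan is to recast $\pv S$-saturation as a statement about an idempotent endomorphism of $\Om{A_0}S$ and then to use the self-similarity supplied by recurrence to transport that statement between level $0$ and level $k$. Since $\bsigma$ is bounded, I would work inside the finite-vertex profinite category $\catsig_{\pv S}(\bsigma)$ and fix the canonical $\pv S$-model $(\Gamma,\psi,x)$ from Proposition~\ref{p:compressions-versus-models}, where $x$ is the prefix accessible pseudopath attached to the path $(\prov S\sigma_0,\prov S\sigma_1,\dots)$. Recurrence of $\bsigma$ is exactly recurrence of $x$ as a right-infinite path over $\Gamma$; moreover every reoccurrence of the initial block forces the alphabet at that position to equal $A_0$, so $A_0$ recurs and the relevant prefixes can be taken to be loops at $A_0$. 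By Proposition~\ref{p:idempotents-in-Pw-Cat} there is an idempotent pseudopath prefix accessible by $x$, and through Theorem~\ref{t:characterization-of-L-minimal-right-stabilizers} and Lemma~\ref{l:V-kernel-endomorphism} this produces an idempotent $\pv S$-compression $\theta=\psi_f\in\en(\Om{A_0}S)$ with $\img(\theta)=\img_{\pv S}(\bsigma)$ (Theorem~\ref{t:letters-mapped-to-Js}); being idempotent, $\theta$ fixes $\img_{\pv S}(\bsigma)$ pointwise.

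The first key step is the reformulation. Let $\tilde K$ be a maximal subgroup of $J_{\pv S}(\bsigma)$ (an $\green H$-class of $\Om{A_0}S$) whose idempotent $e$ lies in $J_{\pv S}(\bsigma)\cap\img_{\pv S}(\bsigma)$; such $e$ exists by Theorem~\ref{t:Im-intersection-J}, and $\theta(e)=e$. As $\theta$ is an endomorphism it maps $\tilde K$ into $\tilde K$, and being idempotent it retracts $\tilde K$ onto $\tilde K\cap\img_{\pv S}(\bsigma)$. Hence, via Proposition~\ref{p:saturation-equivalence}, $\bsigma$ is $\pv S$-saturating if and only if $\theta$ restricts to the identity on $\tilde K$; and since an idempotent continuous endomorphism of a profinite group is the identity exactly when it is injective, this holds if and only if $\theta|_{\tilde K}$ is injective. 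The identical reformulation applies to $\bsigma^{(k)}$: as $\bsigma^{(k)}$ is again bounded primitive recurrent, $A_k$ recurs in it and one gets an idempotent $\pv S$-compression $\theta_k\in\en(\Om{A_k}S)$ with $\img(\theta_k)=\img_{\pv S}(\bsigma^{(k)})$, so that the hypothesis ``$\bsigma^{(k)}$ is $\pv S$-saturating'' becomes ``$\theta_k$ is the identity on a maximal subgroup $\tilde K_k$ of $J_{\pv S}(\bsigma^{(k)})$''.

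The second step links the two idempotents through recurrence. Factoring $\prov S\sigma_{0,n_i}=\prov S\sigma_{0,k}\circ\prov S\sigma_{k,n_i}$ and passing to a subnet, I get $\theta=\beta\circ\theta'$ where $\beta=\prov S\sigma_{0,k}\colon\Om{A_k}S\to\Om{A_0}S$ and $\theta'=\lim_i\prov S\sigma_{k,n_i}\colon\Om{A_0}S\to\Om{A_k}S$ is a $\pv S$-compression of $\bsigma^{(k)}$, so $\img(\theta')=\img_{\pv S}(\bsigma^{(k)})$ by Theorem~\ref{t:letters-mapped-to-Js}. Recurrence enters precisely to produce a \emph{return} map $\gamma$ completing $\beta$: because the initial data of $\bsigma$ reoccurs inside the tail $\bsigma^{(k)}$, one realizes $\theta$ and $\theta_k$ as $\omega$-powers of mutually intertwined endomorphisms $\beta\gamma$ and $\gamma\beta$. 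In the model case of a periodic $\bsigma$ this is literal, and the identity $(\beta\gamma)^\omega=\beta\,(\gamma\beta)^{\omega-1}\gamma$ together with $\beta\,(\gamma\beta)^\omega=(\beta\gamma)^\omega\beta$ and $(\gamma\beta)^\omega\gamma=\gamma\,(\beta\gamma)^\omega$ shows that $\beta$ and $\gamma$ restrict to mutually inverse continuous maps between $\img_{\pv S}(\bsigma^{(k)})$ and $\img_{\pv S}(\bsigma)$, with $\beta$ conjugating $\theta_k$ to $\theta$ on the respective maximal subgroups; injectivity of $\theta_k|_{\tilde K_k}$ then transfers to injectivity of $\theta|_{\tilde K}$, giving saturation of $\bsigma$ by the first step. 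In the general recurrent case I would run this along a sequence of reoccurrences of longer and longer prefixes (agreement length tending to infinity) and pass to the limit using the isomorphism $\img(\theta)\cong\img_{\pv S}^\infty(\bsigma)$ of Proposition~\ref{p:isomorphism-with-inverse-limit}, which is compatible with all the level projections, so that the conjugacy persists in the limit.

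The hard part, and the genuine role of recurrence, is to make $\beta=\prov S\sigma_{0,k}$ behave like a bijection carrying the \emph{full} ambient maximal subgroup $\tilde K_k$ onto the \emph{full} ambient maximal subgroup $\tilde K$. The obstruction is that $\prov S\sigma_{0,k}$ need not be injective (encodings need not be pure, cf.\ the discussion after Theorem~\ref{t:sufficient-conditions-for-injectivity}) and is not onto the ambient $\green J$-class $J_{\pv S}(\bsigma)$; thus, although it preserves $\green H$-classes via the $\pli$-parametrization (Lemma~\ref{l:characterization-of-Green-H-in-J-class-of-minimal-shift}), neither injectivity nor fullness of the induced map on maximal subgroups is automatic from a single projection. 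Controlling the whole $\green H$-class of $\Om{A_0}S$, rather than just its intersection with the image semigroup, is exactly what cannot be done in one step and is what forces the use of the recurrent self-similarity: the return map $\gamma$ and the limiting argument over reoccurrences are what upgrade the always-available inclusion $\tilde K\cap\img_{\pv S}(\bsigma)\subseteq\tilde K$ to an equality. I expect the construction of $\gamma$ and the verification that the intertwining survives the passage to the inverse limit to be the most delicate points of the argument.
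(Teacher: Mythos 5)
Your setup is sound and matches the paper's opening moves: recurrence plus Proposition~\ref{p:idempotents-in-Pw-Cat} yields a model $(\Gamma,\psi,e)$ with $e$ an idempotent loop, $\psi_e$ is an idempotent compression fixing $\img_{\pv S}(\bsigma)$ pointwise, the prefix $z$ of length $k$ gives mutually inverse isomorphisms between $\img_{\pv S}(\bsigma^{(k)})$ and $\img_{\pv S}(\bsigma)$, and your reformulation (saturation $\iff$ the idempotent is injective on the ambient maximal subgroup $\tilde K$) is correct and is essentially how the paper frames its goal. But there are two genuine problems. First, a factual error that misdirects your whole third paragraph: you claim $\prov S\sigma_{0,k}$ ``need not be injective'' because encodings need not be pure. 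Purity is only needed for proper pseudovarieties $\pvo H\subsetneq\pv S$; for $\pv S$ itself, Theorem~\ref{t:sufficient-conditions-for-injectivity} with $\pv H=\pv K=\pv G$ (and $\pv G*\pv G\subseteq\pv G$) shows that \emph{every} encoding has injective pro-$\pv S$ extension, and the paper uses exactly this. So injectivity of $\psi_z=\prov S\sigma_{0,k}$ is free; the real obstruction is only the one you half-identify, namely that $\psi_z$ carries the maximal subgroup $H$ of $\Om{A_k}S$ to a subgroup of $K$ that need not be all of $K$.

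Second, and decisively, the key step --- injectivity of $\psi_{z^{-1}e}$ (equivalently of $\theta$) on the \emph{full ambient} group $\tilde K$ --- is never proved. Your proposed mechanism cannot supply it: the conjugacy by $\beta$ and $\gamma$ only intertwines $\theta_k$ and $\theta$ on the image semigroups, where both idempotents restrict to the identity, so ``transferring injectivity'' says nothing about the part of $\tilde K$ lying outside $\img_{\pv S}(\bsigma)$, which is precisely what is at stake; and the limiting argument over reoccurrences via Proposition~\ref{p:isomorphism-with-inverse-limit} is not needed in the paper and has no visible way to close this gap. What the paper actually does at this point uses ingredients absent from your sketch: take $u\in K$ with $\psi_{z^{-1}e}(u)=h$; since $\sigma_{0,k}(A_k)^+$ is rational, $\img(\psi_z)$ is clopen (Kleene plus Theorem~\ref{t:V-recognizability}), so $u^\omega=g$ forces $u^m\in\img(\psi_z)$ for some finite $m$; lift the closed subgroup $K\cap\img(\psi_z)$ through $\psi_z$ (\cite[Proposition~3.1.1]{Rhodes&Steinberg:2009qt}) to a closed subgroup $K''$ with $\psi_z(K'')=K\cap\img(\psi_z)$; injectivity of $\psi_z$ forces $h\in K''\subseteq H$, idempotency of $\psi_f$ on its image forces the lift $v$ of $u^m$ to equal $h$, hence $u^m=g$; and finally the theorem that closed subgroups of free profinite semigroups are torsion-free (\cite[Theorem~1]{Rhodes&Steinberg:2008}) gives $u=g$. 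That torsion-freeness is the crucial profinite input --- the paper itself flags it --- and nothing in your proposal plays its role, so the argument as proposed does not go through.
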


\begin{proof}
  Let $k$ be a positive integer such that $\bsigma^{(k)}$ is \pv
  S-saturating. By
  Corollary~\ref{c:chain-of-idempotents-in-the-inverse-limit}, we may
  take idempotent pseudowords $g\in J_{\pv S}(\bsigma)\cap\img_{\pv
    S}(\bsigma)$ and $h\in J_{\pv S}(\bsigma^{(k)})\cap\img_{\pv
    S}(\bsigma^{(k)})$ such that $g=\prov S\sigma_{0,k}(h)$. We want
  to show that the maximal subgroup of $\Om {A_0}S$ to which $g$
  belongs is contained in $\img_{\pv S}(\bsigma)$.
  
  Because $\bsigma$ is recurrent,
  Proposition~\ref{p:idempotents-in-Pw-Cat} yields an $\pv S$-model
  $(\Gamma,\psi,e)$ of $\bsigma$ where $e$ is an idempotent. Let~$z$ be the prefix of length $k$ of~$e$ and consider the idempotent
  $f=e^{(k)}z=z^{-1}ez$.
  We claim that $\psi_z$ and $\psi_{z^{-1}e}$ restrict to mutually inverse continuous isomorphisms between
  $\img_{\pv S}(\bsigma^{(k)})$ and $\img_{\pv S}(\bsigma)$.

  In order to prove the claim, we first note that $\img_{\pv S}(\bsigma)=\img(\psi_e)$ by Corollary~\ref{c:compressions-versus-models}. Since $(\Gamma,\psi,z^{-1}e)$ is an $\pv S$-model of
  $\bsigma^{(k)}$ by
  Lemma~\ref{l:cutting-finite-prefixes-Pw-translation-for-models}, we have
  \begin{equation*}
    \img_{\pv S}(\bsigma^{(k)})=\img(\psi_{z^{-1}e})=\img(\psi_f),
  \end{equation*}
  where the first equality holds by
  Corollary~\ref{c:compressions-versus-models}
  and the second because $z^{-1}e\green{R}f$.
  On the other hand, the equalities $zf\cdot z^{-1}e=e$, $zf=ez$ yield $zf\green{R} e$
  and so
  \begin{equation*}
   \img(\psi_{zf})=\img(\psi_e)=\img_{\pv S}(\bsigma).  
 \end{equation*}
 
  As $f$ is idempotent, the homomorphism $\psi_f$ restricts to the
  identity on $\img(\psi_f)$ which can be factored as in the following
  commutative diagram of restricted mappings, which, for simplicity,
  are indicated simply by adding a vertical bar:
  \begin{displaymath}
    \xymatrix@C=0mm{
      \img(\psi_f)
      \ar[rr]^{\psi_f|=\mathrm{id}}
      \ar[rd]_{\psi_z|}
      &
      &
      \img(\psi_f)
      \\
      &
      \img(\psi_{zf})
      \ar[ru]_{\psi_{z^{-1}e}|}.
      &
    }
  \end{displaymath}
 In view of the aforementioned equalities $\img(\psi_f)=\img_{\pv S}(\bsigma^{(k)})$
  and $\img(\psi_{zf})=\img_{\pv S}(\bsigma)$,
  it follows that $\psi_z$ restricts to a continuous isomorphism from
  $\img_{\pv S}(\bsigma^{(k)})$ onto $\img_{\pv S}(\bsigma)$, and that $\psi_{z^{-1}e}$
  restricts to a continuous isomorphism from $\img_{\pv S}(\bsigma)$ onto
  $\img_{\pv S}(\bsigma^{(k)})$. This proves the claim.

  In what follows, bear in mind that
  the equality
  \begin{equation*}
  \psi_{z^{-1}e}(g)=h
  \end{equation*}
  holds: indeed, one has $g=\psi_z(h)$
  as $\psi_z=\prov V\sigma_{0,k}$,
  and $h\in \img_{\pv S}(\bsigma^{(k)})= \img(\psi_f)$,
  whence $h=\psi_f(h)=\psi_{z^{-1}e}(\psi_z(h))=\psi_{z^{-1}e}(g)$.
  
  Since $\bsigma^{(k)}$ is \pv S-saturating, by
  Proposition~\ref{p:saturation-equivalence} we know that $\img_{\pv
    S}(\bsigma^{(k)})$ contains the maximal subgroup $H$ of~$J_{\pv
    S}(\bsigma^{(k)})$ to which the idempotent $h$ belongs. As $\psi_{z^{-1}e}$
  restricts to an isomorphism from $\img_{\pv S}(\bsigma)$ to
  $\img_{\pv S}(\bsigma^{(k)})$, the maximal subgroup $G$
  of~$\img_{\pv S}(\bsigma)$ containing the idempotent $g$ is such that
  $\psi_{z^{-1}e}(G)=H$. Let $K$ be the maximal subgroup
  of~$\Om{A_0}S$ containing $g$. Then, as $K\supseteq G$ and $H$ is a maximal subgroup of $\Om{A_k}S$, we must have
  \begin{equation*}
    \psi_{z^{-1}e}(G) = H = \psi_{z^{-1}e}(K).
  \end{equation*}
  Hence, to show that $\bsigma$ is \pv S-saturating, it suffices to
  show that the restriction of $\psi_{z^{-1}e}$ to $K$ is injective, as that implies that $K=G\subseteq \img_{\pv S}(\bsigma)$. The reader may wish to look at Figure~\ref{fig:torsion-free}
  while checking the proof.

  \begin{figure}[ht]
    \centering
    \begin{tikzpicture} [scale = 0.5,
      decoration={
        markings,
        mark=at position 0.5 with {\arrow{>}}
      }]
      
      \draw [very thick] (-6,-4) rectangle (2,4.2);
      \draw (-5.5,-3.5) node {$K$};
      \node (u) at (-2.5,-3.2) {$u$};
      \draw [thin] (-5,-2.4) rectangle (1,3.6);
      \draw (-4.3,-1.9) node {$K'$};
      \node (um) at (-2,-1.5) {$u^m$};
      \draw [thick] (-4,-0.6) rectangle (-0.1,3);
      \draw (-3.5,0) node {$G$};
      \node (g) at (-2,1.5) {$g$};
      
      \draw [thick] (5,-2) rectangle (11,4);
      \node (h) at (8,1.5) {$h$};
      \node(v) at (8,0) {$v$};
      \draw [thin] (6,-1) rectangle (10,3);
      \draw (9.4,2.5) node {$K''$};
      \draw (10.5,3.5) node {$H$};
      
      \draw [-,thick,postaction={decorate},out=160,in=20] (h) to
      node[midway,above] {$\psi_{z}$} (g);
      
      \draw [-,thick,postaction={decorate},out=-150,in=-10] (v) to
      node[midway,below] {$\psi_{z}$} (um);
      
      \draw [-,thin,postaction={decorate},out=-20,in=-160] (g) to
      node[below] {$\psi_{z^{-1}e}$} (h);
      
      \draw [-,thin,postaction={decorate},out=-20,in=-45] (u) to
      node[midway,below] {$\psi_{z^{-1}e}$} (h);
    \end{tikzpicture}
    \caption{Illustration of the proof of
      Theorem~\ref{t:recurrent-encoding-implies-saturating}}
    \label{fig:torsion-free}
  \end{figure}

  Let $u\in K$ be such that $\psi_{z^{-1}e}(u)=h$.
  Note that
  \begin{equation*}
    \lim u^{n!}=u^\omega=g\in \img(\psi_{z}).
  \end{equation*}
  As the set $\psi_z(A_k)^+$ is a recognizable language by Kleene's theorem~\cite[Theorem 3.2]{Lallement:1979}, and the equality $\img(\psi_z)=\clos S(\psi_z(A_k)^+)$ holds by continuity of $\psi_z$, we know that the set $\img(\psi_z)$
  is clopen by Theorem~\ref{t:V-recognizability}. Hence, there is a positive
  integer~$m$ such that $u^m\in \img(\psi_z)$. Let $K' = K\cap\img(\psi_z)$. Since $K'$ is a closed subgroup,
  the closed subsemigroup $\psi_z^{-1}(K')$ of $\Om {A_k}S$ contains a closed subgroup $K''$ such that $\psi_z(K'')=K'$~\cite[Proposition~3.1.1]{Rhodes&Steinberg:2009qt}. As $u^m\in K'$, we may take $v\in K''$ such that $u^m=\psi_z(v)$.

  We claim that $K''\subseteq H$. On one hand we have $\psi_z(h)=g\in K'=\psi_z(K'')$,
  and on the other hand, as $\bsigma$ is encoding, the homomorphism $\psi_z$ is injective by Theorem~\ref{t:sufficient-conditions-for-injectivity}.
  Therefore, we must have $h\in K''$, which establishes the claim $K''\subseteq H$ by maximality of~$H$.

  In particular, we have $v\in H$. On the other hand,  we also have
  \begin{equation*}
    h=h^m=\psi_{z^{-1}e}(u^m)=\psi_{z^{-1}e}(\psi_z(v))=\psi_f(v).
  \end{equation*}
  Since $H\subseteq\img_{\pv S}(\bsigma^{(k)})$ and $\psi_f$ restricts to
  the identity on $\img_{\pv S}(\bsigma^{(k)})$, it follows that
  $v=h$, thus $u^m=\psi_z(v)=\psi_z(h)=g$. But every closed subgroup of $\Om {A_0}S$ is torsion-free
  by~\cite[Theorem~1]{Rhodes&Steinberg:2008}, and so $u=g$. This
  proves that the restriction of $\psi_{z^{-1}e}$ to $K$ is injective,
  thereby establishing that $\bsigma$ is \pv S-saturating.
\end{proof}

The next proposition and the ensuing corollary, which are not necessary for the sequel, shed additional  light on Theorem~\ref{t:recurrent-encoding-implies-saturating}.

\begin{proposition}
  \label{p:occasionally-versus-eventually}
  Let $\bsigma=(\sigma_n)_{n\in\nn}$ be a primitive directive sequence.
  Let $m\in\nn$ be such that $\sigma_{0,m}^{\pv V}$ is injective.
  If $\bsigma$ is \pv V-saturating,
  then $\bsigma^{(m)}$ is \pv V-saturating.
\end{proposition}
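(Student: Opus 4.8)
The plan is to exploit the hypothesis that $\prov V\sigma_{0,m}$ is injective by turning it into an isomorphism onto the $\pv V$-image of $\bsigma$, and then to transport a maximal subgroup witnessing the saturation of $\bsigma$ backwards along this isomorphism. Write $\phi=\prov V\sigma_{0,m}$. By Corollary~\ref{c:V-image-sent-to-image} we have $\img_\pv V(\bsigma)=\phi(\img_\pv V(\bsigma^{(m)}))$, so $\phi$ maps $\img_\pv V(\bsigma^{(m)})$ onto $\img_\pv V(\bsigma)$; since $\img_\pv V(\bsigma^{(m)})$ is compact and $\phi$ is an injective continuous homomorphism into a Hausdorff semigroup, its restriction is a continuous isomorphism $\img_\pv V(\bsigma^{(m)})\to\img_\pv V(\bsigma)$. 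Set $K=J_\pv V(\bsigma^{(m)})\cap\img_\pv V(\bsigma^{(m)})$ and $J=J_\pv V(\bsigma)\cap\img_\pv V(\bsigma)$, which by Theorem~\ref{t:Im-intersection-J} are regular $\green{J}$-classes of the respective profinite images. From Corollary~\ref{c:at-the-limit-properties-of-lambda}\ref{item:corollary-at-the-limit-properties-of-lambda-2} (with $n=0$) we get $\phi(K)\subseteq J$; since $\phi$ restricts to an isomorphism and isomorphisms send $\green{J}$-classes to $\green{J}$-classes, the inclusion $\phi(K)\subseteq J$ of two $\green{J}$-classes forces $\phi(K)=J$.

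Next I would transport the witnessing subgroup. As $\bsigma$ is $\pv V$-saturating, fix a maximal subgroup $G$ of $J_\pv V(\bsigma)$ with $G\subseteq\img_\pv V(\bsigma)$ and let $e$ be its idempotent, so that $G=H_e$ (the $\green{H}$-class of $e$ in $\Om {A_0}V$) and $e\in J$. Because $\phi(K)=J$ and $\phi$ is injective on $\img_\pv V(\bsigma^{(m)})$, there is a unique $f\in K$ with $\phi(f)=e$; this $f$ is idempotent (the preimage of an idempotent under an isomorphism) and lies in $J_\pv V(\bsigma^{(m)})$ since $K\subseteq J_\pv V(\bsigma^{(m)})$. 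Let $H=H_f$ be the maximal subgroup of $\Om {A_m}V$ with idempotent $f$, which is an $\green{H}$-class of $J_\pv V(\bsigma^{(m)})$. It then suffices to prove $H\subseteq\img_\pv V(\bsigma^{(m)})$, for then $\img_\pv V(\bsigma^{(m)})$ contains a maximal subgroup of $J_\pv V(\bsigma^{(m)})$, which is exactly the assertion that $\bsigma^{(m)}$ is $\pv V$-saturating.

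For the final step, take $h\in H$, so that $h\green{H}f$ in $\Om {A_m}V$. Since $\phi$ preserves the Green quasi-orders $\le_\green{R}$ and $\le_\green{L}$, it maps $\green{H}$-related elements to $\green{H}$-related elements, whence $\phi(h)\green{H}\phi(f)=e$ in $\Om {A_0}V$; that is, $\phi(h)\in H_e=G\subseteq\img_\pv V(\bsigma)=\phi(\img_\pv V(\bsigma^{(m)}))$. Choosing $h'\in\img_\pv V(\bsigma^{(m)})$ with $\phi(h')=\phi(h)$ and invoking the injectivity of $\phi$ gives $h=h'\in\img_\pv V(\bsigma^{(m)})$, as required. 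The one point that needs care — and the reason the injectivity hypothesis is indispensable — is that $\phi$ need not \emph{reflect} Green's relations, so one cannot conclude directly that $H$ maps into a single $\green{H}$-class of the image and pull it back; the argument sidesteps this by only using that $\phi$ preserves $\green{H}$, combined with the fact that, by saturation of $\bsigma$, the whole target subgroup $G$ already lies in $\img_\pv V(\bsigma)$, after which injectivity retrieves $h$ inside $\img_\pv V(\bsigma^{(m)})$.
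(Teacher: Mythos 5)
Your proof is correct, and it transports the witnessing subgroup in the opposite direction to the paper's. The paper starts \emph{upstairs}: by Theorem~\ref{t:Im-intersection-J} the regular $\green{J}$-class $K=J_{\pv V}(\bsigma^{(m)})\cap\img_{\pv V}(\bsigma^{(m)})$ contains a maximal subgroup $G$ of the semigroup $\img_{\pv V}(\bsigma^{(m)})$; pushing $G$ forward through the restricted isomorphism (same Corollary~\ref{c:V-image-sent-to-image} plus injectivity as you use), it lands in $J_{\pv V}(\bsigma)$ by Corollary~\ref{c:at-the-limit-properties-of-lambda}, and the saturation hypothesis is then invoked through Proposition~\ref{p:saturation-equivalence}\ref{i:saturation-equivalence-3} to upgrade $\prov V\sigma_{0,m}(G)$ to a maximal subgroup of the whole of $\Om{A_0}V$; finally, if $H$ is the maximal subgroup of $\Om{A_m}V$ containing $G$, then $\prov V\sigma_{0,m}(G)=\prov V\sigma_{0,m}(H)$ by maximality, and injectivity collapses $H$ onto $G$. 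You instead start \emph{downstairs} with the saturating subgroup $G=H_e\subseteq\img_{\pv V}(\bsigma)$ given directly by the definition, pull the idempotent $e$ back to $f\in K$ (which requires your extra observation that $\phi(K)=J$, correctly obtained from Corollary~\ref{c:at-the-limit-properties-of-lambda} plus the fact that an isomorphism maps the $\green{J}$-class $K$ onto a $\green{J}$-class contained in, hence equal to, $J$), and then verify $H_f\subseteq\img_{\pv V}(\bsigma^{(m)})$ elementwise, using that homomorphisms preserve $\green{H}$ together with injectivity on all of $\Om{A_m}V$. The trade-off: the paper's route avoids identifying $\phi(K)$ with $J$ and avoids pointwise reasoning, at the cost of routing through Proposition~\ref{p:saturation-equivalence} and a maximality-collapse argument; your route uses the definition of saturation directly and replaces the collapse argument by a transparent pull-back, at the cost of one extra $\green{J}$-class identification. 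Both hinge on exactly the same two external inputs, so the arguments are close cousins, but yours is a genuine reorganization rather than a paraphrase, and your closing remark about $\phi$ not reflecting Green's relations correctly pinpoints why the pull-back must go through the subgroup $G$ rather than through an $\green{H}$-class computation in the image.
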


\begin{proof}
  The intersection $J_{\pv V}(\bsigma^{(m)})\cap\img_{\pv V}(\bsigma^{(m)})$
  is a regular $\green{J}$-class of $\img_{\pv V}(\bsigma^{(m)})$, by Theorem~\ref{t:Im-intersection-J}, and so it contains a maximal subgroup $G$ of $\img_{\pv V}(\bsigma^{(m)})$.

  Since $\img_{\pv V}(\bsigma)=\sigma_{0,m}^{\pv V}(\img_{\pv V}(\bsigma^{(m)}))$ by Corollary~\ref{c:V-image-sent-to-image},
  and $\sigma_{0,m}^{\pv V}$ is injective,
  we know that $\sigma_{0,m}^{\pv V}$
  restricts to a continuous isomorphism
  $\img_{\pv V}(\bsigma^{(m)})\to \img_{\pv V}(\bsigma)$.
  Therefore, $\sigma_{0,m}^{\pv V}(G)$ is a maximal subgroup of $\img_{\pv V}(\bsigma)$. Moreover, the inclusion $\prov V\sigma_{0,m}(G)\subseteq J_{\pv V}(\bsigma)$ holds by Corollary~\ref{c:at-the-limit-properties-of-lambda}.
  Since we are assuming that $\bsigma$ is \pv V-saturating,
  the group $\sigma_{0,m}^{\pv V}(G)$ is in fact a maximal subgroup of $\Om {A_0}V$, by Proposition~\ref{p:saturation-equivalence}.

  Let $H$ be the maximal subgroup of $\Om {A_m}V$
  containing $G$.
  Since $\prov V\sigma_{0,m}(G)$ is a maximal subgroup of $\Om {A_0}V$,
  we necessarily have
  $\prov V\sigma_{0,m}(G)=\prov V\sigma_{0,m}(H)$, whence $G=H$ by injectivity of $\prov V\sigma_{0,m}$. This shows that the maximal subgroup
  $H$ of $\Om {A_m}V$ is contained in $\img_{\pv V}(\bsigma^{(m)})$, thus establishing that $\bsigma^{(m)}$ is $\pv V$-saturating.
\end{proof}

We say that a directive sequence $\bsigma=(\sigma_n)_{n\in\nn}$ is \emph{eventually \pv V-saturating} if there is $k\in\nn$ such that
$\bsigma^{(m)}$ is \pv V-saturating for every $m\geq k$.

\begin{corollary}
  \label{c:occasionally-versus-eventually}
  Let $\pv{H}$ be an extension-closed pseudovariety of groups and $\bsigma$ be a primitive directive sequence. Assume that $\sigma_n$ is an $\pvo{H}$-encoding for all $n\in\nn$.
  Then $\bsigma$ is eventually \pvo H-saturating if and only if
  there is $k\in\nn$ such that $\bsigma^{(k)}$ is \pvo H-saturating.
\end{corollary}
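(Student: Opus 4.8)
The plan is to reduce everything to Proposition~\ref{p:occasionally-versus-eventually}, whose hypothesis on injectivity of pro-$\pvo H$ extensions is exactly what extension-closedness of $\pv H$ buys us. The ``only if'' direction is immediate from the definition: if $\bsigma$ is eventually $\pvo H$-saturating, then there is $k\in\nn$ such that $\bsigma^{(m)}$ is $\pvo H$-saturating for every $m\geq k$; in particular $\bsigma^{(k)}$ is $\pvo H$-saturating. So all the content is in the converse, and the only obstacle is bookkeeping, as I explain below.

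First I would record the injectivity that we need. Since $\pv H$ is extension-closed, we have $\pv H*\pv H\subseteq\pv H$. As each $\sigma_n$ is an $\pvo H$-encoding, Theorem~\ref{t:sufficient-conditions-for-injectivity}, applied with $\pv K=\pv H$, shows that the pro-$\pvo H$ extension $\prov{\pvo H}\sigma_n$ is injective for every $n\in\nn$. By functoriality of the assignment $\varphi\mapsto\prov{\pvo H}\varphi$, every composite $\prov{\pvo H}\sigma_{i,j}=\prov{\pvo H}\sigma_i\circ\cdots\circ\prov{\pvo H}\sigma_{j-1}$ is then a composition of injective continuous homomorphisms, hence injective, for all $i\leq j$. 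I would also note at this point that $\pvo H$ contains $\pv N$: indeed $\pv I\subseteq\pv H$ gives $\pv A=\pvo I\subseteq\pvo H$, and $\pv N\subseteq\pv A$, so the notion of $\pvo H$-saturating sequence is available.

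Now suppose there is $k\in\nn$ with $\bsigma^{(k)}$ being $\pvo H$-saturating. I would apply Proposition~\ref{p:occasionally-versus-eventually} to the primitive directive sequence $\btau=\bsigma^{(k)}$ (a tail of a primitive sequence is primitive) and the pseudovariety $\pvo H$. For each $m\in\nn$ one has $\tau_{0,m}=\sigma_{k,k+m}$, whose pro-$\pvo H$ extension $\prov{\pvo H}\sigma_{k,k+m}$ is injective by the previous paragraph; and $\btau$ is $\pvo H$-saturating by hypothesis. Hence Proposition~\ref{p:occasionally-versus-eventually} yields that $\btau^{(m)}=\bsigma^{(k+m)}$ is $\pvo H$-saturating. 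Letting $m$ range over $\nn$, this shows that $\bsigma^{(j)}$ is $\pvo H$-saturating for every $j\geq k$, which is precisely the assertion that $\bsigma$ is eventually $\pvo H$-saturating.

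There is essentially no serious obstacle here: the injectivity is a direct consequence of extension-closedness through Theorem~\ref{t:sufficient-conditions-for-injectivity}, and the propagation of saturation along the tail sequence is exactly Proposition~\ref{p:occasionally-versus-eventually}. The only points that warrant a line of verification are the identity $(\sigma^{(k)})_{0,m}=\sigma_{k,k+m}$ and the identity $(\bsigma^{(k)})^{(m)}=\bsigma^{(k+m)}$, together with the fact that tails of primitive sequences remain primitive; all three are routine.
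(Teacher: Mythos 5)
Your proposal is correct and follows essentially the same route as the paper: both directions reduce to Theorem~\ref{t:sufficient-conditions-for-injectivity} (with $\pv K=\pv H$, where extension-closedness gives $\pv H*\pv H\subseteq\pv H$) to get injectivity of each $\prov{\pvo H}\sigma_n$ and hence of the composites $\prov{\pvo H}\sigma_{k,m}$, followed by an application of Proposition~\ref{p:occasionally-versus-eventually} to the tail $\bsigma^{(k)}$ for each $m\geq k$. Your extra bookkeeping (tails of primitive sequences are primitive, $(\bsigma^{(k)})^{(m)}=\bsigma^{(k+m)}$, and $\pv N\subseteq\pvo H$ so the saturation notion applies) is left implicit in the paper but is exactly right.
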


\begin{proof}
  Suppose that there is $k\in\nn$ such that $\bsigma^{(k)}$ is \pvo H-saturating.
   Let $m\in\nn$ be such that $k\leq m$.
   By assumption and Theorem~\ref{t:sufficient-conditions-for-injectivity}, each $\sigma_n^{\pvo H}$ is injective, hence so is $\sigma_{k,m}^{\pvo H} = \sigma_k^{\pvo H}\circ\cdots\circ\sigma_{m-1}^{\pvo H}$.
   Applying Proposition~\ref{p:occasionally-versus-eventually} to $\bsigma^{(k)}$,
   we deduce that $\bsigma^{(m)}$ is \pvo H-saturating.
   Since this holds for every $m\geq k$, we have established the ``if'' part of the corollary. The ``only if'' part is trivial.
\end{proof}

\subsection{Recognizable directive sequences}
\label{sec:case-recogn-direct}

In earlier work, the first two authors showed that every primitive aperiodic proper substitution $\sigma\from A^+\to A^+$ is such that $\img((\prov S\sigma)^\omega)$ is a maximal subgroup of $\Om AS$~(cf.~\cite[Lemma~6.3]{Almeida&ACosta:2013}, see also~\cite[Theorem~5.6]{Almeida&ACosta:2013}). An essential ingredient of the proof is Mossé's theorem stating that every primitive aperiodic substitution is recognizable. Therefore, the following theorem may be considered a generalization to the S-adic setting
of the result of the two first authors.

\begin{theorem}\label{t:surjectivity}
  Let $\bsigma$ be a primitive directive sequence. If
  $\bsigma$ is recognizable, then it is $\pv{S}$-saturating.
\end{theorem}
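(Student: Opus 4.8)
The plan is to verify condition~\ref{i:saturation-equivalence-2} of Proposition~\ref{p:saturation-equivalence}: I will produce an idempotent $e$ of $J_{\pv S}(\bsigma)\cap\img_\pv{S}(\bsigma)$ whose entire $\green{H}$-class $H_e$ of $\Om {A_0}S$ lies in $\img_\pv{S}(\bsigma)$. Since $\img_\pv{S}(\bsigma)=\bigcap_{n}\img(\prov S\sigma_{0,n})$, this amounts to $H_e\subseteq\img(\prov S\sigma_{0,n})$ for every $n$. First I would fix, via Corollary~\ref{c:chain-of-idempotents-in-the-inverse-limit}, a coherent sequence of idempotents $e_n\in J_{\pv S}(\bsigma^{(n)})\cap\img_\pv{S}(\bsigma^{(n)})$ with $e_0=e$ and $\prov S\sigma_{0,n}(e_n)=e$. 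Writing $z=\pli(e)$ and $z_n=\pli(e_n)$, Proposition~\ref{p:parametrization-of-H-classes} gives $z\in X(\bsigma)$ and $z_n\in X(\bsigma^{(n)})$, and from $\prov S\sigma_{0,n}(e_n)=e$ one reads off $z=\sigma_{0,n}(z_n)$ with the origin a $\sigma_{0,n}$-cutting point of $z_n$.

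Next I would record that $\prov S\sigma_{0,n}$ maps the maximal subgroup $H_{e_n}$ of $\Om {A_n}S$ into $H_e$: every $w\in H_{e_n}$ shares the pair $(\overleftarrow{w},\overrightarrow{w})=(\overleftarrow{z_n},\overrightarrow{z_n})$, so $\prov S\sigma_{0,n}(w)$ shares $(\sigma_{0,n}(\overleftarrow{z_n}),\sigma_{0,n}(\overrightarrow{z_n}))=(\overleftarrow{e},\overrightarrow{e})$, whence $\pli(\prov S\sigma_{0,n}(w))=z$ and $\prov S\sigma_{0,n}(w)\green{H}e$ by Lemma~\ref{l:characterization-of-Green-H-in-J-class-of-minimal-shift}. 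Thus $\prov S\sigma_{0,n}$ restricts to a continuous homomorphism of profinite groups $\theta_n\from H_{e_n}\to H_e$, and the theorem reduces to the claim that each $\theta_n$ is surjective. Indeed $H_e\cap\img(\prov S\sigma_{0,n})=\theta_n(H_{e_n})$: the inclusion $\supseteq$ is clear, while for $\subseteq$ any $q\in H_e$ lying in the image satisfies $\overrightarrow{q}=\sigma_{0,n}(\overrightarrow{\tilde q})$ and $\overleftarrow{q}=\sigma_{0,n}(\overleftarrow{\tilde q})$ for some preimage $\tilde q$, so recognizability of $\sigma_{0,n}$ in $X(\bsigma^{(n)})$ forces $\overrightarrow{\tilde q}=\overrightarrow{z_n}$ and $\overleftarrow{\tilde q}=\overleftarrow{z_n}$ (the boundary letters being absorbed via Lemma~\ref{l:cancelative-property-in-JX}), i.e. $\tilde q\in H_{e_n}$. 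Hence surjectivity of all $\theta_n$ yields $H_e\subseteq\img_\pv{S}(\bsigma)$.

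To prove surjectivity I would argue at the level of finite continuous quotients, exploiting recognizability in Mossé's sense, available because $X(\bsigma^{(n)})$ is minimal (Proposition~\ref{p:mosse-sense}). Fix a continuous homomorphism $\varphi\from\Om {A_0}S\to S$ onto a finite semigroup; since $\theta_n(H_{e_n})$ is closed, it suffices to show $\varphi(\theta_n(H_{e_n}))=\varphi(H_e)$ for all such $\varphi$. Every element of the finite group $\varphi(H_e)$ has the form $\varphi(w)$ for a factor $w=z[i,j)$ of $z$, and here I may take $i,j$ to be $\sigma_{0,n}$-cutting points of $z$ at mutual distance large compared with the recognizability constant $\ell$; recognizability then identifies $w$ with $\sigma_{0,n}(w')$ for a factor $w'=z_n[i',j')$ of $z_n$, so $\varphi(w)=\varphi(\prov S\sigma_{0,n}(w'))$. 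Choosing these windows between consecutive occurrences of a fixed long prefix of $z_n$ (a return-word decomposition) realises $w'$ as a value taken inside $H_{e_n}$, giving $\varphi(w)\in\varphi(\theta_n(H_{e_n}))$; Theorem~\ref{t:V-recognizability} guarantees the clopenness needed to keep these images under control as $\varphi$ varies.

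The hard part will be exactly this surjectivity step: transferring the combinatorial content of recognizability — the uniqueness and local determination of the cutting points of $z$ — into a statement about $H_e$ and its finite quotients. The delicate bookkeeping lies in choosing the windows $z[i,j)$ so that their $\varphi$-values exhaust $\varphi(H_e)$ while beginning and ending at cutting points and respecting a return-word structure that keeps the desubstituted factors inside $H_{e_n}$. The cancellation properties (Proposition~\ref{p:letter-super-cancelativity} and Lemma~\ref{l:cancelative-property-in-JX}) are what make the boundary letters disappear, so that the origin-centred cut produces an \emph{exact} preimage rather than one off by a bounded prefix and suffix.
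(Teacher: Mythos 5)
Your setup coincides with the paper's: the coherent sequence of idempotents from Corollary~\ref{c:chain-of-idempotents-in-the-inverse-limit}, the points $z_k=\pli(e_k)$ with $\sigma_{0,k}(z_k)=z_0$, and the reduction to showing that the maximal subgroup $H$ of $e_0$ lies in $\img(\prov S\sigma_{0,k})$ for every $k$. But there is a genuine gap exactly where you yourself flag ``the hard part''. In your surjectivity step you assert that every element of $\varphi(H_e)$ can be realized as $\varphi\bigl(z[i,j)\bigr)$ where ``$i,j$ [may be taken to be] $\sigma_{0,n}$-cutting points of $z$''. Nothing in what precedes justifies placing the window endpoints at cutting points: continuity of $\varphi$ only gives \emph{some} factor $w$ of $z$ with $\varphi(w)=\varphi(s)$, and an arbitrary occurrence of such a $w$ has no reason to be aligned with the cutting sequence --- producing that alignment is precisely the content of the theorem, so the argument is circular at its crux. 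The fallback you sketch (windows delimited by return words to a long prefix of $z_n$, claimed to ``realise $w'$ as a value taken inside $H_{e_n}$'' and to exhaust $\varphi(H_e)$) is not developed and is not a routine fact: relating the maximal subgroup of $J_{\pv S}(X)$ to return-word data is a substantial matter (it is the theme of the Rauzy-graph interpretation of $G(X)$ in~\cite{Almeida&ACosta:2016b}) and is not available in this paper in the form you would need. Your auxiliary identity $H_e\cap\img(\prov S\sigma_{0,n})=\theta_n(H_{e_n})$ is also shakier than you suggest --- for a preimage $\tilde q$ you tacitly assume $\fac(\tilde q)\subseteq L(\bsigma^{(n)})$, i.e.\ that $\tilde q$ sits in $J_{\pv S}(\bsigma^{(n)})$, which does not follow from $\prov S\sigma_{0,n}(\tilde q)\in J_{\pv S}(\bsigma)$ --- but that identity is only motivational in your plan.

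The paper bridges the gap with a lighter device that your sketch misses: for $s\in H$ one has the group identity $s=e_0se_0$, and Lemma~\ref{l:metric-open-mapping} lifts any approximation $s=\lim t_n$ with $t_n\in L(\bsigma)$ to factorizations $t_n=p_ns_nq_n$ where $p_n\to e_0$, $s_n\to s$, $q_n\to e_0$. Consequently, for $n$ large the finite word $s_n$ comes automatically flanked on both sides by the length-$\ell_k$ suffix $z_0[-\ell_k,0)$ and prefix $z_0[0,\ell_k)$ of $z_0$ at the origin (this is~\eqref{eq:pref-suff}, obtained from clopenness of the prefix and suffix sets). Placing an occurrence $t_n=z_0[j_1,j_4)$ inside $z_0$ --- possible by minimality --- and applying Moss\'e recognizability of $\sigma_{0,k}$ for $z_k$ (Proposition~\ref{p:mosse-sense} combined with Lemma~\ref{l:composition-of-recognizable-homomorphisms}) at the two interior junctions shows that the endpoints $j_2,j_3$ of the occurrence of $s_n$ are cutting points, because the windows $z_0[j_2-\ell_k,j_2+\ell_k)$ and $z_0[j_3-\ell_k,j_3+\ell_k)$ both equal $z_0[-\ell_k,\ell_k)$ and $0$ is a cutting point by construction. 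Hence $s_n\in\sigma_{0,k}(A_k^+)$ \emph{exactly} --- no boundary correction needed --- and $s=\lim s_n\in\img(\prov S\sigma_{0,k})$ since that image is closed. No surjectivity of $\theta_n\from H_{e_n}\to H_e$, no finite-quotient bookkeeping, and no return-word structure enters. If you wish to salvage your plan, replace your window-selection step by this factorization argument: it is $s=e_0se_0$ together with Lemma~\ref{l:metric-open-mapping} that manufactures occurrences which begin and end, within the recognizability constant, exactly like the origin of $z_0$.
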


For the proof of this theorem we need the next couple of lemmas.  The first one is included in~\cite[Lemma 3.5]{Berthe&Steiner&Thuswaldner&Yassawi:2019} (also in \cite[Proposition 6.4.16]{Durand&Perrin:2022}).

\begin{lemma}
  \label{l:composition-of-recognizable-homomorphisms}
   Let $\bsigma=(\sigma_n)_{n\in\nn}$ be a primitive directive
 sequence.
  Let $n,m\in\nn$, with $n<m$. The substitution
  $\sigma_{n,m}$ is recognizable in $X(\bsigma^{(m)})$
  if and only if, for every integer $k$ such that $n\leq k\leq m$,
  the substitution $\sigma_k$ is recognizable in~$X(\bsigma^{(k)})$.
\end{lemma}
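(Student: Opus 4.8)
The plan is to reduce the statement to the two-step case and then induct on $m-n$. Concretely, I would first prove the following composition principle: for any $n<\ell<m$, the substitution $\sigma_{n,m}$ is recognizable in $X(\bsigma^{(m)})$ if and only if $\sigma_{\ell,m}$ is recognizable in $X(\bsigma^{(m)})$ and $\sigma_{n,\ell}$ is recognizable in $X(\bsigma^{(\ell)})$. Granting this, the lemma follows by induction on $m-n$, taking $\ell=m-1$ in the inductive step; the base case $m=n+1$ is a tautology, since $\sigma_{n,n+1}=\sigma_n$ and recognizability of $\sigma_n$ in $X(\bsigma^{(n+1)})$ is exactly the single-step condition. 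Throughout I would use the elementary identity $\tau(\shift^q y)=\shift^{|\tau(y[0,q))|}\tau(y)$, valid for any homomorphism $\tau$, any biinfinite word $y$ and any $q\geq 0$, together with the fact that each letter has nonempty image, so that $t\mapsto|\tau(y[0,t))|$ is strictly increasing.

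For the direction asserting that recognizability of both factors yields recognizability of the composite, write $\rho=\sigma_{\ell,m}$ and $\tau=\sigma_{n,\ell}$. Given two centered $(\tau\rho)$-representations $(j_1,x_1)$, $(j_2,x_2)$ of a point $z$ with $x_1,x_2\in X(\bsigma^{(m)})$, I would set $y_i=\rho(x_i)\in X(\bsigma^{(\ell)})$ (using $\rho(X(\bsigma^{(m)}))\subseteq X(\bsigma^{(\ell)})$ from Lemma~\ref{l:changing-levels-shift}) and split each offset $j_i$ through the $\tau$-cutting points of $y_i$: there are unique $q_i<|\rho(x_i[0])|$ and $j_i'<|\tau(y_i[q_i])|$ with $j_i=|\tau(y_i[0,q_i))|+j_i'$. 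Then $(j_i',\shift^{q_i}y_i)$ is a centered $\tau$-representation of $z$, so recognizability of $\tau$ in $X(\bsigma^{(\ell)})$ gives $j_1'=j_2'$ and $\shift^{q_1}y_1=\shift^{q_2}y_2$; the latter makes $(q_1,x_1)$ and $(q_2,x_2)$ centered $\rho$-representations of a common point, whence recognizability of $\rho$ in $X(\bsigma^{(m)})$ forces $q_1=q_2$ and $x_1=x_2$. Unwinding the splitting then yields $j_1=j_2$.

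For the converse I would assume only that $\tau\rho$ is recognizable in $X(\bsigma^{(m)})$. To recover recognizability of $\rho$, I would take two centered $\rho$-representations $(q_1,x_1),(q_2,x_2)$ of some $w$; applying $\tau$ and the shift identity gives $\tau(w)=\shift^{r_i}\tau\rho(x_i)$ with $r_i=|\tau(\rho(x_i)[0,q_i))|<|\tau\rho(x_i[0])|$, so both $(r_i,x_i)$ are centered $(\tau\rho)$-representations of $\tau(w)$; recognizability of $\tau\rho$ gives $r_1=r_2$ and $x_1=x_2$, and strict monotonicity of the partial-image lengths upgrades $r_1=r_2$ to $q_1=q_2$. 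To recover recognizability of $\tau$, the one extra ingredient is that every $y\in X(\bsigma^{(\ell)})$ admits a centered $\rho$-representation by a point of $X(\bsigma^{(m)})$; I would prove this by checking that $Y=\bigcup_{0\leq c<N}\shift^c\rho(X(\bsigma^{(m)}))$ (with $N=\max_a|\rho(a)|$) is a shift space containing $\rho(X(\bsigma^{(m)}))$ and contained in $X(\bsigma^{(\ell)})$, so that $Y=X(\bsigma^{(\ell)})$ by the minimality clause of Lemma~\ref{l:changing-levels-shift}. Given centered $\tau$-representations $(j_1,y_1),(j_2,y_2)$ of $z$, replacing each $y_i$ by a centered $\rho$-representation $(c_i,x_i)$ turns $(j_i+|\tau(\rho(x_i)[0,c_i))|,x_i)$ into a centered $(\tau\rho)$-representation of $z$; recognizability of $\tau\rho$ gives $x_1=x_2$ and equality of the combined offsets, and the uniqueness of the splitting of that offset through $\rho(x[0])$ then recovers $c_1=c_2$ (hence $y_1=y_2$) and $j_1=j_2$.

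The routine but delicate part is the bookkeeping with cutting points: at each step one must verify that the representations produced are genuinely centered, i.e.\ that the offsets stay strictly below the length of the image of the zeroth letter, which is where the inequalities $q_i<|\rho(x_i[0])|$ and $j_i<|\tau(y_i[0])|$ get combined. The main conceptual obstacle, rather than this arithmetic, is the desubstitution-existence statement $Y=X(\bsigma^{(\ell)})$ used in the converse for $\tau$; everything else is a direct manipulation of $\sigma$-representations, but this step genuinely requires the characterization of $X(\bsigma^{(\ell)})$ as the smallest shift space containing $\sigma_{\ell,m}(X(\bsigma^{(m)}))$ provided by Lemma~\ref{l:changing-levels-shift}, together with the verification that the finite union $Y$ is shift-invariant (which uses $\shift^{|\rho(x[0])|}\rho(x)=\rho(\shift x)$ and its inverse).
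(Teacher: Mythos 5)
Your argument is correct, but there is no in-paper proof to compare it against: the paper quotes this lemma from the literature, citing \cite[Lemma~3.5]{Berthe&Steiner&Thuswaldner&Yassawi:2019} (see also \cite[Proposition~6.4.16]{Durand&Perrin:2022}). Your two-step composition principle --- $\tau\rho$ recognizable in $X(\bsigma^{(m)})$ if and only if $\rho$ is recognizable in $X(\bsigma^{(m)})$ and $\tau$ is recognizable in $X(\bsigma^{(\ell)})$ --- followed by induction on $m-n$ is precisely the route taken in the cited source, including the splitting of offsets through cutting points and the desubstitution statement extracted from Lemma~\ref{l:changing-levels-shift}. So your proposal reconstructs the standard proof rather than finding a new one, which is exactly what the citation presupposes; all three directions (composite from factors, $\rho$ from composite, $\tau$ from composite via desubstitution) are handled the way they are there, and your verification that $Y=\bigcup_{0\leq c<N}\shift^c\rho(X(\bsigma^{(m)}))$ is a shift space squeezed between $\rho(X(\bsigma^{(m)}))$ and $X(\bsigma^{(\ell)})$ is sound (continuity of $x\mapsto\rho(x)$ and compactness give closedness, and your two shift identities give invariance).

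Two small repairs. First, in the desubstitution step, the equality $Y=X(\bsigma^{(\ell)})$ gives every $y\in X(\bsigma^{(\ell)})$ a $\rho$-representation with offset $c<N$, which need not be centered, since $c<|\rho(x[0])|$ can fail when $|\rho(x[0])|<N$; you still need the finite normalization replacing $(c,x)$ by $\bigl(c-|\rho(x[0])|,\shift(x)\bigr)$ until the offset drops below $|\rho(x[0])|$ --- your phrase ``replacing each $y_i$ by a centered $\rho$-representation'' silently uses this, and it should be made explicit. Second, note that the statement as printed carries an index slip: to match the paper's own definition of a recognizable directive sequence (and the cited sources), the condition should read that $\sigma_k$ is recognizable in $X(\bsigma^{(k+1)})$ for $n\leq k<m$; your base case ($\sigma_{n,n+1}=\sigma_n$ recognizable in $X(\bsigma^{(n+1)})$) and inductive step correctly work with this reading.
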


In~\cite[Proposition 4.4.17]{Almeida&ACosta&Kyriakoglou&Perrin:2020b}
one finds a proof of the following lemma.\footnote{In
  \cite[Proposition 4.4.17]{Almeida&ACosta&Kyriakoglou&Perrin:2020b}
  it is used the notation $\widehat{A^*}$ instead of $(\Om AS)^1$, and
  the assumption that $A$ is finite is implicit in the statement,
  since it is done globally in an early point of the chapter. Indeed,
  $\widehat{A^*}$ denotes there the free profinite \emph{monoid}
  generated by $A$, which is equal to $(\Om AS)^1$, see the last
  paragraph in~\cite[Section
  4.4]{Almeida&ACosta&Kyriakoglou&Perrin:2020b}. See
  also~\cite[Section 4.12]{Almeida&ACosta&Kyriakoglou&Perrin:2020b}
  for early references to this lemma.}

 \begin{lemma}
   \label{l:metric-open-mapping}
   Let $A$ be a finite alphabet. Let $u,v\in\Om AS$. If
   $(w_n)_{n\in\nn}$ is a sequence of elements of $\Om AS$ such that
   $\lim w_n=uv$, then there are sequences $(u_n)_{n\in\nn}$ and
   $(v_n)_{n\in\nn}$ of elements of $(\Om AS)^1$ respectively
   converging to $u$ and $v$ and such that $w_n=u_nv_n$ for every
   $n\in\nn$.
\end{lemma}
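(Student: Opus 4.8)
The plan is to deduce this sequential statement from the fact that multiplication in the free profinite monoid is an open map, combined with the metrizability of $\Om AS$ (which holds because $A$ is finite, so that $(\Om AS)^1$ is metrizable as well). Writing $m\from (\Om AS)^1\times(\Om AS)^1\to(\Om AS)^1$ for the multiplication, I would first prove that $m$ is open, and then use a standard diagonal argument to lift the convergence $w_n\to uv=m(u,v)$ to a convergence $(u_n,v_n)\to(u,v)$ along the fibres of $m$.

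For the openness of $m$, note that since $(\Om AS)^1$ is a Stone space its clopen subsets form a basis of the topology, whence the clopen rectangles $U\times V$ form a basis for the product topology; it therefore suffices to check that $UV=m(U\times V)$ is open whenever $U$ and $V$ are clopen. By Stone duality together with (the monoid version of) Theorem~\ref{t:V-recognizability}, every clopen subset of $(\Om AS)^1$ has the form $\clos S(L)$ for a regular language $L\subseteq A^*$. Writing $U=\clos S(L)$ and $V=\clos S(M)$, a routine compactness argument establishes the equality $UV=\clos S(LM)$: the inclusion $UV\subseteq\clos S(LM)$ uses continuity of multiplication applied to words approximating the factors, while the reverse inclusion follows by extracting convergent subsequences from factorizations of words of $LM$. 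Since regular languages are closed under concatenation (Kleene's theorem, \cite[Theorem 3.2]{Lallement:1979}), the language $LM$ is regular, so $\clos S(LM)$ is clopen by Theorem~\ref{t:V-recognizability}; in particular $UV$ is open. As every open subset of the product is a union of basic rectangles and $m$ commutes with unions, it follows that $m$ is an open map.

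Finally, to produce the sequences, I would fix a decreasing neighbourhood basis $(W_k)_{k\geq 1}$ of $(u,v)$ in $(\Om AS)^1\times(\Om AS)^1$, available by metrizability. Each image $m(W_k)$ is an open neighbourhood of $uv$, so from $w_n\to uv$ one obtains indices $N_1<N_2<\cdots$ with $w_n\in m(W_k)$ for all $n\geq N_k$. For $N_k\leq n<N_{k+1}$ choose $(u_n,v_n)\in W_k$ with $u_nv_n=w_n$, and for $n<N_1$ set $(u_n,v_n)=(w_n,1)$; because $(W_k)$ is a decreasing neighbourhood basis, this forces $(u_n,v_n)\to(u,v)$, hence $u_n\to u$ and $v_n\to v$ with $u_n,v_n\in(\Om AS)^1$ and $u_nv_n=w_n$ for every $n\in\nn$. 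The only genuine obstacle is the establishment of openness of $m$ — concretely, the identity $UV=\clos S(LM)$ and the bookkeeping needed so that the adjoined identity of $(\Om AS)^1$ (and the empty word of $A^*$) is handled correctly; once openness is in hand, the lifting argument is purely formal.
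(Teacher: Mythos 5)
Your proposal is correct. Note that the paper itself contains no proof of this lemma: it is quoted from the literature, namely from \cite[Proposition~4.4.17]{Almeida&ACosta&Kyriakoglou&Perrin:2020b}, so there is no in-paper argument to match against. What you have written is a sound self-contained reconstruction along the standard lines used in that source: the heart of the matter is indeed the openness of the multiplication of the free profinite monoid over a finite alphabet, and your verification of it is right. Clopen subsets of $(\Om AS)^1$ are exactly the sets $\clos S(L)$ with $L\subseteq A^*$ regular (for a clopen $K$, take $L=K\cap A^*$, which is dense in $K$ and regular by the monoid version of Theorem~\ref{t:V-recognizability}); the identity $\clos S(L)\clos S(M)=\clos S(LM)$ holds by the two routine arguments you indicate, where for the reverse inclusion one factorizes words $t_n=\ell_n m_n\in LM$ converging to a given point and extracts a convergent subsequence of $(\ell_n,m_n)$ by compactness; and $LM$ is regular, so the product of two clopen sets is clopen. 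Since clopen rectangles form a basis of the product topology of a product of Stone spaces and images commute with unions, $m$ is open. Your final lifting step, via a decreasing countable neighbourhood basis of $(u,v)$ (available since $A$ is finite, so $(\Om AS)^1$ is metrizable) and the choice of indices $N_1<N_2<\cdots$ with $w_n\in m(W_k)$ for $n\geq N_k$, is purely formal and correct, and the convention $(u_n,v_n)=(w_n,1)$ for the finitely many initial indices is harmless. The only bookkeeping point you flag, the passage from $A^+$ and $\Om AS$ to $A^*$ and $(\Om AS)^1$, is genuinely routine because the empty word, like all words, is an isolated point, so adjoining the identity changes nothing topologically.
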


We may now proceed to establish Theorem~\ref{t:surjectivity}.

\begin{proof}[Proof of Theorem~\ref{t:surjectivity}]
  By Corollary~\ref{c:chain-of-idempotents-in-the-inverse-limit}, we
  may consider a sequence $(e_k)_{k\in\nn}$ of idempotents such that
  $e_k\in J_{\pv S}(\bsigma^{(k)})\cap \img_{\pv S}(\bsigma^{(k)})$
  and $e_k=\prov S\sigma_{k,l}(e_l)$ for every $k,l\in\nn$ such that
  $k\leq l$. Set $z_k=\pli(e_k)$ for each $k\in\nn$. Note that $z_k\in
  X(\bsigma^{(k)})$ by
  Proposition~\ref{p:parametrization-of-H-classes}. Since $\prov
  S\sigma_{0,k}(e_k)=e_0$, we have
    \begin{equation*}
        \sigma_{0,k}(z_k)=z_0.  
    \end{equation*}

  Denote by $H$ the maximal subgroup of $\Om {A_0}S$ containing $e_0$.
  We claim that $H\subseteq\img_{\pv S}(\bsigma)$, from which the
  result follows by Proposition~\ref{p:saturation-equivalence}.
  
  Fix $s\in H$. By Proposition~\ref{p:infinite-closure-J-class}, we
  may write $s$ as a limit
  \begin{equation*}
    s = \lim_{n\to\infty} t_n, \quad t_n\in L(\bsigma).
  \end{equation*}
  Since $s=e_0se_0$, it follows from Lemma~\ref{l:metric-open-mapping}
  that we may choose for every $n\in\nn$ a factorization $t_n =
  p_ns_nq_n$ in $(A_0)^*$ such that
  \begin{equation*}
    e_0=\lim_{n\to\infty}p_n=\lim_{n\to\infty}q_n,\quad s=\lim_{n\to\infty}s_n,
  \end{equation*}
  with the limits being taken in $(\Om {A_0}S)^1$.

  Take an arbitrary positive integer $k$. By
  Lemma~\ref{l:composition-of-recognizable-homomorphisms}, the
  composite $\sigma_{0,k}$ is recognizable in $X(\bsigma^{(k)})$.
  Since $X(\bsigma^{(k)})$ is minimal, it is generated by the
  sequence~$(z_k)_k$. Hence, $\sigma_{0,k}$ is recognizable in Mossé's
  sense for $z_k$ by Proposition~\ref{p:mosse-sense}; let $\ell_k$ be
  the corresponding constant of recognizability. Because
  $\pli(H)=\pli(e_0)=z_0$ and for every $x\in (A_0)^+$ the sets of the
  form $x(\Om {A_0}S)^1$ and $(\Om {A_0}S)^1x$ are clopen subsets of
  $\Om{A_0}S$, there is a positive integer $N_k$ such that the
  following relations hold whenever $n> N_k$:
  \begin{equation}\label{eq:pref-suff}
    z_0[-\ell_k,0)
    \geq_{\green{L}} p_n,\quad z_0[0,\ell_k)
    \geq_{\green{R}} s_n,\quad z_0[-\ell_k,0)
    \geq_{\green{L}} s_n,\quad z_0[0,\ell_k)
    \geq_{\green{R}}q_n.
  \end{equation}
  Take $n>N_k$. Since $p_ns_nq_n\in L(\bsigma)$ and the minimal shift
  space $X(\bsigma)$ is generated by $z_0$, there are integers
  $j_1<j_2<j_3<j_4$ such that
  \begin{equation*}
    z_0[j_1,j_2)=p_n,\quad z_0[j_2,j_3)=s_n,\quad z_0[j_3,j_4)=q_n.
  \end{equation*}
  Consider the set $C = C_{\sigma_{0,k}}(z_k)$ of
  $\sigma_{0,k}$-cutting points of $z_k$, and bear in mind the
  equality~$\sigma_{0,k}(z_k)=z_0$. It follows from
  \eqref{eq:pref-suff} that
  \begin{equation*}
    z_0[j_2-\ell_k,j_2)
    =z_0[-\ell_k,0),\quad z_0[j_2,j_2+\ell_k)
    =z_0[0,\ell_k),
  \end{equation*}
  that is, $z_0[j_2-\ell_k,j_2+\ell_k)=z_0[-\ell_k,\ell_k)$, and so,
  since $0\in C$, by recognizability we conclude that $j_2\in C$.
  Similarly, we conclude that $j_3\in C$. Hence,
  as $s_n=z_0[j_2,j_3)$,
  we have $s_n\in \sigma_{0,k}(A_{k}^+)$. Since
  $n$ is an arbitrary integer greater than $N$,
  it follows that $s\in \img(\sigma_{0,k}^{\pv S})$.
  As $k$ is arbitrary, this shows that $s\in\img_{\pv S}(\bsigma)$.
  This establishes the claim and completes the proof of the theorem.
\end{proof}
  
\subsection{Sufficient conditions for recognizability}

The purpose of this section is to give conditions under which
saturating directive sequences are recognizable. In other words, we
are proposing a partial converse to Theorem~\ref{t:surjectivity}.

\begin{theorem}\label{t:a-sort-of-converse-of-surjectivity-theorem}
  Let $\bsigma$ be an encoding directive sequence. Let \pv V be a
  pseudovariety of semigroups containing $\pv{\loc Sl}$. Assume that
  the language $\img(\sigma_{0,n})$ is $\pv V$-recognizable for
  every~$n\in\nn$. If $\bsigma$ is $\pv{V}$-saturating and eventually
  recognizable, then $\bsigma$ is recognizable.
\end{theorem}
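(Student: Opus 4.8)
The plan is to upgrade eventual recognizability to full recognizability by supplying, via the saturating hypothesis, the finitely many missing recognizability statements at the head of the sequence. First I would invoke Lemma~\ref{l:composition-of-recognizable-homomorphisms}: eventual recognizability gives $N$ with $\sigma_n$ recognizable in $X(\bsigma^{(n+1)})$ for all $n\geq N$, hence $\sigma_{N,m}$ is recognizable in $X(\bsigma^{(m)})$ for every $m>N$. By the same lemma, to conclude that $\bsigma$ is recognizable it suffices to prove that the single composite encoding $\tau:=\sigma_{0,m}\from A_m^+\to A_0^+$ is recognizable in the minimal shift space $X(\bsigma^{(m)})$ for one large $m$; the already-known recognizability of the tail $\sigma_{N,m}$ will then be available to assist. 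Note that the hypothesis that $\img(\sigma_{0,m})$ is $\pv V$-recognizable means, by Theorem~\ref{t:V-recognizability}, that the profinite image $\img(\prov V\tau)=\clos V(\tau(A_m)^+)$ is clopen.

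Next I would anchor everything at the distinguished point from Corollary~\ref{c:chain-of-idempotents-in-the-inverse-limit}: an idempotent $e_0\in J_{\pv V}(\bsigma)\cap\img_{\pv V}(\bsigma)$ with $e_0=\prov V\tau(e_m)$, so that $z_0:=\pli(e_0)=\tau(z_m)$ for $z_m=\pli(e_m)\in X(\bsigma^{(m)})$ (Proposition~\ref{p:parametrization-of-H-classes}), the position $0$ being a $\tau$-cutting point of $z_m$. Working with recognizability in Mossé's sense (Proposition~\ref{p:mosse-sense} and its equivalence with dynamical recognizability for minimal shifts), I would suppose towards a contradiction that recognizability fails at this origin cutting point: there are positions $i_k\notin C_\tau(z_m)$ and radii $\ell_k\to\infty$ with $z_0[-\ell_k,\ell_k)=z_0[i_k-\ell_k,i_k+\ell_k)$; the degenerate case of bounded $i_k$ forces $z_0$ periodic and is treated separately. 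Taking a cluster point $d$ of the words $z_0[0,i_k)$, the forward coincidence yields $\overrightarrow d=z_0[0,\infty)$ and the backward coincidence yields $\overleftarrow d=z_0(-\infty,0)$, so $\pli(d)=\pli(e_0)$; since $|d|=\infty$ and $\fac(d)\subseteq L(X(\bsigma))$, Proposition~\ref{p:infinite-closure-J-class} places $d$ in $J_{\pv V}(\bsigma)$, and Lemma~\ref{l:characterization-of-Green-H-in-J-class-of-minimal-shift} then gives $d\green H e_0$, i.e. $d$ lies in the maximal subgroup $H$ of $J_{\pv V}(\bsigma)$ at $e_0$. Because $\bsigma$ is $\pv V$-saturating, $H\subseteq\img_{\pv V}(\bsigma)\subseteq\img(\prov V\tau)=\clos V(\tau(A_m)^+)$, and clopenness forces $z_0[0,i_k)\in\tau(A_m)^+$ for all large $k$.

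The main obstacle is the final step: deducing from $z_0[0,i_k)\in\tau(A_m)^+$ together with $0\in C_\tau(z_m)$ that $i_k\in C_\tau(z_m)$, contradicting $i_k\notin C_\tau(z_m)$. This cannot be read off from the code property alone, since $\tau(A_m)$ need not be circular, so a proper product-of-codewords prefix of a codeword-parsed word need not be boundary-aligned; nor can one simply cancel $\prov V\tau$, because a $\pv V$-encoding may have non-injective pro-$\pv V$ extension (Example~\ref{e:induced-barH-homomorphism-not-injective-for-H-encoding}). Instead I would extract the alignment from the two-sided ($\green H$, hence idempotent-compatible) structure: convert the coincidence into the identity $p\,e_0=f\,p$ through Proposition~\ref{p:shift-of-idempotents}, with $p=z_0[0,i_k)$ and $f$ the idempotent of $J_{\pv V}(\bsigma)$ satisfying $\pli(f)=\shift^{i_k}z_0$, observe that $p e_0=f p\in J_{\pv V}(\bsigma)$ sits in the $\green L$-class of $e_0$ and the $\green R$-class of $f$, and then combine the clopen image with a further application of saturating (the last item of Proposition~\ref{p:saturation-equivalence}) and the recognizability of the tail $\sigma_{N,m}$ to force $f\in\img(\prov V\tau)$ with $\tau$-aligned rays, which is exactly $i_k\in C_\tau(z_m)$.

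In short, the reduction and the identification of $d$ inside the maximal subgroup $H$ are comparatively routine consequences of the machinery already developed; the genuine difficulty—where I expect the bulk of the argument to concentrate—is making the alignment implication rigorous, that is, promoting ``the coincidence word lies in the profinite image'' to ``the coincidence is an actual cutting point'' without circularity of the code and without injectivity of $\prov V\tau$, using only the idempotency of $e_0$, the two-sided $\green H$-structure, and one last appeal to saturation.
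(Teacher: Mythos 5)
There is a genuine gap, concentrated exactly where you predicted the difficulty would lie, but your proposed resolution is circular. Your idempotent $f$ is anchored at the point $\shift^{i_k}(z_0)$, and there is no mechanism to ``force $f\in\img(\prov V\tau)$ with $\tau$-aligned rays'': that statement is essentially a restatement of $i_k\in C_\tau(z_m)$, the very thing to be proved. Unlike the idempotents in the paper's proof, which lie in the profinite image \emph{by construction} (they are images under $\prov S\sigma_{0,n}$ of idempotents produced at level $n$ from a pair of distinct centered representations), your $f$ is manufactured at level $0$ from a non-cutting position, so saturation gives you nothing about it -- the image meets only selected $\green{H}$-classes of $J_{\pv V}(\bsigma)$, as Example~\ref{e:example-that-img-may-not-be-in-J-continued} shows. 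The paper's escape route, which you explicitly ruled out, is available: although $\prov V\tau$ may fail to be injective, the pro-$\pv S$ extension of an encoding \emph{is} injective by Theorem~\ref{t:sufficient-conditions-for-injectivity} (with $\pv H=\pv K=\pv G$). The actual proof plays the two pseudovarieties against each other: all conjugation identities $p_n\,\prov S\sigma_{0,n}(e_n)=\prov S\sigma_{0,n}(f_n)\,p_n$ (from Proposition~\ref{p:shift-of-idempotents}) are kept in $\Om{A_0}S$, where cancellation works; one projects to $\pv V$ only for the saturation and clopenness steps. Moreover, saturation is applied not to a maximal subgroup but to the \emph{conjugator}: a cluster point $p$ of the $p_n$ satisfies $f\green{R}p\green{L}e$ with $e,f\in J\cap\img$, and Proposition~\ref{p:saturation-equivalence}(iv) puts $p_{\pv S,\pv V}(p)$ in $\img_{\pv V}(\bsigma)$; clopenness then gives a finite $p_k=\sigma_{0,m}(q)$, cancellation of $\prov S\sigma_{0,m}$ moves the identity to level $m$, and recognizability of the tail $\sigma_{m,k}$ applied to two centered representations of $\sigma_{m,k}(x_k)$ pins the parse. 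The final contradiction comes from the strict offset bound $0<\ell_k<|\sigma_{0,k}(z_k[k])|$ attached to the level-$k$ witnesses -- data your single-level framework does not retain at all.

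Your set-up also has an unjustified reduction at the start. You assume the failure of recognizability manifests as a Mossé-type coincidence at the \emph{fixed} point $z_m$ from the idempotent chain, centered at the origin cutting point, with radii $\ell_k\to\infty$. Proposition~\ref{p:mosse-sense} only gives the direction ``dynamically recognizable $\Rightarrow$ Mossé recognizable''; the converse is not in the paper, the theorem carries no aperiodicity hypothesis (your deferred ``periodic case'' is never treated), and even granting a converse you cannot pin the failure at the specific anchored point and the specific cutting point $0$ without a recentring argument that, once carried out via minimality and compactness, effectively reproduces the paper's per-level data: for each $n>m$, a pair $x_n,z_n\in X(\bsigma^{(n)})$ with $\sigma_{0,n}(x_n)=\shift^{\ell_n}\sigma_{0,n}(\shift^n(z_n))$ and two idempotents $e_n,f_n$ obtained from Proposition~\ref{p:parametrization-of-H-classes}, the non-recognizability at \emph{every} level being supplied by Lemma~\ref{l:composition-of-recognizable-homomorphisms} from the assumption that $\bsigma$ is not recognizable. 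Your identification of the cluster point $d$ inside the maximal subgroup of $e_0$, and the clopen extraction of $z_0[0,i_k)\in\tau(A_m)^+$, are sound as far as they go, but without the level-$k$ witnesses and the $\pv S$-cancellation the argument cannot close.
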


\begin{proof}
  Using an argument of \emph{reductio ad absurdum}, let us suppose
  that the hypotheses in the statement holds but $\bsigma$ is not
  recognizable.

  Let $m\in\nn$ be such that $\bsigma^{(m)}$ is recognizable.
  Take $n\in\nn$ such that $n>m$.
  By the assumption that $\bsigma$ is not recognizable and by
  Lemma~\ref{l:composition-of-recognizable-homomorphisms}, the
  homomorphism $\sigma_{0,{n}}$ is not recognizable in
  $X(\bsigma^{(n)})$. Therefore, letting $A=A_0$, there is an
  element in $A^\zz$ with two distinct centered
  $\sigma_{0,{n}}$-representations in $X(\bsigma^{({n})})$,
  which means that there are $x_n,z_n\in X(\bsigma^{({n})})$ and
  $\ell_n\in\nn$ such that
  \begin{equation}\label{eq:distinct-centered-representations}
    \sigma_{0,n}(x_n)=\shift^{\ell_n}\sigma_{0,{n}}(\shift^{n}(z_n))
  \end{equation}
  with $0\leq \ell_n<|\sigma_{0,n}(z_n[n])|$ and
  $(0,x_n)\neq(\ell_n,\shift^{n}(z_n))$.

  Despite the statement mentioning the pseudovariety $\pv V$, for most
  of the proof we work with the pseudovariety $\pv S$ of all finite
  semigroups. By Proposition~\ref{p:parametrization-of-H-classes} we
  may take the unique idempotents $e_n$ and $f_n$ of $J_{\pv
    S}(\bsigma^{(n)})$ such that $\pli(e_n)=x_n$ and $\pli(f_n)=z_n$.
  Note that both $\prov S\sigma_{0,{n}}(e_n)$ and $\prov
  S\sigma_{0,{n}}(f_n)$ belong to $J_{\pv S}(\bsigma)$ and that the
  following equalities hold:
  \begin{equation}
    \label{eq:biinfinite-of-idemptotents-of-distinct-representations}
    \pli(\prov S\sigma_{0,{n}}(e_n))=\sigma_{0,{n}}(x_n)\qquad
    \text{and}
    \qquad
    \pli(\prov S\sigma_{0,{n}}(f_n))=\sigma_{0,{n}}(z_n).
  \end{equation}
  Denote by $r_n$ the prefix of length $\ell_n$ of
  $\sigma_{0,{n}}(z_n[n])$,
  cf.~Figure~\ref{f:a-sort-of-converse-of-surjectivity-theorem-1}.
  Let
  \begin{equation*}
    p_n'=z_n{[}0,n{)},\  p_n=\sigma_{0,n}(p_n')r_n.
  \end{equation*}
  Then $p_n$ is a prefix of $\sigma_{0,n}(z_n{[}0,n{]})$ and suffix of
  $\sigma_{0,n}(x_n{[}-n',0{)})$ for some $n'\in\nn$, as illustrated
  by Figure~\ref{f:a-sort-of-converse-of-surjectivity-theorem-1}.
  Hence, $p_n$ is a prefix of the idempotent $\sigma_{0,n}(f_n)$, and
  a suffix of the idempotent $\sigma_{0,n}(e_n)$, in view
  of~\eqref{eq:biinfinite-of-idemptotents-of-distinct-representations}.
  Moreover, we have
  \begin{align*}
    \shift^{|p_n|}(\pli(\prov S\sigma_{0,n}(f_n)))
    &=\shift^{|r_n|}\shift^{|\sigma_{0,n}(z_n{[}0,n{)})|}(\sigma_{0,{n}}(z_n))\\
    & =\shift^{\ell_n}\sigma_{0,{n}}(\shift^{n}(z_n))\\
    & =\sigma_{0,n}(x_n)\\
    &=\pli(\prov S\sigma_{0,{n}} (e_n)),
  \end{align*}
  with the second last equality holding
  by~\eqref{eq:distinct-centered-representations}. Therefore, the
  equality
  \begin{equation}\label{eq:conjugation-with-small-word}
    p_n\prov S\sigma_{0,{n}}(e_n)=\prov S\sigma_{0,{n}}(f_n)p_n
  \end{equation}
  holds by Proposition~\ref{p:shift-of-idempotents}.

  \begin{figure}[ht]
    \begin{tikzpicture}[every node/.style={font=\footnotesize}]
      \node[cutting,fill=black] (x0) at (0,0) {} ;
      \node[cutting,right = 2 of x0.center, anchor=center] (x1) {} ;
      \node[above left = .55 and 1.2 of x0.center,anchor=center] (xdotsl) {$\cdots$} ;
      \node[above right = .55 and 1.2 of x1.center,anchor=center] (xdotsr) {$\cdots$} ;
      \node[cutting, left = 1.6 of x0.center, anchor=center] (z0) {} ;
      \node[cutting, right = 2.3 of z0.center, anchor=center] (z1) {} ;
      \node[below left = .55 and 1.2 of z0.center,anchor=center] (zdotsl) {$\cdots$} ;
      \node[below right = .55 and 1.2 of z1.center,anchor=center] (zdotsr) {$\cdots$} ;
      \node[left = 2 of z0.center,anchor=center] (ldots) {$\cdots$} ;
      \node[right = 2 of x1.center,anchor=center] (rdots) {$\cdots$} ;
      \node[above = 1 of z0.center] (lengthl) {} ;
      \node[above = 1 of x0.center] (lengthr) {} ;
      \begin{scope}[on background layer]
        \draw[-,thick] (ldots) to (z0) ;
        \draw[-,thick] (z0) to node [below] {$r_n$} (x0) ;
        \draw[-,thick] (x0) to (rdots) ;
      \end{scope}
      \draw[-] (xdotsl.east) to [bend left] (x0) ;
      \draw[-] (x1) to [bend left] (xdotsr.west) ;
      \draw[-] (x0) to [bend left=60] node [above] {$\sigma_{0,{n}}(x_n[0])$} (x1) ;
      \draw[-] (zdotsl.east) to [bend right] (z0) ;
      \draw[-] (z1) to [bend right] (zdotsr) ;
      \draw[-] (z0) to [bend right=60] node [below] {$\sigma_{0,{n}}(z_n[n])$} (z1) ;
      \draw[|-|,thick] ([xshift=-.3] lengthl.center) to node [above] {length $\ell_n$} ([xshift=.3] lengthr.center) ;
      \draw[-,densely dotted] (z0) to (lengthl.center) ;
      \draw[-,densely dotted] (x0) to (lengthr.center) ;
    \end{tikzpicture}
    \caption{The bi-infinite word
      $\sigma_{0,{n}}(x_n)=\shift^{\ell_n}\sigma_{0,{n}}(\shift^{n}(z_n))$, where
      the black cutting point marks the boundary of its left and
      right infinite parts}
    \label{f:a-sort-of-converse-of-surjectivity-theorem-1}
  \end{figure}
  
  Suppose that $\ell_n=0$, that is to say $p_n=\sigma_{0,n}(p_n')$.
  Then the equality~\eqref{eq:conjugation-with-small-word} becomes
  \begin{equation*}
    \prov S\sigma_{0,n}(p_n'e_n)=\prov S\sigma_{0,n}(f_np_n').
  \end{equation*}
  As by hypothesis $\bsigma$ is an encoding directive sequence, the
  homomorphism $\sigma_{0,n}$ is injective, and so $\sigma_{0,n}^{\pv
    S}$ is injective by
  Theorem~\ref{t:sufficient-conditions-for-injectivity}. It follows
  that $p_n'e_n=f_np_n'$, thus $x_n=\shift^{n}(z_n)$ by
  Proposition~\ref{p:shift-of-idempotents}. But this contradicts
  $(0,x_n)\neq (\ell_n,\shift^{n}(z_n))$. Therefore, $\ell_n$ must be
  positive.
  
  By compactness of $\Om AS$, the sequence $(p_n,\prov
  S\sigma_{0,{n}}(e_n),\prov S\sigma_{0,{n}}(f_n))_{n> m}$ has some
  subsequence converging in $(\Om AS)^3$ to a triple $(p,e,f)$. Bear
  in mind that the pseudowords $e,f$ are idempotents in $\img_{\pv
    S}(\bsigma)\cap J_{\pv{S}}(\bsigma)$: indeed, we have $e,f\in
  \img_{\pv S}(\bsigma)$ by
  Lemma~\ref{l:V-image-as-a-set-of-cluster-points}, and $\prov
  S\sigma_{0,{n}}(e_n),\prov S\sigma_{0,{n}}(f_n)\in
  J_{\pv{S}}(\bsigma)$ by
  Corollary~\ref{c:at-the-limit-properties-of-lambda}\ref{item:corollary-at-the-limit-properties-of-lambda-2},
  and thus taking limits $e,f\in J_{\pv{S}}(\bsigma)$. Note that
  $\lim|p_n|=\infty$, and so the pseudoword $p$ has infinite length.
  As $p_n\in L(\bsigma)$ for every $n>m$, it follows that $p\in J_{\pv
    S}(\bsigma)$. Since $p_{n}$ is a prefix of the idempotent $\prov
  S\sigma_{0,n}(f_{n})$ for each $n>m$, and the relation
  $\leq_{\green{R}}$ is closed in $\Om AS$, we know that $p$ is a
  prefix of~$f$. Similarly, $p$ is a suffix of~$e$. By~stability, we
  obtain $f\green{R}p\green{L}e$. It follows that $p_{\pv S,\pv
    V}(f)\green{R}p_{\pv S,\pv V}(p)\green{L}p_{\pv S,\pv V}(e)$. Note
  that the idempotents $p_{\pv S,\pv V}(f)$ and $p_{\pv S,\pv V}(e)$
  belong to $J_{\pv V}(\bsigma)\cap \img_{\pv V}(\bsigma)$ by
  Corollary~\ref{c:mirage} and
  Proposition~\ref{p:projecting-profinite-image}. Since $\bsigma$ is
  $\pv V$-saturating, we deduce that $p_{\pv S,\pv V}(p)\in\img_{\pv
    V}(\bsigma)$ by Proposition~\ref{p:saturation-equivalence}.
  
  Set $B=A_m$. Since $\img_{\pv V}(\bsigma)\subseteq \img_{\pv
    V}(\prov V\sigma_{0,m})$, we have $p_{\pv S,\pv V}(p)\in\clos
  V{(\sigma_{0,m}(B^+))}$. Because $\sigma_{0,m}(B^+)$ is assumed to
  be $\pv V$-recognizable, the set $\clos V{(\sigma_{0,m}(B^+))}$ is
  clopen by Theorem~\ref{t:V-recognizability}. Note also that, since
  the continuous mapping $p_{\pv S,\pv V}$ restricts to the identity
  on $A^+$, the pseudoword $p_{\pv S,\pv V}(p)$ is a cluster point of
  the sequence $(p_n)_n$ in the space $\Om AV$. Hence, there is
  $k\in\nn$ such that $k>m$ and $p_k\in \sigma_{0,m}(B^+)$. Take $q\in
  B^+$ such that $p_k=\sigma_{0,m}(q)$. The
  equality~\eqref{eq:conjugation-with-small-word} then entails
  \begin{equation*}
    \prov S\sigma_{0,m}\Bigl(q\cdot \prov S\sigma_{m,k}(e_{k})\Bigr)
    =
    \prov S\sigma_{0,m}\Bigl(\prov S\sigma_{m,k}(f_{k})\cdot q\Bigr).
  \end{equation*}
  But $\prov S\sigma_{0,m}$ is injective (by
  Theorem~\ref{t:sufficient-conditions-for-injectivity}), and so the
  equality $ q\cdot \prov S\sigma_{m,k}(e_{k}) = \prov
  S\sigma_{m,k}(f_{k})\cdot q$ holds. Since $\pli(\prov
  S\sigma_{m,k}(e_{k}))=\sigma_{m,k}(x_k)$ and $\pli(\prov
  S\sigma_{m,k}(f_{k}))=\sigma_{m,k}(z_k)$, we then deduce from
  Proposition~\ref{p:shift-of-idempotents} that
 \begin{equation*}
   \sigma_{m,k}(x_k)=\shift^{|q|}(\sigma_{m,k}(z_k))
 \end{equation*}
 and that $q$ is a nonempty prefix of a word of the form
 $\sigma_{m,k}(z_{k}{[}0,l{)})$, with $l>0$
 (see~Figure~\ref{f:a-sort-of-converse-of-surjectivity-theorem-2}).
 Hence, we may consider the integer
 \begin{equation*}
   l_0=\min\{l\in\nn :|\sigma_{m,k}(z_{k}{[}0,l{)})|\geq q\}.
 \end{equation*}
 Letting $d=|\sigma_{m,k}(z_{k}{[}0,l_0{)})|-q$ we see that
 $(d,\shift^{l_0-1}(z_{k}))$ is a centered
 $\sigma_{m,k}$-representation of $\sigma_{m,k}(x_k)$. Since
 $\bsigma^{(m)}$ is recognizable, we know that $\sigma_{m,k}$ is
 recognizable in $X(\bsigma^{(k)})$ by
 Lemma~\ref{l:composition-of-recognizable-homomorphisms}. Therefore,
 we must have $d=0$, thus $q=\sigma_{m,k}(z_{k}{[}0,l_0{)})$.
    
 \begin{figure}[ht]
   \begin{tikzpicture}[every node/.style={font=\footnotesize}]
     \node[cutting,fill=black] (x0) at (0,0) {} ;
     \node[above left = .55 and 1.2 of x0.center,anchor=center,label=above:{$\sigma_{m,k}(x_k[-1])$}] (xdotsl) {$\cdots$} ;
     \node[above right = .55 and 1.2 of x0.center,anchor=center,label=above:{$\sigma_{m,k}(x_k[0])$}] (xdotsr) {$\cdots$} ;
     \node[cutting, left = 3 of x0.center, anchor=center] (z0) {} ;
     \node[below left = .55 and 1.2 of z0.center,anchor=center,label=below:{$\sigma_{m,k}(z_{k}[-1])$}] (zdotsl) {$\cdots$} ;
     \node[below right = .55 and 1.2 of z0.center,anchor=center,label=below:{$\sigma_{m,k}(z_{k}[0])$}] (zdotsr) {$\cdots$} ;
     \node[left = 2.5 of z0.center,anchor=center] (ldots) {$\cdots$} ;
     \node[right = 2.5 of x0.center,anchor=center] (rdots) {$\cdots$} ;
     \begin{scope}[on background layer]
       \draw[-,thick] (ldots) to (rdots) ;
       \draw[-,densely dashed] ($(z0) + (0,1)$) to ($(z0) + (0,-1.5)$) ;
       \draw[-,densely dashed] ($(x0) + (0,1)$) to ($(x0) + (0,-1.5)$) ;
     \end{scope}
     \draw[-] (xdotsl.east) to [bend left] (x0) ;
     \draw[-] (x0) to [bend left] (xdotsr.west) ;
     \draw[-] (zdotsl.east) to [bend right] (z0) ;
     \draw[-] (z0) to [bend right] (zdotsr) ;
     \draw[thick, decorate, decoration = {calligraphic brace,mirror,raise=5pt,amplitude=7pt}] ($(z0) + (0,-1.5)$) to node [below=12pt] {$q$} ($(x0) + (0,-1.5)$) ;
   \end{tikzpicture}
   \caption{Location of $q$ in the infinite word $\pli(\prov S\sigma_{m,k}(e_k)) = \sigma_{m,k}(x_k)$}
   \label{f:a-sort-of-converse-of-surjectivity-theorem-2}
 \end{figure}
 
 We have therefore $\sigma_{0,k}(z_{k}{[}0,l_0{)})=\sigma_{0,m}(q)=p_k
 =\sigma_{0,k}(z_{k}{[}0,k{)})r_k$. In particular, we must have
 $l_0>k$, as $|r_k|=\ell_k\neq 0$. It follows that
 $\sigma_{0,k}(z_k[k])$ is a prefix of $r_k$. But this contradicts the
 fact that $|r_k|=\ell_k<|\sigma_{0,k}(z_k[k])|$ by choice of $r_k$
 and~$\ell_k$.

 This concludes the argument by \emph{reductio ad absurdum}, and
 therefore we proved that $\bsigma$ must be recognizable.
\end{proof}
  
In the special case where \pv{V} is of the form \pvo{H} for some
extension-closed pseudovariety of groups \pv{H}, the previous theorem
can be specialized as follows.

\begin{corollary}
  \label{c:an-equivalence-with-surjectivity}
  Let $\bsigma$ be an eventually recognizable primitive
  directive sequence. Let \pv H be an extension-closed pseudovariety
  of groups such that $\sigma_n$ is an \pvo{H}-encoding for every
  $n\in\nn$. Then the following conditions are equivalent:
    \begin{enumerate}
        \item $\bsigma$ is recognizable;
            \label{i:an-equivalence-with-surjectivity-1}
        \item $\bsigma$ is $\pv{S}$-saturating;
            \label{i:an-equivalence-with-surjectivity-2}
        \item $\bsigma$ is \pvo{H}-saturating.
            \label{i:an-equivalence-with-surjectivity-3}
    \end{enumerate}
\end{corollary}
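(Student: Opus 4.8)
The plan is to prove the cycle of implications $\ref{i:an-equivalence-with-surjectivity-1}\Rightarrow\ref{i:an-equivalence-with-surjectivity-2}\Rightarrow\ref{i:an-equivalence-with-surjectivity-3}\Rightarrow\ref{i:an-equivalence-with-surjectivity-1}$, with eventual recognizability of $\bsigma$ serving as a standing hypothesis throughout. The first two implications are quick. For $\ref{i:an-equivalence-with-surjectivity-1}\Rightarrow\ref{i:an-equivalence-with-surjectivity-2}$ I would simply invoke Theorem~\ref{t:surjectivity}, which says that a recognizable primitive directive sequence is $\pv S$-saturating. For $\ref{i:an-equivalence-with-surjectivity-2}\Rightarrow\ref{i:an-equivalence-with-surjectivity-3}$ the point is that $\pvo H$ lies between $\pv{\loc Sl}$ and $\pv S$: indeed $\pv{\loc Sl}\subseteq\pv A=\pvo I\subseteq\pvo H\subseteq\pv S$, since every member of $\pv{\loc Sl}$ has only trivial subgroups (hence is aperiodic) and the trivial group lies in $\pv H$. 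Thus Proposition~\ref{p:saturation-inheritance}, applied with $\pv V=\pv S$ and $\pv W=\pvo H$, converts $\pv S$-saturation into $\pvo H$-saturation.

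The substantial implication is $\ref{i:an-equivalence-with-surjectivity-3}\Rightarrow\ref{i:an-equivalence-with-surjectivity-1}$, which I would derive from Theorem~\ref{t:a-sort-of-converse-of-surjectivity-theorem} taken with $\pv V=\pvo H$. Most of that theorem's hypotheses are immediate: $\bsigma$ is encoding because each $\sigma_n$, being an $\pvo H$-encoding, is injective; $\pvo H\supseteq\pv{\loc Sl}$ as just noted; $\bsigma$ is $\pvo H$-saturating by $\ref{i:an-equivalence-with-surjectivity-3}$; and $\bsigma$ is eventually recognizable by assumption. The one remaining hypothesis to verify is that the language $\img(\sigma_{0,n})$ is $\pvo H$-recognizable for every $n\in\nn$, equivalently that $\sigma_{0,n}$ is an $\pvo H$-encoding for every $n$. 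This is the only place where extension-closedness of $\pv H$ is really used, and it is the main obstacle in the proof.

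To settle it I would argue by induction on $n$ that the closed subsemigroup $\img(\prov{\pvo H}{\sigma_{0,n}})$ of $\Om{A_0}{\pvo H}$ is open. By Theorem~\ref{t:V-recognizability} together with the identity $\img(\prov{\pvo H}{\sigma_{0,n}})=\clos{\pvo H}(\img(\sigma_{0,n}))$ (valid since $\prov{\pvo H}{\sigma_{0,n}}$ is continuous with compact domain and agrees with $\sigma_{0,n}$ on the dense subset $A_n^+$), this openness is equivalent to $\pvo H$-recognizability of $\img(\sigma_{0,n})$. The base case $n=1$ holds because $\sigma_{0,1}=\sigma_0$ is an $\pvo H$-encoding. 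For the inductive step I use the factorization $\prov{\pvo H}{\sigma_{0,n+1}}=\prov{\pvo H}{\sigma_{0,n}}\circ\prov{\pvo H}{\sigma_n}$ coming from functoriality. Since $\pv H$ is extension-closed we have $\pv H*\pv H\subseteq\pv H$, so Theorem~\ref{t:sufficient-conditions-for-injectivity} (with $\pv K=\pv H$) makes each $\prov{\pvo H}{\sigma_i}$, and hence the composite $\prov{\pvo H}{\sigma_{0,n}}$, injective. A continuous injection between compact Hausdorff spaces is a homeomorphism onto its image; thus $\prov{\pvo H}{\sigma_{0,n}}$ restricts to a homeomorphism from $\Om{A_n}{\pvo H}$ onto $\img(\prov{\pvo H}{\sigma_{0,n}})$, which is open in $\Om{A_0}{\pvo H}$ by the inductive hypothesis.

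Because $\sigma_n$ is an $\pvo H$-encoding, the set $\img(\prov{\pvo H}{\sigma_n})$ is open in $\Om{A_n}{\pvo H}$; its image under the homeomorphism above is therefore open in $\img(\prov{\pvo H}{\sigma_{0,n}})$, hence open in $\Om{A_0}{\pvo H}$ since the latter set is itself open. As $\img(\prov{\pvo H}{\sigma_{0,n+1}})=\prov{\pvo H}{\sigma_{0,n}}\bigl(\img(\prov{\pvo H}{\sigma_n})\bigr)$, the induction closes, giving $\pvo H$-recognizability of $\img(\sigma_{0,n})$ for all $n$. The delicate part is thus the interplay between injectivity of the pro-$\pvo H$ extension (which forces the extension-closed hypothesis on $\pv H$) and openness of the image of an $\pvo H$-code; once both are secured, the homeomorphism-onto-open-image argument transports $\pvo H$-recognizability through compositions, and Theorem~\ref{t:a-sort-of-converse-of-surjectivity-theorem} then yields recognizability of $\bsigma$, completing the cycle.
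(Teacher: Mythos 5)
Your proof is correct, and its skeleton coincides with the paper's: you establish the cycle
\ref{i:an-equivalence-with-surjectivity-1}$\Rightarrow$\ref{i:an-equivalence-with-surjectivity-2} via Theorem~\ref{t:surjectivity},
\ref{i:an-equivalence-with-surjectivity-2}$\Rightarrow$\ref{i:an-equivalence-with-surjectivity-3} via Proposition~\ref{p:saturation-inheritance} (with the same verification that $\pv{\loc Sl}\subseteq\pv A=\pvo I\subseteq\pvo H\subseteq\pv S$), and
\ref{i:an-equivalence-with-surjectivity-3}$\Rightarrow$\ref{i:an-equivalence-with-surjectivity-1} via Theorem~\ref{t:a-sort-of-converse-of-surjectivity-theorem} with $\pv V=\pvo H$, exactly as in the paper. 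The one genuine divergence is how you secure the hypothesis that $\img(\sigma_{0,n})$ is $\pvo H$-recognizable. The paper does this through Lemma~\ref{l:image-under-encoding} --- the image of an $\pvo H$-recognizable language under an $\pvo H$-encoding is again $\pvo H$-recognizable --- whose proof leans on the external fact \cite[Corollary~5.7]{Almeida&Steinberg:2008} that $\img(\prov{\pvo H}\sigma)$ is clopen for any $\pvo H$-encoding $\sigma$. You avoid that citation: in your induction, the only openness inputs are the sets $\clos{\pvo H}\bigl(\sigma_n(A_{n+1}^+)\bigr)$, which are clopen directly from the definition of $\pvo H$-code together with Theorem~\ref{t:V-recognizability}, while the transport of openness along $\prov{\pvo H}{\sigma_{0,n}}$ uses only the compact-to-Hausdorff homeomorphism-onto-open-image argument, with injectivity supplied by Theorem~\ref{t:sufficient-conditions-for-injectivity} applied with $\pv K=\pv H$ --- the same place where extension-closedness enters in the paper's lemma. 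Your version is thus self-contained within the paper's stated results, at the cost of being tailored to the specific chain of full images $\img(\sigma_{0,n})$; the paper's lemma is the more general and reusable statement (it shows in one stroke that compositions of $\pvo H$-encodings are $\pvo H$-encodings, for arbitrary $\pvo H$-recognizable languages), and the paper also records a second external route in Remark~\ref{r:alternative-proof-of-lemma-image-under-encoding-using-Pin-Sakarovitch}, so your induction amounts to a third, more elementary derivation of the special case actually needed.
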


The proof of the corollary requires the next lemma.

\begin{lemma}
  \label{l:image-under-encoding}
  Let $\pv{H}$ be an extension-closed pseudovariety of groups. Let
  $L\subseteq A^+$ be an $\pvo{H}$-recognizable language and
  $\sigma\from A^+\to B^+$ be an $\pvo{H}$-encoding. Then $\sigma(L)$
  is \pvo{H}-recognizable.
\end{lemma}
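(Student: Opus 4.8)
The plan is to reduce everything to the topological criterion of Theorem~\ref{t:V-recognizability}. Since $\pvo H$ contains $\pv A$ and hence $\pv N$, the language $\sigma(L)$ is $\pvo H$-recognizable if and only if its closure $\clos{\pvo H}(\sigma(L))$ is open in $\Om B{\pvo H}$, so the whole task becomes showing that this closure is open. The single nontrivial input will be the injectivity of the pro-$\pvo H$ extension $\prov{\pvo H}\sigma\from\Om A{\pvo H}\to\Om B{\pvo H}$.

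First I would deduce this injectivity from Theorem~\ref{t:sufficient-conditions-for-injectivity}, taking $\pv K=\pv H$: the hypothesis $\pv H*\pv H\subseteq\pv H$ holds precisely because $\pv H$ is extension-closed, and $\sigma$ is an $\pvo H$-encoding by assumption. Being a continuous injection from the compact space $\Om A{\pvo H}$ into the Hausdorff space $\Om B{\pvo H}$, the map $\prov{\pvo H}\sigma$ is then a homeomorphism onto its image. I would identify this image: since $\prov{\pvo H}\sigma$ extends $\sigma$ and $A^+$ is dense in $\Om A{\pvo H}$, one has $\img(\prov{\pvo H}\sigma)=\clos{\pvo H}(\sigma(A)^+)$, and the latter is clopen because $\sigma(A)^+$ is $\pvo H$-recognizable (as $\sigma(A)$ is an $\pvo H$-code), again by Theorem~\ref{t:V-recognizability}.

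Next I would establish the equality $\clos{\pvo H}(\sigma(L))=\prov{\pvo H}\sigma(\clos{\pvo H}(L))$. The inclusion from right to left is the standard fact that a continuous map sends the closure of $L$ into the closure of $\prov{\pvo H}\sigma(L)=\sigma(L)$; the reverse inclusion uses that $\prov{\pvo H}\sigma(\clos{\pvo H}(L))$ is the continuous image of a compact set, hence closed, and already contains $\sigma(L)$. With this equality in hand the conclusion follows by chaining the topological facts: the set $\clos{\pvo H}(L)$ is open in $\Om A{\pvo H}$ because $L$ is $\pvo H$-recognizable; its image under the homeomorphism $\prov{\pvo H}\sigma$ is therefore open in the subspace $\img(\prov{\pvo H}\sigma)$; and since that subspace is itself open in $\Om B{\pvo H}$, the set $\clos{\pvo H}(\sigma(L))$ is open in $\Om B{\pvo H}$, as required.

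The genuinely nontrivial step is the injectivity of $\prov{\pvo H}\sigma$, which is exactly what Theorem~\ref{t:sufficient-conditions-for-injectivity} supplies through the extension-closedness of $\pv H$; everything else is routine point-set topology. The one conceptual point that deserves care is that openness is \emph{not} preserved by arbitrary continuous injective images, so the argument genuinely needs the two separate facts that $\prov{\pvo H}\sigma$ is a homeomorphism onto its image and that this image is open (not merely closed) in $\Om B{\pvo H}$ — the latter being where the $\pvo H$-code hypothesis on $\sigma(A)$ is used a second time.
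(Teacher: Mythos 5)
Your proposal is correct and follows essentially the same route as the paper's proof: injectivity of $\prov{\pvo H}\sigma$ from Theorem~\ref{t:sufficient-conditions-for-injectivity} with $\pv K=\pv H$ (extension-closedness giving $\pv H*\pv H\subseteq\pv H$), the equality $\clos{\pvo H}(\sigma(L))=\prov{\pvo H}\sigma(\clos{\pvo H}(L))$, openness of the map as a homeomorphism onto a clopen image, and the conclusion via Theorem~\ref{t:V-recognizability}. The only deviation is in how the image is shown to be clopen: the paper cites Corollary~5.7 of Almeida and Steinberg (2008), whereas you derive it from $\img(\prov{\pvo H}\sigma)=\clos{\pvo H}(\sigma(A)^+)$ together with the $\pvo H$-recognizability of $\sigma(A)^+$ built into the $\pvo H$-encoding hypothesis --- a valid and self-contained replacement for that citation.
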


\begin{proof}
  Observe that $\sigma^{\pvo{H}}\from\Om A{\pvo{H}}\to\Om B{\pvo{H}}$
  is injective by
  Theorem~\ref{t:sufficient-conditions-for-injectivity}~\ref{item:sufficient-conditions-for-injectivity-2}.
  Moreover, its image is clopen by
  \cite[Corollary~5.7]{Almeida&Steinberg:2008}, which implies that it
  is an open mapping since it is a homeomorphism onto its image. By
  assumption, the closure of $L$ in $\Om A{\pvo{H}}$ is clopen. As the
  closure of $\sigma(L)$ in $\Om B{\pvo{H}}$ is equal to the image by
  $\sigma^{\pvo{H}}$ of the closure of $L$, the closure of $\sigma(L)$
  is clopen. Therefore, $\sigma(L)$ is $\pvo{H}$-recognizable by
  Theorem~\ref{t:V-recognizability}.
\end{proof}

\begin{remark}
  \label{r:alternative-proof-of-lemma-image-under-encoding-using-Pin-Sakarovitch}
  An alternative proof of this lemma is obtained by combining
  \cite[Proposition~4.3]{Pin&Sakarovitch:1985} with
  \cite[Theorem~3]{LeRest&LeRest:1980a}.
\end{remark}

\begin{proof}[Proof of
  Corollary~\ref{c:an-equivalence-with-surjectivity}]
  The implication
  $\ref{i:an-equivalence-with-surjectivity-1}
  \Rightarrow
  \ref{i:an-equivalence-with-surjectivity-2}$ is
  Theorem~\ref{t:surjectivity}, while the implication
  $\ref{i:an-equivalence-with-surjectivity-2}
  \Rightarrow
  \ref{i:an-equivalence-with-surjectivity-3}$ is given by
  Proposition~\ref{p:saturation-inheritance}. It remains to establish
  the implication
  $\ref{i:an-equivalence-with-surjectivity-3}
  \Rightarrow
  \ref{i:an-equivalence-with-surjectivity-1}$.

  Assume that \ref{i:an-equivalence-with-surjectivity-3} holds. By
  Lemma~\ref{l:image-under-encoding}, the composition of two
  $\pvo{H}$-encodings is again an $\pvo{H}$-encoding. Therefore, under
  our assumptions, the image of $\sigma_{0,n}$ is
  $\pvo{H}$-recognizable, for all $n\in\nn$. Since $\pv{\loc
    Sl}\subseteq\pv{A}\subseteq\pvo{H}$, we can apply
  Theorem~\ref{t:a-sort-of-converse-of-surjectivity-theorem} to
  conclude that \ref{i:an-equivalence-with-surjectivity-1} holds.
\end{proof}

The case $\pv{H} = \pv{I}$ in
Corollary~\ref{c:an-equivalence-with-surjectivity} is precisely the
pure case. Since $G_\pv{A}(\bsigma)$ is the trivial group, condition
\ref{i:an-equivalence-with-surjectivity-3} holds trivially in that
case. We deduce the following.

\begin{corollary}
  \label{c:surjectivity-pure-encoding}
  Let $\bsigma$ be an eventually recognizable primitive
  directive sequence. If $\bsigma$ is pure, then $\bsigma$ is
  recognizable.
\end{corollary}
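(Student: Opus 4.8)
The plan is to obtain this as the special case $\pv H=\pv I$ of Corollary~\ref{c:an-equivalence-with-surjectivity}. First I would check that the hypotheses of that corollary are met for $\pv H=\pv I$. The trivial pseudovariety $\pv I$ is extension-closed, since $\pv I*\pv I=\pv I$. Moreover, recalling the identity $\pvo I=\pv A$ together with the fact that the finite $\pv A$-codes are exactly the finite pure codes, the assumption that $\bsigma$ is pure says precisely that each $\sigma_n$ is a pure encoding, i.e.\ an $\pvo I$-encoding. Since $\bsigma$ is also an eventually recognizable primitive directive sequence, all the standing hypotheses of Corollary~\ref{c:an-equivalence-with-surjectivity} hold, and it remains only to verify its condition~\ref{i:an-equivalence-with-surjectivity-3}, namely that $\bsigma$ is $\pvo I$-saturating.

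The key observation is that this $\pvo I$-saturating condition holds automatically. Since $\pvo I=\pv A$ is aperiodic, every maximal subgroup of $\Om{A_0}{A}$ is trivial; equivalently, the $\pv A$-Schützenberger group $G_{\pv A}(\bsigma)$ is the trivial group, so the maximal subgroups of $J_{\pv A}(\bsigma)$ are singletons, each consisting of a single idempotent. By Theorem~\ref{t:Im-intersection-J}, the set $J_{\pv A}(\bsigma)\cap\img_{\pv A}(\bsigma)$ is a regular $\green{J}$-class of $\img_{\pv A}(\bsigma)$, and hence contains an idempotent. Such an idempotent constitutes a maximal subgroup of $J_{\pv A}(\bsigma)$ contained in $\img_{\pv A}(\bsigma)$, which is exactly what Definition~\ref{d:V-saturating} requires for $\bsigma$ to be $\pvo I$-saturating.

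With condition~\ref{i:an-equivalence-with-surjectivity-3} established, the implication $\ref{i:an-equivalence-with-surjectivity-3}\Rightarrow\ref{i:an-equivalence-with-surjectivity-1}$ of Corollary~\ref{c:an-equivalence-with-surjectivity} yields at once that $\bsigma$ is recognizable. I expect no genuine obstacle here: the argument is essentially bookkeeping with the pseudovariety dictionary $\pvo I=\pv A$ and the triviality of subgroups in the aperiodic case. (Alternatively, one could bypass Corollary~\ref{c:an-equivalence-with-surjectivity} and apply Theorem~\ref{t:a-sort-of-converse-of-surjectivity-theorem} directly with $\pv V=\pv A$, using that a composition of pure encodings is pure so that each $\img(\sigma_{0,n})$ is $\pv A$-recognizable, combined with the same trivial saturation argument; but routing through the corollary is the cleanest presentation.)
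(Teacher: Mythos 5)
Your proposal is correct and follows essentially the same route as the paper, which also obtains the corollary as the case $\pv H=\pv I$ of Corollary~\ref{c:an-equivalence-with-surjectivity}, observing that $\pvo I=\pv A$ and that condition~\ref{i:an-equivalence-with-surjectivity-3} holds trivially because $G_{\pv A}(\bsigma)$ is the trivial group. Your explicit verification of the trivial saturation via an idempotent in $J_{\pv A}(\bsigma)\cap\img_{\pv A}(\bsigma)$ (Theorem~\ref{t:Im-intersection-J}) just spells out what the paper leaves implicit.
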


The following result gives yet another sufficient condition for
recognizability.

\begin{theorem}
  \label{t:recurrent-encoding-made-implies-recognizable}
  Let $\bsigma$ be a bounded primitive directive sequence. If
  $\bsigma$ is eventually recognizable, recurrent, and encoding, then
  it is recognizable.
\end{theorem}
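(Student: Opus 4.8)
The plan is to assemble Theorem~\ref{t:surjectivity}, Theorem~\ref{t:recurrent-encoding-implies-saturating}, and Theorem~\ref{t:a-sort-of-converse-of-surjectivity-theorem} into a single chain, with $\pv S$ as the governing pseudovariety throughout. First I would use eventual recognizability to fix some $k\in\nn$ for which the tail $\bsigma^{(k)}$ is recognizable. Since every tail of a primitive directive sequence is again primitive, Theorem~\ref{t:surjectivity} applies to $\bsigma^{(k)}$ and shows that $\bsigma^{(k)}$ is $\pv S$-saturating.

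Next I would feed this into Theorem~\ref{t:recurrent-encoding-implies-saturating}. Its hypotheses---that $\bsigma$ be bounded, primitive, recurrent, and encoding---are exactly those assumed here, and we have just produced a $k$ for which $\bsigma^{(k)}$ is $\pv S$-saturating. Hence $\bsigma$ itself is $\pv S$-saturating.

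The final step is to invoke Theorem~\ref{t:a-sort-of-converse-of-surjectivity-theorem} with $\pv V=\pv S$. Here $\pv S$ trivially contains $\pv{\loc Sl}$, the sequence $\bsigma$ is encoding by hypothesis, and $\bsigma$ is both $\pv S$-saturating (just shown) and eventually recognizable (by hypothesis). The one condition that still needs checking is that each image language $\img(\sigma_{0,n})=\sigma_{0,n}(A_n^+)$ is $\pv S$-recognizable. But $\sigma_{0,n}$ is an encoding, being a composition of encodings, so its image equals $C^+$ for the finite code $C=\sigma_{0,n}(A_n)\subseteq A_0^+$; since $C$ is a finite language, $C^+$ is rational, hence recognizable, that is, $\pv S$-recognizable. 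With every hypothesis verified, Theorem~\ref{t:a-sort-of-converse-of-surjectivity-theorem} yields that $\bsigma$ is recognizable.

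I do not expect a genuine obstacle: each step is a direct application of an already-established theorem, and the only verification requiring a word of justification is the $\pv S$-recognizability of the image languages, which follows from the finiteness of the codes $\sigma_{0,n}(A_n)$ (guaranteed by boundedness together with the encoding hypothesis) via Kleene's theorem. The conceptual work---deducing saturation from recognizability, propagating saturation down a recurrent encoding sequence, and recovering recognizability from saturation plus recognizable images---has already been carried out in the cited theorems, so the present statement is essentially their formal synthesis.
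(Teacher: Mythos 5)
Your proposal is correct and follows essentially the same route as the paper, which likewise chains Theorem~\ref{t:surjectivity} (applied to a recognizable tail), Theorem~\ref{t:recurrent-encoding-implies-saturating}, and then recovers recognizability from $\pv S$-saturation. The only cosmetic difference is that the paper closes by citing Corollary~\ref{c:an-equivalence-with-surjectivity} (with $\pv H=\pv G$, so $\pvo H=\pv S$), which is just Theorem~\ref{t:a-sort-of-converse-of-surjectivity-theorem} packaged together with the $\pv V$-recognizability check on the image languages that you instead verified directly via Kleene's theorem---a check which, incidentally, needs only the finiteness of the alphabets $A_n$ (part of the definition of directive sequence), not boundedness.
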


\begin{proof}
  By Theorem~\ref{t:surjectivity}, the assumption that $\bsigma$ is
  eventually recognizable entails that it is eventually \pv
  S-saturating. Theorem~\ref{t:recurrent-encoding-implies-saturating}
  then yields that $\bsigma$ is \pv S-saturating. Finally, as each
  $\sigma_n$ is an \pv S-encoding,
  Corollary~\ref{c:an-equivalence-with-surjectivity} shows that
  $\bsigma$ is recognizable.
\end{proof}

Although the statements of
Corollary~\ref{c:surjectivity-pure-encoding} and
Theorem~\ref{t:recurrent-encoding-made-implies-recognizable} concern
only symbolic dynamics, their proofs use the connection with profinite
semigroups in crucial ways. For instance, the proof of
Theorem~\ref{t:recurrent-encoding-made-implies-recognizable} relies
indirectly on the fact that closed subgroups of free profinite
semigroups are torsion-free~\cite[Theorem~1]{Rhodes&Steinberg:2008}
(needed in the proof of
Theorem~\ref{t:recurrent-encoding-implies-saturating}). This may
motivate a quest for proofs of purely dynamical and combinatorial
character for those results.

\section{The rank of
  \texorpdfstring{$\pv{V}$-Sch\"utzenberger}{V-Schutzenberger} groups}
\label{sec:rank}

A finitely generated profinite semigroup $S$ is said to have
\emph{rank} $k$, if $k$ is the smallest positive integer $n$ such that
$S$ is $n$-generated, as a profinite semigroup. In this section, we
investigate bounds on the rank of the Sch\"utzenberger groups of a
primitive directive sequence with finite alphabet rank. We provide
sharp bounds on the rank of Sch\"utzenberger groups as a function of
the alphabet rank under very general conditions.

The section is divided into three subsections. In the first one, we
gather the necessary results on semigroups; the second one presents
relationships between different notions of rank and depends on several
of our main results that appear in earlier sections; the third serves
to establish sharpness of some of the inequalities of the second one.

\subsection{Preliminaries on semigroup theory}
\label{sec:prelim-sgp-theory}

For the reader's convenience, we gather here the results from
semigroup theory that intervene in the remaining subsections. Most of
them are taken from the literature.

We start by recalling a very useful result due to Miller and Clifford
\cite[Theorem~3]{Miller&Clifford:1956}.

\begin{theorem}
  \label{t:Miller-Clifford}
  If $s\green{L}t\green{R}u$ in a semigroup $S$, then the
  $\green{H}$-class $H_t$ is a group if and only if
  $s\green{R}su\green{L}u$.
\end{theorem}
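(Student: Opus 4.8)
The plan is to deduce the statement from the Miller--Clifford \emph{location principle}: for any $a,b$ in a semigroup, the product $ab$ lies in $R_a\cap L_b$ if and only if the intersection $L_a\cap R_b$ contains an idempotent. Applying this with $a=s$ and $b=u$, the hypothesis $s\green{L}t\green{R}u$ gives $L_s=L_t$ and $R_u=R_t$, so $L_s\cap R_u=L_t\cap R_t=H_t$ (which is nonempty, as it contains $t$). Thus $su\in R_s\cap L_u$ — that is, $s\green{R}su\green{L}u$ — is equivalent to $H_t$ containing an idempotent, and by Green's theorem (recalled in Subsection~\ref{ss:green-relations}) an $\green{H}$-class is a group precisely when it contains an idempotent. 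Chaining these equivalences yields the theorem. So the real content is to prove the location principle directly for the pair $(s,u)$, in both directions.

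For the easy direction I would start from an idempotent $e\in H_t=L_s\cap R_u$. Since $e\green{L}s$ and $e^2=e$, writing $s\in S^1e$ gives $se=s$; since $e\green{R}u$ and $e^2=e$, writing $u\in eS^1$ gives $eu=u$. Factoring $e=uw$ (from $e\le_{\green{R}}u$) then yields $s=se=suw\in suS^1$, whence $s\green{R}su$ (the reverse $\le_{\green{R}}$ being automatic); and factoring $e=w's$ (from $e\le_{\green{L}}s$) yields $u=eu=w'su\in S^1su$, whence $su\green{L}u$.

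The harder direction — and the step I expect to be the main obstacle — is producing the idempotent from the relations $s\green{R}su\green{L}u$. Here I would write $u=xsu$ and $s=suy$ with $x,y\in S^1$, coming respectively from $u\le_{\green{L}}su$ and $s\le_{\green{R}}su$. Substituting the first into the second gives $s=s(xsu)y=(sx)(suy)=sxs$, so $s=sxs$; this cancellation identity is the crux. It follows immediately that $e:=xs$ is idempotent, since $(xs)(xs)=x(sxs)=xs$. Finally I would check $e\in L_s\cap R_u$: the identity $s=s(xs)$ gives $s\le_{\green{L}}xs$ while $xs\in S^1s$ gives the reverse, so $xs\green{L}s$; and $u=(xs)u$ gives $u\le_{\green{R}}xs$ while $xs=x(suy)=(xsu)y=uy\in uS^1$ gives the reverse, so $xs\green{R}u$. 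Hence $e=xs$ is an idempotent lying in $H_t$, which completes the location principle and, through the equivalences of the first paragraph, the proof.
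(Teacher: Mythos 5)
Your argument is correct and complete. Note first that the paper itself offers no proof of this statement: it is recalled from Miller and Clifford's original article (their Theorem~3), so there is no internal argument to compare against. Your route --- reducing the theorem to the location principle ($su\in R_s\cap L_u$ if and only if $L_s\cap R_u$ contains an idempotent) and then proving that principle directly --- is in fact the classical proof, and every step checks out. In the easy direction, $e\green{L}s$ and $e^2=e$ do give $se=s$ (write $s=ve$, so $se=ve^2=s$), and symmetrically $eu=u$; combined with the factorizations $e=uw$ and $e=w's$ this yields $s=suw$ and $u=w'su$, which are exactly the nontrivial inequalities $s\le_{\green{R}}su$ and $u\le_{\green{L}}su$ (the reverse inequalities being automatic, as you say). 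In the hard direction, the substitution of $u=xsu$ into $s=suy$ correctly produces the key identity $s=sxs$, from which $e=xs$ is idempotent, and your verifications $xs\green{L}s$ (via $s=s\cdot xs$) and $xs\green{R}u$ (via $u=xs\cdot u$ and $xs=uy$) are sound; note also that $e=xs$ lies in $S$ even when $x=1\in S^1$. Since $L_s\cap R_u=L_t\cap R_t=H_t$ by hypothesis, and an $\green{H}$-class is a group exactly when it contains an idempotent (by Green's theorem as recalled in Subsection~\ref{ss:green-relations}), the chain of equivalences closes. The proof uses no stability or topological hypotheses, consistent with the statement being asserted for an arbitrary semigroup.
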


The next proposition is also useful in
Subsection~\ref{sec:rank-upper-bounds}. It is a
generalization of \cite[Lemma~2]{Hunter:1988}.

\begin{proposition}
  \label{p:top-principal-factor}
  Let $S$ be a compact semigroup which is generated by a 
  $\green{J}$-class $J$ with only finitely many $\green{H}$-classes.
  Then $J$ is clopen.
\end{proposition}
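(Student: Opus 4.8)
The plan is to first pin down the coarse structure and then localise the difficulty to a purely topological statement. Since $S$ is generated by the single $\green{J}$-class $J$, every element of $S$ is a product of members of $J$ and hence lies $\le_{\green{J}}$-below an element of $J$; thus $J$ is the unique maximal $\green{J}$-class of $S$, and its complement $I=S\setminus J$ is an ideal (empty exactly when $J=S$, in which case there is nothing to prove). In a compact semigroup Green's equivalence classes are closed, so $J$ is closed and only its openness remains. Here I would exploit a convenient dichotomy: $\overline I$ is again an ideal, and if it met $J$ it would contain the whole $\green{J}$-class of any of its points, hence all of $J$. So either $\overline I\cap J=\varnothing$, which gives that $I$ is closed and $J$ open, or $\overline I=S$ and $I$ is dense. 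Everything therefore reduces to excluding the case that $I$ is dense, i.e.\ that some (equivalently every) point of $J$ is a limit of points of $I$.

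Next I would bring in the hypothesis that $J$ has finitely many $\green{H}$-classes. Since $S$ is stable, within $J$ every $\green{R}$-class meets every $\green{L}$-class in a single $\green{H}$-class, so there are only finitely many $\green{R}$-classes $R_1,\dots,R_m$ and $\green{L}$-classes $L_1,\dots,L_n$; each is closed, and being finitely many closed sets partitioning $J$ they are relatively clopen in $J$. Using stability together with the Miller--Clifford criterion (Theorem~\ref{t:Miller-Clifford}), a product $st$ of elements $s,t\in J$ remains in $J$ precisely when the $\green{H}$-class $L_s\cap R_t$ is a group; this encodes the finitely many admissible ``transitions'' combinatorially. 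Two extreme cases then settle themselves and are worth isolating. If every $\green{H}$-class of $J$ is a group, i.e.\ $J$ is completely simple, products never leave $J$, so $J$ is a subsemigroup and $S=\overline{\langle J\rangle}=J$ is clopen. If instead $J$ carries no idempotent, then no $\green{H}$-class is a group, every product of two elements of $J$ drops into $I$, and $J$ occurs among products only with multiplicity one. The genuinely general situation is that some but not all $\green{H}$-classes of $J$ are groups.

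The hard part will be the purely topological step of ruling out that $I$ is dense, and this is exactly where finiteness must do the work, because the soft multiplicative manipulations (multiplying an approximating net on either side by idempotents of $J$) keep the net inside the ideal $I$ and so never yield a contradiction on their own. My plan is to argue in the profinite realisation $S=\varprojlim_\lambda S_\lambda$ relevant to our applications. A homomorphism preserves $\le_{\green{J}}$ and $\green{J}$, so each projection $\varphi_\lambda$ sends $J$ into a single $\green{J}$-class $J_\lambda$ of $S_\lambda$, which by the generation hypothesis is the maximum $\green{J}$-class of $S_\lambda$; moreover the $\green{J}$-order of $S$ is the intersection of the pulled-back $\green{J}$-orders of the $S_\lambda$, so for $s\in I$ there is a level $\lambda$ with $\varphi_\lambda(s)<_{\green{J}}J_\lambda$, giving the presentation $J=\bigcap_\lambda\varphi_\lambda^{-1}(J_\lambda)$ as an intersection of clopen sets (which merely re-proves closedness). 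The crux is to upgrade this to a single clopen preimage: here I would use that $J$ has only finitely many $\green{R}$- and $\green{L}$-classes to show that the finite egg-box pictures of the $J_\lambda$ stabilise, so that for all sufficiently large $\lambda$ one already has $\varphi_\lambda^{-1}(J_\lambda)=J$, exhibiting $J$ as clopen. Establishing this stabilisation — guaranteeing that once the finitely many cells of $J$ have been separated no proper $\green{J}$-descent of $S$ remains invisible in a fixed finite quotient — is the main obstacle, and I expect it to require combining the Miller--Clifford transition analysis above with a compactness argument over the finitely many $\green{H}$-classes of $J$; for a general compact $S$ the same conclusion would be reached by localising this argument at the (finitely many) idempotents of $J$.
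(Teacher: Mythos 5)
There is a genuine gap: your proof is never completed. You correctly reduce the problem to showing that $I=S\setminus J$ is closed, and you assemble the right ingredients (stability, the Miller--Clifford criterion, the fact that membership of a product $st$ in $J$ depends only on whether the cell $L_s\cap R_t$ is a group, and the finitely many closed $\green{H}$-classes), but the decisive step is left as an announced expectation rather than an argument. Your proposed route through a profinite realisation $S=\varprojlim S_\lambda$ has two problems. First, it silently narrows the statement: the proposition concerns an arbitrary compact semigroup, which need not be an inverse limit of finite semigroups, and the closing remark that ``the same conclusion would be reached by localising at the idempotents of $J$'' is not an argument (note also that $J$ need not contain any idempotent). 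Second, and more seriously, the ``stabilisation'' claim $\varphi_\lambda^{-1}(J_\lambda)=J$ for large $\lambda$ is essentially equivalent to the clopenness of $J$ you are trying to prove: a subset of an inverse limit of compact spaces is clopen precisely when it is a preimage of a set at some finite level, so deducing clopenness from stabilisation is circular unless stabilisation is proved independently --- and you explicitly flag it as the unresolved obstacle.

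The missing idea, which is the heart of the paper's proof, is a reduction of arbitrary products to consecutive pairs: one shows by induction (the case $r=3$ sufficing, via stability and the fact that $\green{L}$ is a right congruence) that for $t_1,\dots,t_r\in J$ the product $t_1\cdots t_r$ lies in $J$ if and only if every consecutive product $t_it_{i+1}$ does. Consequently any product of elements of $J$ that leaves $J$ admits a \emph{dropping factorization} $s=s'tus''$ with $s',s''\in S^1$, $t,u\in J$ and $tu\notin J$. By Miller--Clifford the set $K=\{tu: t,u\in J,\ tu\notin J\}$ is a finite union of products $H\cdot H'$ over pairs of closed $\green{H}$-classes, hence compact; therefore $S^1KS^1$ is compact, hence closed, and since $S$ is generated by $J$ this closed set coincides with $S\setminus J$. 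This single uniform argument also subsumes your two ``extreme cases'' (all cells groups, respectively no cells groups), for which you likewise did not actually conclude openness. Without the consecutive-pair induction, the soft manipulations you describe cannot rule out that $I$ is dense, as you yourself observe.
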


\begin{proof}
  One can easily show by induction on~$r$ that, given
  $t_1,\ldots,t_r\in J$, the product $t_1t_2\cdots t_r$ belongs to~$J$
  if and only if so do each of the products $t_it_{i+1}$
  ($i=1,\ldots,r-1$). Indeed, by induction, it suffices to consider
  the case case $r=3$, so suppose that $t_1,t_2,t_3,t_1t_2,t_2t_3\in
  J$. By stability, we must have $t_1t_2\green{L}t_2$. Since the
  relation $\green{L}$ is stable under multiplication on the right, we
  deduce that $t_1t_2t_3\green{L}t_2t_3$ so that, as $t_2t_3$ is
  assumed to belong to~$J$, so does $t_1t_2t_3$.

  Moreover, by stability and Theorem~\ref{t:Miller-Clifford}, whether
  the product $t_it_{i+1}$ belongs to~$J$ depends only on whether
  $L_{t_i}\cap R_{t_{i+1}}$ is a group.

  Hence, every product
  $s$ of elements of~$J$ that does not belong to~$J$ has a
  \emph{dropping factorization}, in the sense that it admits a
  factorization of the form $s=s'tus''$ with $s',s''\in S^1$, $t,u\in
  J$, and $tu\notin J$. Since $J$ is closed and contains only finitely
  many $\green{H}$-classes, which are themselves closed, by
  compactness the set of elements with dropping factorizations is
  closed and must coincide with $S\setminus J$, which completes the
  proof.
\end{proof}

If a semigroup $S$ has a zero element $0$, then $J_0=\{0\}$; if
$S\setminus\{0\}$ is the only other $\green{J}$-class and the
multiplication is not constant, then we say that $S$ is
\emph{0-simple}. As for completely simple semigroups, \emph{completely
  0-simple semigroups} are the stable 0-simple semigroups. By a
theorem of Rees, the structure of a completely 0-simple semigroup can
be described in terms of the structure of one of its maximal nonzero
subgroups $G$ and a matrix with entries in $G\cup\{0\}$ that basically
describes how the idempotents multiply.

More precisely, given sets $I$ and $\Lambda$, a group $G$, and a
matrix $P:\Lambda\times I\to G\cup\{0\}$ into the group $G$ with a
zero added, the set
\begin{displaymath}
  \mathcal{M}^0(G,I,\Lambda,P)=I\times G\times\Lambda\cup\{0\}
\end{displaymath}
is a semigroup under the operation defined by
\begin{equation}
  \label{eq:Rees-formula}
  (i,g,\lambda)\,(j,h,\mu)=
  (i,gP(\lambda,j)h,\mu) \text{ whenever } P(\lambda,j)\ne0,
\end{equation}
all other products being set to~$0$. If the matrix $P$ has no zero
entries, then the case of the formula (\ref{eq:Rees-formula}) always
holds and $\mathcal{M}(G,I,\Lambda,P)=I\times G\times\Lambda$ is a
subsemigroup. Both such semigroups are called \emph{Rees matrix
  semigroups}. Then, the matrix $P$ is said to be \emph{normalized
  along row $\lambda_0$ and column $i_0$} if
$P(\lambda_0,i)=P(\lambda,i_0)=e$, where $e$ is the idempotent of~$G$.
Each subset $\{i\}\times G\times\{\lambda\}$ is an $\green{H}$-class
and it is a group isomorphic to~$G$ if and only if $P(\lambda,i)\ne0$.
The following key structure theorem is due to Rees~\cite[Theorems~2.92 and~2.93]{Rees:1940}.

\begin{theorem}
  \label{t:Rees}
  The following hold for arbitrary semigroups.
  \begin{enumerate}
  \item\label{item:Ress-1} A semigroup is completely 0-simple
    semigroup if and only if it is isomorphic to a Rees matrix
    semigroup $\mathcal{M}^0(G,I,\Lambda,P)$ where the matrix $P$ has
    at least one nonzero entry in each row and in each column.
  \item\label{item:Rees-2} A semigroup is completely simple if and
    only if it is isomorphic to a Rees matrix semigroup
    $\mathcal{M}(G,I,\Lambda,P)$. Moreover, in such a representation,
    the matrix $P$ may be normalized along any chosen row and column.
  \end{enumerate}
\end{theorem}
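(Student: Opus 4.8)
The plan is to follow the classical coordinatization argument of Rees, using Green's relations together with Theorem~\ref{t:Miller-Clifford} as the principal structural tool. I would prove each of the two equivalences by treating its two directions separately, and obtain the completely simple case (item~\ref{item:Rees-2}) as the specialization of item~\ref{item:Ress-1} in which $0$ is absent and every $\green H$-class is a group. Throughout, recall that ``completely'' builds stability into the hypotheses, so stability of $S$ is available.

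For the \emph{``if''} direction of item~\ref{item:Ress-1}, I would start from $S=\mathcal{M}^0(G,I,\Lambda,P)$ with $P$ having a nonzero entry in every row and column. A direct computation with~\eqref{eq:Rees-formula} shows that two nonzero elements are $\green R$-related iff they share their first coordinate and $\green L$-related iff they share their last coordinate, so all nonzero elements form a single $\green J$-class; the nonzero-entry hypothesis on $P$ is exactly what forces each element to admit nonzero products on both sides, giving $0$-simplicity and ruling out a null semigroup. Stability is immediate from this explicit description of the Green relations, so $S$ is completely $0$-simple, and~\eqref{eq:Rees-formula} makes plain that $\{i\}\times G\times\{\lambda\}$ is a group $\green H$-class precisely when $P(\lambda,i)\neq 0$.

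For the \emph{``only if''} direction, let $S$ be completely $0$-simple with nonzero $\green J$-class $J=S\setminus\{0\}$, which is regular. I would fix an idempotent $e\in J$, put $G=H_e$, index the $\green R$-classes of $J$ by $I$ and the $\green L$-classes by $\Lambda$, marking by $i_0,\lambda_0$ the row and column of $e$. Using regularity and Green's Lemma I would select representatives $q_i\in R_i\cap L_e$ and $p_\lambda\in R_e\cap L_\lambda$ with $q_{i_0}=p_{\lambda_0}=e$, so that $(i,g,\lambda)\mapsto q_i\,g\,p_\lambda$ (and $0\mapsto 0$) is a bijection onto $S$: for fixed $(i,\lambda)$ the map $g\mapsto q_i g p_\lambda$ is the composite of two Green's-Lemma bijections $G\to R_i\cap L_e\to R_i\cap L_\lambda$, and distinct $(i,\lambda)$ hit distinct $\green H$-classes. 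Setting $P(\lambda,i)=p_\lambda q_i$, stability forces $p_\lambda q_i\in H_e\cup\{0\}=G\cup\{0\}$, and the condition that no row or column of $P$ vanishes identically is simply a restatement of $0$-simplicity.

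The delicate point, and the step I expect to be the main obstacle, is verifying multiplicativity: one must show that $(q_i g p_\lambda)(q_j h p_\mu)=q_i\,(g\,P(\lambda,j)\,h)\,p_\mu$ when $P(\lambda,j)\neq 0$ and $=0$ otherwise. This reduces to analyzing the sandwich product $p_\lambda q_j$, and it is here that Theorem~\ref{t:Miller-Clifford} is decisive: choosing any $t$ in the $\green H$-class $R_j\cap L_\lambda$, the relations $p_\lambda\green{L}t\green{R}q_j$ hold, so Theorem~\ref{t:Miller-Clifford} states that $R_j\cap L_\lambda$ is a group $\green H$-class exactly when $p_\lambda\green{R}p_\lambda q_j\green{L}q_j$; in that case $p_\lambda q_j$ lies in $R_e\cap L_e=G$ and equals $P(\lambda,j)$, while otherwise $p_\lambda q_j=0$, which is precisely the dichotomy encoded in~\eqref{eq:Rees-formula}. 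Finally, for item~\ref{item:Rees-2}, a completely simple semigroup has no zero and every $\green H$-class is a group, so the same construction produces $\mathcal{M}(G,I,\Lambda,P)$ with all entries of $P$ nonzero; normalization along a chosen row $\lambda_0$ and column $i_0$ is then obtained by replacing the representatives $p_\lambda,q_i$ by suitable translates $a_\lambda p_\lambda$ and $q_i b_i$ with $a_\lambda,b_i\in G$, which rescales $P(\lambda,i)$ to $a_\lambda P(\lambda,i)b_i$ and can be arranged so that the $\lambda_0$-row and $i_0$-column of $P$ both equal $e$.
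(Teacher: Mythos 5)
The paper offers no proof of Theorem~\ref{t:Rees}: it is stated as a classical result and attributed to Rees with a citation, so there is no internal argument to compare yours against. What you have written is the standard Rees coordinatization proof, and it is correct in all essentials: the computation of Green's relations in $\mathcal{M}^0(G,I,\Lambda,P)$ for the ``if'' direction; the choice of representatives $q_i\in R_i\cap L_e$ and $p_\lambda\in R_e\cap L_\lambda$ (these intersections are nonempty because stability gives $\green{J}=\green{D}$, a property the paper lists for stable semigroups); the bijectivity via Green's Lemma; the identification of the sandwich entries $p_\lambda q_j$ via the Miller--Clifford dichotomy, which is exactly the role Theorem~\ref{t:Miller-Clifford} plays in the classical argument; and the normalization by rescaling $P(\lambda,i)\mapsto a_\lambda P(\lambda,i)b_i$, which works by taking $a_\lambda=P(\lambda,i_0)^{-1}$ and $b_i=P(\lambda_0,i)^{-1}P(\lambda_0,i_0)$.

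One point deserves explicit justification, because this paper defines completely $0$-simple semigroups as the \emph{stable} $0$-simple semigroups (following Rhodes--Steinberg) rather than as $0$-simple semigroups with a primitive idempotent. You assert at the outset that $J=S\setminus\{0\}$ ``is regular'' and then fix an idempotent $e\in J$; under the stability-based definition, the existence of any idempotent in $J$ is not definitional and needs an argument. It is short: given $a\neq 0$, $0$-simplicity yields $SaS=S$, so $a=xay$ for some $x,y$; stability then gives $a\green{R}ay$, hence $ab=a$ for some $b$ with $b\neq 0$ and indeed $b^n\neq 0$ for all $n$; since $b^2\leq_{\green{R}}b$, $b^2\leq_{\green{L}}b$ and $b^2\green{J}b$, stability forces $b\green{H}b^2$, and Green's theorem (an $\green{H}$-class meeting its own square is a group) produces an idempotent in $J$. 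The same remark applies to your unproved claim in item~\ref{item:Rees-2} that every $\green{H}$-class of a completely simple semigroup is a group: $t^2\green{J}t$ is automatic when $\green{J}$ is universal, so stability gives $t\green{H}t^2$ and the same theorem of Green applies. With these two one-line lemmas supplied, your proof is complete and is, as far as can be told from the citation, the same route as the source the paper relies on.
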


In this section, we are interested in Rees matrix semigroups
$\mathcal{M}(G,I,\Lambda,P)$ where $I=\Lambda=\{1,\ldots,n\}$ for a
positive integer, so that $P$ is a square matrix with entries in $G$,
simply denoted $\mathcal{M}(G,n,n,P)$. In this case, it is common to
denote the matrix $P$ as $(p_{\lambda,i})_{\lambda,i=1,\ldots,n}$,
where $p_{\lambda,i}=P(\lambda,i)$. The matrix $P$ is usually taken to
be normalized along the first row and column. Sometimes, we will use
additive notation when the group $G$ is Abelian, denoting the
idempotent by $0$. This should not lead to confusion as we will only
need to do so when the matrix has all its entries in $G$.

For a pseudovariety of groups $\pv H$, let $\pv{CS(H)}$ be the
pseudovariety $\pv{CS}\cap\pvo H$. We are particularly interested in
the case of $\pv{CS}(\pv{Ab}_p)$, where $\pv{Ab}_p$ is the
pseudovariety of all finite elementary Abelian $p$-groups for a prime
$p$. Let
\begin{displaymath}
  K_p = \mathcal{M}\Bigl(\mathbb{Z}/p\mathbb{Z},n,n,
    \left(
      \begin{smallmatrix}
        0 & 0\\
        0 & 1
      \end{smallmatrix}
    \right)
  \Bigr)
\end{displaymath}
Note that the product of idempotents $(1,0,2)(2,0,1)=(1,1,1)$ is not
an idempotent, whence it is a generator of its (group)
$\green{H}$-class. Semigroups in which the product of two idempotents
is always idempotent are said to be \emph{orthodox}. The orthodox
finite simple semigroups are also known in the literature as
\emph{rectangular groups}. They are direct products of groups and
aperiodic simple semigroups, where the latter are also known as
\emph{rectangular bands}; this decomposition extends to
compact semigroups (see, for instance, \cite{Howie:1995} and \cite[Appendix A]{Rhodes&Steinberg:2009qt}). In particular, every maximal subgroup $G$ of a
(compact) orthodox simple semigroup $S$ is a (continuous) homomorphic image of $S$.

The following result is a simple application of the main theorem
in~\cite{Rasin:1979}.

\begin{theorem}
  \label{t:Rasin}
  Let $\pv V$ be a pseudovariety of simple semigroups. Then $\pv V$ contains a non-orthodox element if and only if it contains $K_p$ for some prime~$p$.
  The pseudovariety $\pv{CS}(\pv{Ab}_p)$ is the smallest containing
  $K_p$.
\end{theorem}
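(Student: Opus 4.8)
The plan is to reduce everything to the Rees matrix description provided by Theorem~\ref{t:Rees} and to isolate precisely the single point that genuinely requires Rasin's classification. Note first that since $\pv V\subseteq\pv{CS}$, every member of $\pv V$ is a finite simple semigroup, hence completely simple, hence isomorphic to some $\mathcal{M}(G,I,\Lambda,P)$ with $P$ normalized along row $1$ and column $1$ (Theorem~\ref{t:Rees}\ref{item:Rees-2}). I would begin by recording the \emph{orthodoxy criterion}: computing the product of the idempotents $(i,p_{\lambda,i}^{-1},\lambda)$ and $(j,p_{\mu,j}^{-1},\mu)$ shows that such a semigroup is orthodox if and only if $p_{\mu,i}\,p_{\lambda,i}^{-1}\,p_{\lambda,j}\,p_{\mu,j}^{-1}=e$ for all indices; taking $i=1$ and using the normalization, this reduces to $p_{\lambda,j}=e$ for all $\lambda,j$. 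Thus a normalized Rees matrix semigroup is non-orthodox precisely when some entry $g=p_{\lambda,i}$ (necessarily with $\lambda\neq1$ and $i\neq1$) differs from the identity $e$.

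With this in hand, the equivalence in the first sentence is elementary and does not need Rasin. For the ``if'' direction, $K_p$ is completely simple, hence simple, and it is non-orthodox since the product of idempotents $(1,0,2)(2,0,1)=(1,1,1)$ is not idempotent, as already observed. For the ``only if'' direction, let $S=\mathcal{M}(G,I,\Lambda,P)\in\pv V$ be non-orthodox and choose, via the criterion above, indices with $g=p_{\lambda,i}\neq e$. The set $\{1,i\}\times\langle g\rangle\times\{1,\lambda\}$ is a subsemigroup $T$ of $S$, isomorphic to $\mathcal{M}\bigl(\langle g\rangle,2,2,\left(\begin{smallmatrix}e&e\\e&g\end{smallmatrix}\right)\bigr)$. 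Picking a prime $p$ dividing the order of $g$ and a surjective homomorphism $\phi\from\langle g\rangle\to\mathbb{Z}/p\mathbb{Z}$ sending $g$ to a generator, the induced map $(j,h,\mu)\mapsto(j,\phi(h),\mu)$ is a homomorphism of $T$ onto $K_p$. Since pseudovarieties are closed under subsemigroups and quotients, $K_p\in\pv V$.

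For the second sentence I would first note the easy inclusion: $K_p$ is completely simple with maximal subgroups isomorphic to $\mathbb{Z}/p\mathbb{Z}$, so $K_p\in\pv{CS}(\pv{Ab}_p)$, whence the pseudovariety generated by $K_p$ is contained in $\pv{CS}(\pv{Ab}_p)$. The reverse inclusion—that every finite completely simple semigroup whose subgroups are elementary abelian $p$-groups lies in the pseudovariety generated by $K_p$—is exactly the content I would extract from the main theorem of \cite{Rasin:1979}, whose classification of the lattice of varieties of completely simple semigroups identifies the variety generated by $K_p$ with $\pv{CS}(\pv{Ab}_p)$. Since $\pv{Ab}_p$ is locally finite, this variety is locally finite, so its finite members coincide with the pseudovariety generated by $K_p$, and the passage between the variety and pseudovariety settings is routine.

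The hard part is precisely this reverse inclusion: reconstructing an arbitrary sandwich matrix over $(\mathbb{Z}/p\mathbb{Z})^d$ of arbitrary dimensions as a divisor of a finite power of the single $2\times2$ semigroup $K_p$. This is the substance of Rasin's lattice description rather than anything one proves by hand here, which is why the theorem is presented as a direct reading of his result; all the remaining steps—the orthodoxy criterion and the extraction of $K_p$ from a non-orthodox member—are short and self-contained.
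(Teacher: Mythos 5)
Your proposal is correct, and it is worth noting how it relates to the paper's treatment: the paper offers no written argument at all, presenting the entire theorem as ``a simple application of the main theorem in Rasin's paper,'' so both sentences are implicitly read off from Rasin's classification of the lattice of varieties of completely simple semigroups over abelian groups. You instead prove the first sentence by hand and confine the citation of Rasin to the one genuinely hard step, the inclusion $\pv{CS}(\pv{Ab}_p)\subseteq\langle K_p\rangle$. Your elementary half checks out: with $P$ normalized along the first row and column, the product of the idempotents $(i,p_{\lambda,i}^{-1},\lambda)$ and $(j,p_{\mu,j}^{-1},\mu)$ is idempotent exactly when $p_{\mu,i}p_{\lambda,i}^{-1}p_{\lambda,j}p_{\mu,j}^{-1}=e$, and setting $i=1$ then $\mu=1$ forces $p_{\lambda,j}=e$, so non-orthodoxy is witnessed by a single entry $g=p_{\lambda,i}\neq e$; the set $\{1,i\}\times\langle g\rangle\times\{1,\lambda\}$ is indeed closed under multiplication because the only sandwich entries it invokes are $e$ and $g$, and applying $\phi\colon\langle g\rangle\to\mathbb{Z}/p\mathbb{Z}$ coordinatewise yields $\mathcal{M}\bigl(\mathbb{Z}/p\mathbb{Z},2,2,\bigl(\begin{smallmatrix}0&0\\0&\phi(g)\end{smallmatrix}\bigr)\bigr)$, which is $K_p$ on the nose if you choose $\phi(g)=1$ (possible since $g$ generates $\langle g\rangle$), or after composing with an automorphism of $\mathbb{Z}/p\mathbb{Z}$ otherwise --- a cosmetic point you should make explicit. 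Your variety-to-pseudovariety bridge is also sound, though the phrase ``since $\pv{Ab}_p$ is locally finite, this variety is locally finite'' compresses a small standard argument (a finitely generated completely simple semigroup has finite index sets $I$ and $\Lambda$, and its structure group is then finitely generated, hence finite in $\pv{Ab}_p$); likewise one should note that for finite completely simple semigroups the unary inversion of Rasin's signature is term-definable, so subsemigroups and sub-unary-semigroups coincide. What your route buys is a precise accounting of where the classification is indispensable, making the first sentence self-contained; what the paper's terser route buys is brevity, at the cost of leaving the reader to extract both statements from the lattice description.
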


The Rees matrix representation theorem (Theorem~\ref{t:Rees}) extends to
finitely generated profinite (0-)simple semigroups $S$, where the
group $G$ involved is profinite, the cardinality of each of the sets
$I$ and $\Lambda$ is at most the rank of~$S$, and the topology is that
of the product $I\times G\times\Lambda$, where the first and third
factors are discrete, and, in the 0-simple case, the zero is an isolated point~\cite[Remark A.4.19]{Rhodes&Steinberg:2009qt}. The structure of $\Om n{CS(H)}$ is described in
the following theorem whose proof in the case of $\pv H=\pv G$ is
given in~\cite[Theorem~3.3]{Almeida:1991d} and extends to the case of an arbitrary
pseudovariety of groups $\pv H$.

\begin{theorem}
  \label{t:free-CS(H)}
  Let $\pv H$ be a pseudovariety of groups. Then the profinite
  semigroup $\mathcal{M}(\Om{n+(n-1)^2}H,n,n,P)$ is freely generated
  by the elements $(i,g_i,i)$ ($i=1,\ldots,n$) in the pseudovariety
  $\pv{CS(H)}$, where $P$ is normalized along the first row and column
  and the remaining entries are given by $P(i,j)=g_{i,j}$, and where the elements
  $g_i$ ($i=1,\ldots,n$) and $g_{i,j}$ ($i,j=2,\ldots,n$) are free
  generators of $\Om{n+(n-1)^2}H$.
\end{theorem}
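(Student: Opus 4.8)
The plan is to build the claimed Rees matrix representation of $\Om n{CS(H)}$ directly and verify its freeness via the universal property, following the established case $\pv H = \pv G$ from \cite{Almeida:1991d}. First I would set $S = \mathcal{M}(\Om{n+(n-1)^2}H, n, n, P)$ with $P$ normalized along the first row and column and the remaining entries $P(i,j) = g_{i,j}$ for $i,j \in \{2,\ldots,n\}$ being free generators, together with the free generators $g_i$ ($i=1,\ldots,n$). The key observation is that $S$ is a compact completely simple semigroup over a free pro-$\pv H$ group, hence $S \in \pv{CS(H)}$ by the Rees structure theorem (Theorem~\ref{t:Rees}) extended to the profinite setting as recalled after Theorem~\ref{t:Rasin}; the maximal subgroup $\Om{n+(n-1)^2}H$ being free pro-$\pv H$ guarantees $S$ is residually in $\pv{CS(H)}$.

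Next I would check that the elements $s_i = (i, g_i, i)$ generate a dense subsemigroup of $S$. This is a computation using the Rees product formula~\eqref{eq:Rees-formula}: a product $s_i s_j = (i, g_i P(i,j) g_j, j)$, and by choosing suitable products one recovers, in the maximal subgroup, all the free generators $g_i$ and $g_{i,j}$ (the normalization $P(1,\cdot) = P(\cdot,1) = e$ is what makes the off-diagonal entries $g_{i,j}$ accessible and the diagonal contributions separable). Since these elements freely generate $\Om{n+(n-1)^2}H$, the closed subsemigroup they generate must contain each $\green{H}$-class $\{i\} \times \Om{n+(n-1)^2}H \times \{j\}$ entirely, hence is all of $S$.

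The heart of the argument, and the main obstacle, is verifying the universal property: given any $\pv{CS(H)}$ semigroup $T$ and any map $f\from \{s_1,\ldots,s_n\} \to T$, one must produce a unique continuous homomorphism $\bar f\from S \to T$ extending it. The plan is to represent $T$ itself via Theorem~\ref{t:Rees} as $\mathcal{M}(K, J, M, Q)$ with $K$ a pro-$\pv H$ group, normalize $Q$, and then show that the images $f(s_i)$ determine, through the freeness of $\Om{n+(n-1)^2}H$ on the $g_i$ and $g_{i,j}$, a unique compatible assignment of the structural data. Concretely, each $f(s_i)$ lies in some $\green{H}$-class indexed by a pair in $J \times M$ and carries a group element of $K$; the subtlety is that the homomorphism on the index sets is forced by where the idempotents $s_i^{\omega}$ go, while the values on the free group generators $g_{i,j}$ are pinned down by the products $f(s_i)f(s_j)$ and the structure matrix $Q$. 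The counting $n + (n-1)^2$ is exactly the number of independent parameters: $n$ diagonal group elements $g_i$ and $(n-1)^2$ off-diagonal matrix entries $g_{i,j}$, after normalization has eliminated the $2n-1$ entries in the first row and column.

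Once the assignment on generators is shown to extend (using the universal property of $\Om{n+(n-1)^2}H$ in $\pv H$ to define $\bar f$ on the group coordinate, and defining it on indices by the forced map) and to be a continuous homomorphism compatible with~\eqref{eq:Rees-formula}, uniqueness follows from density of the subsemigroup generated by the $s_i$. I expect the bookkeeping of normalization—ensuring that the map respects $Q(Q$-normalized$)$ while the $g_{i,j}$ remain free—to be the most delicate point, since one must confirm that no hidden relations among the $f(s_i)$ are imposed beyond those already present in the free pro-$\pv H$ group. The extension from $\pv H = \pv G$ to general $\pv H$ is formal: every step uses only that $\Om m H$ is the free pro-$\pv H$ group and that Rees coordinatization is available for compact completely simple semigroups, both of which hold for arbitrary pseudovarieties of groups.
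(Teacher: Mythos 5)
Your proposal is correct and takes essentially the same route as the paper, which offers no independent proof of this statement but instead cites \cite[Theorem~3.3]{Almeida:1991d} for the case $\pv H=\pv G$ and notes that the argument extends verbatim to arbitrary pseudovarieties of groups; that cited argument is precisely the Rees-coordinatization-plus-universal-property verification you sketch. Your identification of the crux is also accurate: after normalizing both structure matrices, the compatibility equations \emph{define} (rather than constrain) the group homomorphism $\theta$ on the free generators $g_i$ and $g_{i,j}$ --- each generator occurs in exactly one equation, so no hidden relations arise --- and uniqueness follows from density of the subsemigroup generated by the elements $(i,g_i,i)$.
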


We are also interested in the following application of
Theorem~\ref{t:free-CS(H)}, which carries over to the profinite
context a result of Ru\v{s}kuc~\cite[Corollary~5.3]{Ruskuc:1994}. An
alternative proof can be obtained directly from Ru\v{s}kuc's result
using inverse limits.

\begin{corollary}
  \label{c:rank-max-subgroup-0-simple}
  Let $S$ be an $n$-generated 0-simple profinite semigroup. Then its
  maximal subgroups have rank at most $n^2-n+1$. If, moreover, $S$ is
  orthodox, then its maximal subgroups have rank at most $n$.
\end{corollary}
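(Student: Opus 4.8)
The plan is to combine the profinite Rees matrix representation with the explicit description of free completely simple semigroups in Theorem~\ref{t:free-CS(H)}. First I would invoke the fact recalled just before the statement, that the Rees representation extends to finitely generated profinite $0$-simple semigroups: writing $k\le n$ for the rank of $S$, we get $S\cong\mathcal{M}^0(G,I,\Lambda,P)$ with $G$ a profinite group, $|I|,|\Lambda|\le k\le n$, and the nonzero $\green{H}$-classes $\{i\}\times G\times\{\lambda\}$ clopen. Fix a generating set $X$ with $|X|\le n$. Discarding the trivial possibility that $P$ has no zero entries (in which case $S\setminus\{0\}$ is a completely simple ideal that is $(n-1)$-generated, and the bound follows a fortiori from the completely simple case), we may assume all elements of $X$ are nonzero, say $x_t=(i_t,g_t,\lambda_t)$.

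Next I would normalize and run a density argument. Choose the maximal subgroup $H_e=\{i_0\}\times G\times\{\lambda_0\}$ with $e=(i_0,1,\lambda_0)$ idempotent, taking $i_0,\lambda_0$ among the coordinates of $X$ (possible since some product of generators meets $H_e$). By the classical normalization of Rees matrices (cf.\ Theorem~\ref{t:Rees}, whose item~\ref{item:Rees-2} records it for the completely simple structure) we may assume $P$ is normalized along row $\lambda_0$ and column $i_0$, i.e.\ those entries equal the identity of $G$. Since $H_e$ is clopen and $X$ generates a dense subsemigroup, $\langle X\rangle\cap H_e$ is dense in $H_e$; as every closed subsemigroup of the compact group $H_e$ is a subgroup, the closed subgroup of $G$ generated by the second coordinates of the elements of $\langle X\rangle\cap H_e$ must equal $G$. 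By the Rees multiplication formula those second coordinates are words over the finite set consisting of the $g_t$ together with the nonzero off-normalized entries $p_{\lambda_t,i_s}$ (with $\lambda_t\ne\lambda_0$, $i_s\ne i_0$). Hence $G$ is topologically generated by at most $n+(n-1)^2=n^2-n+1$ elements. This is precisely the generating set that Theorem~\ref{t:free-CS(H)} exhibits as free in the universal completely simple model, so the bound is the expected sharp one.

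For the orthodox refinement I would show the matrix entries contribute nothing. If $S$ is orthodox then so is the completely simple structure of its nonzero part, which is therefore a rectangular group; equivalently, by Theorem~\ref{t:Rasin} the pseudovariety it generates omits every $K_p$, and this is what forces $P$ to be normalizable with all nonzero entries equal to the identity. With such a normalization the off-diagonal entries disappear from the generating set built above, leaving only the $n$ elements $g_t$, whence $G$ has rank at most $n$.

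The main obstacle, and the step deserving the most care, is the passage between the $0$-simple hypothesis and the completely simple model of Theorem~\ref{t:free-CS(H)}: the zero blocks a direct use of the universal property, since products of generators that vanish in $S$ have no counterpart in a free completely simple semigroup. The argument must therefore run through the Rees coordinates and the density of $\langle X\rangle\cap H_e$ rather than through a homomorphism out of a free object. Two bookkeeping points need to be pinned down: that $e$ can be chosen with coordinates occurring in $X$, so that normalization genuinely removes one row and one column (yielding $(n-1)^2$ and not $n^2$ matrix generators); and, for the orthodox case, the justification via Theorem~\ref{t:Rasin} or the rectangular-group decomposition that orthodoxy trivializes the normalized matrix. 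As noted in the text, an alternative is to write $S=\varprojlim_i\mathcal{M}^0(G_i,I,\Lambda,P_i)$ over its finite $0$-simple quotients, apply Ru\v{s}kuc's finite count to each $G_i$, and use that an inverse limit of $r$-generated profinite groups is $r$-generated; this avoids the free object but rests on the same combinatorial input.
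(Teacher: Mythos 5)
Your bound $n^2-n+1$ is proved correctly, but by a genuinely different route from the paper's. The paper disposes of the zero in one stroke using Ru\v{s}kuc's device: replacing every zero entry of $P$ by the identity of $G$ yields a completely simple semigroup $S'=\mathcal{M}(G,I,\Lambda,P')$ on the \emph{same} $n$ generators, so $S'$ is a continuous homomorphic image of $\Om n{CS}$, and Theorem~\ref{t:free-CS(H)}, together with the fact that maximal subgroups of a continuous image are images of maximal subgroups, gives that $G$ is a continuous image of $\Om mG$ with $m=n^2-n+1$. So your ``main obstacle'' paragraph is slightly off: the zero does not block the universal property in the paper's argument — it is eliminated \emph{before} the universal property is invoked. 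Your substitute, the density computation in Rees coordinates, is sound and self-contained: the coordinates of the nonzero generators cover $I$ and $\Lambda$ (first and third coordinates of nonzero products are those of the outermost factors, and these sets are finite and discrete), $H_e$ is clopen so $\langle X\rangle\cap H_e$ is dense in $H_e$, closed subsemigroups of compact groups are subgroups, and zero entries of $P$ simply annihilate products and so never enter the count. This buys independence from Theorem~\ref{t:free-CS(H)} at the cost of more bookkeeping. Three small points to tighten: in the 0-simple setting only the \emph{nonzero} entries of the chosen row and column can be normalized to the identity (harmless, for the reason just given); in your no-zero-entries case $S\setminus\{0\}$ is a closed subsemigroup, not an ideal; and when $0\in X$ while $P$ has a zero entry, that entry sits at generator coordinates, so $0$ is a product of two nonzero generators and may be discarded from $X$.

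The orthodox half, however, has a genuine gap as written. What you need is that an orthodox completely 0-simple semigroup admits a Rees representation whose sandwich matrix has all nonzero entries equal to the identity. This is true, but neither of your justifications delivers it: ``the completely simple structure of its nonzero part'' is undefined when $P$ has zeros (then $S\setminus\{0\}$ is not a subsemigroup), and Theorem~\ref{t:Rasin} is a statement about pseudovarieties of \emph{simple} semigroups containing $K_p$ — it says nothing about normalizing the matrix of a single 0-simple semigroup. The fix is a short computation with idempotents: for $e=(i,P(\lambda,i)^{-1},\lambda)$ and $f=(j,P(\mu,j)^{-1},\mu)$, the product $ef$ is nonzero if and only if $P(\lambda,j)\ne0$, and orthodoxy then forces $P(\mu,i)\ne0$ together with $P(\mu,i)P(\lambda,i)^{-1}P(\lambda,j)P(\mu,j)^{-1}=1$; hence the nonzero pattern of $P$ is closed under completing rectangles, decomposes into complete blocks, and within each block the entries factor as $u_\lambda^{-1}v_i^{-1}$, so $P$ normalizes to $\{0,1\}$ entries and your generating set collapses to the at most $n$ elements $g_t$. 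For comparison, the paper again goes through $S'$: starting from the normalized representation, orthodoxy of $S$ passes to $S'$, a compact orthodox simple semigroup is a rectangular group, so $G$ is a continuous homomorphic image of the $n$-generated $S'$ and has rank at most $n$. Your closing inverse-limit alternative is exactly the one the paper mentions after the statement, so that sketch is consistent with the source.
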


\begin{proof}
  Let $m=n^2-n+1=n+(n-1)^2$ and let $\mathcal{M}^0(G,I,\Lambda,P)$ be
  a Rees matrix representation of~$S$. As observed in the proof
  of~\cite[Corollary~5.3]{Ruskuc:1994}, changing to the idempotent
  of~$G$ all zero entries of $P$ we obtain a semigroup
  $S'=\mathcal{M}(G,I,\Lambda,P')$ which is still $n$-generated. As
  $S'$ is a pro-$\pv{CS}$ semigroup, it is a continuous homomorphic
  image of $\Om n{CS}$. From Theorem~\ref{t:free-CS(H)}, it follows
  that $G$ is a continuous homomorphic image of the group $\Om mG$
  (see~\cite[Lemma 3.7]{ACosta&Steinberg:2011}, the straightforward extension for compact semigroups
  of a classical result about finite semigroups~\cite[Lemma~4.6.10]{Rhodes&Steinberg:2009qt}). Hence $G$ is
  $m$-generated.  If $S$ is orthodox, then $S'$ is also orthodox, which yields that $G$ is a continuous homomorphic image of $S'$.
  Therefore, if $S$ is orthodox, then the rank of $G$ cannot exceed $n$.
\end{proof}

Given an ideal $I$ of a semigroup $S$, the corresponding \emph{Rees
  quotient} is the semigroup $S/I$ which is obtained from $S$ by
identifying all elements of~$I$. Indeed, this defines a congruence on
$S$ in which the congruence class $I$ becomes a zero, all other classes
being singleton sets. In case $I$ is empty, $S/I$ is equal to $S$.

If $S$ is a profinite semigroup and $I$ is a clopen ideal then $S/I$
is a compact zero-dimensional semigroup, whence it is also profinite
by Numakura's theorem \cite[Theorem~1]{Numakura:1957} and the natural quotient
mapping, sending each element to its congruence class, is a continuous
homomorphism.

\subsection{Upper bounds on the rank of \texorpdfstring{\pv
    V}{V}-Sch\"utzenberger groups}
\label{sec:rank-upper-bounds}

We derive from results in earlier sections upper bounds on the rank of
Sch\"utzenberger groups. The starting point is the following theorem
where both Corollary~\ref{c:letters-mapped-to-Js} and semigroup theory
play key roles.

\begin{theorem}
  \label{t:maximal-subgroup-in-image}
  Let $\bsigma$ be a primitive directive sequence with finite alphabet
  rank~$n$ and let $\pv V$ be a pseudovariety of semigroups containing
  $\pv{LSl}$.
  \begin{enumerate}
  \item The maximal subgroups of $J_{\pv V}(\bsigma)\cap\img_{\pv
      V}(\bsigma)$ are profinite groups of rank at most
    $n^2-n+1$.\label{item:maximal-subgroup-in-image-1}
  \item If $\bsigma$ has a left proper contraction, then $\img_{\pv
      V}(\bsigma)$ is a right simple profinite semigroup whose maximal
    subgroups have rank at
    most~$n$.\label{item:maximal-subgroup-in-image-2}
  \item If $\pv V\cap\pv{CS}$ consists of orthodox semigroups, then
    the maximal subgroups of $J_{\pv V}(\bsigma)\cap\img_{\pv
      V}(\bsigma)$ are profinite groups of rank at
    most~$n$.\label{item:maximal-subgroup-in-image-3}
  \end{enumerate}
\end{theorem}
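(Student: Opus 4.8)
The plan is to reduce everything to the structure theory of finitely generated profinite (completely) simple semigroups gathered in Subsection~\ref{sec:prelim-sgp-theory}, using Corollary~\ref{c:letters-mapped-to-Js} to control the number of generators. First I would observe that by Corollary~\ref{c:letters-mapped-to-Js}\ref{item:letters-mapped-to-Js-2}, after passing to a bounded relabeling (Remark~\ref{r:bounded-versus-finite-alphabet-rank}), the profinite semigroup $\img_{\pv V}(\bsigma)$ is generated by at most $n$ elements. Since $\pv V$ contains $\pv{\loc Sl}$, Theorem~\ref{t:Im-intersection-J} tells us that $J_{\pv V}(\bsigma)\cap\img_{\pv V}(\bsigma)$ is a regular $\green{J}$-class of $\img_{\pv V}(\bsigma)$, and in fact the unique $\le_{\green J}$-maximal one (as it contains $\Lambda_{\pv V}(\bsigma)$, which generates a dense subsemigroup by Proposition~\ref{p:properties-of-lambda}). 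The idea for \ref{item:maximal-subgroup-in-image-1} is therefore to form the Rees quotient of $\img_{\pv V}(\bsigma)$ by the (closed) ideal consisting of everything strictly $\le_{\green J}$-below this top class; the quotient is a $0$-simple (or simple) profinite semigroup, still generated by at most $n$ elements, with the same maximal subgroups. Applying Corollary~\ref{c:rank-max-subgroup-0-simple} then yields the bound $n^2-n+1$.

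For the quotient construction to produce a \emph{profinite} $0$-simple semigroup via Numakura's theorem, the ideal being quotiented must be clopen, equivalently the top $\green J$-class $J=J_{\pv V}(\bsigma)\cap\img_{\pv V}(\bsigma)$ must be clopen in $\img_{\pv V}(\bsigma)$. This is exactly where Proposition~\ref{p:top-principal-factor} enters: since $\img_{\pv V}(\bsigma)$ is generated by $J$ and $J$ has only finitely many $\green H$-classes (there are at most $n^2$ of them, being indexed by pairs $(\green R\text{-class},\green L\text{-class})$ with each family of size at most $n$ by the $n$-generation, as in Rees coordinates), that proposition gives that $J$ is clopen. So I would first record the finiteness of the number of $\green H$-classes of $J$, deduce clopenness from Proposition~\ref{p:top-principal-factor}, and only then take the Rees quotient.

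For \ref{item:maximal-subgroup-in-image-2}, the hypothesis that $\bsigma$ has a left proper contraction gives, via Theorem~\ref{t:right-proper-in-an-R-class}, that $\img_{\pv V}(\bsigma)$ is right simple; in particular it is itself a (completely) simple profinite semigroup generated by at most $n$ elements, with no proper quotient needed. A right simple completely simple semigroup is a left group, hence orthodox (in Rees coordinates $\Lambda$ is a singleton, so the sandwich matrix has a single row and can be normalized to have all entries the identity, making the idempotents multiply correctly). Thus the orthodox clause of Corollary~\ref{c:rank-max-subgroup-0-simple} applies and gives rank at most $n$. For \ref{item:maximal-subgroup-in-image-3}, the assumption that $\pv V\cap\pv{CS}$ consists of orthodox semigroups means every pro-$(\pv V\cap\pv{CS})$ semigroup is orthodox; since the $0$-simple Rees quotient from part~\ref{item:maximal-subgroup-in-image-1} is pro-$\pv{CS}\cap\pv V$ after removing the zero, its nonzero part is orthodox, so again the orthodox bound in Corollary~\ref{c:rank-max-subgroup-0-simple} yields rank at most $n$.

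The main obstacle I anticipate is the clopenness argument underlying \ref{item:maximal-subgroup-in-image-1}: one must be careful that Proposition~\ref{p:top-principal-factor} applies, i.e. that $\img_{\pv V}(\bsigma)$ really is generated \emph{as a semigroup} by the single $\green J$-class $J$ and that $J$ has finitely many $\green H$-classes. The generation is Corollary~\ref{c:letters-mapped-to-Js}\ref{item:letters-mapped-to-Js-2} together with $\xi(B)\subseteq J$; the finiteness of $\green H$-classes should follow from the Rees matrix description of $J$ having index sets of size at most $n$ (a consequence of $n$-generation of the simple quotient, cf.~the Rees representation recalled before Theorem~\ref{t:Rees}). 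A secondary technical point is verifying that a right simple profinite semigroup is orthodox in part~\ref{item:maximal-subgroup-in-image-2}; this is the standard left-group structure, but I would state it explicitly via the normalization clause of Theorem~\ref{t:Rees}\ref{item:Rees-2}. Everything else is a direct invocation of the cited structure theorems.
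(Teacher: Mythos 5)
For parts \ref{item:maximal-subgroup-in-image-1} and \ref{item:maximal-subgroup-in-image-2} your plan is essentially the paper's proof: at most $n$ generators inside $J=J_{\pv V}(\bsigma)\cap\img_{\pv V}(\bsigma)$ via Corollary~\ref{c:letters-mapped-to-Js}, clopenness of $J$ via Proposition~\ref{p:top-principal-factor}, the Rees quotient made profinite by Numakura's theorem, and then Corollary~\ref{c:rank-max-subgroup-0-simple}. Two small corrections. First, the finiteness of the number of $\green{H}$-classes of $J$ should be argued directly by stability, as the paper does: a product of generators that stays in $J$ is $\green{R}$-equivalent to its first factor and $\green{L}$-equivalent to its last, so every $\green{R}$-class and every $\green{L}$-class of $J$ contains one of the $n$ generators; appealing to ``Rees coordinates'' of the quotient is circular, since the quotient is only available after clopenness is established. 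Second, in \ref{item:maximal-subgroup-in-image-2} you have the left/right conventions backwards: right simplicity means the $\green{R}$-index set $I$ in Theorem~\ref{t:Rees} is a singleton (a one-\emph{column} sandwich matrix), making $\img_{\pv V}(\bsigma)$ a \emph{right} group, not a left group; the orthodoxy conclusion is unaffected, and your detour through the orthodox clause of Corollary~\ref{c:rank-max-subgroup-0-simple} is equivalent in substance to the paper's remark that each maximal subgroup is then a continuous homomorphic image of the whole $n$-generated semigroup.

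The genuine gap is in part \ref{item:maximal-subgroup-in-image-3}. Your justification --- that the $0$-simple Rees quotient ``is pro-$(\pv{CS}\cap\pv V)$ after removing the zero, so its nonzero part is orthodox'' --- breaks down precisely when the ideal $I=\img_{\pv V}(\bsigma)\setminus J$ is nonempty, which genuinely happens (Example~\ref{e:example-that-img-may-not-be-in-J}). In that case the zero of $S=\img_{\pv V}(\bsigma)/I$ is attained by products of nonzero elements, so $S\setminus\{0\}$ is not a subsemigroup and ``removing the zero'' yields no semigroup at all; nor is $S$ itself pro-$\pv{CS}$, having a zero and a non-universal $\green{J}$-relation. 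What the orthodox clause of Corollary~\ref{c:rank-max-subgroup-0-simple} (which the paper invokes here) requires is orthodoxy of the $0$-simple semigroup $S$ \emph{with} its zero, while the hypothesis of \ref{item:maximal-subgroup-in-image-3} only constrains the \emph{simple} members of $\pv V$; some bridge is needed. For instance, if $e,f$ are nonzero idempotents of $S$ with $ef\neq 0$ and $(ef)^2\neq 0$, then all four relevant sandwich entries are nonzero, so $e$ and $f$ lie in a closed completely simple subsemigroup of $S$; that subsemigroup is pro-$(\pv V\cap\pv{CS})$ (finite continuous quotients of compact simple semigroups are simple) and hence orthodox, since orthodoxy is defined by the pseudoidentity $(x^\omega y^\omega)^2=x^\omega y^\omega$ and so passes to inverse limits; thus $ef$ is idempotent. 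But the remaining configuration $ef\neq 0=(ef)^2$ is untouched by your argument: it is exactly the pattern of the five-element non-orthodox $0$-simple aperiodic semigroup, which lies in pseudovarieties satisfying the hypothesis of \ref{item:maximal-subgroup-in-image-3} (e.g.\ $\pv A$), so it cannot be dismissed on the grounds you give. Your reasoning is complete only in the special case $\img_{\pv V}(\bsigma)\subseteq J_{\pv V}(\bsigma)$, where $S$ is simple and genuinely pro-$(\pv V\cap\pv{CS})$.
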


\begin{proof}
  In all cases, we use the fact that, by
  Corollary~\ref{c:letters-mapped-to-Js}, the profinite semigroup
  $\img_\pv{V}(\bsigma)$ is generated by a subset of the regular
  $\green{J}$-class $J=J_{\pv V}(\bsigma)\cap\img_{\pv V}(\bsigma)$
  with at most $n$~elements. 
  
  \ref{item:maximal-subgroup-in-image-1} Note that $I=\img_{\pv
    V}(\bsigma)\setminus J$ is an ideal, which may be empty. By
  stability, $J$ contains only finitely many $\green{L}$-classes and
  $\green{R}$-classes, as each of them must contain one of the
  generators. By Proposition~\ref{p:top-principal-factor}, $J$ is a
  clopen set in $\img_{\pv V}(\bsigma)$. Its complement $I$ is,
  therefore, a clopen ideal. As observed at the end of
  Subsection~\ref{sec:prelim-sgp-theory}, the Rees quotient
  $S=\img_{\pv V}(\bsigma)/I$ is a profinite semigroup. Since $J$ is a
  regular $\green{J}$-class, $S$ is $n$-generated and either simple or
  0-simple. The natural quotient mapping $\img_{\pv V}(\bsigma)\to S$
  is a continuous homomorphism which is injective on $J$. Hence, it
  suffices to show that the nonzero maximal subgroups $G$ of~$S$ are
  $(n^2-n+1)$-generated and this follows from
  Corollary~\ref{c:rank-max-subgroup-0-simple}.

  \ref{item:maximal-subgroup-in-image-2} If $\bsigma$ has a
  left proper contraction, then by
  Theorem~\ref{t:right-proper-in-an-R-class} the profinite semigroup $\img_{\pv V}(\bsigma)$ is right simple, thus orthodox.
  In particular, each maximal subgroup of $\img_{\pv V}(\bsigma)$
  is a continuous homomorphic image of $\img_{\pv V}(\bsigma)$,
  whence its rank is at most~$n$.

  \ref{item:maximal-subgroup-in-image-3} Proceeding as in the proof of
  \ref{item:maximal-subgroup-in-image-1}, it suffices to invoke the
  special case of Corollary~\ref{c:rank-max-subgroup-0-simple} at the
  end of the argument.
\end{proof}

Adding the saturating hypothesis, we obtain the following corollary.

\begin{corollary}
  \label{c:upper-bound-saturating}
  Let $\bsigma$ be a primitive directive sequence with finite alphabet
  rank~$n$. Let \pv{V} be a pseudovariety of semigroups containing
  \pv{\loc Sl}. Then,
  the following properties hold if $\bsigma$ is $\pv{V}$-saturating:
  \begin{enumerate}
  \item\label{item:upper-bound-saturating-1} The rank of the
    Sch\"utzenberger group $G_\pv{V}(\bsigma)$ is at most $n^2-n+1$.
  \item\label{item:upper-bound-saturating-2} If $\bsigma$ has a left
    proper contraction, then the rank of $G_\pv{V}(\bsigma)$ is at
    most $n$.
  \item\label{item:upper-bound-saturating-3} If $\pv V\cap\pv{CS}$
    consists of orthodox semigroups, then the rank of
    $G_\pv{V}(\bsigma)$ is at most~$n$.
  \end{enumerate}
  In particular, these properties hold if $\bsigma$ is recognizable.
\end{corollary}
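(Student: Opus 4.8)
The plan is to reduce the corollary directly to Theorem~\ref{t:maximal-subgroup-in-image}, using the saturating hypothesis only to place an isomorphic copy of $G_{\pv V}(\bsigma)$ inside the $\green{J}$-class $J=J_{\pv V}(\bsigma)\cap\img_{\pv V}(\bsigma)$. First I would invoke the definition of $\pv V$-saturating to obtain a maximal subgroup $H$ of $J_{\pv V}(\bsigma)$ with $H\subseteq\img_{\pv V}(\bsigma)$. Since $J_{\pv V}(\bsigma)$ is a regular $\green{J}$-class, all of its maximal subgroups are isomorphic and represent the isomorphism class defining $G_{\pv V}(\bsigma)$; hence $G_{\pv V}(\bsigma)\isom H$, and it suffices to bound the rank of~$H$.

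Next I would verify that $H$ is in fact a maximal subgroup of the subsemigroup $\img_{\pv V}(\bsigma)$, and not merely a subgroup of one. Writing $e$ for the idempotent of $H$, the maximal subgroup of $\img_{\pv V}(\bsigma)$ containing $e$ is a subgroup of $\Om{A_0}V$ containing $e$, hence contained in the maximal subgroup of $\Om{A_0}V$ at $e$, which is exactly $H$; conversely $H$ is a group lying inside $\img_{\pv V}(\bsigma)$ and containing $e$, so it is contained in that maximal subgroup of $\img_{\pv V}(\bsigma)$. The two therefore coincide, and $H$ is a maximal subgroup of $J=J_{\pv V}(\bsigma)\cap\img_{\pv V}(\bsigma)$, which is a regular $\green{J}$-class of $\img_{\pv V}(\bsigma)$ by Theorem~\ref{t:Im-intersection-J}.

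With this identification, the three numbered bounds follow at once by applying, respectively, parts \ref{item:maximal-subgroup-in-image-1}, \ref{item:maximal-subgroup-in-image-2} and \ref{item:maximal-subgroup-in-image-3} of Theorem~\ref{t:maximal-subgroup-in-image} to the rank of~$H$. For the closing assertion, I would combine Theorem~\ref{t:surjectivity}, which gives that a recognizable primitive directive sequence is $\pv S$-saturating, with Proposition~\ref{p:saturation-inheritance} applied along $\pv{\loc Sl}\subseteq\pv V\subseteq\pv S$; this yields that $\bsigma$ is $\pv V$-saturating, so the bounds just established apply.

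Since the argument is essentially a packaging of Theorem~\ref{t:maximal-subgroup-in-image} together with the saturating hypothesis, I do not expect any substantial obstacle. The only point requiring genuine care is the verification in the second paragraph that the subgroup furnished by saturation is a maximal subgroup of $\img_{\pv V}(\bsigma)$, which is settled by the double inclusion above; everything else is a direct appeal to results already established.
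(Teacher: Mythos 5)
Your proof is correct and takes essentially the same route as the paper: saturation realizes $G_{\pv V}(\bsigma)$ as a maximal subgroup of $J_{\pv V}(\bsigma)\cap\img_{\pv V}(\bsigma)$, after which Theorem~\ref{t:maximal-subgroup-in-image} yields all three bounds, and the closing assertion follows from Theorem~\ref{t:surjectivity} combined with Proposition~\ref{p:saturation-inheritance}. The only difference is that you spell out the routine double-inclusion showing that the maximal subgroup of $\Om{A_0}V$ furnished by saturation coincides with the maximal subgroup of $\img_{\pv V}(\bsigma)$ at the same idempotent, a verification the paper leaves implicit in the phrase ``by definition of saturation.''
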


\begin{proof}
  If $\bsigma$ is recognizable, then it is $\pv V$-saturating by Theorem~\ref{t:surjectivity} and Proposition~\ref{p:saturation-inheritance}.
  Hence, indeed, it suffices to assume that $\bsigma$ is $\pv V$-saturating.
  In that case, by definition of saturation, the profinite group $G_\pv{V}(\bsigma)$ is (isomorphic to) a maximal subgroup of $J_{\pv V}(\bsigma)\cap\img_{\pv V}(\bsigma)$. Therefore, it suffices to invoke Theorem~\ref{t:maximal-subgroup-in-image}.
 \end{proof}

We next show how Corollary~\ref{c:upper-bound-saturating} applies to
the important class of minimal shift spaces of \emph{finite
  topological rank}. A minimal shift space is said to be of finite
topological rank when it can be represented by a Bratteli-Vershik
diagram with a uniformly bounded number of vertices per level; and if
the least such bound among all such representations is $n$, then it is
said to have topological rank $n$; see~\cite[Chapter
6]{Durand&Perrin:2022} for details. A minimal shift space $X$ has
topological rank at most~$n$ if and only if it is topologically
conjugate to $X(\bsigma)$ for some proper, recognizable, primitive
directive sequence $\bsigma$ of alphabet rank at most~$n$ (this result
is from~\cite{Donoso&Durand&Maass&Petite:2021}, as attributed
in~\cite[Theorem~1.1]{Espinoza:2022}).

\begin{theorem}
  \label{t:finite-top-rank-implies-finitely-generated-Schutz}
  Let $X$ be a minimal shift space of finite topological rank $n$.
  Then, for every pseudovariety \pv{V} containing \pv{\loc Sl}, the
  Sch\"utzenberger group $G_\pv{V}(X)$ is a profinite group of rank at
  most~$n$.
\end{theorem}

\begin{proof}
  By Corollary~\ref{c:projecting-the-J-class-of-X}, it suffices to
  establish the result for the case $\pv{V}=\pv{S}$.

  By~\cite[Proposition~4.6]{Donoso&Durand&Maass&Petite:2021}, there
  exists a proper, recognizable, primitive directive sequence
  $\bsigma$ with alphabet rank at most~$n$ and such that $X$~is
  topologically conjugate to $X(\bsigma)$. Since the
  $\pv{S}$-Sch\"utzenberger group of a minimal shift space is a
  topological conjugacy invariant by
  Theorem~\ref{t:invariance-under-flow-equivalence}, we have
  $G_\pv{S}(X)\isom G_\pv{S}(\bsigma)$. The result now follows
  immediately from
  Corollary~\ref{c:upper-bound-saturating}\ref{item:upper-bound-saturating-2}.
\end{proof}

In general, the determination of the topological rank is a hard problem, motivating the investigation of upper and lower bounds for it;
the rank of the dimension group of the shift space is a lower bound to which intensive study is dedicated~\cite{Durand&Perrin:2022}.

The next example illustrates one motivation for determining the rank of the Sch\"utzenberger group of a minimal shift space.

\begin{example}
  \label{eg:ptm-comparing-ranks}
      Consider the stable primitive directive sequence $\btau$ defined by the Prouhet-Thue-Morse substitution, cf.~Example~\ref{eg:Prouhet-Thue-Morse-stable}.
      The topological rank of $X(\btau)$ is~$3$, and its dimension group has rank $2$ (cf.~\cite[Table C.1]{Durand&Perrin:2022}). The profinite
      group $G_\pv{S}(\btau)$ has rank $3$, and it is not free~\cite{Almeida&ACosta:2013}.
  \end{example}

We do not know if the converse of
Theorem~\ref{t:finite-top-rank-implies-finitely-generated-Schutz} holds:

\begin{problem}
  \label{pb:finitely-generated-Schutz-vs-finite-top-rank}
  Let $X$ be a minimal shift space.
  \begin{enumerate}
  \item Suppose that $G_{\pv S}(X)$ is finitely generated. Does $X$
    necessarily have finite topological rank?
  \item Is it true that, if $\pv H$ is a nontrivial pseudovariety of
    groups such that $G_{\pvo H}(X)$ is finitely generated, then
    $G_{\pv S}(X)$ is finitely generated?
  \end{enumerate}
\end{problem}

\subsection{Lower bounds on the rank of \texorpdfstring{\pv
    V}{V}-Sch\"utzenberger groups}
\label{sec:rank-lower-bounds}

The purpose of this subsection is to determine under what conditions
the bounds on the rank of the Sch\"utzenberger group of an arbitrary
recognizable primitive directive sequence with finite alphabet rank are optimal. We
concentrate on the case of constant primitive directive sequences,
which provides enough diversity to reach our goal.

Let $\varphi$ be a primitive endomorphism of $A_n^+$ for an $n$-letter
ordered alphabet $A_n=\{a_1,\ldots,a_n\}$. We also consider the set
$A_n^k=\{w_1,\ldots,w_{n^k}\}$ of $k$-letter words over $A_n$ ordered
by the lexicographic order. The \emph{$k$-frequency matrix}
of~$\varphi$, denoted $F_k(\varphi)$ is the $n\times n^k$ integer
matrix whose $i,j$-entry is the number $|\varphi(a_i)|_{w_j}$ of times
that $w_j$ occurs as a factor of the word $\varphi(a_i)$. In
particular, $F_1(\varphi)$ is the usual incidence matrix of~$\varphi$.
The matrix $F_2(\varphi)$ plays a key role in the sequel.

We also consider the matrix $T_2(\varphi)$ which accounts for extra
factors of length~2 when we concatenate words of the form
$\varphi(a_i)$. Specifically, $T_2(\varphi)$ is the $n^2\times n^2$
integer matrix whose $i,j$-entry is 1 if, when $w_i=a_ra_s$, then
$w_j=a_ta_u$, where $a_t$ is the last letter of~$\varphi(a_r)$ and
$a_u$ is the first letter of $\varphi(a_s)$; all other entries are
zero. Note that $T_2(\varphi)$ is a $0,1$ row monomial matrix, in the
sense that it has exactly one entry 1 in each row.
Since there are exactly $n^2$ nonzero entries in $T_2(\varphi)$,
$T_2(\varphi)$ is a permutation matrix if and only if has no zero
columns, that is, every letter from $A_n$ appears as the last letter
of $\varphi$ of some letter, and the first letter of $\varphi$ of some
letter; these conditions define properties of~$\varphi$ which are
called, respectively, \emph{right permutative} and \emph{left
  permutative}
in~\cite[Subsection~3.1]{Berthe&Steiner&Thuswaldner&Yassawi:2019}.

Finally, we consider the $m \times m$ integer matrix defined in block
form by
\begin{displaymath}
  \mathbb{I}_2(\varphi)=
  \begin{pmatrix}
    F_1(\varphi) & F_2(\varphi) \\
    \mathbf{0} & T_2(\varphi)
  \end{pmatrix}
\end{displaymath}
where $m=n+n^2$ and $\mathbf{0}$ is the $n^2\times n$ zero matrix.

\begin{example}
  \label{eg:sigma}
  Given $n\ge 2$, we let $\sigma_n$ be the endomorphism of~$A_n$
  defined by $\sigma_n(a_1)=a_1a_2\ldots a_n$ and
  $\sigma_n(a_i)=a_ia_i\ldots a_na_1\ldots a_{i-1}$ for
  $i=2,\ldots,n$. The matrix $F_1(\sigma_n)$ has 2 in the main
  diagonal entries except the first and 1 elsewhere. For instance,
  here is the matrix $\mathbb{I}_2(\sigma_3)$, highlighting both its
  block structure and that of its submatrix $T_2(\sigma_3)$:
  \begin{displaymath}
    \setcounter{MaxMatrixCols}{12}
    \mathbb{I}_2(\sigma_3) =
    \begin{pNiceArray}{ccc|ccccccccc}[margin]
      \Block[fill=[gray]{0.9},rounded-corners]{3-3}{}
      1 & 1 & 1 & 0 & 1 & 0 & 0 & 0 & 1 & 0 & 0 & 0 \\
      1 & 2 & 1 & 0 & 0 & 0 & 0 & 1 & 1 & 1 & 0 & 0 \\
      1 & 1 & 2 & 0 & 1 & 0 & 0 & 0 & 0 & 1 & 0 & 1 \\
      \Hline
      0 & 0 & 0 & 0 & 0 & 0 & 0 & 0 & 0 &
      \Block[fill=[gray]{0.9},rounded-corners]{3-3}{}
      1 & 0 & 0 \\
      0 & 0 & 0 & 0 & 0 & 0 & 0 & 0 & 0 & 0 & 1 & 0 \\
      0 & 0 & 0 & 0 & 0 & 0 & 0 & 0 & 0 & 0 & 0 & 1 \\
      0 & 0 & 0 &
      \Block[fill=[gray]{0.9},rounded-corners]{3-3}{}
      1 & 0 & 0 & 0 & 0 & 0 & 0 & 0 & 0 \\
      0 & 0 & 0 & 0 & 1 & 0 & 0 & 0 & 0 & 0 & 0 & 0 \\
      0 & 0 & 0 & 0 & 0 & 1 & 0 & 0 & 0 & 0 & 0 & 0 \\
      0 & 0 & 0 & 0 & 0 & 0 &
      \Block[fill=[gray]{0.9},rounded-corners]{3-3}{}
      1 & 0 & 0 & 0 & 0 & 0 \\
      0 & 0 & 0 & 0 & 0 & 0 & 0 & 1 & 0 & 0 & 0 & 0 \\
      0 & 0 & 0 & 0 & 0 & 0 & 0 & 0 & 1 & 0 & 0 & 0 \\
    \end{pNiceArray}
  \end{displaymath}
  By performing row reductions, it is easy to see that $F_1(\sigma_n)$
  has determinant~1. On the other hand, the matrix $T_2(\sigma_n)$ is
  a permutation matrix which is the Kronecker product of the
  permutation matrix of the cycle $(n\,1\,2\,\ldots\,n-1)$ by the
  identity matrix in dimension~$n$. Its determinant is $\pm1$. Hence,
  the integer matrix $\mathbb{I}_2(\sigma_n)$ is invertible.
\end{example}

In general, since the determinant of the matrix
$\mathbb{I}_2(\varphi)$ is the product of the determinants of the
matrices $F_1(\varphi)$ and $T_2(\varphi)$, the integer matrix
$\mathbb{I}_2(\varphi)$ is invertible if and only if so is
$F_1(\varphi)$ and $T_2(\varphi)$ is a permutation matrix. The latter
condition means that every letter appears as the first letter
(respectively last letter) of some word $\varphi(a_i)$; note that this
condition is stronger than the constant directive sequence
$(\varphi,\varphi,\ldots)$ being stable.

As already alluded above, the purpose of the block $T_2(\varphi)$ is
to aid in counting the occurrences of two-letter factors. More
precisely, we have the following result.

\begin{lemma}
  \label{l:I2-homomorphism}
  The mapping $\mathbb{I}_2:\en(A_n^+)\to M_m(\mathbb{Z})$ is an
  anti-homomorphism from the monoid of endomorphisms of the free
  semigroup $A_n^+$ to the multiplicative semigroup of $m\times m$
  integer matrices.
\end{lemma}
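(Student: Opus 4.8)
The plan is to verify directly that $\mathbb{I}_2(\varphi\circ\psi) = \mathbb{I}_2(\psi)\cdot\mathbb{I}_2(\varphi)$ for composable endomorphisms $\varphi,\psi$ of $A_n^+$, by checking the identity block by block. Writing both sides in the block form with the four constituent blocks $F_1$, $F_2$, $\mathbf 0$, $T_2$, the block multiplication gives
\begin{equation*}
  \mathbb{I}_2(\psi)\cdot\mathbb{I}_2(\varphi)=
  \begin{pmatrix}
    F_1(\psi)F_1(\varphi) & F_1(\psi)F_2(\varphi)+F_2(\psi)T_2(\varphi) \\
    \mathbf 0 & T_2(\psi)T_2(\varphi)
  \end{pmatrix},
\end{equation*}
so the claim reduces to three separate block identities, which I would establish in turn: (i) $F_1(\varphi\circ\psi)=F_1(\psi)F_1(\varphi)$; (ii) $T_2(\varphi\circ\psi)=T_2(\psi)T_2(\varphi)$; and (iii) $F_2(\varphi\circ\psi)=F_1(\psi)F_2(\varphi)+F_2(\psi)T_2(\varphi)$.

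First I would record the combinatorial meaning of each entry. The entry $F_1(\varphi)_{i,j}=|\varphi(a_i)|_{a_j}$ counts occurrences of the letter $a_j$ in $\varphi(a_i)$; identity (i) is then the classical fact that incidence matrices compose contravariantly, proved by expanding $|\psi\varphi(a_i)|_{a_k}$ via $\psi\varphi(a_i)=\psi(\varphi(a_i))$ and summing over the letters $a_j$ appearing in $\varphi(a_i)$. Identity (ii) concerns the row-monomial matrices recording, for a two-letter word $w_i=a_ra_s$, the pair formed by the last letter of the image of $a_r$ and the first letter of the image of $a_s$; since $T_2$ acts by a ``first-letter/last-letter'' bookkeeping, its composition law follows because the last letter of $\psi\varphi(a_r)$ is the last letter of $\psi$ applied to the last letter of $\varphi(a_r)$, and dually for first letters. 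This is a direct check on how the monomial entries chain together.

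The substantive step is identity (iii), the counting of two-letter factors. Here I would use the decomposition of factors of length two in a concatenation: a factor $w_j=a_ta_u$ of $\psi\varphi(a_i)$ either lies entirely inside some block $\psi(a_p)$ where $a_p$ is a letter of $\varphi(a_i)$, or it straddles the boundary between two consecutive blocks $\psi(a_p)\psi(a_q)$, where $a_pa_q$ is a length-two factor of $\varphi(a_i)$. Occurrences of the first kind are counted, after summing over the letter $a_p$, by the product $F_1(\varphi)^{T}$-weighting against $F_2(\psi)$—more precisely by the term $F_1(\psi)F_2(\varphi)$ once the contravariance is accounted for—while boundary occurrences are counted by $F_2(\psi)T_2(\varphi)$, since $T_2(\varphi)$ selects exactly the boundary pair (last letter of $\psi(a_p)$, first letter of $\psi(a_q)$) determined by each two-letter factor of $\varphi(a_i)$. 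The bookkeeping of which index carries the sum and in which order the two matrices multiply is exactly what produces the anti-homomorphism (reversal of order).

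The main obstacle I anticipate is purely notational: keeping straight the contravariance together with the lexicographic indexing of two-letter words, so that the sums line up with the matrix products in the stated order. I would therefore carry out identity (iii) carefully on the level of entries, fixing $w_j=a_ta_u$ and computing $|\psi\varphi(a_i)|_{w_j}$ as (interior occurrences) $+$ (boundary occurrences), then matching each summand against the corresponding entry of $F_1(\psi)F_2(\varphi)+F_2(\psi)T_2(\varphi)$. Once the entrywise correspondence is verified, the three block identities assemble into the claimed matrix equation $\mathbb{I}_2(\varphi\circ\psi)=\mathbb{I}_2(\psi)\,\mathbb{I}_2(\varphi)$, establishing that $\mathbb{I}_2$ is an anti-homomorphism; the lower-left $\mathbf 0$ block is preserved automatically by the block-triangular shape, so no further check is needed there.
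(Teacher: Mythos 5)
Your proposal is correct and follows essentially the same route as the paper: the well-known anti-homomorphism property of $F_1$, the last-letter/first-letter chaining argument for $T_2$, and the interior-plus-boundary decomposition of length-two factors yielding the mixed identity $F_2(\varphi\circ\psi)=F_1(\psi)F_2(\varphi)+F_2(\psi)T_2(\varphi)$, assembled via block multiplication of the triangular matrices. Only watch the momentary swap of $\varphi$ and $\psi$ in your prose for identity (iii) (you decompose $\psi(\varphi(a_i))$ while attributing the terms as for $\varphi(\psi(a_i))$), which is exactly the bookkeeping hazard you flag and which the entrywise verification you plan would resolve.
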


\begin{proof}
  Let $\varphi$ and $\psi$ be two endomorphisms of $A_n^+$. It is well
  known that $F_1$ is an anti-homomorphism. Given
  $i\in\{1,\ldots,n^2\}$, let $w_i=a_ra_s$ and $w_j=a_ta_u$ where the
  1 entry in row $i$ is in column $j$. Then $a_t$ is the last letter
  of $\varphi(a_r)$ and $a_u$ is the first letter of $\varphi(a_s)$,
  so that the last letter $a_x$ of $\psi(\varphi(a_r))$ is the last
  letter of $\psi(a_t)$ and the first letter $a_y$ of
  $\psi(\varphi(a_s))$ is the first letter of $\psi(a_u)$. Now, when
  we take the inner product of the $i$th row of $T_2(\varphi)$ by the
  $k$th column of $T_2(\psi)$, the result is 1 if and only if the
  $k$th column where the 1 in the $j$th row is located, that is,
  $a_xa_y=w_k$. Hence, $T_2(\varphi)T_2(\psi)=T_2(\psi\circ\varphi)$
  and $T_2$ is also an anti-homomorphism.

  It remains to verify that
  \begin{equation}
    \label{eq:I2-homomorphism-1}
    F_2(\psi\circ\varphi)=F_1(\varphi)F_2(\psi)+F_2(\varphi)T_2(\psi).
  \end{equation}
  Let $i\in\{1,\ldots,n\}$ and $j\in\{1,\ldots,n^2\}$. Let
  $\varphi(a_i)=a_{i_1}a_{i_2}\ldots a_{i_z}$. Since $\psi$ is a
  homomorphism the occurrences of $w_j$ in $\psi(\varphi(a_i))$ are
  those that occur entirely within one of the factors $\psi(a_{i_t})$
  plus those that are made from the last letter of some
  $\psi(a_{i_t})$ followed by the first letter of $\psi(a_{i_{t+1}})$.
  Thus, we have the formula
  \begin{equation}
    \label{eq:I2-homomorphism-2}
    \bigl(F_2(\psi\circ\varphi)\bigr)_{i,j}
    =\left|\psi\bigl(\varphi(a_i)\bigr)\right|_{w_j}
    =\sum_{t=1}^z|\psi(a_{i_t})|_{w_j}
    +\sum_{t=1}^{z-1}(T_2(\psi))_{k(i_t,i_{t+1}),j}
  \end{equation}
  where $a_ra_s=w_{k(r,s)}$. To conclude the verification of the
  formula \eqref{eq:I2-homomorphism-1}, we just observe that the
  $(i,j)$-entry of the matrix on its right side is exactly the sum on
  the right side of \eqref{eq:I2-homomorphism-2}.
\end{proof}

We proceed to consider the special case of the pseudovariety
$\pv{CS}(\pv{Ab}_p)$ for a prime~$p$. For a positive integer $n$ and
$m=n+n^2$, the elementary Abelian $p$-group
$G=(\mathbb{Z}/p\mathbb{Z})^m$ is free in $\pv{Ab}_p$ with free
generating set the canonical basis of the group viewed as a vector
space over the field $\mathbb{Z}/p\mathbb{Z}$. For $1\le i\le n$, we
denote the $i$th basis vector by $v_i$ and, additionally, for $1\le
j\le n$, we let $v_{i,j}$ be the $ni+j$th basis vector.

\begin{proposition}
  \label{p:suficient-for-CSAbp-invertible}
  Let $\varphi$ be an endomorphism of the free semigroup $A_n^+$.
  Then the induced homomorphism $\varphi^{\pv{CS}(\pv{Ab}_p)}$ is
  an automorphism of the semigroup
  $\Om{A_n}{CS}(\pv{Ab}_p)$ if and only if the matrix
  $\mathbb{I}_2(\varphi)$ is invertible mod the prime $p$.
\end{proposition}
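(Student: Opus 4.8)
The plan is to build on the structure theorem for the free semigroup $\Om{A_n}{CS}(\pv{Ab}_p)$ provided by Theorem~\ref{t:free-CS(H)} together with the anti-homomorphism $\mathbb{I}_2$ from Lemma~\ref{l:I2-homomorphism}. First I would recall that, by Theorem~\ref{t:free-CS(H)} applied with $\pv H=\pv{Ab}_p$, the semigroup $S=\Om{A_n}{CS}(\pv{Ab}_p)$ is a Rees matrix semigroup $\mathcal{M}(G,n,n,P)$ over the elementary Abelian $p$-group $G=\Om{n+(n-1)^2}{Ab}_p=(\mathbb{Z}/p\mathbb{Z})^{m}$, with $m=n+n^2$, freely generated by the diagonal elements $(i,g_i,i)$. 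Since $\varphi^{\pv{CS}(\pv{Ab}_p)}$ is a continuous endomorphism of $S$, it is determined by its action on these generators, and it is an automorphism precisely when it maps the generating set to another generating set, equivalently when the induced action on the abelianized invariants is invertible over $\mathbb{Z}/p\mathbb{Z}$.

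The key step is to identify exactly which linear data on $(\mathbb{Z}/p\mathbb{Z})^{m}$ controls invertibility, and to show this data is precisely the reduction mod $p$ of $\mathbb{I}_2(\varphi)$. The informal content is that in $\pv{CS}(\pv{Ab}_p)$ the only information retained about a word is, first, the commutative image counting letters (governed by $F_1(\varphi)$), and second, the "boundary" and two-letter-factor bookkeeping encoded in $F_2(\varphi)$ and $T_2(\varphi)$. I would make this precise by exhibiting an explicit isomorphism between the relevant module of coordinates of $S$ (the free generators $v_i$ from the group components, together with the $v_{i,j}$ recording the normalized off-diagonal structure-matrix entries) and $(\mathbb{Z}/p\mathbb{Z})^{m}$, under which the action of $\varphi^{\pv{CS}(\pv{Ab}_p)}$ on coordinates is given by left multiplication by $\mathbb{I}_2(\varphi)^{\mathrm t}$ (transpose, because by Lemma~\ref{l:I2-homomorphism} the map $\mathbb{I}_2$ is an \emph{anti}-homomorphism, so composition of substitutions corresponds to the reversed product of matrices, matching the reversed convention $\prov {} {(\varphi\circ\psi)}=\prov {}\varphi\circ\prov {}\psi$). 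Here $F_1$ records how letter-frequencies transform, $F_2$ records the creation of new length-two factors inside the image of a single letter, and $T_2$ records the length-two factors created at concatenation boundaries, exactly as in the derivation of \eqref{eq:I2-homomorphism-1}. With this identification in place, $\varphi^{\pv{CS}(\pv{Ab}_p)}$ is an automorphism of $S$ if and only if the corresponding linear map on $(\mathbb{Z}/p\mathbb{Z})^{m}$ is bijective, i.e.\ if and only if $\det \mathbb{I}_2(\varphi)$ is a unit mod $p$, which is the asserted condition.

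The main obstacle I anticipate is setting up the coordinate isomorphism cleanly and verifying that it is functorial with respect to endomorphisms, so that the semigroup-level statement "$\varphi^{\pv{CS}(\pv{Ab}_p)}$ is an automorphism" really does reduce to "the matrix is invertible mod $p$" without loss of information. Two subtleties require care. First, the Rees matrix coordinates mix a group $\green{H}$-class datum with discrete $\green{R}$- and $\green{L}$-coordinates; one must check that an endomorphism of $S$ which is continuous and fixes the $\green{J}$-class structure appropriately acts on the $I$- and $\Lambda$-indices in a way compatible with the letter-permutation content of $T_2(\varphi)$, and that invertibility of $T_2(\varphi)$ as a permutation matrix is exactly what is needed for the endomorphism to permute the $\green{R}$- and $\green{L}$-classes bijectively. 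Second, one must confirm that an endomorphism of a free pro-$\pv{CS}(\pv{Ab}_p)$ semigroup is surjective (equivalently bijective, by compactness and a standard Hopfian-type argument for finitely generated relatively free profinite semigroups) precisely when it induces a surjection on the $m$ free generators of the associated abelian group $G$; this is where Theorem~\ref{t:free-CS(H)} and the remark that maximal subgroups of the free $\pv{CS}(\pv H)$-semigroup are themselves free are used decisively.

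Finally I would assemble the two directions. For the "only if" direction, if $\varphi^{\pv{CS}(\pv{Ab}_p)}$ is an automorphism, then its induced linear map on $(\mathbb{Z}/p\mathbb{Z})^{m}$ is invertible, forcing both $F_1(\varphi)$ to be invertible mod $p$ and $T_2(\varphi)$ to be a permutation matrix (a zero column in $T_2(\varphi)$ would produce a non-invertible block, contradicting bijectivity), whence $\det\mathbb{I}_2(\varphi)=\det F_1(\varphi)\cdot\det T_2(\varphi)$ is a unit mod $p$. For the "if" direction, invertibility of $\mathbb{I}_2(\varphi)$ mod $p$ makes the induced coordinate map bijective, so $\varphi^{\pv{CS}(\pv{Ab}_p)}$ carries a free generating set of $S$ to another generating set of the same cardinality; by the Hopfian property of the finitely generated relatively free profinite semigroup $S$, this endomorphism is an automorphism. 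I expect the bulk of the writing to go into the explicit coordinate computation and the careful matching of the reversed-composition convention, since that is where an index or transpose error could silently invalidate the equivalence.
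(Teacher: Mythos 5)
Your overall strategy coincides with the paper's: realize $\Om{A_n}{CS}(\pv{Ab}_p)$ concretely via Theorem~\ref{t:free-CS(H)}, read off the action of $\varphi^{\pv{CS}(\pv{Ab}_p)}$ on letter counts and two-letter-factor counts, and exploit the anti-homomorphism property of $\mathbb{I}_2$ (Lemma~\ref{l:I2-homomorphism}). But as written there are two genuine gaps. First, you misquote the structure theorem: the maximal subgroup of $\Om{A_n}{CS}(\pv{Ab}_p)$ is $\Om{n+(n-1)^2}{Ab}_p\cong(\mathbb{Z}/p\mathbb{Z})^{n^2-n+1}$, not $(\mathbb{Z}/p\mathbb{Z})^{m}$ with $m=n+n^2$. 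The $m$-dimensional space enters differently in the paper's proof: one builds an \emph{auxiliary} Rees matrix semigroup $\mathcal{M}(F^m,n,n,P)$ over $F=\mathbb{Z}/p\mathbb{Z}$ with $P_{i,j}=v_{i,j}$, and shows that the subsemigroup $T$ generated by the diagonal elements $(i,v_i,i)$ is the free object; the group coordinate of a product records exactly the letter frequencies $\lambda_i$ and two-letter-factor frequencies $\lambda_{i,j}$ of the corresponding word, and these satisfy $2n-1$ affine row/column-sum constraints, so each $\green{H}$-class of $T$ is an affine subspace of dimension $n^2-n+1$; freeness is then verified by the cardinality count $|T|=n^2p^{n^2-n+1}$ against Theorem~\ref{t:free-CS(H)}. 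Your proposed ``explicit coordinate isomorphism onto $(\mathbb{Z}/p\mathbb{Z})^m$'' cannot exist as stated: the coordinates of the elements of a fixed $\green{H}$-class fill only a proper affine subspace of $F^m$, and $\varphi^{\pv{CS}(\pv{Ab}_p)}$ is not a linear map on $F^m$ at all, since products in $T$ are twisted by the sandwich entries $v_{i,j}$.

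Second, the central equivalence --- automorphism if and only if the coordinate matrix is invertible mod $p$ --- is asserted rather than proved, and both directions need real work. For the ``if'' direction, your route (images of generators form a generating set, then a Hopfian argument) leaves unproven precisely the generation claim; the paper's trick is cleaner: invertibility of $\mathbb{I}_2(\varphi)$ over the finite field gives $\mathbb{I}_2(\varphi^\omega)=\mathbb{I}_2(\varphi)^\omega=I$ by the anti-homomorphism property, so $\bigl(\varphi^{\pv{CS}(\pv{Ab}_p)}\bigr)^\omega$ fixes every free generator and is therefore the identity of the \emph{finite} semigroup $\Om{A_n}{CS}(\pv{Ab}_p)$, forcing $\varphi^{\pv{CS}(\pv{Ab}_p)}$ to be bijective. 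For the ``only if'' direction, your sketch is circular: you deduce that $T_2(\varphi)$ is a permutation matrix from invertibility of ``the induced linear map on $F^m$'', but the existence of such a linear action is exactly what is unavailable before permutativity is known. The paper derives it combinatorially: since $\bigl(\varphi^{\pv{CS}(\pv{Ab}_p)}\bigr)^\omega$ is the identity, the map is surjective, and the $\green{R}$-class (resp.\ $\green{L}$-class) of the image of a word is determined by the first (resp.\ last) letter of its image under $\varphi$; comparing with the number $n$ of such classes forces $\varphi$ to be left and right permutative, i.e.\ $T_2(\varphi)$ to be a permutation matrix, while invertibility of $F_1(\varphi)$ mod $p$ comes from the induced identity on the abelianization $\Om{A_n}{Ab}_p\cong F^n$. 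Repairing your proposal essentially amounts to reproducing these two arguments.
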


\begin{proof}
  Let us assume first that $\mathbb{I}_2(\varphi)$ is invertible mod
  the prime $p$. Let $m=n+n^2$ and consider the field
  $F=\mathbb{Z}/p\mathbb{Z}$ and the vector space $V=F^m$, below
  sometimes viewed as an additive group. The latter assumption holds
  when we define the Rees matrix semigroup $S =
  \mathcal{M}\left(V,n,n,P\right)$, where $P_{i,j}=v_{i,j}$. Let $T$
  be the subsemigroup generated by the elements $s_i=(i,v_i,i)$
  ($i=1,\ldots,n$).

  We compute the product $s_{i_1}s_{i_2}\cdots s_{i_r}$: 
  \begin{equation}
    \label{eq:product-formula}
    \left\{
      \begin{aligned}
        s_{i_1}s_{i_2}\cdots s_{i_r}
        &= (i_1,v,i_r)\\
        v
        &= \sum_{i=1}^n\lambda_iv_i+\sum_{i,j=1}^n\lambda_{i,j}v_{i,j}\\
        \lambda_i
        &= |a_{i_1}a_{i_2}\cdots a_{i_r}|_{a_i}
          \quad (1\le i\le n)\\
        \lambda_{i,j}
        &= |a_{i_1}a_{i_2}\cdots a_{i_r}|_{a_ia_j}
          \quad (1\le i,j \le n).
      \end{aligned}
    \right.
  \end{equation}
  Note that the sum of row $i$ of the matrix $(\lambda_{i,j})_{i,j}$
  is $\lambda_i-1$ if $i=i_r$ and $\lambda_i$ otherwise; similarly,
  the sum of column $i$ of the matrix $(\lambda_{i,j})_{i,j}$ is
  $\lambda_i-1$ if $i=i_1$ and $\lambda_i$ otherwise. For fixed $i_1$
  and $i_r$, for all products lying in the same $\green{H}$-class,
  this gives $2n-1$ independent linear equations; not $2n$ because if
  one knows the row sums and all but one column sums in a matrix, then
  the remaining column sum is completely determined. Thus, the row and
  column sum equations define an affine subspace $V_{i_1,i_r}$ of~$V$
  of dimension~$m-(2n-1)=n^2-n+1$. Hence, the maximal
  subgroups of~$T$ have cardinality $p^{n^2-n+1}$ and
  $|T|=n^2p^{n^2-n+1}$.

  Consider the homomorphism $h:\Om{A_n}{CS}(\pv{Ab}_p)\to T$ sending
  the $i$th free generator to $s_i$. As $h$ is onto and its domain
  also has cardinality $n^2p^{n^2-n+1}$ by Theorem~\ref{t:free-CS(H)},
  we conclude that $h$ is an isomorphism, and so $T$ is freely
  generated by the elements $s_1,\ldots,s_n$ in the pseudovariety
  $\pv{CS}(\pv{Ab}_p)$. Hence, $\varphi$ induces an endomorphism
  $\varphi'=\varphi^{\pv{CS}(\pv{Ab}_p)}$ of~$T$.

  To establish that $\varphi'$ is an automorphism, it remains to show
  that $(\varphi')^\omega=(\varphi^\omega)'$ fixes each
  generator~$s_i$. The product formula (\ref{eq:product-formula})
  shows that the coordinates of $\lambda_i,\lambda_{i,j}$ of
  $\varphi'(s_i)$ are given by row $i$ of the matrix
  $\mathbb{I}_2(\varphi)$ (mod~$p$). Since this matrix is assumed to
  be invertible mod~$p$, it follows from Lemma~\ref{l:I2-homomorphism}
  that $\mathbb{I}_2(\varphi^\omega)$ is the identity matrix mod~$p$.
  This allows us to conclude that indeed $(\varphi')^\omega(s_i)=s_i$
  for $i=1,\ldots,n$.

  Conversely, suppose that $(\varphi^\omega)'$ is the identity mapping
  of the finite semigroup $S=\Om{A_n}{CS}(\pv{Ab}_p)$. In particular,
  $\varphi'$ must be a surjective transformation of~$S$, thus it must
  contain elements in each $\green{L}$-class and in each
  $\green{R}$-class. Now, since $S$ is finite and $(\varphi^\omega)'$
  is the identity mapping, every element of $S$ is of the form $p_{\pv
    S,\pv{CS}(\pv{Ab}_p)}\bigl(\varphi^\omega(w)\bigr)$ for some word
  $w\in A_n^+$, and its $\green{R}$-class is completely determined by $p_{\pv
    S,\pv{CS}(\pv{Ab}_p)}\bigl(\varphi^\omega(a)\bigr)=(\varphi^\omega)'(a)$,
  where $a$ is the first letter of~$w$. Since the number of $\green{R}$-classes of $S$ and
  the number of letters are both equal to $n$, we conclude that
  $\varphi$ is left permutative. Similarly, $\varphi$ is right
  permutative. Hence, as we observed before, $T_2(\varphi)$ is a
  permutation matrix.

  Next, we claim that $F_1(\varphi)$ is also an invertible matrix.
  Indeed, the assumption that $(\varphi^\omega)'$ is the identity on
  $S$, implies that $(\varphi^{\pv{Ab}_p})^\omega$ is the identity on
  the group $\Om{A_n}{Ab}_p$, which is isomorphic with the additive
  group of the above vector space $F^n$ under the homomorphism that
  sends the generator $a_i$ to the $i$th canonical basis vector. Hence,
  the matrix of the linear transformation of $F^n$ corresponding to
  $\varphi^{\pv{Ab}_p}$, which is precisely $F_1(\varphi)$, must be
  invertible. Combining with the already established fact that
  $T_2(\varphi)$ is invertible, we conclude that the matrix
  $\mathbb{I}_2(\varphi)$ is invertible, which completes the proof.
\end{proof}

We are now ready to establish the main result of this subsection.

\begin{theorem}
  \label{t:lower-bound}
  Let $\bsigma$ be a primitive directive sequence with alphabet rank $n$ and
  let $\pv V$ be a pseudovariety of semigroups containing $\pv{LSl}$.
  \begin{enumerate}
  \item\label{item:lower-bound-1} If $\pv V$ contains some non
    orthodox simple semigroup, then the upper bound $n^2-n+1$ for the
    rank of $G_{\pv V}(\bsigma)$ given by
    Theorem~\ref{t:maximal-subgroup-in-image}\ref{item:maximal-subgroup-in-image-1}
    is optimal.
  \item\label{item:lower-bound-2} If $\pv V$ contains only orthodox
    simple semigroups and contains some nontrivial group, then the
    upper bound $n$ for the rank of $G_{\pv V}(\bsigma)$ given by
    Theorem~\ref{t:maximal-subgroup-in-image}\ref{item:maximal-subgroup-in-image-3}
    is optimal.
  \end{enumerate}
\end{theorem}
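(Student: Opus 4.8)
The plan is to prove both optimality statements by a single construction: for each $n\ge 2$, take the constant directive sequence $\bsigma=(\sigma_n,\sigma_n,\dots)$ determined by the endomorphism $\sigma_n$ of Example~\ref{eg:sigma}. This $\bsigma$ is primitive (each $\sigma_n(a_i)$ is a cyclic rotation of $a_1\cdots a_n$, hence contains every letter) and has alphabet rank~$n$, and by Example~\ref{e:V-image-omega-power} one has $\img_{\pv V}(\bsigma)=\img\bigl((\sigma_n^{\pv V})^\omega\bigr)$. Writing $J=J_{\pv V}(\bsigma)\cap\img_{\pv V}(\bsigma)$, recall that $J$ is a completely simple $\green{J}$-class of $\img_{\pv V}(\bsigma)$ (Theorem~\ref{t:Im-intersection-J}) whose maximal subgroup is $G_{\pv V}(\bsigma)$, and that $\img_{\pv V}(\bsigma)$ is generated by the finite set $\{(\sigma_n^{\pv V})^\omega(a_i):1\le i\le n\}\subseteq J$ (Corollary~\ref{c:letters-mapped-to-Js} together with Example~\ref{e:omega-power-V-compression}). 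To bound the rank of $G_{\pv V}(\bsigma)$ from below I would construct, in each case, a finite simple semigroup $U\in\pv V$ and a continuous onto homomorphism $\pi\from\Om{A_n}V\to U$ sending each $a_i$ to a generator $s_i$ of $U$, such that $\sigma_n$ induces an automorphism $\sigma_n'$ of $U$, i.e.\ $\pi\circ\sigma_n^{\pv V}=\sigma_n'\circ\pi$. A continuous surjection of $G_{\pv V}(\bsigma)$ onto a maximal subgroup of $U$ forces $\operatorname{rank}G_{\pv V}(\bsigma)\ge\operatorname{rank}(\text{that subgroup})$, since the image of a generating set generates the image; matching this against the upper bounds of Theorem~\ref{t:maximal-subgroup-in-image} yields optimality.

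The core lemma, common to both parts, is that $\pi$ maps $G_{\pv V}(\bsigma)$ onto a \emph{full} maximal subgroup of $U$. Since $\sigma_n'$ is an automorphism of the finite semigroup $U$, its $\omega$-power is the identity, whence $\pi\circ(\sigma_n^{\pv V})^\omega=(\sigma_n')^\omega\circ\pi=\pi$; evaluating at $a_i$ gives $\pi\bigl((\sigma_n^{\pv V})^\omega(a_i)\bigr)=s_i$. As the generators $(\sigma_n^{\pv V})^\omega(a_i)$ of $\img_{\pv V}(\bsigma)$ lie in $J$, the image $\pi(J)$ is a subsemigroup of $U$ containing every $s_i$, and therefore $\pi(J)=U$. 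Now fix an idempotent $e\in J$; since both $J$ and $U$ are completely simple one has $eJe=H_e$ and $\pi(e)\,U\,\pi(e)=H_{\pi(e)}$, so $\pi(H_e)=\pi(eJe)=\pi(e)\,\pi(J)\,\pi(e)=H_{\pi(e)}$, where these $H$-classes are taken inside $J$ and $U$ respectively. Because $H_e\subseteq G_{\pv V}(\bsigma)$ and $\pi$ carries the $\green{H}$-class of $e$ into that of $\pi(e)$, it follows that $\pi\bigl(G_{\pv V}(\bsigma)\bigr)=H_{\pi(e)}$, as desired.

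It remains to supply $U$ in each case. For part~\ref{item:lower-bound-1}, since $\pv V$ contains a non-orthodox (hence completely) simple semigroup, Theorem~\ref{t:Rasin} gives a prime $p$ with $K_p\in\pv V$, so that $\pv{CS}(\pv{Ab}_p)\subseteq\pv V$; I take $U=\Om{A_n}{CS}(\pv{Ab}_p)$ and $\pi=p_{\pv V,\pv{CS}(\pv{Ab}_p)}$, which sends each $a_i$ to its free generator $s_i$. By Example~\ref{eg:sigma} the matrix $\mathbb{I}_2(\sigma_n)$ is invertible over $\mathbb Z$, hence mod $p$, so Proposition~\ref{p:suficient-for-CSAbp-invertible} makes $\sigma_n^{\pv{CS}(\pv{Ab}_p)}$ an automorphism of $U$; its maximal subgroups have rank $n+(n-1)^2=n^2-n+1$ by Theorem~\ref{t:free-CS(H)}. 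For part~\ref{item:lower-bound-2}, $\pv V$ contains a nontrivial group, hence some $\mathbb Z/p\mathbb Z$ and thus $\pv{Ab}_p\subseteq\pv V$; I take $U=\mathcal M\bigl((\mathbb Z/p\mathbb Z)^n,n,n,P_0\bigr)$ with trivial structure matrix $P_0$, which lies in $\pv V$ as a direct product of the $n\times n$ rectangular band (a member of $\pv{\loc Sl}$, since $eSe=\{e\}$ there) with $(\mathbb Z/p\mathbb Z)^n\in\pv{Ab}_p$. Here $\pi$ is the homomorphism sending $u$ to the triple consisting of the first letter of $u$, the vector $(|u|_{a_1},\dots,|u|_{a_n})$ reduced mod $p$, and the last letter of $u$; it is well defined and continuous on $\Om{A_n}V$ because the first- and last-letter components are computed in $\pv{\loc Sl}$ and the middle one in $\pv{Ab}_p$. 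On the three coordinates $\sigma_n$ acts by $i\mapsto a_i$ (the first letter of $\sigma_n(a_i)$), by $g\mapsto F_1(\sigma_n)\,g$, and by the permutation $\lambda\mapsto a_{\lambda-1}$ (the last letter of $\sigma_n(a_\lambda)$); as $\det F_1(\sigma_n)=1$ these are all bijective, so $\sigma_n'$ is an automorphism of $U$, whose maximal subgroup $(\mathbb Z/p\mathbb Z)^n$ has rank~$n$.

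The main obstacle, and the step requiring the most care, is the core lemma of the second paragraph: one must genuinely exploit complete simplicity of both $J$ and $U$ (through the identity $eSe=H_e$ valid in completely simple semigroups) to upgrade the easy surjectivity $\pi(\img_{\pv V}(\bsigma))=U$ to surjectivity of $\pi$ on the maximal subgroup itself, since a surjection of compact semigroups need not map the $\green{H}$-class of a given idempotent onto the $\green{H}$-class of its image. The remaining points are routine once set up: verifying that the coordinatewise map $\pi$ of part~\ref{item:lower-bound-2} is a genuine continuous homomorphism on $\Om{A_n}V$ (so that it factors through the relevant projections), and that $\sigma_n$ induces the stated automorphisms, both of which reduce to the invertibility of $\mathbb{I}_2(\sigma_n)$ and $F_1(\sigma_n)$ and the permutativity of $\sigma_n$ recorded in Example~\ref{eg:sigma}.
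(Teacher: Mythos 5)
Your strategy is sound and, after one repair, yields a correct proof; but as written there is a genuine gap at exactly the step you yourself single out as the crux. You assert that $J=J_{\pv V}(\bsigma)\cap\img_{\pv V}(\bsigma)$ is a \emph{completely simple} $\green{J}$-class, citing Theorem~\ref{t:Im-intersection-J}. That theorem only says $J$ is a \emph{regular} $\green{J}$-class of $\img_{\pv V}(\bsigma)$; it does not make $J$ a subsemigroup, and in general it is not one: Example~\ref{e:example-that-img-may-not-be-in-J} exhibits a primitive substitution with $w\in J_{\pv S}(\bsigma)\cap\img_{\pv S}(\bsigma)$ but $w^2\notin J_{\pv S}(\bsigma)$. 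Without $J$ being a compact completely simple semigroup, both pivotal steps of your core lemma fail: $\pi(J)$ need not be a subsemigroup of $U$, and the identity $eJe=H_e$ is unavailable, since products $exe$ can drop strictly $\leq_{\green{J}}$-below $e$. What you need, and what the paper supplies in one sentence, is that for this particular $\sigma_n$ the semigroup $\img_{\pv V}(\bsigma)$ is simple (so that $J=\img_{\pv V}(\bsigma)$): this follows from stability of the constant sequence $(\sigma_n,\sigma_n,\ldots)$ together with Theorem~\ref{t:contraction-stable}. Stability reduces here to checking that every two-letter word lies in $L(\sigma_n)$, since the crossing factors of the words $\sigma_n(a_ia_j)$ realize all pairs (the last letters of the $\sigma_n(a_i)$ exhaust the alphabet). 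With that verification inserted, your core lemma is correct, and since you reuse $\sigma_n$ in part~(ii), the same patch serves both parts.

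Granting the repair, your route genuinely differs from the paper's in two respects, both to your credit. First, you bypass recognizability and saturation entirely: the paper proves $\bsigma_n$ is $\pv{V}$-saturating (unimodular $\Rightarrow$ aperiodic $\Rightarrow$ recognizable by Mossé's theorem $\Rightarrow$ $\pv S$-saturating by Theorem~\ref{t:surjectivity}) in order to place a full maximal subgroup of $J_{\pv V}(\bsigma)$ inside $\img_{\pv V}(\bsigma)$, whereas your sandwich $H_{\pi(e)}=\pi(H_e)\subseteq\pi(G_{\pv V}(\bsigma))\subseteq H_{\pi(e)}$ uses only the inclusion $H_e\subseteq G_{\pv V}(\bsigma)$ and lower-bounds the rank of $G_{\pv V}(\bsigma)$ directly. (Your parenthetical claim that the maximal subgroup of $J$ \emph{is} $G_{\pv V}(\bsigma)$ is likewise unjustified without saturation, but your argument never uses it.) Second, in part~(ii) the paper switches to a different, proper unimodular substitution $\sigma'_n$ and projects the resulting group $\img_{\pv V}(\bsigma)$ onto $\Om n{Ab}_p$, while you keep $\sigma_n$ and build the rectangular-group quotient via first letter, abelianization mod~$p$, and last letter; your verifications that this $U$ lies in $\pv V$ (the rectangular band being in $\pv{\loc I}\subseteq\pv{\loc Sl}$, the group in $\pv{Ab}_p$) and that $\sigma_n$ induces an automorphism of $U$ are correct. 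Your version is more uniform and needs less machinery; the paper's saturation detour buys in addition the exact equality of the rank of $G_{\pv V}(\bsigma)$ with the stated bounds, whereas your argument establishes optimality of the bounds of Theorem~\ref{t:maximal-subgroup-in-image} (via the maximal subgroup $H_e$ of $J\cap\img_{\pv V}(\bsigma)$, which attains them exactly) together with the lower bound for $G_{\pv V}(\bsigma)$ itself, which is all the statement requires.
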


\begin{proof}
  \ref{item:lower-bound-1} Let $\sigma_n$ be the substitution over
  $A_n$ of Example~\ref{eg:sigma}, whose matrix
  $\mathbb{I}_2(\sigma_n)$ is invertible. A substitution over a finite
  alphabet is called \emph{unimodular} when its $1$-frequency matrix
  has determinant $\pm 1$. The third author showed that if $\varphi$
  is a unimodular primitive substitution, then $X(\varphi)$ is
  aperiodic~\cite[Proposition 5.6]{Goulet-Ouellet:2022d}, and so in
  that case the constant directive sequence $(\varphi,\varphi,\ldots)$
  is recognizable by Mossé's theorem~\cite[Theorem
  2.4.34]{Durand&Perrin:2022}. Hence, the constant directive sequence
  $\bsigma_n=(\sigma_n,\sigma_n,\ldots)$ is $\pv{S}$-saturating by
  Theorem~\ref{t:surjectivity}, and so also $\pv{V}$-saturating by
  Proposition~\ref{p:saturation-inheritance}.

  By Example~\ref{e:omega-power-V-compression} and
  Corollary~\ref{c:letters-mapped-to-Js}, $\sigma_n^\omega(A_n)$ is
  contained in $J_{\pv V}(\bsigma_n)=J_{\pv V}(\sigma_n)$ and
  generates a closed subsemigroup $S$ which contains maximal subgroups
  of~$J_{\pv V}(\sigma_n)$. As $\sigma_n$ is stable, $S$ is a simple
  semigroup by Theorem~\ref{t:contraction-stable}.

  By Theorem~\ref{t:Rasin}, $\pv V$ contains the pseudovariety
  $\pv{CS}(\pv{Ab}_p)$ for some prime~$p$. By
  Proposition~\ref{p:suficient-for-CSAbp-invertible}, the natural
  projection $p_{\pv V,\pv{CS}(\pv{Ab}_p)}$ maps each
  $\sigma_n^\omega(a_i)$ to $\iota_{\pv{CS}(\pv{Ab}_p)}(a_i)$. In
  particular, each maximal subgroup of $S$ is mapped onto a maximal
  subgroup of~$\Om{A_n}{\pv{CS}(\pv{Ab}_p)}$. By
  Theorem~\ref{t:free-CS(H)}, the rank of the latter is $n^2-n+1$ and,
  therefore, so is the rank of the former.

  \ref{item:lower-bound-2} The proof can be achieved with the same
  kind of arguments using instead the unimodular proper primitive substitution $\sigma'_n$
  defined by $\sigma'_n(a_1)=a_1a_2$ and $\sigma_n'(a_i)=a_1a_ia_2$
  ($i=2,\ldots,n$). To complete the
  proof, it suffices to consider the projection $p_{\pv V,\pv{Ab}_p}$,
  where $p$ is a prime such that $\mathbb{Z}/p\mathbb{Z}$ belongs to $\pv V$. The
  restriction of $p_{\pv V,\pv{Ab}_p}$ to the maximal subgroup
  $\img_{\pv{V}}\bigl((\sigma'_n)^\omega\bigr)$ of $J_{\pv
    V}(\sigma'_n)$ is onto, the image being the rank~$n$ elementary
  Abelian $p$-group $\Om n{Ab}_p$. Hence, the group $G_{\pv
    V}(\sigma'_n)$ has rank~$n$.
\end{proof}

For the pseudovariety $\pv V=\pv S$, the following example gives an
alternative method to the proof of sharpness of the bound $n$ obtained
in Theorem~\ref{t:maximal-subgroup-in-image} for left proper directive
sequences.

\begin{example}
  If $X$ is a \emph{dendric} shift space (also called \emph{tree}
  shift) over an alphabet of size $n$, then $G_{\pv S}(X)$ is a free
  profinite group of rank $n$~\cite{Almeida&ACosta:2016b}. It is known
  that $X=X(\bsigma)$ for a primitive directive sequence of alphabet
  rank $n$ that is proper and recognizable, see the discussion
  in~\cite[Example 6.9]{Berthe&Steiner&Thuswaldner&Yassawi:2019}.
\end{example}

\section*{Acknowledgments}

The first author acknowledges partial support by CMUP (Centro de
Matemática da Universidade do Porto), member of LASI (Intelligent
Systems Laboratory), which is funded through Portuguese funds through
FCT (Fundação para a Ciência e a Tecnologia, I. P.) under the project
UIDB/00144/2025 (\url{https://doi.org/10.54499/UID/00144/2025}).

The second author was financially supported by the Fundação para a Ciência e a Tecnologia (Portuguese Foundation for Science and Technology) under the scope of the projects UID/00324/2025 (\url{https://doi.org/10.54499/UID/00324/2025}) (Centre for Mathematics of the University of Coimbra).

The third author was supported by the Czech Technical University
Global Postdoc Fellowship program.

\inputencoding{latin1}
\bibliographystyle{amsplain}
\bibliography{../../biblio/sgpabb,../../biblio/ref-sgps}

\end{document}